\newcommand{\cC}{\mathcal{C}}
\newcommand{\cG}{\mathcal{G}}
\newcommand{\cI}{\mathcal{I}}
\newcommand{\cL}{\mathcal{L}}
\newcommand{\cV}{\mathcal{V}}
\newcommand{\bC}{\mathbb{C}}
\newcommand{\bL}{\mathbb{L}}
\newcommand{\bN}{\mathbb{N}}
\newcommand{\bQ}{\mathbb{Q}}\newcommand{\bR}{\mathbb{R}}
\newcommand{\bZ}{\mathbb{Z}}
\newcommand{\ft}{\mathfrak{t}}
\newcommand{\fh}{\mathfrak{h}}
\newcommand{\fu}{\mathfrak{u}}
\newcommand{\fg}{\mathfrak{g}}
\newcommand{\fp}{\mathfrak{p}}
\newcommand{\bunit}{\mathbbm{1}}
\newcommand{\CircNum}[1]{\ooalign{\hfil\raise .00ex\hbox{\scriptsize #1}\hfil\crcr\mathhexbox20D}}
	\def\MR#1{}
\tikzset{->-/.style={decoration={
			markings,
			mark=at position #1 with {\arrow{>}}},postaction={decorate}}}
\tikzset{-<-/.style={decoration={
					markings,
					mark=at position #1 with {\arrow{<}}},postaction={decorate}}}
\newtheorem{theorem}{Theorem}[section]
\newtheorem{lemma}[theorem]{Lemma}
\newtheorem{proposition}[theorem]{Proposition}
\newtheorem{corollary}[theorem]{Corollary}
\newtheorem*{corollary*}{Corollary}
\newtheorem{atheorem}{Theorem}
\theoremstyle{definition}
\newtheorem{definition}[theorem]{Definition}
\newtheorem{notation}[theorem]{Notation}
\theoremstyle{remark}
\newtheorem{remark}[theorem]{Remark}
\newtheorem*{remark*}{Remark}
\newtheorem{question}[theorem]{Question}
\newcommand{\half}{\nicefrac{1}{2}}
\newcommand{\cat}[1]{\mathsf{#1}}
\newcommand{\mr}[1]{{\rm #1}}
\newcommand{\ul}[1]{\underline{#1}}
\newcommand{\ol}[1]{\overline{#1}}
\newcommand{\fS}{\mathfrak{S}}
\newcommand{\lra}{\longrightarrow}
\newcommand{\Diff}{\mr{Diff}}
\newcommand{\hAut}{\mr{hAut}}
\newcommand{\Emb}{\mr{Emb}}
\newcommand{\Tor}{\mr{Tor}}
\newcommand{\GrLCS}{{\mr{Gr}^\bullet_\mr{LCS}\,}}
\DeclareMathOperator*{\colim}{colim}
\definecolor{darkgreen}{RGB}{0,100,0}
\title{On the Torelli Lie algebra}
\author{Alexander Kupers}
\address{Department of Computer and Mathematical Sciences \\
 	University of Toronto Scarborough \\
 	 1265 Military Trail \\
 	 Toronto, ON M1C 1A4 \\ Canada}
\email{a.kupers@utoronto.ca}
\author{Oscar Randal-Williams}
\address{Centre for Mathematical Sciences\\
  Wilberforce Road\\
 Cambridge CB3 0WB\\
 UK}
\email{o.randal-williams@dpmms.cam.ac.uk}
\begin{document}

\begin{abstract}
We prove two theorems about the Malcev Lie algebra associated to the Torelli group of a surface of genus $g$: stably, it is Koszul and the kernel of the Johnson homomorphism consists only of trivial $\mr{Sp}_{2g}(\bZ)$-representations lying in the centre.
\end{abstract}

\maketitle

\tableofcontents

\newpage

\section{Introduction} Let $\Sigma_{g,1}$ denote a compact oriented surface of genus $g$ with 1 boundary component. Its \emph{Torelli group} is the group $T_{g,1}$ of isotopy classes of orientation-preserving diffeomorphisms of $\Sigma_{g,1}$ which act as the identity on $H_1(\Sigma_{g,1};\bZ)$. There is an initial pro-unipotent $\bQ$-algebraic group under $T_{g,1}$, whose pro-nilpotent Lie algebra is the \emph{unipotent completion}, or \emph{Malcev completion}, $\ft_{g,1}$ of $T_{g,1}$. Hain has proved that, as long as $g \geq 4$, $\ft_{g,1}$ is isomorphic to the completion of the associated graded $\mr{Gr}^\bullet_\mr{LCS}\, \ft_{g,1}$ of its lower central series, and that this Lie algebra is quadratically presented. In this paper we will prove that it is Koszul in a stable range. Throughout this paper we shall take ``Koszul" to mean the diagonal criterion: the additional ``weight" grading of $\GrLCS \ft_{g,1}$ induces a weight grading on its Lie algebra homology (see \cref{sec:preliminaries} for our grading conventions), and Koszulness means vanishing of this bigraded Lie algebra homology away from the diagonal.

\begin{atheorem}\label{athm:main}
The Lie algebra $\GrLCS \ft_{g,1}$ is Koszul in weight $ \leq \tfrac{g}{3}$.
\end{atheorem}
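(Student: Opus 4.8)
The plan is to reduce Koszulness to a computation of the stable rational cohomology of Torelli groups and to carry this out using the topology of moduli of surfaces. Write $L := \GrLCS \ft_{g,1}$, bigraded by its lower central series degree $n$ — so $L$ is generated in degree $1$ by $L_1 = H_1(T_{g,1};\bQ)$ — and by weight $w$. By Hain's theorem $L$ is quadratically presented, with relations $R \subseteq \Lambda^2 L_1$, and this pins down the bigraded quadratic dual algebra $A = U(L)^!$, whose diagonal piece ($w = n$) sits inside the Chevalley--Eilenberg cohomology $H^\bullet_{\mr{CE}}(L)$ as a bigraded subspace. The diagonal criterion thus rephrases \cref{athm:main} as: through weight $\tfrac{g}{3}$, the bigraded vector space $H^\bullet_{\mr{CE}}(L)$ is no larger than $A$ — equivalently, the weight-$w$ part of the Lie algebra homology $H_n(L)$ vanishes whenever $n \neq w \leq \tfrac{g}{3}$, the case $w < n$ being automatic since $L$ is generated in degree $1$. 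So it suffices to compute $H^\bullet_{\mr{CE}}(L)$, or a sufficiently sharp upper bound for it, in this range.

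Here one uses that $T_{g,1}$ is $1$-formal for $g \geq 4$, a consequence of Hain's results quoted above (as $\ft_{g,1}$ is the completion of the quadratically presented $L$): then $U(L)$ is the associated graded of $\bQ[T_{g,1}]$ for its augmentation filtration, and $A$ is the quadratic dual of the holonomy Lie algebra of $T_{g,1}$, i.e. the ``quadratic part'' of the cohomology ring $H^\bullet(T_{g,1};\bQ)$. The strategy is then to compute $H^\bullet(T_{g,1};\bQ)$ stably, as a graded $\mr{Sp}_{2g}(\bZ)$-representation and in a range of degrees, to verify that the output is exactly $A$ (so in particular $H^\bullet(T_{g,1};\bQ)$ is quadratic in that range), and to check that this explicit bigraded algebra is Koszul — which then yields the claim.

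The stable computation itself goes through the extension $1 \to T_{g,1} \to \Gamma_{g,1} \to \mr{Sp}_{2g}(\bZ) \to 1$. Borel's theorem supplies $H^\bullet(\mr{Sp}_{2g}(\bZ);\bQ)$ stably and the vanishing of $H^{>0}(\mr{Sp}_{2g}(\bZ); V_\lambda)$ in a range depending on $|\lambda|$, so the associated Hochschild--Serre spectral sequences recover the $V_\lambda$-isotypical multiplicities of $H^\bullet(T_{g,1};\bQ)$ in a range from the twisted stable cohomology $H^\bullet(\Gamma_{g,1}; V_\lambda)$; and the latter is computed stably by the Madsen--Weiss theorem together with the theory of polynomial coefficient systems, explicitly in terms of the $\kappa$-classes and the combinatorics of $\lambda$. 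Assembling these over all $\lambda$ gives $H^\bullet(T_{g,1};\bQ)$ stably. The bound $w \leq \tfrac{g}{3}$ is precisely what survives after intersecting the slope-$\tfrac{2}{3}$ twisted homological stability range for $\Gamma_{g,1}$ with Borel's vanishing range, using that a class of weight $w$ in $H^\bullet_{\mr{CE}}(L)$ lies in cohomological degree $\leq w$ and involves only representations $V_\lambda$ with $|\lambda| = O(w)$.

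I expect the principal obstacle to be obtaining an honest \emph{upper} bound on $H^\bullet_{\mr{CE}}(\GrLCS \ft_{g,1})$ rather than on $H^\bullet_{\mr{CE}}(\ft_{g,1})$: the weight spectral sequence only bounds the cohomology of $\ft_{g,1}$ from above by that of $\GrLCS \ft_{g,1}$, and its degeneration in the range is tantamount to the theorem, so the argument cannot be circular — one needs instead either a formality statement valid in the relevant range, or a direct geometric (or $E_\infty$-cellular) model for the associated-graded object whose cell count is governed by the same homological-stability estimates. The remaining work — performing the stable computation with symplectic coefficients, recognizing the output as the quadratic-dual algebra $A$ from Hain's description of $R$, checking that $A$ is Koszul, and tracking all the ranges so that the conclusion holds in exactly weights $\leq \tfrac{g}{3}$ — is extensive but essentially routine.
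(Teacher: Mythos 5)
Your proposal runs orthogonal to the paper's proof, and the obstacle you flag at the end is in fact the entire substance of the theorem: the plan breaks on a point you have conflated, not merely on an estimate you have deferred.

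The step ``compute $H^\bullet(T_{g,1};\bQ)$ stably \ldots{} and verify that the output is exactly $A$'' is not achievable, and would not imply the theorem even if it were. The Hochschild--Serre\slash Borel\slash Madsen--Weiss machinery only computes $[H^*(T_{g,1};\bQ)\otimes V_\lambda]^{\mr{Sp}_{2g}(\bZ)}$ in a stable range --- i.e.\ the \emph{algebraic part} $H^*(T_{g,1};\bQ)^{\mathrm{alg}}$; it is silent on the (conjecturally non-algebraic) remainder. More importantly, even total knowledge of $H^*(T_{g,1};\bQ)$ would not yield $H^*_{\mr{Lie}}(\GrLCS\ft_{g,1})$: the comparison map $H^*_{\mr{Lie,cts}}(\ft_{g,1})\to H^*(T_{g,1};\bQ)$ is only known to be an isomorphism in degree $1$ and a monomorphism in degree $2$ (this is exactly what the paper's Theorem~\ref{thm:LowCohTg1} squeezes out, using finiteness of $H^1(T_{g,1};\bQ)$), and identifying the two in higher degrees would amount to a pseudo-nilpotence statement for $T_{g,1}$ that is not available. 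What you \emph{can} compute from Madsen--Weiss plus Borel is the quadratic dual algebra $A$ itself (this is \cite{KR-WTorelli,GG}), but matching $A$ against $H^*(T_{g,1};\bQ)$ is then precisely the content of Koszulness, so the proposed verification is circular. Note also that Remark~\ref{rem:NotKoszul} shows $\GrLCS\ft_{g,1}$ is \emph{not} Koszul outright --- only in a range --- which is inconsistent with any too-literal identification of the stable cohomology with $A$.

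What the paper actually does is entirely different and has no surface-topology analogue: it packages the quadratic dual $A$ uniformly in $g$ as a commutative algebra object $E_1$ (then $Z_n$) on the downward (signed) Brauer category, reinterprets $K^\vee\otimes^{\cat{d(s)Br}}Z_n$ as the cohomology of a nilpotent piece $X_1(g)$ of the framed Torelli space of the \emph{high-dimensional} manifold $W_{g,1}=D^{2n}\#(S^n\times S^n)^{\#g}$ with $2n\ge 6$, and plays off the sparsity of $H^*(X_1(g);\bQ)$ against the sparsity of $\pi_*(X_1(g))\otimes\bQ$ (the latter obtained via embedding calculus) in the unstable rational Adams spectral sequence. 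That handles the non-trivial $\mr{Sp}_{2g}(\bZ)$-isotypics; the trivial ones (i.e.\ the empty set in $\cat{d(s)Br}$) require a separate transfer argument in graph complexes. None of this is ``routine,'' and none of it is visible from the surface mapping class group side; the passage to dimension $2n\ge 6$ is the key idea that your proposal is missing.
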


More generally, for $\Sigma_{g,n}^r$ a surface of genus $g$ with $n$ boundary components and $r$ marked points, let $\smash{\ft^r_{g,n}}$ be the unipotent completion of its Torelli group. We also prove that $\GrLCS\ft_g$ and $\GrLCS \ft_{g}^1$ are Koszul in the same range, as well as the \emph{relative unipotent completions} $\GrLCS \fu_{g}$, $\GrLCS \fu_g^1$, and $\GrLCS \fu_{g,1}$ (see \cref{sec:torelli-defs} for detailed definitions). This result for $\GrLCS \fu_{g}$ has simultaneously been obtained by Felder--Naef--Willwacher \cite{FNW}; we discuss the relation between the arguments in \cref{rem:FNW}.

\begin{remark*}%\
	\begin{enumerate}[(i)]
		\item \cref{athm:main} and its variants imply Conjecture 16.2 of \cite{HainJohnson} and answer Questions 9.13 and 9.14 of \cite{HainLooijenga} affirmatively. By Corollary 16.5 of \cite{HainJohnson}, Conjecture 16.1 of loc.~cit.~is also true. This is part (iii) of the ``the most optimistic landscape'' in Section 19 of \cite{HainJohnson}.
		\item Remark \ref{rem:NotKoszul} shows that the Lie algebra $\GrLCS\ft_{g,1}$ cannot actually be Koszul.
	\end{enumerate}
\end{remark*}

Garoufalidis--Getzler \cite{GG} have used work of Looijenga and Madsen--Weiss to compute the stable character of the quadratic dual of $\GrLCS \fu_g$ as graded algebraic $\mr{Sp}_{2g}(\bZ)$-representation and then in Theorem 1.3 of loc.~cit.~have computed from this---under the assumption of stable Koszulness---the stable character of $\GrLCS \ft_g$ as a graded algebraic $\mr{Sp}_{2g}(\bZ)$-representation. In \cite[Sections 6, 8.1]{KR-WTorelli} we gave a different approach to the first of these computations, and explained how it applies to surfaces with a boundary component or a marked point too. This method, along with \cref{athm:main} and its generalisation, renders the characters of the graded algebraic $\mr{Sp}_{2g}(\bZ)$-representations $\GrLCS \ft_g$, $\GrLCS\ft^1_g$, $\GrLCS \ft_{g,1}$, $\GrLCS \fu_{g}$, $\GrLCS \fu_g^1$, and $\GrLCS \fu_{g,1}$, amenable to computer calculation in weight $\leq \tfrac{g}{3}$.

\vspace{1ex}

We further use \cref{athm:main} to analyse the map
\[\tau_{g,1} \colon \GrLCS \ft_{g,1} \lra \fh_{g,1} \subset \mr{Der}(\mr{Lie}(H))\]
obtained from the action of the mapping class group of $\Sigma_{g,1}$ on its fundamental group, whose image lies in the Lie subalgebra $\fh_{g,1} \subset \mr{Der}(\mr{Lie}(H))$ of \emph{symplectic derivations}. Following Hain we call this the \emph{geometric Johnson homomorphisms}, and Morita has asked whether it is injective in weight $\neq 2$. The map $\tau_{g,1}$ is one of Lie algebras with additional weight grading in the category of algebraic $\mr{Sp}_{2g}(\bZ)$-representations and here we give strong evidence for this injectivity by tightly constraining its kernel in a range.

\begin{atheorem}\label{athm:morita} 
In weight $\leq \frac{g}{3}$, the kernel of $\tau_{g,1}$ lies in the centre of the Lie algebra $\GrLCS \ft_{g,1}$ and consists of trivial $\mr{Sp}_{2g}(\bZ)$-representations.
\end{atheorem}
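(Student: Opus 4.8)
The plan is to combine the stable Koszulness of \cref{athm:main} with the known structure of $\fh_{g,1}$ and its first few graded pieces as $\mr{Sp}_{2g}(\bZ)$-representations. The key mechanism is that Koszulness of $\GrLCS \ft_{g,1}$ forces its Lie algebra (co)homology to be concentrated on the diagonal, and this severely restricts the structure of any sub- or quotient Lie algebra. Write $K_\bullet = \ker(\tau_{g,1})$, a graded ideal in $\GrLCS \ft_{g,1}$ with $K_1 = 0$ (since $\tau$ is an isomorphism in weight $1$, both sides being $\wedge^3 H / H$ by Johnson's theorem) and $K_2 = 0$ (since $\tau$ is injective in weight $2$ by Hain--Morita; this is the source of the ``$\neq 2$'' in Morita's question). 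Thus the first possibly-nonzero piece of $K$ is in weight $3$.

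First I would argue that $K$ is central, at least in the range weight $\leq g/3$. Since $K$ is an ideal and $\GrLCS\ft_{g,1}$ is generated in weight $1$ by $H_1 = \wedge^3 H/H$, it suffices to show $[H_1, K_w] = 0$ for all $w \leq g/3 - 1$. The bracket $[H_1, K_w] \subseteq K_{w+1}$, so inductively it is enough to understand the adjoint action of $H_1$. Here I would use that $\GrLCS\fh_{g,1}$ (the image of $\tau$) is the ``diagonal part'' and that Koszulness of the source, together with the computation from \cite{KR-WTorelli} of the quadratic dual, pins down $H_*(\GrLCS\ft_{g,1})$ in low degrees; any non-central ideal concentrated in high weight would produce off-diagonal homology classes (in $H_2$ of the Lie algebra) via the standard argument that $[\,\cdot\,,\,\cdot\,]\colon \wedge^2(\GrLCS\ft_{g,1})_1 \to (\GrLCS\ft_{g,1})_2$ having a large kernel is controlled by $H_2$, but a central ideal starting in weight $\geq 3$ contributes to $H_2$ in bidegree $(2, w)$ with $w \geq 3$, off the diagonal, contradicting Koszulness. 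More precisely, since the quotient $\GrLCS\ft_{g,1}/K \cong \GrLCS\fh_{g,1}$ (in the stable range) sits in an extension, the five-term exact sequence in Lie algebra homology relates $H_2$ of the quotient, $H_1$ of $K$ as a module over the quotient, and $H_2$ of $\GrLCS\ft_{g,1}$; Koszulness of the latter and the known Koszul-type behaviour of $\GrLCS\fh_{g,1}$ in this range force $(H_1 K)_{\text{coinv}}$ — hence $K/[\GrLCS\ft_{g,1},K]$, hence $K$ by a downward induction from the top weight in range — to be annihilated by the bracket, i.e.\ $K$ is central.

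Next, given that $K$ is central, I would show each $K_w$ is a trivial $\mr{Sp}_{2g}(\bZ)$-representation. A central ideal of a quadratically presented Lie algebra generated in weight $1$ that starts in weight $\geq 3$ is detected by $H_3$ (or equivalently by the quadratic-dual Koszul complex): being central, $K$ splits off the relevant part of the bar complex, so its weight-$w$ piece appears as a summand of $H_{2}(\GrLCS\ft_{g,1})_w$ or more precisely of the off-diagonal homology that Koszulness forbids — unless it is ``invisible,'' which forces it to consist of trivial representations matching the unique trivial summands that \emph{can} appear. Concretely, I would use the explicit character computation of \cite{GG} / \cite[Sections 6, 8.1]{KR-WTorelli}: in weight $\leq g/3$, one knows the decomposition of $\GrLCS\ft_{g,1}$ and of its quadratic dual, hence of the difference $K$ when nonzero must be supported exactly on the trivial-representation discrepancy between these two characters (the algebraic $\mr{Sp}$-representation theory being semisimple, a central ideal with $\tau(K)=0$ has character equal to the difference $[\GrLCS\ft_{g,1}] - [\GrLCS\fh_{g,1}]$ in each weight, and the Koszulness constraint of \cref{athm:main} forces this difference to be a sum of trivials).

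The main obstacle will be the first step: controlling $H_2$ (and possibly $H_3$) of $\GrLCS\fh_{g,1}$ — the target of $\tau$ — well enough in the range to run the five-term sequence argument. The Lie algebra $\fh_{g,1}$ of symplectic derivations is \emph{not} itself known to be Koszul or even quadratically presented in general, so I expect to need the input from \cite{KR-WTorelli} that its associated graded agrees with the quadratic dual construction in weight $\leq g/3$, together with a careful bookkeeping of which irreducibles appear in $H_2$ and $H_3$ there. A secondary obstacle is the downward induction on weight used to upgrade ``$K/[\GrLCS\ft_{g,1},K]$ is central'' to ``$K$ is central'': one must check that the stable range $g/3$ is uniform enough that the induction does not fall out of range, which should follow since each step only decreases the weight.
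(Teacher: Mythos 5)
Your approach is fundamentally different from the paper's, and unfortunately it has gaps that I don't think can be repaired without importing the paper's actual geometric input. The paper does \emph{not} argue algebraically from Koszulness alone. Instead it re-identifies $\tau_{g,1}$, up to trivial representations, with the map on rational homotopy Lie algebras induced by $X_1(g) \to B\mr{Diff}^\mr{fr}_\partial(W_{g,1})_\ell \to B\mr{hAut}_\partial(W_{g,1})$ (the ``higher-dimensional geometric Johnson homomorphism''), and then proves injectivity of the relevant piece by showing that $B\mr{Emb}^\mr{fr}_{\half\partial}(W_{g,1})_\ell \to B\mr{hAut}_{\half\partial}(W_{g,1})$ is rationally $\pi_*$-injective via the Bousfield--Kan spectral sequence for the embedding calculus Taylor tower, and identifying the remaining kernels with contributions from $X_0$ and $B^2\mr{Diff}^\mr{fr}_\partial(D^{2n})$, both of which carry trivial $\mr{Sp}_{2g}(\bZ)$-action and land in the centre. \cref{athm:main} enters only to collapse the unstable rational Adams spectral sequence so that $\pi_{*+1}(X_1(g))\otimes\bQ$ can be read off as the quadratic dual of $K^\vee \otimes^{\cat{dsBr}} Z_n$; it is not used to control ideals of $\GrLCS\ft_{g,1}$ directly.

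Several specific points in your argument do not hold up. First, $K_2 \neq 0$: as noted in the remark following the theorem statement, the kernel of $\GrLCS\ft_{g,1} \to \GrLCS\fu_{g,1}$ (through which $\tau_{g,1}$ factors) is a central trivial $\bQ$ in weight $2$, so your base case fails, and in fact you contradict your own parenthetical explanation of why Morita excludes weight $2$. Second, you repeatedly write $\GrLCS\ft_{g,1}/K \cong \GrLCS\fh_{g,1}$, but the geometric Johnson homomorphism is far from surjective; the quotient is $\mr{im}(\tau_{g,1})$, whose Lie algebra homology is precisely what you don't know (controlling the image is logically equivalent to controlling $K$, so the five-term sequence step is circular). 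You flag this as ``the main obstacle'' yourself, but then suggest it can be patched by inputs about $\fh_{g,1}$ from \cite{KR-WTorelli}; that does not help because $\fh_{g,1}$ is not the quotient in the extension. Third, Koszulness of $L$ simply does not force ideals starting in weight $\geq 3$ to be central: take $L$ the free Lie algebra on $V$ in weight $1$ (Koszul, dual to $\mr{Sym}(V^\vee)$) and $K$ the ideal generated by any weight-$3$ bracket --- this $K$ is not central. Finally, the character argument at the end again needs the character of $\mr{im}(\tau)$ in each weight, not of $\GrLCS\ft_{g,1}$ or $\fh_{g,1}$, and the Johnson cokernel is large, so the ``difference of characters'' you describe is not something that can be read off from \cite{GG} or \cite[Sections 6, 8.1]{KR-WTorelli} alone. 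The essential missing ingredient is the embedding calculus input, which gives a genuinely new constraint on $\pi_*(X_1(g))\otimes\bQ$ beyond what Koszulness of $\GrLCS\ft_{g,1}$ provides.
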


We also prove the analogous statement for $\ft_g^1$, $\ft_g$, $\fu_{g,1}$, $\fu_g^1$, and $\fu_g$.

\begin{remark*}\
	\begin{enumerate}[(i)]
		\item The map $\tau_{g,1}$ factors over a map $\GrLCS \ft_{g,1} \to \GrLCS \fu_{g,1}$ whose kernel is a central $\bQ$ in weight 2 on which $\mr{Sp}_{2g}(\bZ)$ acts trivially \cite[Theoren 3.4]{HainTorelli}, so \cref{athm:morita} is sharp in this sense. However, comparing \cite[Section 7]{GG} with \cite[Table 1]{MSS} one sees that $\GrLCS \fu_{g,1} \to \fh_{g,1}$ is injective in weight $\leq 6$ in a stable range, and it might well be the case that this map is injective.
		\item Stably and neglecting trivial $\mr{Sp}_{2g}(\bZ)$-representations in the centre, this answers several questions in the literature: part (i) of ``the most optimistic landscape'' in Section 1.9 of \cite{HainJohnson},  \cite[Question 9.7]{HainLooijenga}, \cite[Problem 6.2]{MoritaStructure}, \cite[Problem 3.3, Problem 8.1]{MoritaCohomological}, \cite[Question 8.2]{HabiroMassuyeau}, \cite[Conjecture 1.8]{MSSTorelli}. It also has consequences for finite type invariants of 3-manifolds (see \cref{rem:habiro-massuyeau}), c.f.~p.\,381 of \cite{MoritaStructure}.
	\end{enumerate}
\end{remark*}

Our arguments use in an essential manner the classifying spaces $B\Tor_\partial(W_{g,1})$ of the Torelli groups of the high-dimensional analogue of a surface of genus $g$ with one boundary component. As a consequence, we also obtain results about the rational homotopy Lie algebra of $B\Tor_\partial(W_{g,1})$ (see \cref{sec:HighDimApplications}).

\vspace{1ex}

\noindent\textbf{Outline of proofs.} We now outline the proof of \cref{athm:main}. We will suppress explicit ranges for the sake of readability, writing ``in a stable range'' when a statement holds in a range of degrees or weights tending to $\infty$ with $g$.
\begin{enumerate}[(1)]
	\item \label{step:reductions} \emph{Use Koszul duality, and relate different genera $g$ and dimensions $n$.} By a result of Hain, the Lie algebra $\mr{Gr}^\bullet_\mr{LCS}\,\ft_{g,1}$ is quadratically presented when $g \geq 4$. Hence, to prove \cref{athm:main}, it suffices to prove the quadratic dual commutative algebra of $\mr{Gr}^\bullet_\mr{LCS}\,\ft_{g,1}$ is Koszul in a stable range, in the sense that its bigraded commutative algebra homology groups vanish away from the diagonal.
	
	The quadratic dual of $\mr{Gr}^\bullet_\mr{LCS}\,\ft_{g,1}$ can be identified as an algebra of twisted Miller--Morita--Mumford classes as in \cite[Section 5]{KR-WTorelli}. Taking into account the $\mr{Sp}_{2g}(\bZ)$-action this description is uniform in $g$: there is a commutative algebra object $E_1/(\kappa_{e^2}) \colon \cat{dsBr} \to \cat{Gr}(\bQ\text{-}\cat{mod})$ in the category of representations of the downward signed Brauer category whose realisation $K^\vee \otimes^{\cat{dsBr}} E_1/(\kappa_{e^2}) \in \cat{Gr}(\cat{Rep}(\mr{Sp}_{2g}(\bZ)))$ is the quadratic dual of  $\mr{Gr}^\bullet_\mr{LCS}\,\ft_{g,1}$ in a stable range (implicitly $K^\vee$ depends on $g$ but $E_1/(\kappa_{e^2})$ does not). This realisation is Koszul in a stable range if and only if $E_1/(\kappa_{e^2})$ is if and only if $E_1$ is. The algebra object $E_1$ is one of a family $E_n$ which up to rescaling only depends on the parity of $n$, and this has a closely-related variant $Z_n$. The commutative algebra object $E_1$ is Koszul in a stable range if and only if $E_n$ is if and only if $Z_n$ is.
	\item \label{step:mfd-theory} \emph{Use high-dimensional manifold theory to get vanishing of the non-trivial $\mr{Sp}_{2g}(\bZ)$-representations.} The reason for the reductions performed in Step \eqref{step:reductions} is that the algebra object $Z_n$ has appeared in \cite{KR-WDisks}: its realisation $K^\vee \otimes^{\cat{d(s)Br}} Z_n$ is an algebra of twisted Miller--Morita--Mumford classes for the framed Torelli group $\mr{Tor}^\mr{fr}_\partial(W_{g,1})$ of the $2n$-dimensional analogue $W_{g,1} = D^{2n} \# (S^n \times S^n)^{\# g}$ of the surface $\Sigma_{g,1}$. These classes account for nearly all of the cohomology of this group: when $2n \geq 6$, a finite cover of the classifying space of this group fits in a fibration sequence
	\[X_1(g) \lra \overline{B\mr{Tor}}^\mr{fr}_\partial(W_{g,1}) \lra X_0\]
	of nilpotent spaces with $\mr{Sp}_{2g}(\bZ)$-action (the action is trivial on $X_0$) and $K^\vee \otimes^{\cat{d(s)Br}} Z_n = H^*(X_1(g);\bQ)$ in a stable range. It thus suffices to verify the diagonal criterion for the commutative algebra homology of $H^*(X_1(g);\bQ)$.
	
	There is an unstable rational Adams spectral sequence of $\mr{Sp}_{2g}(\bZ)$-representations
	\[H^\mr{Com}_s(H^*(X_1(g);\bQ))_{rn} \Longrightarrow \mr{Hom}(\pi_{rn-s+1}(X_1(g)),\bQ).\] 
	As $H^*(X_1(g);\bQ)$ is sparse, so are the entries in this spectral sequence. Not only does this imply a sparsity result for $\pi_*(X_1(g)) \otimes \bQ$, but we can exclude differentials in any desired range by increasing the dimension $2n$. In \cite{KR-WDisks} we also accessed the rational homotopy groups of $\smash{\overline{B\mr{Tor}}^\mr{fr}_\partial}(W_{g,1})$ using embedding calculus. Up to contributions from $\pi_*(X_0)\otimes \bQ$, this gives a different sparsity result for $\pi_*(X_1(g))$. Verification of the diagonal criterion \emph{up to trivial representations} then follows in a stable range by combining the two sparsity results.
	\item \emph{Apply a transfer argument in graph complexes to get vanishing of the trivial $\mr{Sp}_{2g}(\bZ)$-representations.} In terms of the commutative algebra object $Z_n$, we have proven that its commutative algebra homology vanishes away from the diagonal when evaluated on those objects of $\cat{d(s)Br}$ given by non-empty sets, and it remains to prove the same holds for the empty set. To do so, we resolve $Z_n$ by a complex $RB^{Z_n}$ of red-and-black graphs: the commutative algebra homology groups of $RB^{Z_n}$ equal those of $Z_n$, but $RB^{Z_n}$ is chosen so that these can be computed by taking strict commutative algebra indecomposables; this is a graph complex $RB^{Z_n}_\mr{conn}$ of connected red-and-black graphs. We show that this	is closely related to a complex $G^{Z_n}$ of connected black graphs, and in particular certain vanishing ranges can be passed back and forth between these complexes. 
	
	Step \eqref{step:mfd-theory} gives the required vanishing of the homology of $RB^{Z_n}_\mr{conn}(S)$ for all non-empty sets $S$, and in turn this implies a certain vanishing of the homology of $G^{Z_n}(S)$ for all non-empty sets $S$. A transfer argument shows that the homology of $G^{Z_n}(\varnothing)$ injects into that of $G^{Z_n}(\ul{1})$, giving a certain vanishing of the homology of $G^{Z_n}(\varnothing)$; this then implies the required vanishing of the homology of $RB^{Z_n}_\mr{conn}(\varnothing)$.
		
		%		On the latter, a transfer argument shows that the homology of $G^{Z_n}(\varnothing)$ injects into that of $G^{Z_n}(\ul{1})$. Step \eqref{step:mfd-theory} implies a vanishing result for the latter, and a vanishing result for the former follows.
\end{enumerate}

\noindent We next outline the proof of \cref{athm:morita}. Using \cref{athm:main} we can identify the geometric Johnson homomorphism in terms of high-dimensional manifold theory: up to trivial representations, it amounts to the map induced on rational homotopy groups by the composition
\[X_1(g) \lra \smash{\overline{B\mr{Tor}}^\mr{fr}_\partial}(W_{g,1}) \lra B\mr{hAut}_*(W_{g,1}).\]
Its injectivity can then be deduced from the unstable rational Adams spectral sequence and sparsity results of Step \eqref{step:mfd-theory}.

\vspace{1ex}

\noindent\textbf{Acknowledgements.} The authors thank M.\ Kassabov and T.\ Willwacher for useful discussions, and both anonymous referees for helpful comments. AK acknowledges the support of the Natural Sciences and Engineering Research Council of Canada (NSERC) [funding reference number 512156 and 512250], as well as the Research Competitiveness Fund of the University of Toronto at Scarborough. AK was supported by an Alfred J.~Sloan Research Fellowship. ORW was partially supported by the ERC under the European Union's Horizon 2020 research and innovation programme (grant agreement No.\ 756444), and by a Philip Leverhulme Prize from the Leverhulme Trust.

\section{Preliminaries}\label{sec:preliminaries}

\begin{notation}Let $\bQ\text{-}\cat{mod}$ denote the category of $\bQ$-vector spaces, $\cat{Gr}(\bQ\text{-}\cat{mod})$ the category of non-negatively graded $\bQ$-vector spaces, and $\cat{Ch}$ the category of non-negatively graded chain complexes over $\bQ$. We consider $\cat{Gr}(\bQ\text{-}\cat{mod})$ as the subcategory of $\cat{Ch}$ of chain complexes without differential, and $\bQ\text{-}\cat{mod}$  as the subcategory of $\cat{Gr}(\bQ\text{-}\cat{mod})$ of graded vector spaces supported in degree 0. We endow $\cat{Ch}$ with the usual symmetric monoidality, incorporating the Koszul sign rule, and give the other categories the induced symmetric monoidalities.

We will want to consider objects (chain complexes, etc.) equipped with an additional $\bN$-grading, called the ``weight grading", and we write $\cat{Ch}^\bN := \cat{Fun}(\bN, \cat{Ch})$ and so on for the categories of these. Treating $\bN$ as a symmetric monoidal category having only identity maps and symmetric monoidal structure given by addition, Day convolution endows these categories with their own symmetric monoidalities. We emphasise that when interchanging elements the sign incurred depends only on the homological degree, and not on the weight. In particular $X \mapsto \bigoplus_{n \in \bN} X(n): \cat{Ch}^\bN \to \cat{Ch}$ is symmetric monoidal.
\end{notation}

\subsection{Commutative algebras} \label{sec:prelim-com} Let $\cat{Com}$ denote the \emph{nonunital commutative operad} in $\bQ\text{-}\cat{mod}$, having $\cat{Com}(r) = 0$ for $r=0$ and $\cat{Com}(r) = \bQ$ as a trivial $\fS_r$-representations otherwise. A \emph{commutative algebra} in $\cat{Ch}$ is an algebra for the operad $\cat{Com}$, or equivalently the associated monad. 

The (desuspended) \emph{Harrison complex} of commutative algebra $A$ is
\[\mr{Harr}(A)[-1] \coloneqq \mr{coLie}(A[1])[-1]\]
where $[1]$ denotes suspension and $[-1]$ denotes desuspension of chain complexes, and the differential $d=d_A + d_\mr{Harr}$ is the sum of the differential $d_A$ induced by that on $A$ and the differential $d_\mr{Harr}$ induced by the unique map of Lie coalgebras given on cogenerators by the commutative multiplication map $\mr{coLie}^2(A[1]) = \Lambda^2(A[1])= \mr{Sym}^2(A)[2] \to A[1]$. These satisfy $d_Ad_\mr{Harr}+d_\mr{Harr}d_A = 0$, so $d^2 = 0$.

If $A$ is a nonunital commutative algebra object in the subcategory $\cat{Gr}(\bQ\text{-}\cat{mod}) \subset \cat{Ch}$, then the objects $\mr{Harr}(A)$ inherit a further grading, as usual in homological algebra. We may think of this as follows: the grading of $A$ determines a $\bQ^\times$-action on $A$, where $u \in \bQ^\times$ acts as $u^q$ in degree $q$. Functoriality gives a $\bQ^\times$-action on  $\mr{Harr}(A)$, and we write $\mr{Harr}_{*}(A)_q$ for the subspace on which $u \in \bQ^\times$ acts as $u^q$. It is clear that such eigenspaces exhaust $\mr{Harr}(A)$, and we set
\[H^\mr{Com}_p(A)_q \coloneqq H_{p+q}(\mr{Harr}(A))_q = H_{p+q-1}(\mr{Harr}(A)[-1])_q,\]
and call $p$ the \emph{Harrison degree}, $q$ the \emph{internal degree}, and $p+q-1$ the \emph{total degree}.

If $A$ also has a weight grading, i.e.\ is a nonunital commutative algebra object in $\cat{Gr}(\bQ\text{-}\cat{mod})^\bN$, then the Harrison complex and its homology obtains a further weight grading, and we write
\[H^\mr{Com}_p(A)_{q,w} \coloneqq H_{p+q}(\mr{Harr}(A))_{q,w}\]
for the piece of \emph{weight} $w$.

\begin{definition}\label{def:koszul-com-1} For $A \in \cat{Alg}_{\mr{Com}}(\cat{Gr}(\bQ\text{-}\cat{mod})^\bN)$ we say that $A$ is \emph{Koszul in weight $ \leq W$} if $H^\mr{Com}_p(A)_{q,w}=0$ when $p \neq w$ and $w \leq W$. If $W=\infty$ then we simply say that $A$ is \emph{Koszul}.
\end{definition}

\subsection{Lie algebras}  Let $\cat{Lie}$ denote the \emph{Lie operad}, whose definition can be found in \cite[Section 13.2.3]{LodayVallette}. A \emph{Lie algebra} in $\cat{Ch}$ is an algebra for the operad $\cat{Lie}$, or equivalently the associated monad. The (desuspended) \emph{Chevalley--Eilenberg complex} of a Lie algebra $L$ is
\[\mr{CE}(L)[-1] = \mr{coCom}(L[1])[-1],\]
where the differential $d=d_L + d_\mr{CE}$ is again the sum of the differential $d_L$ induced by that on $L$ and the differential $d_\mr{CE}$ induced by the unique map of cocommutative coalgebras given on cogenerators by the Lie bracket map $\mr{coCom}^2(L[1]) = \mr{Sym}^2(L[1]) = \mr{Lie}^2(L)[2] \to L[1]$. These satisfy $d_Ld_\mr{CE}+d_\mr{CE}d_L = 0$, so $d^2 = 0$.
%is again a certain natural deformation of the differential induced by that of $L$ \cite[Section 13.2.7]{LodayVallette}.

If $L$ is a Lie algebra object in $\cat{Gr}(\bQ\text{-}\cat{mod})$ then as above we can endow the Chevalley--Eilenberg complex with a further grading, and we set
\[H^\mr{Lie}_p(L)_q \coloneqq H_{p+q}(\mr{CE}(L))_q = H_{p+q-1}(\mr{CE}(L)[-1])_q.\]
If $L$ also has a weight grading, then we write
\[H^\mr{Lie}_p(L)_{q,w} \coloneqq H_{p+q}(\mr{CE}(L))_{q,w}\]
for the piece of weight $w$.

\begin{definition}\label{def:koszul-lie} For $L \in \cat{Alg}_\cat{Lie}(\cat{Gr}(\bQ\text{-}\cat{mod})^\bN)$ we say that $L$ is \emph{Koszul in weight $\leq W$} if $H^\mr{Lie}_p(L)_{q,w} = 0$ when $p \neq w$ and $w \leq W$. If $W=\infty$ then we simply say that $L$ is \emph{Koszul}.
\end{definition}

\subsection{Koszul duality}\label{sec:koszul-duality} The operads $\cat{Com}$ and $\cat{Lie}$ are Koszul dual \cite[Corollary 4.2.7]{GinzburgKapranov}, \cite[Proposition 13.1.5]{LodayVallette}, and thus there is a Koszul duality between nonunital commutative algebras and Lie algebras.

A \emph{quadratic datum for a nonunital commutative algebra} is a pair $(V,S)$ of a finite-dimensional graded vector space $V$ and a subspace $S \subset \mr{Com}(2) \otimes_{\fS_2} V^{\otimes 2}$. From this we may form the \emph{quadratic nonunital commutative algebra} $A(V,S)$ as the quotient of the free graded nonunital commutative algebra $\mr{Com}(V)$ by the ideal generated by $S$. As the relations are homogeneous in $V$ (namely purely quadratic) we can endow this with a weight grading by declaring $V$ to have weight 1. Similarly, a \emph{quadratic datum for a Lie algebra}, given by a pair $(W,R)$ of a finite-dimensional graded vector space $W$ and a subspace $R \subset \mr{Lie}(2) \otimes_{\fS_2} W^{\otimes 2}$, yields a \emph{quadratic Lie algebra} $L(W,R)$; it has a weight grading by declaring $W$ to have weight 1.

Given a quadratically presented nonunital commutative algebra $A=A(V,S)$, its \emph{quadratic dual} Lie algebra is $A^! \coloneqq L(V^\vee[-1],S^\perp[-2])$.\footnote{For convenience, our grading conventions are such that if the graded $\bQ$-vector space $V$ is concentrated in non-negative degrees then so is $V^\vee$.} Similarly, given a quadratically presented Lie algebra $L = L(W,R)$, its \emph{quadratic dual} nonunital commutative algebra is $L^! \coloneqq A(W^\vee[1],R^\perp[2])$. These constructions are natural in the quadratic data, and are dualities in the sense that there are natural isomorphisms $A \cong (A^!)^!$ and $L \cong (L^!)^!$. In particular, the quadratic dual preserves any group actions on the objects in question.

The definitions of Koszulness given in \cref{def:koszul-com-1} and \cref{def:koszul-lie} are the diagonal criteria; a reference is \cite[Theorem 4.9 (ii)]{Milles}. By \cite[Theorem 4.11]{Milles}, $A$ is Koszul if and only if $A^!$ is Koszul. The same is true for the notions of Koszulness in a range by the same argument:

\begin{lemma}\label{lem:koszul-lie-vs-com} A quadratic nonunital commutative algebra $A$ is Koszul in weight $\leq W$ if and only if its quadratic dual Lie algebra $A^!$ is Koszul in weight $\leq W$.\qed
\end{lemma}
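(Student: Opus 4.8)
The plan is to revisit the proof of \cite[Theorem 4.11]{Milles} and verify that it is local in the weight grading. Recall that the quadratic datum $(V,S)$ determines, besides $A = A(V,S)$, its \emph{Koszul dual coalgebra} $A^\excl$ --- a conilpotent cocommutative coalgebra --- together with the Koszul twisting morphism $\kappa \colon A^\excl \to A$ and the associated \emph{Koszul complexes} $A^\excl \otimes_\kappa A$ and $A \otimes_\kappa A^\excl$. Declaring $V$ to have weight $1$ equips $A$, $A^\excl$, $A^!$ and these Koszul complexes with compatible weight gradings, and since $A$ is generated in weight $1$, the weight-$w$ part of any of these only involves the weight-$\leq w$ parts of the others. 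As $V$ is finite-dimensional, each of $A_w$, $A^\excl_w$, $A^!_w$ is finite-dimensional, so weightwise linear dualization is exact; and, unwinding the operadic suspensions exactly as in the passage from $A^\excl$ to $A^!$, there are natural isomorphisms $A^\excl_w \cong (A^!_w)^\vee$. Applying the same construction to $A^!$ and using $(A^!)^! \cong A$ likewise gives $(A^!)^\excl_w \cong (A_w)^\vee$.

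First I would compare Koszul complexes weight by weight: the identifications above show that $(A^\excl \otimes_\kappa A)_w$ is, up to an overall suspension and with dual differentials, the linear dual of $(A^! \otimes_\kappa (A^!)^\excl)_w$. Hence the Koszul complex of $A$ is acyclic in weight $w$ if and only if that of $A^!$ is. Next I would invoke the weight-local form of \cite[Theorem 4.9]{Milles}: for a quadratic commutative (resp.\ Lie) algebra, acyclicity of the Koszul complex in all weights $w \leq W$ is equivalent to the diagonal criterion of \cref{def:koszul-com-1} (resp.\ \cref{def:koszul-lie}) restricted to weights $\leq W$ --- the comparison between the Koszul complex and the Harrison, resp.\ Chevalley--Eilenberg, complex that proves this preserves weights and, in weight $w$, involves only weights $\leq w$. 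Chaining these equivalences yields: $A$ is Koszul in weight $\leq W$ $\iff$ the Koszul complex of $A$ is acyclic in weights $\leq W$ $\iff$ the Koszul complex of $A^!$ is acyclic in weights $\leq W$ $\iff$ $A^!$ is Koszul in weight $\leq W$.

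The main obstacle will be purely bookkeeping: one must check that every construction involved --- forming the Koszul dual cooperad and coalgebra, the bar and cobar functors, and weightwise linear duality --- strictly preserves the weight grading and, when evaluated in weight $w$, mixes only weights $\leq w$, so that no information can leak across the cutoff $W$; and one must keep track of the operadic suspensions and of the shift $[-1]$ in $A^! = L(V^\vee[-1], S^\perp[-2])$ carefully enough to be sure that the diagonal $p = w$ on the commutative side is matched with the diagonal $p = w$ on the Lie side (not, say, with a line $p + p' = w+1$). Because everything is finite-dimensional in each fixed weight, none of this is substantive --- but it is the point that has to be pinned down, after which the statement is \cite[Theorems 4.9 and 4.11]{Milles} read off weight by weight.
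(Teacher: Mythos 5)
Your proposal is correct and takes essentially the same approach the paper intends: the paper cites Millès's Theorem 4.9 (for the diagonal criterion) and Theorem 4.11 (for Koszul duality), and remarks that the range version holds "by the same argument"; your writeup carries out precisely that argument, checking weight-locality of the Koszul complex comparison and of weightwise linear duality.
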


\subsection{Orthogonal and symplectic representation theory}\label{sec:orth-symp-rep-theory} We recall some facts from \cite[Section 2.1]{KR-WTorelli}. Let $H(g)$ be a $2g$-dimensional rational vector space with non-singular $\epsilon$-symmetric pairing $\lambda \colon H(g) \otimes H(g) \to \bQ$ given by the hyperbolic form. Its dual $\epsilon$-symmetric form $\omega \colon \bQ \to H(g) \otimes H(g)$ characteristic by $(\lambda \otimes \mr{id})(- \otimes \omega) = \mr{id}(-)$. The automorphisms of $H(g)$ preserving $\lambda$ are denoted $\mr{O}_\epsilon(H(g))$; these are the $\bQ$-points of an algebraic group $\mathbf{O}_{g,g}$ for $\epsilon=1$ and $\mathbf{Sp}_{2g}$ for $\epsilon=-1$. Observe that $\mr{O}_\epsilon(H(g))$ is a subgroup of $\mr{GL}_{2g}(\bQ)$ and the intersection $\mr{O}_\epsilon(H(g)) \cap \mr{GL}_{2g}(\bZ)$ is $\mr{O}_{g,g}(\bZ)$ if $\epsilon=1$ and $\mr{Sp}_{2g}(\bZ)$ if $\epsilon=-1$.

For distinct $i$ and $j$ in $\{1,2,\ldots,q\}$, applying the pairing $\lambda$ to the $i$th and $j$th factors yields a map
\[\lambda_{i,j} \colon H(g)^{\otimes q} \lra H(g)^{\otimes q-2}\]
and inserting the form $\omega$ in the $i$th and $j$th factors yields a map
\[\omega_{i,j} \colon H(g)^{\otimes q-2} \lra H(g)^{\otimes q}.\]
We obtain irreducible representations of $\mr{O}_\epsilon(H(g))$ by
\begin{align*}H(g)^{[q]} &\coloneqq \ker\Big(H(g)^{\otimes q} \overset{\lambda_{i,j}}\lra \bigoplus_{i,j} H(g)^{\otimes q-2}\Big), \\
H(g)_{[q]} &\coloneqq \mr{coker}\Big(\bigoplus_{i,j} H(g)^{\otimes q-2} \overset{\omega_{i,j}}\lra H(g)^{\otimes q}  \Big),\end{align*}
and the composition $H(g)^{[q]} \to H(g)^{\otimes q} \to H(g)_{[q]}$ is an isomorphism.  The action of the symmetric group $\Sigma_q$ permuting the terms in the tensor product $H(g)^{\otimes q}$ descends to an action on $H(g)^{[q]}$ and $H(g)_{[q]}$.

Recall that the irreducible rational representations of $\fS_q$ are in bijection with partitions $\lambda$ of $q$ into positive integers; these are denoted $S^\lambda$. Then we define
\[V_\lambda \coloneqq [S^\lambda \otimes H(g)^{[q]}]^{\fS_q}.\]
By construction these are algebraic $\mr{O}_\epsilon(H(g))$-representations. They are either zero or irreducible, and the irreducible ones are distinct and exhaust all isomorphism classes.

We will have a use for representations of \emph{arithmetic subgroups} $G \subset \mr{O}_\epsilon(H(g))$, that is, groups $G \subset \mr{O}_\epsilon(H(g))$ which are commensurable with $\mr{O}_{g,g}(\bZ)$ or $\mr{Sp}_{2g}(\bZ)$. As long as $g \geq 2$, the $V_\lambda$ restrict to irreducible algebraic representations of $G$ and any algebraic representation of $G$ is a direct sum of these \cite[Section 2.1]{KR-WTorelli}.

\section{The Torelli Lie algebra}

\subsection{Basic results}\label{sec:basic-results}

\subsubsection{Definitions}\label{sec:torelli-defs}
Let $\Sigma^r_{g,n}$ be a surface of genus $g$ with $n$ boundary components and $r$ marked points, and $\Gamma^r_{g,n}$ its mapping class group, consisting of isotopy classes of orientation-preserving diffeomorphism fixing pointwise the boundary components and the marked points. Its \emph{Torelli group} is the subgroup $T_{g,n}^r \subset \Gamma_{g,n}^r$ of those isotopy classes which act as the identity on $H_1(\Sigma_g;\bZ)$ through the inclusion $\Sigma_{g,n}^r \hookrightarrow \Sigma_g$. Its pro-unipotent, or Malcev, completion $\ft^r_{g,n}$ is the Lie algebra of the initial pro-unipotent algebraic group over $\bQ$ under $T^r_{g,n}$. 

There is a tautological extension of groups $T_{g,n}^r \to \Gamma_{g,n}^r \to \mr{Sp}_{2g}(\bZ)$ and
the induced representation $\Gamma_{g,n}^r \to \mr{Sp}_{2g}(\bQ)$ into the $\bQ$-points of the algebraic group $\mathbf{Sp}_{2g}$ is Zariski dense. There is an initial pro-algebraic group $\mathcal{G}_{g,n}^r$ over $\bQ$ with Zariski dense homomorphism to $\mathbf{Sp}_{2g}$ and pro-unipotent kernel \cite[\S 2]{HainCompletions}. The Lie algebra of $\mathcal{G}_{g,n}^r$ is $\mathfrak{g}_{g,n}^r$ and the pro-nilpotent Lie algebra of this kernel is the \emph{relative unipotent completion} $\fu_{g,n}^r$ of $T_{g,n}^r$.  

Let $\fp^r_{g,n}$ denote the Lie algebra of the pro-unipotent completion of the fundamental group $\pi^r_{g,n}$ of the ordered configuration space in $\Sigma_g$ of $r$ points and $n$ points with non-zero tangent vector (in the notation of \cite{HainTorelli} we have $\fp_g = \fp^1_g$). The usual extensions of groups relating mapping class groups yield extensions of Lie algebras (Theorem 3.4 and Proposition 3.6 of \cite{HainTorelli}):

\begin{lemma}\label{lem:tg-ug-ses} For $g \geq 3$ there are extensions of Lie algebras
	\[\fp^r_{g,n} \lra \ft^r_{g,n} \lra \ft_g, \quad\quad\quad\quad \bQ \lra \ft^r_{g,n} \lra \ft^{r+1}_{g,n-1},\]
	 \begin{equation*}
	\bQ \lra \ft^r_{g,n} \lra \fu^r_{g,n}.\eqno\qed
	\end{equation*}
	%\qed
\end{lemma}

\subsubsection{Mixed Hodge structures and lower central series} The associated graded of the lower central series of a Lie algebra $\fg$ is a Lie algebra $\GrLCS \fg$. We consider this as a Lie algebra in $\cat{Gr}(\bQ\text{-}\cat{mod})^\bN$ by giving it homological degree 0, and giving $\mr{Gr}^w_\mr{LCS}\, \fg$ weight $w$. In the following there are unfortunately two things called \emph{weight}: our additional grading, and the Hodge theoretic weight filtration. We do not think any confusion is likely, but we shall refer to our weight grading as ``the additional grading" in places where confusion is possible.

As long as $g \geq 3$, Theorem 4.10 of \cite{HainTorelli} says that $\ft_{g,n}^r$ is equipped with a mixed Hodge structure whose weight filtration agrees, up to negation of the indexing, with the lower central filtration. By Corollary 4.8 and 4.9 of loc.\ cit., the same is true for $\fu_{g,n}^r$. Corollary 5.3 of loc.\ cit.\ (stated over $\bC$, see \cite[Section 7.4]{HainJohnson} for why it is also true over $\bQ$) implies that for $g \geq 3$ there are isomorphisms of pro-nilpotent Lie algebras
\[\ft^r_{g,n} \cong \prod_{s>0} \mr{Gr}^{s}_\mr{LCS}\, \ft^r_{g,n} \qquad \text{and}  \quad \fu^r_{g,n} \cong \prod_{s>0} \mr{Gr}^{s}_\mr{LCS}\, \fu^r_{g,n}.\]
Similarly $\fp^r_{g,n}$ comes with a mixed Hodge structure \cite[Theorem 1]{HaindeRhamI}, and by Section 2 and Lemma 4.7 of \cite{HainTorelli} its weight filtration agrees, up to negation of the indexing, with the lower central series filtration. 

The existence of compatible mixed Hodge structures implies that taking the associated graded of the weight filtration is exact and hence so is taking the associated graded of the lower central series filtration. In \cref{lem:tg-ug-ses} the two copies of $\bQ$ are the Hodge structures $\bQ(1)$, so we obtain (Section 13 and Theorem 4.10 of \cite{HainTorelli}), where we clarify that the $\bQ[2]$ are in weight $2$ and homological degree $0$.

\begin{lemma}\label{lem:grlcs-tg-ug-ses} For $g \geq 3$ there are extensions of Lie algebras with additional grading
	\[\GrLCS \fp^r_{g,n} \lra \GrLCS \ft^r_{g,n} \lra \GrLCS \ft_g \quad\quad\quad \bQ[2] \lra \GrLCS \ft^r_{g,n} \lra \GrLCS \ft^{r+1}_{g,n-1} ,\]
	\begin{equation*}
	\bQ[2] \lra \GrLCS \ft^r_{g,n} \lra \GrLCS \fu^r_{g,n}.\eqno \qed
	\end{equation*}
\end{lemma}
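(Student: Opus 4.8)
The plan is to deduce the graded statement from the ungraded extensions of \cref{lem:tg-ug-ses} by passing to associated gradeds of mixed Hodge structures. The inputs, all recalled above and due to Hain, are: for $g \geq 3$ each of the pro-nilpotent Lie algebras $\ft^r_{g,n}$, $\ft_g$, $\fu^r_{g,n}$, $\fp^r_{g,n}$ carries a mixed Hodge structure \cite{HainTorelli,HaindeRhamI}; on each of them the weight filtration agrees, after negating the indexing, with the lower central series filtration; and the maps appearing in the three extensions of \cref{lem:tg-ug-ses} are morphisms of mixed Hodge structures (Theorem 3.4, Proposition 3.6 and Theorem 4.10 of \cite{HainTorelli}).

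The main mechanism I would use is strictness: a morphism of mixed Hodge structures is strictly compatible with the weight filtration. Hence for a short exact sequence $0 \to \mathfrak{a} \to \mathfrak{b} \to \mathfrak{c} \to 0$ of Lie algebras with mixed Hodge structure in which the maps are morphisms of mixed Hodge structures, the weight filtration that $\mathfrak{a}$ inherits as a subobject of $\mathfrak{b}$ is its own intrinsic weight filtration, the weight filtration that $\mathfrak{c}$ inherits as a quotient is its own, and applying $\GrW$ to the sequence yields again a short exact sequence. Reindexing the weight filtration to the lower central series filtration, using the matching recalled above, the same holds with $\GrLCS$ in place of $\GrW$, and the maps respect the weight grading. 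Applying this to each of the three extensions of \cref{lem:tg-ug-ses}---with $\GrLCS$ regarded, as in \cref{sec:basic-results}, as a Lie algebra concentrated in homological degree $0$ in $\cat{Gr}(\bQ\text{-}\cat{mod})^\bN$---gives the three asserted extensions of Lie algebras with additional grading, apart from identifying the kernels.

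Finally I would pin down the two copies of $\bQ$. As mixed Hodge structures these are the Tate twist $\bQ(1)$ (again part of the cited results of \cite{HainTorelli}; compare the central $\bQ$ in weight $2$ discussed in the introduction), which is pure of Hodge-theoretic weight $-2$; under the reindexing it therefore sits in lower central series weight $2$, and it is concentrated in homological degree $0$, so in the associated graded it contributes precisely $\bQ[2]$ in the notation of the statement. The proof is short, and I expect the only step requiring genuine care to be the bookkeeping: verifying that every map in sight is a morphism of mixed Hodge structures with the stated effect on weights, and keeping the two indexing conventions straight (Hodge-theoretic weight versus lower central series weight, and homological degree)---but all of this is provided by the results of Hain cited in \cref{sec:basic-results}.
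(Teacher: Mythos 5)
Your proposal is correct and follows essentially the same route as the paper: the paper likewise deduces the graded extensions from \cref{lem:tg-ug-ses} by observing that all the Lie algebras involved carry compatible mixed Hodge structures whose weight filtrations match (up to negation) the lower central series filtrations, invoking strictness to pass the short exact sequences through $\mr{Gr}^W_\bullet = \mr{Gr}^\bullet_{\mr{LCS}}$, and identifying the kernels as the Tate twists $\bQ(1)$, hence $\bQ[2]$ in LCS weight $2$ and homological degree $0$.
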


\subsubsection{Koszulness} 

The general version of \cref{athm:main} is as follows

\begin{theorem}\label{thm:mainPrime}
The Lie algebras with additional grading $\GrLCS \ft_{g,1}$, $\GrLCS \ft_{g}^1$, $\GrLCS \ft_g$, $\GrLCS \fu_{g,1}$, $\GrLCS \fu^1_g$, and $\GrLCS \fu_g$ are Koszul in weight $\leq \tfrac{g}{3}$.
\end{theorem}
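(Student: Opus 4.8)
The plan is to follow the three-step strategy outlined in the introduction, the entry point being a reduction via Koszul duality of the Koszulness of all six Lie algebras to a single statement about a $g$-independent combinatorial commutative algebra object. First I would use that, by Hain's theorem, each of $\GrLCS\ft_{g,1}$, $\GrLCS\ft^1_g$, $\GrLCS\ft_g$, $\GrLCS\fu_{g,1}$, $\GrLCS\fu^1_g$, $\GrLCS\fu_g$ is quadratically presented for $g\geq 4$, so by \cref{lem:koszul-lie-vs-com} it is Koszul in weight $\leq W$ precisely when its quadratic dual nonunital commutative algebra is. Following \cite[Section 5]{KR-WTorelli} I would identify these quadratic duals with the realisations $K^\vee\otimes^{\cat{dsBr}}(-)$ over the downward signed Brauer category of commutative algebra objects built from twisted Miller--Morita--Mumford classes — for $\GrLCS\ft_{g,1}$ this is $E_1/(\kappa_{e^2})$, and the other five cases differ only by (de)adjoining the central weight-$2$ class $\kappa_{e^2}$ or by incorporating the generators of $\GrLCS\fp^r_{g,n}$, using the extensions of \cref{lem:grlcs-tg-ug-ses}. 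Since the realisation functor is exact and, in a stable range, detects Koszulness, and since Koszulness is preserved under all the operations relating these algebra objects (quotienting by $\kappa_{e^2}$, rescaling, the $\GrLCS\fp^r_{g,n}$-reductions, and passing to the variant $Z_n$), all six statements reduce to the single claim that $E_n$, equivalently $Z_n$, for some $n$ of the appropriate parity, is Koszul in a stable range, i.e.\ $H^\mr{Com}_p(Z_n)_{q,w}$ is concentrated on $p=w$ for $w$ in a range growing with $g$.

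Next, to attack this claim I would import the manifold-theoretic input of \cite{KR-WDisks}: for $2n\geq 6$ the realisation $K^\vee\otimes^{\cat{d(s)Br}}Z_n$ computes, in a stable range, the rational cohomology of the fibre $X_1(g)$ of a fibration of nilpotent spaces with $\mr{Sp}_{2g}(\bZ)$-action
\[
X_1(g)\lra \overline{B\mr{Tor}}^\mr{fr}_\partial(W_{g,1})\lra X_0,
\]
with trivial action on the base, so it suffices to verify the diagonal criterion for $H^\mr{Com}_*(H^*(X_1(g);\bQ))$. I would then run the unstable rational Adams spectral sequence
\[
H^\mr{Com}_s(H^*(X_1(g);\bQ))_{rn}\Longrightarrow \mr{Hom}(\pi_{rn-s+1}(X_1(g)),\bQ),
\]
whose $E_2$-page is sparse because $H^*(X_1(g);\bQ)$ is concentrated in degrees divisible by $n$; crucially, enlarging $n$ pushes the first potential differential out of any prescribed window, so in a range this yields both that $\pi_*(X_1(g))\otimes\bQ$ is sparse and that an off-diagonal class in $H^\mr{Com}_*(H^*(X_1(g);\bQ))$ could not be cancelled and would survive to a homotopy group in a forbidden degree. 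Independently, embedding calculus (again following \cite{KR-WDisks}) controls $\pi_*(\overline{B\mr{Tor}}^\mr{fr}_\partial(W_{g,1}))\otimes\bQ$ and hence, modulo $\pi_*(X_0)\otimes\bQ$, gives a second sparsity estimate for $\pi_*(X_1(g))\otimes\bQ$. Combining the two estimates verifies the diagonal criterion for $H^\mr{Com}_*(H^*(X_1(g);\bQ))$ \emph{up to trivial $\mr{Sp}_{2g}(\bZ)$-representations}; equivalently, $H^\mr{Com}_p(Z_n)(S)$ is concentrated on the diagonal for every \emph{non-empty} finite set $S$.

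Finally, for the trivial representations I would carry out the transfer argument: resolve $Z_n$ by a complex $RB^{Z_n}$ of red-and-black graphs with the same commutative algebra homology but computing it via strict commutative-algebra indecomposables, yielding a connected red-and-black graph complex $RB^{Z_n}_\mr{conn}$, and compare it with a complex $G^{Z_n}$ of connected black graphs, along which vanishing ranges can be passed back and forth. The previous step supplies the vanishing of $H_*(RB^{Z_n}_\mr{conn}(S))$, hence of $H_*(G^{Z_n}(S))$, for all non-empty $S$; a transfer argument relating $G^{Z_n}(\varnothing)$ and $G^{Z_n}(\ul 1)$ shows $H_*(G^{Z_n}(\varnothing))$ injects into $H_*(G^{Z_n}(\ul 1))$, which forces the vanishing of $H_*(G^{Z_n}(\varnothing))$ and thence of $H_*(RB^{Z_n}_\mr{conn}(\varnothing))=H^\mr{Com}_*(Z_n)(\varnothing)$ off the diagonal. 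Tracking the various ranges through the argument produces the bound $w\leq \tfrac{g}{3}$, and unwinding the reductions of the first step gives Koszulness in weight $\leq\tfrac{g}{3}$ for all six Lie algebras.

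I expect the main obstacle to be the last step: constructing the red-and-black resolution $RB^{Z_n}$ with the correct homotopy type, pinning down its relation to $G^{Z_n}$ precisely enough to transport vanishing ranges in both directions, and making the transfer argument cohere with all the quantitative estimates. By comparison, the first step is essentially formal once the identifications of \cite{KR-WTorelli} are in hand, and the second is mostly a careful assembly of results from \cite{KR-WDisks}, the delicate points there being the ``enlarge $2n$'' device for killing differentials and the bookkeeping of $\mr{Sp}_{2g}(\bZ)$-equivariance throughout.
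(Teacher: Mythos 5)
Your outline reproduces the three-step strategy of the introduction accurately, and steps (2) and (3) — the manifold input, the unstable Adams spectral sequence, the red-and-black graph resolution, and the transfer from $G^{Z_n}(\ul 1)$ to $G^{Z_n}(\varnothing)$ — are precisely what the paper does. The divergence is in how you pass from the six Lie algebras of the statement to a single $g$-independent combinatorial object.

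You propose to identify each of the six quadratic duals directly with a realisation $K^\vee\otimes^{\cat{dsBr}}(-)$ of some algebra object built from $E_1/(\kappa_{e^2})$ by ``(de)adjoining the central weight-$2$ class $\kappa_{e^2}$ or by incorporating the generators of $\GrLCS\fp^r_{g,n}$.'' This is not what the paper does, and I don't think it works as stated. First, the direction of the operation is wrong for $\GrLCS\ft_g$: passing from $\ft^1_g$ to $\ft_g$ kills the Lie ideal $\GrLCS\fp^1_g$, and on abelianisations $\Lambda^3 H \twoheadrightarrow \Lambda^3 H/H$, so one \emph{removes} a copy of $H$ from the space of generators rather than incorporating generators of $\fp$; there is no evident combinatorial algebra object over $\cat{dsBr}$ whose realisation is that quadratic dual, and the paper never constructs one. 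Second, for the $\fu$-variants, quotienting $\ft^r_{g,n}$ by a central $\bQ[2]$ \emph{adds} a quadratic relation on the Lie side, hence \emph{removes} one on the quadratic-dual commutative side; matching this against the combinatorics of $E_1$ and its quotients (which $\kappa_{e^j}$ is generator versus decomposable, etc.) is exactly the sort of thing the paper avoids having to track. Third, the realisation functor is not exact (this is \cref{lem:PartialExactness}: it is only exact on objects supported on sets of size $\le g$), so ``exactness'' cannot be invoked freely.

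Instead the paper isolates the reduction in \cref{prop:koszul}, which works entirely on the Chevalley--Eilenberg cohomology side. For the $\bQ[2]$-extensions ($\ft_{g,1} \to \ft^1_g$ and $\ft^r_{g,n} \to \fu^r_{g,n}$) it uses the Gysin sequence and infinite $e$-divisibility; for $\ft^1_g \to \ft_g$ it uses the fibre-integration map $\pi_!$ along $\GrLCS\fp^1_g$, together with pseudo-nilpotence of surface groups and $\pi_!(e) = 2-2g\ne 0$. This reduces \cref{thm:mainPrime} to \cref{athm:main} alone, after which one only needs the single quadratic-dual identification (\cref{thm:TorRelation}) for $\ft_{g,1}$. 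The gain is that no combinatorial models of the other five duals are needed. You should replace your first step with this Gysin/fibre-integration argument; alternatively, if you want to pursue the direct identification route, you would need to actually construct and verify the $\cat{dsBr}$-models for $\ft_g$, $\fu_g$, etc., which is not straightforward from the extensions in \cref{lem:grlcs-tg-ug-ses}.
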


The following proposition shows that \cref{athm:main} implies \cref{thm:mainPrime}, so we may focus on \cref{athm:main} for the remainder of this paper.

\begin{proposition}\label{prop:koszul} Suppose $g \geq 3$ and that $\GrLCS \ft_{g,1}$ is Koszul in weight $\leq W$. Then the same is true for $\GrLCS \ft_{g}^1$, $\GrLCS \ft_g$, $\GrLCS \fu_{g,1}$, $\GrLCS \fu^1_g$, and $\GrLCS \fu_g$.
\end{proposition}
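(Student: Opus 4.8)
The plan is to leverage the exact sequences of Lie algebras with additional grading from \cref{lem:grlcs-tg-ug-ses}, together with the stability of Koszulness under extensions by a central one-dimensional Lie algebra placed on the diagonal, and under Koszul-dual passage to commutative algebras. First I would recall (or establish) the following two closure properties of the class of Koszul (in weight $\leq W$) quadratic Lie algebras: (a) if $\bQ[2] \to L \to L'$ is a central extension with $\bQ[2]$ in weight $2$ and homological degree $0$, then $L$ is Koszul in weight $\leq W$ if and only if $L'$ is; and (b) the statement transported through \cref{lem:koszul-lie-vs-com} for the quadratic duals. Property (a) is the key input, since all three extensions in \cref{lem:grlcs-tg-ug-ses} that we actually need (the last two) are of this shape, except for the first extension $\GrLCS\fp^r_{g,n} \to \GrLCS\ft^r_{g,n} \to \GrLCS\ft_g$, which has a more interesting kernel.

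The argument would then proceed in steps. From the second extension in \cref{lem:grlcs-tg-ug-ses} with $(r,n)=(0,1)$ we get $\bQ[2] \to \GrLCS\ft_g^1 \to \GrLCS\ft_g$; from the same with $(r,n)=(1,1)$, noting $\ft^1_{g,0} = \ft^1_g$, we get $\bQ[2] \to \GrLCS\ft_{g,1} \to \GrLCS\ft_g^1$. Chaining these, $\GrLCS\ft_{g,1}$ is an iterated central extension of $\GrLCS\ft_g$ by copies of $\bQ[2]$; by property (a), Koszulness of $\GrLCS\ft_{g,1}$ in weight $\leq W$ is equivalent to that of $\GrLCS\ft_g^1$, which is equivalent to that of $\GrLCS\ft_g$. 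Then the third extension $\bQ[2] \to \GrLCS\ft^r_{g,n} \to \GrLCS\fu^r_{g,n}$, applied with $(r,n) \in \{(0,1),(1,0),(0,0)\}$, gives by property (a) again that each of $\GrLCS\fu_{g,1}$, $\GrLCS\fu_g^1$, $\GrLCS\fu_g$ is Koszul in weight $\leq W$ once the corresponding $\GrLCS\ft$ is. (One must check, for $\ft_g$ itself, that $\GrLCS\ft_g$ is indeed quadratically presented in the relevant range so that the notion of Koszulness applies, but this follows from Hain's results cited in the introduction together with the extension relating it to $\ft_{g,1}$.) This disposes of everything except verifying that the hypothesis ``$\GrLCS\ft_{g,1}$ Koszul'' suffices, which is exactly what we assumed.

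For property (a) itself, the cleanest route is via the Chevalley--Eilenberg complex: a central extension $\bQ[2] \to L \to L'$ of graded Lie algebras yields, after choosing a vector-space splitting, $\mr{CE}(L) \cong \mr{CE}(L') \otimes \mr{CE}(\bQ[2])$ as bigraded complexes up to a perturbation of the differential by the classifying cocycle in $H^2_{\mr{Lie}}$, i.e.\ a Hochschild--Serre type spectral sequence $H^{\mr{Lie}}_*(L')_{*,*} \otimes H^{\mr{Lie}}_*(\bQ[2])_{*,*} \Rightarrow H^{\mr{Lie}}_*(L)_{*,*}$. Since $\bQ[2]$ is one-dimensional abelian in weight $2$, homological degree $0$, its Lie homology is $\bQ$ in Harrison/weight bidegree $(0,0)$ and $(1,2)$ — both \emph{on the diagonal} $p=w$. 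Thus the $E_2$-page is concentrated on the diagonal in total weight $\leq W$ iff $H^{\mr{Lie}}_*(L')$ is, and the differentials, being weight-preserving and degree-shifting in the standard way, cannot create off-diagonal classes in weight $\leq W$ from an on-diagonal $E_2$; conversely an off-diagonal class of $L'$ in weight $\leq W$ survives or supports/receives a differential with an off-diagonal partner, so it cannot be killed. Hence $L$ is Koszul in weight $\leq W$ iff $L'$ is.

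I expect the main obstacle to be purely bookkeeping: making sure the weight conventions (Hodge weight vs.\ the additional grading vs.\ the $[2]$-shift) line up so that ``$\bQ[2]$'' genuinely sits on the diagonal, and confirming that Koszulness in the ``diagonal criterion'' sense is literally invariant — not merely up to a shift in the range $W$ — under these central $\bQ[2]$-extensions. Once the placement of $\bQ[2]$ in bidegree $(p,w)=(1,2)$ for its one generator in $H^{\mr{Lie}}_1$ is pinned down, the spectral sequence argument is routine, and the rest is a short diagram chase through \cref{lem:grlcs-tg-ug-ses}.
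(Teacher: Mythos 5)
There is a genuine gap, and it is the crucial one. You claim that the step from $\GrLCS \ft_g^1$ down to $\GrLCS \ft_g$ is a central $\bQ[2]$-extension coming from the second exact sequence of \cref{lem:grlcs-tg-ug-ses}, but this is not so and cannot be fixed by choosing different $(r,n)$. That extension reads $\bQ[2] \to \GrLCS \ft^r_{g,n} \to \GrLCS \ft^{r+1}_{g,n-1}$, so the right-hand term always has a positive total number of boundary components and marked points; there is no $(r,n)$ that produces $\GrLCS\ft_g$ on the right (you would need $r = -1$). Indeed, with $(r,n) = (0,1)$ this gives $\bQ[2] \to \GrLCS \ft_{g,1} \to \GrLCS \ft_g^1$, not the sequence you wrote, and with $(r,n)=(1,1)$ it gives $\bQ[2] \to \GrLCS\ft_{g,1}^1 \to \GrLCS\ft_g^2$. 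Passing from a marked point to nothing is the \emph{first} extension $\GrLCS\fp_g^1 \to \GrLCS\ft_g^1 \to \GrLCS\ft_g$, whose kernel is the (associated graded of the) Malcev Lie algebra of the closed surface group — not one-dimensional — and here the paper uses a genuinely different argument: $H^*_{\mr{Lie}}(\GrLCS\fp_g^1) \cong H^*(\Sigma_g;\bQ)$ by pseudo-nilpotence of surface groups, so the Hochschild--Serre spectral sequence has only two nontrivial rows and gives a fibre-integration $\pi_!$ with $\pi_!(e) = 2-2g \neq 0$, forcing $\pi^*$ to be injective. This transfer argument is the heart of the proposition and is missing from your proposal.

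A secondary error: your claim that the Lie homology of $\bQ[2]$ is concentrated on the diagonal $p=w$ is false. With $\bQ[2]$ in weight $2$ and homological degree $0$, the nontrivial class in $H^{\mr{Lie}}_1(\bQ[2])_{0,2}$ sits at $(p,w)=(1,2)$, which is off-diagonal. Consequently the $E_2$-page of your spectral sequence is not automatically diagonal even when $L'$ is Koszul: there is a potentially nonzero contribution at $(p,w)=(w-1,w)$. Your proposed biconditional ``$L$ Koszul iff $L'$ Koszul'' is therefore not a routine consequence. What does work — and is what the paper uses, via the Gysin sequence and an infinite-divisibility argument — is the single implication ``$L$ Koszul in weight $\leq W$ implies $L'$ Koszul in weight $\leq W$''. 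That direction is all that is needed, but your write-up conflates the two and does not supply the argument for the one you actually need.
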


\begin{proof}	Though we defined Koszulness in terms of Chevalley--Eilenberg homology, to verify its vanishing we may as well use Chevalley--Eilenberg cohomology, and take advantage of its multiplicative structure. We first prove that $\GrLCS \ft^1_g$ is Koszul in additional gradings $ \leq W$, by considering the spectral sequence in Lie algebra cohomology for the extension 
	\[ \bQ[2] \lra \GrLCS \ft_{g,1} \lra \GrLCS \ft_g^1\]
	of \cref{lem:grlcs-tg-ug-ses}.	This yields a Gysin sequence
	\[\cdots \lra H_\mr{Lie}^{p-2}(\GrLCS \ft_{g}^1)_{0,w-2} \overset{e \cdot -}\lra H_\mr{Lie}^{p}(\GrLCS \ft_{g}^1)_{0,w} \lra H_\mr{Lie}^{p}(\GrLCS \ft_{g,1})_{0,w} \lra \cdots\]
	If $w \leq W$ then the right-hand term is zero whenever $p \neq w$, in which case $H_\mr{Lie}^{p}(\GrLCS \ft_{g}^1)_{0,w}$ is infinitely divisible by $e$ and hence vanishes. Thus $\GrLCS \ft_{g}^1$ is also Koszul in additional gradings $ \leq W$.
	
	\vspace{.5em}
	
	We next prove that $\GrLCS\ft_g$ is Koszul in additional gradings $ \leq W$, using the extension
	\[ \GrLCS \fp^1_g \lra \GrLCS \ft^1_g \lra \GrLCS \ft_g\]
	of \cref{lem:grlcs-tg-ug-ses}. We will make use of continuous Lie algebra cohomology, see \cite[Section 5]{HainTorelli} for background. By Proposition 5.5 of \cite{HainTorelli} we have an isomorphism
	\[H_\mr{Lie}^*(\GrLCS \mathfrak{p}_g) = H_\mr{Lie}^*(\mr{Gr}^{-\bullet}_\mr{W} \mathfrak{p}_g) \cong  \mr{Gr}^{-\bullet}_\mr{W} H_{\mr{Lie}, \text{cts}}^*(\mathfrak{p}_g)\]
	and as surface groups are pseudo-nilpotent \cite{KohnoOda} we have $H^*_{\mr{Lie}, \text{cts}}(\mathfrak{p}_g) = H^*(\Sigma_g ; \bQ)$. In particular we have $H_\mr{Lie}^2(\GrLCS \mathfrak{p}_g) = \bQ$ and the cohomology in higher degrees vanishes. Thus, in the spectral sequence for this extension projection to the top row provides a fibre-integration map
	\[\pi_!\colon H_\mr{Lie}^*(\GrLCS \ft_g^1) \lra H_\mr{Lie}^{*-2}(\GrLCS \ft_g)\]
	which as usual is a map of $H^{*}_\mr{Lie}(\GrLCS \ft_g)$-modules and satisfies $\pi_!(e) = 2-2g \neq 0  \in H_\mr{Lie}^{0}(\GrLCS \ft_g)$ by comparison to the corresponding extension of groups $\pi_1(\Sigma_g) \to  T_g^1 \to T_g$. In particular this implies that $\pi^*\colon H_\mr{Lie}^{p}(\GrLCS \ft_g)_{0,w} \to H_\mr{Lie}^{p}(\GrLCS \ft_g^1)_{0,w}$ is injective. Assuming $\GrLCS \ft_g^1$ is Koszul the target of this map vanishes for $p \neq w$ and $w \leq W$: thus the source does too.
	
	\vspace{.5em}
	
	For $\fu_{g,1}$ we use the extension $\bQ(1) \to \ft_{g,1} \to \fu_{g,1}$ 	of Lie algebras with mixed Hodge structure, and proceed as we did above for $\ft^1_g$, taking associated graded and using the Gysin sequence; $\fu_g^1$ and $\fu_g$ may be treated similarly.
\end{proof}

\begin{remark}\label{rem:NotKoszul}
The Lie algebra $\GrLCS \ft_{g}$ for $g \geq 4$ can never actually be Koszul, and hence neither can $\GrLCS \ft_{g}^1$ or $\GrLCS \ft_{g,1}$ be by the proof of \cref{prop:koszul}. If $\GrLCS \ft_{g}$ were Koszul then $\GrLCS \fu_{g}$ would be too by the Gysin sequence argument of Proposition \ref{prop:koszul} applied to 
\[\bQ[2] \lra \GrLCS \ft_{g} \lra \GrLCS \fu_{g}.\]
This would imply that the cohomology algebras $H^*_\mr{Lie}(\GrLCS \ft_{g})$ and $H^*_\mr{Lie}(\GrLCS \fu_{g})$ are both generated in tridegree $(1,0,1)$ by the same vector space $(\mr{Gr}_{\text{LCS}}^1\, \fu_{g})^\vee \overset{\sim} \to (\mr{Gr}_{\text{LCS}}^1\, \ft_{g})^\vee$. This is finite-dimensional for $g \geq 3$ by a theorem of Johnson \cite{JohnsonAb}, so the map
\[H^*_\mr{Lie}(\GrLCS \fu_{g}) \lra H^*_\mr{Lie}(\GrLCS \ft_{g})\]
is surjective and both cohomology groups vanish above some cohomological degree. Let the top degrees in which the cohomologies are nontrivial be called $U$ and $T$, so this surjection shows that $U \geq T$. On the other hand the Gysin sequence for the extension shows that $T = U+1$, a contradiction.
\end{remark}

As far as we can tell Koszulness of $\GrLCS\fu_g$ cannot be ruled out in this way:

\begin{question}Is it possible that $\GrLCS\fu_g$ is Koszul?\end{question}

\subsubsection{Quadratic presentations}\label{sec:quadratic-presentations} We will take from the work of Hain  only the \emph{fact} that the the Lie algebra $\GrLCS \ft_{g,1}$ is quadratically presented, and not a particular presentation: in the following section we will deduce from this fact a presentation that is convenient for us.

For $g \geq 6$, the following is \cite[Corollary 7.8]{HainTorelli} (beware that what is denoted $\ft_{g,n}$ in \cite[Theorem 7.8]{HainImproved} is denoted $\ft_{g}^n$ in \cite{HainTorelli} and here). It can be generalised to $g \geq 4$ using the results in \cite[Section 7]{HainImproved}.

\begin{theorem}[Hain]\label{thm:quadratic-presentations} For $g \geq 4$ and $r,n \geq 0$, the Lie algebras with additional grading $\GrLCS \ft_{g,r}^n$ and $\GrLCS \fu_{g,r}^n$ are quadratically presented.\qed
\end{theorem}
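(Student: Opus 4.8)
\emph{Proof proposal.} The plan is to isolate the one essential case, $\GrLCS \ft_{g,1}$, which is the hard case and is due to Hain, and to deduce all the others from it by elementary arguments with the extensions of \cref{lem:grlcs-tg-ug-ses}. I would first record a reformulation: for any Lie algebra $\fg$ the graded Lie algebra $\GrLCS \fg$ is generated in weight $1$, because $\gamma_{k+1}=[\gamma_1,\gamma_k]$, so $H^\mr{Lie}_1(\GrLCS \fg)_{0,w}=0$ for $w\neq 1$ automatically; hence $\GrLCS \fg$ is quadratically presented if and only if $H^\mr{Lie}_2(\GrLCS \fg)_{0,w}=0$ for $w\neq 2$, i.e.\ the only content is that the relations are quadratic.

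For $\GrLCS \ft_{g,1}$ I would argue as Hain does. The starting point is that $\ft_{g,1}$ carries a mixed Hodge structure whose weight filtration is, up to reindexing, the lower central series filtration, so that $\GrLCS \ft_{g,1}$ is a graded Lie algebra in the category of polarisable pure Hodge structures. Johnson's theorem identifies $H^\mr{Lie}_1(\GrLCS \ft_{g,1}) = \mr{Gr}^1_\mr{LCS}\,\ft_{g,1}\cong \wedge^3 H$ (with $H=H_1(\Sigma_g;\bQ)$), pure of weight $1$. To see that the relations are quadratic one exhibits an explicit space of quadratic relations --- an $\mr{Sp}_{2g}$-subrepresentation of $\Lambda^2(\wedge^3 H)$, the weight-$2$ part of the free Lie algebra on $\wedge^3 H$ --- and invokes Hain's Hodge-theoretic argument (strictness of morphisms of mixed Hodge structures, together with a computation of the second homology of Torelli space and of the $\mr{Sp}_{2g}$-equivariant Poincar\'e series) to conclude that there are no further relations, i.e.\ that $H^\mr{Lie}_2(\GrLCS \ft_{g,1})$ is concentrated in weight $2$. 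For $g\geq 6$ this is \cite[Corollary 7.8]{HainTorelli}, and the extension to $g\geq 4$ uses the refinements of \cite[Section 7]{HainImproved}.

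Granting this, the remaining cases follow from the five-term exact sequence in Lie algebra homology applied to the extensions of \cref{lem:grlcs-tg-ug-ses}, keeping track of weights; here the various copies of $\bQ[2]$ are automatically central, being one-dimensional ideals in weight $2$ inside positively graded Lie algebras. For $\bQ[2]\to\GrLCS \ft_{g,1}\to\GrLCS \ft_g^1$ the sequence reads
\[ H^\mr{Lie}_2(\GrLCS \ft_{g,1})\lra H^\mr{Lie}_2(\GrLCS \ft_g^1)\lra \bQ[2]\lra H^\mr{Lie}_1(\GrLCS \ft_{g,1})\lra H^\mr{Lie}_1(\GrLCS \ft_g^1)\lra 0, \]
and since the map $\bQ[2]\to H^\mr{Lie}_1(\GrLCS \ft_{g,1})$ vanishes for weight reasons, $H^\mr{Lie}_2(\GrLCS \ft_g^1)$ is an extension of $\bQ[2]$ by a quotient of $H^\mr{Lie}_2(\GrLCS \ft_{g,1})$, hence concentrated in weight $2$; thus $\GrLCS \ft_g^1$ is quadratically presented. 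The same argument for $\bQ[2]\to\GrLCS \ft_{g,1}\to\GrLCS \fu_{g,1}$, then $\bQ[2]\to\GrLCS \ft_g^1\to\GrLCS \fu_g^1$ and $\bQ[2]\to\GrLCS \ft_g\to\GrLCS \fu_g$, handles the relative completions. For $\GrLCS \ft_g$ I would use $\GrLCS \fp^1_g\to\GrLCS \ft_g^1\to\GrLCS \ft_g$, whose five-term sequence ends
\[ H^\mr{Lie}_2(\GrLCS \ft_g)\lra \bigl(H^\mr{Lie}_1(\GrLCS \fp^1_g)\bigr)_{\GrLCS \ft_g}\lra H^\mr{Lie}_1(\GrLCS \ft_g^1)\lra H^\mr{Lie}_1(\GrLCS \ft_g)\lra 0; \]
as $H^\mr{Lie}_1(\GrLCS \fp^1_g)=\mr{Gr}^1_\mr{LCS}\,\fp^1_g=H$ sits in weight $1$, it is acted on trivially by the positively graded Lie algebra $\GrLCS \ft_g$, so the sequence becomes $H^\mr{Lie}_2(\GrLCS \ft_g)\to H\to \wedge^3 H\to \wedge^3 H/H\to 0$ using Johnson's computations of $H^\mr{Lie}_1(\GrLCS \ft_g^1)$ and $H^\mr{Lie}_1(\GrLCS \ft_g)$; exactness forces $H\to \wedge^3 H$ to have image the $2g$-dimensional kernel of $\wedge^3 H\to \wedge^3 H/H$, hence to be injective, so $H^\mr{Lie}_2(\GrLCS \ft_g^1)\to H^\mr{Lie}_2(\GrLCS \ft_g)$ is surjective and $H^\mr{Lie}_2(\GrLCS \ft_g)$ is concentrated in weight $2$. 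This covers the cases needed for \cref{thm:mainPrime}; for general $\GrLCS \ft_{g,r}^n$ and $\GrLCS \fu_{g,r}^n$ one runs Hain's mixed Hodge theory argument directly for each surface decoration, the relevant mixed Hodge structures being furnished by the same results of \cite{HainTorelli, HainImproved}.

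The main obstacle is the single case $\GrLCS \ft_{g,1}$: the statement that its relations are quadratic --- equivalently that $H^\mr{Lie}_2(\GrLCS \ft_{g,1})$ is concentrated in weight $2$ --- is Hain's theorem, and rests on the mixed Hodge theory of completions of fundamental groups of varieties together with a computation of the homology of Torelli space. I would take this as a black box; the rest, namely the reformulation and the passage between surface decorations, is routine bookkeeping with the five-term exact sequence.
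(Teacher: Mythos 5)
The paper gives no proof of this theorem: it is stated with a \(\qed\) and the surrounding text simply cites Hain (\cite[Corollary 7.8]{HainTorelli} for \(g \geq 6\), and \cite[Section 7]{HainImproved} to lower the bound to \(g \geq 4\)). You instead take only the single hardest case \(\GrLCS \ft_{g,1}\) as a black box from Hain and derive the other five cases that appear in \cref{thm:mainPrime} by applying the Lyndon--Hochschild--Serre five-term sequence to the extensions of \cref{lem:grlcs-tg-ug-ses} with the weight grading kept in view. This is a genuinely different and more self-contained route, and it is sound for those cases: the reformulation (generation in weight 1 is automatic, so quadratic presentation is equivalent to \(H^\mr{Lie}_2\) being concentrated in weight 2), the centrality of \(\bQ[2]\), and the bookkeeping all check out. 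It is also philosophically the same move the paper makes for the Koszulness statement in \cref{prop:koszul}, so it fits the paper's style well.

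Two remarks. First, your argument for \(\GrLCS \ft_g\) is more elaborate than it needs to be: since \(H^\mr{Lie}_2(\GrLCS \ft_g)\) is supported in weight \(\geq 2\) (it is a subquotient of \(\Lambda^2\) of a positively graded Lie algebra) while the coinvariants \((H^\mr{Lie}_1(\GrLCS \fp_g^1))_{\GrLCS \ft_g} = H\) sit in weight 1, the connecting map \(H^\mr{Lie}_2(\GrLCS \ft_g) \to H\) is forced to vanish for weight reasons, without invoking Johnson's identification of the groups or the explicit map \(H \to \wedge^3 H\); surjectivity of \(H^\mr{Lie}_2(\GrLCS \ft_g^1) \to H^\mr{Lie}_2(\GrLCS \ft_g)\) then follows immediately. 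Second, you explicitly punt on the fully general \((n,r)\); relative to the theorem as stated this is a gap, but since the paper only ever uses the \(\ft_{g,1}\) case (via \cref{cor:hain-presentation}) and also covers the remaining cases only by citation, this is a fair and clearly flagged place to stop.
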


\subsection{Cohomology of $\ft_{g,1}$ in low degrees}\label{sec:CohTg}
Following Morita and Kawazumi \cite{KM}, in \cite[Section 5.1]{KR-WTorelli} we have constructed, for each $a,r \geq 0$ with $r+2a -2 \geq 0$ and tuple $v_1, v_2, \ldots, v_r \in H^1(\Sigma_g;\bQ)$, a cohomology class 
\[\kappa_{e^a}(v_1 \otimes v_2 \otimes \cdots \otimes v_r) \in H^{r-2 + 2a}(T_{g,1};\bQ)\]
generalising the Miller--Morita--Mumford class $\kappa_{e^a}$; we refer to it as a \emph{twisted MMM-class}. (We often write $1$ for $e^0$.) This construction is linear in the $v_i$, is alternating under permuting the $v_i$, and is $\mr{Sp}_{2g}(\bZ)$-equivariant. 

\begin{lemma}\label{lem:MMMrels}
If $\{a_i\}$ is a basis of $H^1(\Sigma_g;\bQ)$, and $\{a_i^\#\}$ is the dual basis characterised by $\langle a_i^\# \cdot a_j, [\Sigma_g] \rangle = \delta_{ij}$, then there are relations
\begin{align*}
\sum_i \kappa_{e^a}(v_1 \otimes \cdots \otimes v_j \otimes a_i) \cdot \kappa_{e^b}(a_i^\# \otimes v_{j+1} \otimes \cdots \otimes v_r) &= \kappa_{e^{a+b}}(v_1 \otimes \cdots \otimes v_r)\\
\sum_i \kappa_{e^a}(v_1 \otimes \cdots \otimes v_r \otimes a_i \otimes a_i^\#) &=\kappa_{e^{a+1}}(v_1 \otimes \cdots \otimes v_r)\\
\sum_{i,j,k} \kappa_1(a_i \otimes a_j \otimes a_k) \cdot \kappa_1(a_i^\# \otimes a_j^\# \otimes a_k^\#) &=0.
\end{align*}
\end{lemma}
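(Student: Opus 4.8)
The plan is to deduce all three identities from the fibre-integral description of the twisted MMM classes recalled in \cite[Section 5.1]{KR-WTorelli}, following \cite{KM}. Write $\pi\colon E\to BT_{g,1}$ for the universal $\Sigma_{g,1}$-bundle. Since $T_{g,1}$ acts trivially on $H_1(\Sigma_g)$ the vertical first cohomology is a trivial local system, so using the boundary trivialisation one gets a canonical linear $\mr{Sp}_{2g}(\bZ)$-equivariant assignment $v\mapsto\widetilde{v}\in H^1(E,\partial_v E;\bQ)$ lifting $v\in H^1(\Sigma_g;\bQ)$, together with the (relative) vertical Euler class $e\in H^2(E,\partial_v E;\bQ)$, so that $\kappa_{e^a}(v_1\otimes\cdots\otimes v_r)=\pi_!\big(\widetilde{v}_1\cdots\widetilde{v}_r\cdot e^a\big)$ for the fibre-integration map $\pi_!$. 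I would use throughout the projection formula $\pi_!(x\cdot\pi^*y)=\pi_!(x)\cdot y$ and, for the fibre square of $\pi$ along itself with projections $p_1,p_2\colon E\times_{BT_{g,1}}E\to E$ and structure map $q:=\pi p_1=\pi p_2$, the base-change identity $(p_1)_!\,p_2^*=\pi^*\pi_!$; together these give $\pi_!(x)\cdot\pi_!(y)=q_!(p_1^*x\cdot p_2^*y)$.

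For the first identity I would apply this with $x=\widetilde{v}_1\cdots\widetilde{v}_j\cdot\widetilde{a}_i\cdot e^a$ and $y=\widetilde{a}_i^\#\cdot\widetilde{v}_{j+1}\cdots\widetilde{v}_r\cdot e^b$ and sum over $i$. The factor $\sum_i p_1^*\widetilde{a}_i\cdot p_2^*\widetilde{a}_i^\#$ that appears is exactly the K\"unneth expression for the class $\delta_!(1)$ of the fibrewise diagonal $\delta\colon E\hookrightarrow E\times_{BT_{g,1}}E$: the dual-basis normalisation $\langle a_i^\#\cdot a_j,[\Sigma_g]\rangle=\delta_{ij}$ is precisely what identifies $\sum_i\widetilde{a}_i\otimes\widetilde{a}_i^\#$ with $\delta_!(1)$, and there are no correction terms of other bidegree because $\Sigma_{g,1}$ has vanishing cohomology in degree $0$ relative to its boundary (the remaining possible corrections, pulled back from the base, are killed by restriction to the boundary section and by the fact that $H^1(BT_{g,1};\bQ)$ contains no copy of $H$). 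Pulling back along $\delta$, using $p_i\delta=\mr{id}$, $q\delta=\pi$ and the projection formula for $\delta$, collapses the $q$-integral to $\pi_!(\widetilde{v}_1\cdots\widetilde{v}_r\cdot e^{a+b})=\kappa_{e^{a+b}}(v_1\otimes\cdots\otimes v_r)$.

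The second identity is the equality $\sum_i\widetilde{a}_i\cdot\widetilde{a}_i^\#=e$ in $H^2(E,\partial_v E;\bQ)$: pulling $\delta_!(1)$ back along $\delta$ computes, on the one hand, $\sum_i\widetilde{a}_i\cdot\widetilde{a}_i^\#$ and, on the other, the Euler class of the normal bundle of the diagonal, which is the vertical Euler class $e$; one then multiplies by $\widetilde{v}_1\cdots\widetilde{v}_r\cdot e^a$ and applies $\pi_!$. The third identity I would then obtain formally from the first two and the skew-symmetry of the twisted classes: cyclically permute the arguments of the second factor so that $a_k^\#$ comes first, apply the first identity to carry out the sum over $k$, transpose two arguments, and apply the second identity twice — first summing over $j$, then over $i$ — to reduce $\sum_{i,j,k}\kappa_1(a_i\otimes a_j\otimes a_k)\cdot\kappa_1(a_i^\#\otimes a_j^\#\otimes a_k^\#)$ to $\pm\kappa_{e^2}(\,)=\pm\pi_!(e^2)$, the first Miller--Morita--Mumford class. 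This vanishes in $H^2(T_{g,1};\bQ)$ because it is pulled back along the nullhomotopic map $BT_{g,1}\to B\mr{Sp}_{2g}(\bZ)$ (equivalently, because $e$ restricts trivially to the boundary section).

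The main obstacle is not any individual step but the bookkeeping that makes them legitimate: pinning down the normalisations of $v\mapsto\widetilde{v}$ and of $e$, and the exact compatibility between Poincar\'e duality on $\Sigma_g$ and Lefschetz duality on $(\Sigma_{g,1},\partial)$, so that the K\"unneth decomposition of $\delta_!(1)$ really has no spurious terms and the dual bases $\{a_i\},\{a_i^\#\}$ behave as the identities demand — this is where the Torelli hypothesis and the $\mr{Sp}_{2g}(\bZ)$-representation theory of $H^1(BT_{g,1};\bQ)$ enter — together with tracking Koszul signs when reordering degree-one classes. Once these points are settled (as in \cite[Section 5.1]{KR-WTorelli}) each of the three identities is a short formal computation.
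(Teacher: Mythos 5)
Your proposal takes essentially the same route as the paper: the paper cites \cite[Section 5.2]{KR-WTorelli} for the first two relations (where they are established by exactly the fibre-integration/diagonal-class argument you sketch), and it derives the third by the same algebraic reduction to $-\kappa_{e^2}$, then appeals to the Family Signature Theorem for the vanishing of $\kappa_{e^2}$ (writing $e^2 = p_1 = 3\mathcal{L}_1$ and citing $\kappa_{\mathcal{L}_1} = 0$ in $H^2(T_{g,1};\bQ)$). Two small points. First, your justification ``it is pulled back along the nullhomotopic map $BT_{g,1}\to B\mr{Sp}_{2g}(\bZ)$'' is correct but incomplete as stated: the fact that $\kappa_{e^2}$ on $B\Gamma_{g,1}$ is pulled back from $B\mr{Sp}_{2g}(\bZ)$ is precisely the content of the Family Signature Theorem, and should be cited as such rather than asserted. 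Second, the parenthetical ``equivalently, because $e$ restricts trivially to the boundary section'' is not an equivalent reason and should be deleted: $e$ restricts trivially to the boundary section already over $B\Gamma_{g,1}$, where $\kappa_{e^2}$ is nonzero (it is the usual $\kappa_1$), so that observation alone cannot yield the vanishing.
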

\begin{proof}
The first two are given in \cite[Section 5.2]{KR-WTorelli}. Repeatedly applying the first two relations shows that $\sum_{i,j,k} \kappa_1(a_i \otimes a_j \otimes a_k) \cdot \kappa_1(a_i^\# \otimes a_j^\# \otimes a_k^\#) = -  \kappa_{e^2}$. Now $e^2 = p_1 = 3 \mathcal{L}_1$ and we have $\kappa_{\mathcal{L}_1}=0 \in H^2(T_{g,1};\bQ)$ as a consequence of the Family Signature Theorem (see e.g.\ \cite{AtiyahFib}).
\end{proof}

This construction for $a=0$ and $r=3$ gives a map of $\mr{Sp}_{2g}(\bZ)$-representations
\[\kappa_1\colon \Lambda^3 V_1 \lra H^{1}(T_{g,1};\bQ).\]
\begin{remark}The linear dual $H_{1}(T_{g,1};\bQ) \to \Lambda^3 V_1$ of $\kappa_1$ may be identified with the Johnson homomorphism \cite{JohnsonAb}.\end{remark}

Taking cup products of such classes determines an algebra homomorphism
\[\phi \colon \Lambda^*[\Lambda^3 V_1[1]] \lra H^{*}(T_{g,1};\bQ).\]
It follows from the discussion in \cite[Section 8.2]{KR-WTorelli} and the estimate in \cite[Section 9.3]{KR-WTorelli} that as long as $g \geq 4$ this map is an isomorphism in degree 1 and its kernel in degree 2 is spanned by the terms
\begin{equation}\label{eq:IH}\tag{IH}
\sum_i \kappa_1(v_1 \otimes v_2 \otimes a_i) \cdot \kappa_1(a_i^\# \otimes v_5 \otimes v_6) - \sum_i \kappa_1(v_1 \otimes v_5 \otimes a_i) \cdot \kappa_1(a_i^\# \otimes v_6 \otimes v_2)
\end{equation}
for $v_1, v_2, v_5, v_6 \in H^1(\Sigma_{g,1};\bQ)$, as well as the invariant vector
\begin{equation}\label{eq:Theta}\tag{$\Theta$}
\sum_{i,j,k} \kappa_1(a_i \otimes a_j \otimes a_k) \cdot \kappa_1(a_i^\# \otimes a_j^\# \otimes a_k^\#)
\end{equation}
(which, as we showed in the proof of Lemma \ref{lem:MMMrels} above, represents $-3 \kappa_{\mathcal{L}_1}$). Furthermore, in degrees $\leq 2$ the image of $\phi$ is the maximal algebraic subrepresentation $H^{*}(T_{g,1};\bQ)^{\text{alg}}$.

\begin{theorem}\label{thm:LowCohTg1}
There is a homomorphism of trigraded algebras
\[\psi \colon \frac{\Lambda^*[\Lambda^3 V_1[1,0,1]]}{((\ref{eq:IH}), (\text{\ref{eq:Theta}}))} \lra H_{\mr{Lie}}^{*}(\GrLCS \ft_{g,1})_{*,*},\]
which as long as $g \geq 4$ is an isomorphism in cohomological degrees $* \leq 2$.
\end{theorem}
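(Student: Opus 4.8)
The plan is to upgrade the known computation of $H^*(T_{g,1};\bQ)$ in degrees $\leq 2$ (from \cite[Sections 8.2, 9.3]{KR-WTorelli}) to a computation of $H^*_\mr{Lie}(\GrLCS \ft_{g,1})$ in the same range, using that passing to the associated graded of the lower central series is compatible with the mixed Hodge structure. First I would recall that $\GrLCS \ft_{g,1}$ is quadratically presented for $g \geq 4$ by \cref{thm:quadratic-presentations}, with generators $\mr{Gr}^1_\mr{LCS}\,\ft_{g,1} = (\Lambda^3 V_1)^\vee$ placed in weight $1$; dualizing, this gives that the degree-$1$ part of Chevalley--Eilenberg cohomology is $H^1_\mr{Lie}(\GrLCS \ft_{g,1})_{*,*} = \Lambda^3 V_1$ concentrated in tridegree $(1,0,1)$, which defines $\psi$ on generators, and the multiplicative structure of $H^*_\mr{Lie}$ extends this to the claimed algebra map $\psi$ once one checks the relations \eqref{eq:IH} and \eqref{eq:Theta} hold in $H^2_\mr{Lie}$. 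That these relations hold is exactly the statement that the quadratic relations defining the Lie algebra are Koszul-dual to (IH) and ($\Theta$); concretely, $H^2_\mr{Lie}(\GrLCS\ft_{g,1})_{0,2} = (\text{relations})^\vee$, and the relations of $\GrLCS\ft_{g,1}$ in weight $2$ are identified via Hain's work (and the discussion of twisted MMM-classes in \cref{sec:CohTg}) with the annihilator of the span of (IH) and ($\Theta$) inside $\Lambda^2(\Lambda^3 V_1)$.

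Next I would prove $\psi$ is an isomorphism in degrees $\leq 2$. In degree $0$ both sides are $\bQ$; in degree $1$ both sides are $\Lambda^3 V_1$ and $\psi$ is the identification above, so the content is degree $2$. Here I would use the comparison between group cohomology and Lie algebra cohomology afforded by the mixed Hodge structure on $\ft_{g,1}$ (Theorem 4.10 of \cite{HainTorelli}): there is an isomorphism $\GrW H^*_\mr{cts}(\ft_{g,1}) \cong H^*_\mr{Lie}(\GrLCS \ft_{g,1})$ in the relevant range, and the continuous cohomology of the Malcev completion computes the maximal algebraic part $H^*(T_{g,1};\bQ)^\mr{alg}$ of the cohomology of the Torelli group in low degrees (this is where one uses that in degrees $\leq 2$ the image of $\phi$ is precisely $H^*(T_{g,1};\bQ)^\mr{alg}$, as stated at the end of \cref{sec:CohTg}). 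Thus the target $H^2_\mr{Lie}(\GrLCS\ft_{g,1})_{*,*}$ is identified with $\GrW$ of $H^2(T_{g,1};\bQ)^\mr{alg}$, which by \cite[Section 8.2]{KR-WTorelli} is exactly $\Lambda^2[\Lambda^3 V_1]/((\text{IH}),(\Theta))$ in cohomological degree $2$ — matching the source of $\psi$ — and one checks the bigrading matches (everything in the source sits in internal degree $0$, which is consistent since the generators do). Tracking that the abstract Hodge-theoretic identification agrees with the explicit map $\psi$ built from cup products of the classes $\kappa_1$ is a matter of naturality: the twisted MMM-classes are morphisms of mixed Hodge structures, so their images under $\phi$ and under $\psi$ correspond under $\GrW H^*_\mr{cts}(\ft_{g,1}) \cong H^*_\mr{Lie}(\GrLCS\ft_{g,1})$.

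The main obstacle I expect is the degree-$2$ surjectivity together with the precise bookkeeping of weights: one must rule out any classes in $H^2_\mr{Lie}(\GrLCS\ft_{g,1})$ of internal degree $\neq 0$ (equivalently, in weight $\neq 2$), and one must be sure that the passage through continuous cohomology of the Malcev completion does not lose information in degree $2$ — i.e.\ that $H^2_\mr{cts}(\ft_{g,1})$ really does see all of $H^2(T_{g,1};\bQ)^\mr{alg}$ and nothing more. For the first point, since the generators of the quadratically presented Lie algebra are concentrated in homological degree $0$ and weight $1$, the Chevalley--Eilenberg complex is concentrated in internal degree $0$, so $H^p_\mr{Lie}(\GrLCS\ft_{g,1})_{q,w} = 0$ for $q \neq 0$ automatically; this also forces $w = p$ to be the only possibly-nonzero weight in cohomological degree $p \leq 2$ beyond the diagonal constraints, reducing the problem to the single group $H^2_\mr{Lie}(\GrLCS\ft_{g,1})_{0,2}$, which is the linear dual of the weight-$2$ relations. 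For the second point, I would cite that $\ft_{g,1}$ is its own associated graded (Corollary 5.3 of \cite{HainTorelli}, valid for $g\geq 3$) so continuous cohomology of $\ft_{g,1}$ literally equals $H^*_\mr{Lie}(\GrLCS\ft_{g,1})$, and invoke the low-degree comparison with $H^*(T_{g,1};\bQ)^\mr{alg}$ from \cite[Sections 8.2, 9.3]{KR-WTorelli} directly. With these two inputs the degree-$\leq 2$ isomorphism follows from the explicit presentation of $H^2(T_{g,1};\bQ)^\mr{alg}$ recalled in \cref{sec:CohTg}.
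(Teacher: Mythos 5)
Your overall strategy is the same as the paper's: exploit the mixed Hodge structure on $\ft_{g,1}$ to identify $H^*_\mr{Lie}(\GrLCS\ft_{g,1})$ with (the associated graded of) the continuous Lie algebra cohomology $H^*_{\mr{Lie},\text{cts}}(\ft_{g,1})$, compare that to $H^*(T_{g,1};\bQ)^{\text{alg}}$, and feed in the explicit low-degree computation from \cite[Sections 8.2, 9.3]{KR-WTorelli}. However, you have correctly flagged but then not actually closed the crucial gap: you need the comparison map $H^2_{\mr{Lie},\text{cts}}(\ft_{g,1}) \to H^2(T_{g,1};\bQ)$ to be \emph{injective}, so that surjectivity of $\phi$ onto $H^2(T_{g,1};\bQ)^{\text{alg}}$ forces $\psi'$ to be both surjective and injective. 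Neither Corollary 5.3 of \cite{HainTorelli} (which only tells you $\ft_{g,1}$ is its own associated graded, simplifying the weight bookkeeping) nor Sections 8.2 and 9.3 of \cite{KR-WTorelli} (which give the computation of $H^{\leq 2}(T_{g,1};\bQ)^{\text{alg}}$ and the kernel of $\phi$) touch this injectivity. The fact that is needed and that the paper uses is \cite[Proposition 5.1]{HainTorelli}: because $H^1(T_{g,1};\bQ)$ is finite-dimensional (Johnson's theorem), the map from continuous Lie algebra cohomology of the Malcev completion to group cohomology is an isomorphism in degree $1$ and a monomorphism in degree $2$. Without this input, one cannot rule out extra classes in $H^2_{\mr{Lie},\text{cts}}$ mapping to zero.

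A secondary issue is your first paragraph: you propose to \emph{define} $\psi$ by observing that the relations of $\GrLCS\ft_{g,1}$ are the annihilator of $\langle(\text{IH}),(\Theta)\rangle$, but in this paper that identification (\cref{cor:hain-presentation}) is a \emph{consequence} of the theorem you are trying to prove (combined with \cref{lem:quadpres}), so appealing to it here would be circular. The correct way to produce $\psi$ is exactly what the paper does: start from the algebra map $\phi\colon \Lambda^*[\Lambda^3 V_1[1]]\to H^*(T_{g,1};\bQ)$ built from twisted MMM-classes, observe that it factors through $H^*_{\mr{Lie},\text{cts}}(\ft_{g,1})$, use injectivity in degree $2$ to see the relations (\ref{eq:IH}) and (\ref{eq:Theta}) already hold there, and then pass to the associated graded of the weight filtration. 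Your remark that the Chevalley--Eilenberg complex is concentrated in internal degree $0$ (since the generators sit in homological degree $0$, weight $1$) is correct and is a clean way to handle the trigrading.
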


\begin{proof}
We will work with the continuous Lie algebra cohomology $H_{\mr{Lie}, \text{cts}}^*(\ft_{g,1})$, cf. \cite[Section 5]{HainTorelli}. By \cite[Proposition 5.5]{HainTorelli} there is an isomorphism
\begin{equation}\label{eq:GrWCommutes}
\mr{Gr}^W_{\bullet} H_{\mr{Lie}, \text{cts}}^*(\ft_{g,1}) \cong H_\mr{Lie}^*(\mr{Gr}^W_\bullet \ft_{g,1}),
\end{equation}
and by \cite[Theorem 4.10]{HainTorelli} there is an identification
\begin{equation}\label{eq:WtIsLCS}
\mr{Gr}^W_\bullet \ft_{g,1} = \mr{Gr}_\mr{LCS}^{-\bullet} \ft_{g,1}.
\end{equation}
 By definition of the lower central series, this Lie algebra with additional grading is generated by its piece of grading 1. Its grading 1 piece is by definition the abelianisation of the Lie algebra $\ft_{g,1}$, which is tautologically identified with $H_1(T_{g,1};\bQ)$ and so for $g \geq 3$ is the algebraic $\mr{Sp}_{2g}(\bZ)$-representation $\Lambda^3 V_1$ by a theorem of Johnson \cite{JohnsonAb}. It follows that the Lie algebra with additional grading $\GrLCS \ft_{g,1}$ is a finite-dimensional algebraic $\mr{Sp}_{2g}(\bZ)$-representation in each additional grading, so $\smash{\mr{Gr}^W_{\bullet}} H_{\mr{Lie}, \text{cts}}^*(\ft_{g,1})$ is a finite-dimensional algebraic $\mr{Sp}_{2g}(\bZ)$-representation in each bigrading. As the weight filtration is finite in each cohomological degree, it follows that $H_{\mr{Lie}, \text{cts}}^*(\ft_{g,1})$ is a finite-dimensional algebraic $\mr{Sp}_{2g}(\bZ)$-representation in each degree.

Consider the natural map
\begin{equation}\label{eq:ContToOrdinary}
H_{\mr{Lie}, \text{cts}}^*(\ft_{g,1}) \lra H^*(T_{g,1};\bQ).
\end{equation}
By the discussion above, it has image in the maximal algebraic subrepresentation  $ H^{*}(T_{g,1};\bQ)^{\text{alg}}$. As $H^1(T_{g,1};\bQ) \cong \Lambda^3 V_1$ is finite-dimensional, \cite[Proposition 5.1]{HainTorelli} implies that \eqref{eq:ContToOrdinary} is an isomorphism in degree 1 and a monomorphism in degree 2. Thus $\phi$ factors as
\[\phi \colon \Lambda^*[\Lambda^3 V_1[1]] \overset{\psi''}\lra H_{\mr{Lie}, \text{cts}}^*(\ft_{g,1}) \lra H^{*}(T_{g,1};\bQ)^{\text{alg}}\]
and the relations (\ref{eq:IH}) and (\ref{eq:Theta}) hold in $H_{\mr{Lie}, \text{cts}}^2(\ft_{g,1})$, so $\psi''$ descends to give
\[\frac{\Lambda^*[\Lambda^3 V_1[1]]}{((\ref{eq:IH}), (\text{\ref{eq:Theta}}))} \overset{\psi'}\lra H_{\mr{Lie}, \text{cts}}^{*}(\ft_{g,1}) \lra H^{*}(T_{g,1};\bQ)^{\text{alg}}\]
with $\psi'$ an isomorphism in degree 1 and a monomorphism in degree 2. By the discussion above the composition is an isomorphism in degrees $\leq 2$, so it follows that $\psi'$ is too.

It follows from \eqref{eq:GrWCommutes} and \eqref{eq:WtIsLCS} that $H_{\mr{Lie}, \text{cts}}^{1}(\ft_{g,1})$ is a pure Hodge structure of weight $-1$, so if we give the domain of $\psi'$ the weight filtration given by minus its degree then $\psi'$ is an isomorphism of weight-filtered graded vector spaces in degrees $* \leq 2$: passing to associated graded for the weight filtration, applying \eqref{eq:GrWCommutes} and \eqref{eq:WtIsLCS} again, and recalling that $\GrLCS \ft_{g,1}$ is supported in homological degree 0, gives the map $\psi$ in the statement of the lemma and shows it is an isomorphism in cohomological degrees $* \leq 2$.
\end{proof}

\begin{remark}
	It will be a consequence of \cref{athm:main} that $\psi$ is in fact an isomorphism in a stable range of degrees. 
\end{remark}

Our next goal is to describe the quadratic presentation of $\GrLCS \ft_{g,1}$. This uses the following lemma:

\begin{lemma}\label{lem:quadpres}
Let $W$ be a vector space and $Q \leq \Lambda^2 W$ be a subspace, and form the quadratic Lie algebra $\mathfrak{g} = \mr{Lie}(W)/(Q)$. Then
\begin{enumerate}[(i)]
\item The abelianisation map $\mathfrak{g} \to W$ induces an isomorphism $W^\vee \overset{\sim}\to H_\mr{Lie}^1(\mathfrak{g})$.

\item The kernel of $\Lambda^2 H_\mr{Lie}^1(\mathfrak{g}) \to H_\mr{Lie}^2(\mathfrak{g})$ is the annihilator of $Q$.

\end{enumerate}
\end{lemma}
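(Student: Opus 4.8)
The plan is to compute directly with the Chevalley--Eilenberg cochain complex $C^\bullet = \Lambda^\bullet \mathfrak{g}^\vee$ computing $H^*_\mr{Lie}(\mathfrak{g})$, taking graded duals throughout. To make the bookkeeping clean I would first grade $\mathfrak{g}$ by declaring $W$ to lie in weight $1$; then $\mathfrak{g} = \bigoplus_{n \geq 1}\mathfrak{g}_n$ with each $\mathfrak{g}_n$ finite-dimensional, $\mathfrak{g}_1 = W$, and $\mathfrak{g}_2 = \Lambda^2 W/Q$ (using $\mr{Lie}^2(W) = \Lambda^2 W$ and that the weight-$2$ part of the ideal $(Q)$ is just $Q$). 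The start of $C^\bullet$ is
\[ 0 \lra \mathfrak{g}^\vee \overset{d^1}\lra \Lambda^2 \mathfrak{g}^\vee \overset{d^2}\lra \Lambda^3 \mathfrak{g}^\vee, \]
with $d^1$ the dual (up to sign) of the bracket $[-,-]\colon \Lambda^2\mathfrak{g} \to \mathfrak{g}$; the product on $C^\bullet$ is the wedge product and $d$ is a graded derivation for it. Since the bracket adds weights, $d^1$ preserves weight.

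For part (i), I would identify $H^1_\mr{Lie}(\mathfrak{g}) = \ker d^1$ with the annihilator of $\mr{im}[-,-] = [\mathfrak{g},\mathfrak{g}]$, i.e.\ with the image of the injection $(\mathfrak{g}/[\mathfrak{g},\mathfrak{g}])^\vee \hookrightarrow \mathfrak{g}^\vee$ dual to the abelianisation map. Since $\mathfrak{g}$ is generated in weight $1$ one has $[\mathfrak{g},\mathfrak{g}] = \bigoplus_{n\geq 2}\mathfrak{g}_n$, so the abelianisation map is the projection $\mathfrak{g} \to \mathfrak{g}_1 = W$ and its dual is the asserted isomorphism $W^\vee \overset{\sim}\to H^1_\mr{Lie}(\mathfrak{g})$.

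For part (ii), I would use (i) to write $\Lambda^2 H^1_\mr{Lie}(\mathfrak{g}) = \Lambda^2 W^\vee$, regarded as the weight-$2$ summand of $\Lambda^2 \mathfrak{g}^\vee$; since $d$ is a derivation and $d^1$ kills $H^1$, every element of $\Lambda^2 W^\vee$ is a cocycle, so the cup product $\Lambda^2 H^1_\mr{Lie}(\mathfrak{g}) \to H^2_\mr{Lie}(\mathfrak{g})$ is the composite $\Lambda^2 W^\vee \hookrightarrow \ker d^2 \twoheadrightarrow H^2_\mr{Lie}(\mathfrak{g})$, with kernel $\Lambda^2 W^\vee \cap \mr{im}\,d^1$. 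Because $d^1$ preserves weight this intersection equals $\mr{im}\,d^1$ in weight $2$, namely $d^1(\mathfrak{g}_2^\vee)$; and $d^1$ in weight $2$ is dual to the weight-$2$ component $\Lambda^2 \mathfrak{g}_1 = \Lambda^2 W \to \mathfrak{g}_2$ of the bracket, which for $\mathfrak{g} = \mr{Lie}(W)/(Q)$ is the quotient map $\Lambda^2 W \twoheadrightarrow \Lambda^2 W/Q$ with kernel $Q$. Hence its dual is injective with image $Q^\perp$, giving $\ker(\Lambda^2 H^1_\mr{Lie}(\mathfrak{g}) \to H^2_\mr{Lie}(\mathfrak{g})) = Q^\perp$, the annihilator of $Q$.

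I do not expect a real obstacle: the whole lemma is formal once the Chevalley--Eilenberg complex is unwound in degrees $\leq 2$. The one place that benefits from care --- and the reason for introducing the auxiliary weight grading --- is the final step of (ii): one must be sure that $\mr{im}\,d^1$ meets $\Lambda^2 W^\vee$ only in its weight-$2$ part $Q^\perp$, which is immediate from weight-preservation of $d^1$ but would be slightly fiddly to argue without it.
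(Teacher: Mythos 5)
Your proof is correct and takes essentially the same approach as the paper: both introduce the auxiliary weight grading on $\mathfrak{g}$ (with $W$ in weight $1$) and exploit weight-preservation of the Chevalley--Eilenberg differential to isolate the relevant low-weight pieces, the key observation being that generation in weight $1$ forces the bracket to be surjective onto all higher weights. The only difference is presentational — the paper displays the bigraded chain complex $(\mr{coCom}(\mathfrak{g}[1])[-1],d_{\mr{CE}})$ and dualises at the end, while you work directly with the cochain complex $\Lambda^\bullet\mathfrak{g}^\vee$.
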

\begin{proof}
As $\mathfrak{g}$ is quadratically presented it admits an additional grading by weight, where $W$ is put in weight 1. This endows the Chevalley--Eilenberg chains $(\mr{coCom}(\mathfrak{g}[1])[-1], d_{CE})$ with an additional weight grading, with respect to which it takes the form (homological degree is displayed horizontally,  weight vertically)
\begin{equation*}
	\begin{tikzcd}[row sep=0.1cm]
0 & ? & ? \arrow[l, two heads]& ? \arrow[l]\\
0 & \mr{Lie}^2(W)/Q & \Lambda^2 W \arrow[l, two heads]& 0\\
0 & W & 0 & 0\\
0 & 0 & 0 & 0
	\end{tikzcd}
\end{equation*}
The maps from the second to the first column are given by $[-,-] \colon \Lambda^2 \mathfrak{g} \to \mathfrak{g}$ so are surjective in weights $>1$, as $\mathfrak{g}$ is generated in weight 1. Dualising this complex, it follows that $H^1_\mr{Lie}(\mathfrak{g})=W^\vee$ and that there is an exact sequence as claimed
\[0 \lra (\mr{Lie}^2(W)/Q)^\vee \lra \Lambda^2 W^\vee = \Lambda^2 H^1_\mr{Lie}(\mathfrak{g}) \lra H^2_\mr{Lie}(\mathfrak{g}).\qedhere\]
\end{proof}

Let us $R \coloneqq \langle (\ref{eq:IH}), (\text{\ref{eq:Theta}}) \rangle \leq \Lambda^2 \Lambda^3 V_1$ and $R^\perp \leq (\Lambda^2 \Lambda^3 V_1)^\vee \cong \Lambda^2 \Lambda^3 V_1$ for its annihilator. 

\begin{remark}\label{rem:tg1-explicit}
As long as $g \geq 6$ we have the decomposition
\[\Lambda^2 \Lambda^3 V_1 = 2 V_0 + 3 V_{1^2} + 2V_{1^4} + V_{1^6} + V_{2,1^2} + V_{2^2} + V_{2^2,1^2}\]
into irreducible $\mr{Sp}_{2g}(\bZ)$-representations. The vectors \eqref{eq:IH} generate the representation $V_0 + V_{1^2} + V_{2^2}$ (see \cite[Proof of Corollary 2.2 (ii)]{GN}\footnote{This paper is corrected in \cite{Akazawa}, but that does not affect this point.}), and the invariant vector \eqref{eq:Theta} can be checked not to lie in \eqref{eq:IH}. Therefore $R^\perp \cong  2 V_{1^2} + 2V_{1^4} + V_{1^6} + V_{2,1^2}  + V_{2^2,1^2}$. For $g < 6$ the representation theory is degenerate, but in principle a similar analysis can be made.
\end{remark}

The following is essentially in \cite{HainTorelli}, though Hain does not give the final answer in the case of $\ft_{g,1}$. The explicit presentation produced by Hain's method in this case was given in \cite{HabeggerSorger} (and agrees with the following).

\begin{corollary}[Hain] \label{cor:hain-presentation}
For $g \geq 4$ there is an isomorphism
\[\mr{Lie}(\Lambda^3 V_1 [0,1])/ ( R^\perp ) \overset{\sim}\lra \GrLCS \ft_{g,1}.\]
\end{corollary}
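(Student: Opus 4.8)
The plan is to combine three earlier results: Hain's quadratic presentability (\cref{thm:quadratic-presentations}), the abstract recovery of quadratic relations from cohomology in degrees $\leq 2$ (\cref{lem:quadpres}), and the explicit identification of that cohomology (\cref{thm:LowCohTg1}).

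First I would pin down the generators and relations intrinsically. Since $\GrLCS \ft_{g,1}$ is, as the associated graded of a lower central series, generated in weight $1$, the space of generators in any presentation must be $W \coloneqq \mr{Gr}^1_\mr{LCS} \ft_{g,1}$, which by Johnson's theorem \cite{JohnsonAb} is the algebraic $\mr{Sp}_{2g}(\bZ)$-representation $\Lambda^3 V_1$, placed in homological degree $0$ and weight $1$. By \cref{thm:quadratic-presentations} the Lie algebra is quadratically presented for $g \geq 4$; since the presentation is then forced, $\GrLCS \ft_{g,1} \cong \mr{Lie}(W)/(Q)$ with $Q = \ker(\mr{Lie}^2(W) = \Lambda^2 W \to \mr{Gr}^2_\mr{LCS} \ft_{g,1})$, an $\mr{Sp}_{2g}(\bZ)$-subrepresentation of $\Lambda^2 \Lambda^3 V_1$. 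So it suffices to identify $Q$ with $R^\perp$, equivariantly.

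Next I would feed this into \cref{lem:quadpres}: part (i) gives an equivariant isomorphism $W^\vee \cong H^1_\mr{Lie}(\GrLCS \ft_{g,1})$, and part (ii) identifies the kernel of the cup product $\Lambda^2 H^1_\mr{Lie}(\GrLCS \ft_{g,1}) \to H^2_\mr{Lie}(\GrLCS \ft_{g,1})$ with the annihilator $Q^\perp \subseteq \Lambda^2 W^\vee$. On the other hand, for $g \geq 4$, \cref{thm:LowCohTg1} gives an algebra isomorphism onto $H^*_\mr{Lie}(\GrLCS \ft_{g,1})$ in cohomological degrees $\leq 2$: in degree $1$ it is an equivariant isomorphism $\Lambda^3 V_1 \cong H^1_\mr{Lie}(\GrLCS \ft_{g,1}) = W^\vee$, dual to the Johnson isomorphism $W \cong \Lambda^3 V_1$ (as $\psi$ is built from the classes $\kappa_1$, whose linear dual is the Johnson homomorphism); and in degree $2$ it identifies the relevant graded piece of $H^2_\mr{Lie}(\GrLCS \ft_{g,1})$ with $\Lambda^2 \Lambda^3 V_1 / R$, compatibly with products. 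Hence $\Lambda^2$ of the degree-$1$ isomorphism carries the cup product $\Lambda^2 H^1_\mr{Lie} \to H^2_\mr{Lie}$ to the quotient map $\Lambda^2 \Lambda^3 V_1 \to \Lambda^2 \Lambda^3 V_1/R$, whose kernel is $R$; comparing with \cref{lem:quadpres}, $Q^\perp$ corresponds to $R$. Taking annihilators and using the Johnson isomorphism itself (dual to the identification just used) to identify $\Lambda^2 W \cong \Lambda^2 \Lambda^3 V_1$ then gives $Q = R^\perp$, and the previous paragraph produces the desired equivariant isomorphism $\mr{Lie}(\Lambda^3 V_1[0,1])/(R^\perp) \overset{\sim}\lra \GrLCS \ft_{g,1}$.

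The genuinely substantive input — the computation of $H^{\leq 2}_\mr{Lie}(\GrLCS \ft_{g,1})$ — has already been carried out in \cref{thm:LowCohTg1}, so what remains is essentially assembly. The one point that needs care, and the main obstacle, is keeping the several dualities consistent: one must check that the degree-$1$ part of $\psi$ really is the dual of the Johnson homomorphism (so that the correspondence $Q^\perp \leftrightarrow R$ dualizes cleanly to $Q \leftrightarrow R^\perp$ with no stray automorphism of $\Lambda^3 V_1$ intervening), and that the self-duality of $\Lambda^3 V_1$ used to regard $R^\perp \subseteq (\Lambda^2\Lambda^3 V_1)^\vee$ as a subspace of $\Lambda^2 \Lambda^3 V_1$ is the same one throughout.
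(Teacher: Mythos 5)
Your proposal is correct and follows essentially the same route as the paper: combine Hain's quadratic presentability (\cref{thm:quadratic-presentations}), the abstract identification of generators and relations from $H^{\leq 2}_\mr{Lie}$ (\cref{lem:quadpres}), and the explicit computation of that cohomology (\cref{thm:LowCohTg1}). The paper's proof is terser and leaves the duality bookkeeping implicit, but the argument is the same.
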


\begin{proof}
As $\GrLCS \ft_{g,1}$ admits a quadratic presentation by \cref{thm:quadratic-presentations}, it is isomorphic to $\mr{Lie}(W)/ ( Q )$ for some vector space $W$ of homological degree 0 and weight 1 and some $Q \leq \Lambda^2 W$, and we may apply Lemma \ref{lem:quadpres} to determine $W$ and $Q$ in terms of $H_\mr{Lie}^*(\GrLCS \ft_{g,1})$ for $* \leq 2$, which is given by \cref{thm:LowCohTg1}. This gives $W = (\Lambda^3 V_1)^\vee \cong \Lambda^3 V_1$ and $Q = R^\perp$.
\end{proof}

Using \cref{lem:koszul-lie-vs-com}, it follows that for $g \geq 4$ the Lie algebra $\GrLCS \ft_{g,1}$ is Koszul in weight $\leq W$ if and only if the commutative algebra object $\frac{\Lambda^*[\Lambda^3 V_1[1,1]]}{((\ref{eq:IH}), (\text{\ref{eq:Theta}}))}$ in $\cat{Gr}(\bQ\text{-}\cat{mod})^\bN$ is Koszul in weight $\leq W$, allowing us to work with the latter.

\section{Representations of the downward Brauer category}

In this section we define the objects $E_n$ and $Z_n$, which are augmented unital commutative algebra objects in the category of representations of the downward (signed) Brauer category, and explain how they relate to $\GrLCS \ft_{g,1}$ as well as each other.

\subsection{The downward Brauer category}\label{sec:Brauer}

The following is identical to \cite[Definition 2.15]{KR-WTorelli}. One thinks of the morphisms of this category as in Figure \ref{fig:brauerdownward}.

\begin{definition}
The \emph{downward Brauer category} $\cat{dBr}$ is the  $\bQ\text{-}\cat{mod}$-enriched category with objects finite sets, and with vector space of morphisms $\cat{dBr}(S,T)$ having basis the pairs $(f, m_S)$ of an injection $f\colon T \hookrightarrow S$ and an unordered matching $m_S$ on the set  $S \setminus f(T)$. It has a symmetric monoidality given by disjoint union.
\end{definition}

\begin{figure}[h]
	\begin{tikzpicture}
		\draw (0,3) to[out=-90,in=-90,looseness=1.5] (2,3);
		\draw (4,3) to[out=-90,in=90] (2,0);
		\draw (3,3) to[out=-90,in=90] (1,0);
		\draw (-2,3) to[out=-90,in=90] (-1,0);
		\draw (-1,3) to[out=-90,in=90] (0,0);
		\draw [line width=2mm,white] (1,3) to[out=-90,in=90] (3,0);
		\draw (1,3) to[out=-90,in=90] (3,0);
		\foreach \x in {-2,...,4}
		\node at (\x,3) {$\bullet$};
		\foreach \x in {-1,...,3}
		\node at (\x,0) {$\bullet$};
		\node at (-3,3) {$S$};
		\node at (-2,0) {$T$};
	\end{tikzpicture}
	\caption{A graphical representation of a morphism $(f,m_S)$ in $\cat{dBr}(S,T)$ from a $7$-element set $S$ to a $5$-element set $T$. The order of crossings is irrelevant.}
	\label{fig:brauerdownward}
\end{figure}
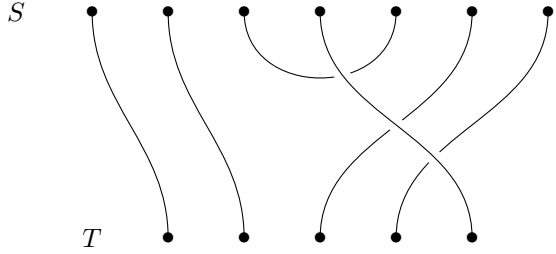

There is also the following signed version, identical to \cite[Definition 2.20]{KR-WTorelli}.

\begin{definition}
The \emph{downward signed Brauer category} $\cat{dsBr}$ is the $\bQ\text{-}\cat{mod}$-enriched category with objects finite sets, and with vector space of morphisms $\cat{dsBr}(S,T)$ given as follows. First let $\cat{dsBr}(S,T)'$ denote the vector space with basis the pairs $(f, m_S)$ of an injection $f \colon T \hookrightarrow S$ and an ordered matching $m_S$ of $S \setminus f(T)$. Then let $\cat{dsBr}(S,T)$ be the quotient space of $\cat{dsBr}(S,T)'$ by the subspace spanned by elements $(f, m_S) - (-1)^r (f, m'_S)$ whenever $m'_S$ differ from $m_S$ by reversing precisely $r$ pairs. It has a symmetric monoidality given by disjoint union.
\end{definition}

We will also write $\cat{FB}$ for the category of finite sets and bijections, and consider it (after $\bQ$-linearising) as a subcategory of both $\cat{dBr}$ and $\cat{dsBr}$, consisting of all objects and morphisms spanned by those $(f,m_S)$ with $f$ a bijection and $m_S=\varnothing$. To treat the symmetric and skew-symmetric cases simultaneously, we will write $\cat{d(s)Br}$ for $\cat{dBr}$ if $n$ is even and $\cat{dsBr}$ if $n$ is odd.

\vspace{1ex}

Recall that $\cat{Gr}(\bQ\text{-}\cat{mod})$ is the category of non-negatively graded $\bQ$-vector spaces, considered as a $\bQ\text{-}\cat{mod}$-enriched category. We shall be interested in (graded) representations of the downward (signed) Brauer category, i.e.\ enriched functors $F\colon \cat{d(s)Br} \to \cat{Gr}(\bQ\text{-}\cat{mod})$. The category of such functors $\cat{Fun}({\cat{d(s)Br}}, \cat{Gr}(\bQ\text{-}\cat{mod}))$ is again symmetric monoidal under Day convolution, using the symmetric monoidality on $\cat{d(s)Br}$ given by disjoint union and that on $\cat{Gr}(\bQ\text{-}\cat{mod})$ given by tensor product of graded vector spaces and the Koszul sign rule. Using the tensoring of $\cat{Gr}(\bQ\text{-}\cat{mod})$ over  $\bQ\text{-}\cat{mod}$, it is given by the enriched coend
\[(F \otimes G)(S) = \int^{S',S'' \in \cat{d(s)Br}} \cat{d(s)Br}(S' \sqcup S'',S) \otimes F(S') \otimes G(S''),\]
and the monoidal unit is the functor $\bunit \colon \cat{d(s)Br} \to \cat{Ch}$ given by $\bQ$ on the empty set and 0 on all other finite sets. One can compute the values of the Day convolution explicitly as
\begin{equation}\label{eqn:formula-day}\bigoplus_{S',S''} \cat{Pair}(S' \sqcup S'',S) \otimes_{\fS_{S'} \times \fS_{S''}} F(S') \otimes G(S'') \overset{\cong}\lra (F \otimes G)(S)\end{equation}
where the indexing set runs over all isomorphism classes of pairs of finite sets and $\cat{Pair}(S' \sqcup S'',S) \subset \cat{d(s)Br}(S' \sqcup S'',S)$ is the span of those $(f,m_{S' \sqcup S''})$ where each pair in $m_{S' \sqcup S''}$ contains elements of both $S'$ and $S''$. To see this, use that Day convolution preserves colimits in each variable to reduce to the case that $F$ and $G$ are representable, and verify it by hand there.

\subsubsection{Homotopy theory of representations of the downward Brauer category} We can do homotopy theory with these objects if we enlarge their target to $\cat{Ch}$. One can use $\infty$-categories or model structures; we opt for the latter because we need to do some explicit computations.

We endow $\cat{Ch}$ with the \emph{projective model structure} \cite[Theorem 1.5]{GoerssSchemmerhorn}: the weak equivalences are quasi-isomorphisms, the cofibrations are degreewise monomorphisms, and the fibrations are epimorphisms in positive degrees. Let us define chain complexes
\begin{equation}\label{eqn:gen-cof}\begin{aligned}D(n) &\coloneqq (\bQ\{z_n\}[n] \oplus \bQ\{z_{n-1}\}[n-1],dz_n=z_{n-1}) \\
	S(n-1) &\coloneqq (\bQ\{z_{n-1}\}[n-1],0).\end{aligned}\end{equation}
Then this model structure is cofibrantly generated by the set $I$ of generating cofibrations given by $0 \to S(0)$ and the inclusions $S(n-1) \hookrightarrow D(n)$ for $n \geq 1$, and the set $J$ of generating trivial cofibrations given by the inclusions $0 \hookrightarrow D(n)$ for $n \geq 1$ \cite[Examples 3.4(1)]{GoerssSchemmerhorn}. 

\begin{lemma}\label{lem:functor-model-cat}The category $\cat{Fun}(\cat{d(s)Br}, \cat{Ch})$ has a model structure whose weak equivalences and fibrations are objectwise. This is cofibrantly generated and monoidal with respect to Day convolution.\qed
\end{lemma}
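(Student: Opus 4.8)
The plan is to realise this as the projective model structure on the category of $\cat{Ch}$-valued functors and then check the two monoidal model-category axioms directly. For the first part, recall from the discussion above that $\cat{Ch}$ is cofibrantly generated by the set $I$ (containing $0 \to S(0)$ and the maps $S(n-1) \hookrightarrow D(n)$) and the set $J$ (the maps $0 \hookrightarrow D(n)$), and is combinatorial since $\cat{Ch}$ is locally presentable. As $\cat{d(s)Br}$ is a small $\bQ$-linear category and $\cat{Ch}$ is $\bQ$-linear and cocomplete, enriched functors $\cat{d(s)Br} \to \cat{Ch}$ are the same as additive functors, and for each finite set $S$ the enriched Yoneda lemma supplies an adjunction $F_S \otimes (-) \colon \cat{Ch} \rightleftarrows \cat{Fun}(\cat{d(s)Br}, \cat{Ch}) \colon \mathrm{ev}_S$, where $F_S \coloneqq \cat{d(s)Br}(S,-)$ is the representable functor, $\mathrm{ev}_S$ is evaluation at $S$, and $F_S \otimes X$ is the objectwise copower $T \mapsto \cat{d(s)Br}(S,T) \otimes_\bQ X$. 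The projective model structure on $\cat{Fun}(\cat{d(s)Br}, \cat{Ch})$, with objectwise weak equivalences and fibrations, then exists and is cofibrantly generated by $\{F_S \otimes \iota : S \in \cat{d(s)Br},\ \iota \in I\}$ and by $\{F_S \otimes \iota : S \in \cat{d(s)Br},\ \iota \in J\}$: this is the usual diagram-category construction, obtained by transfer along $\prod_S \mathrm{ev}_S$, whose hypotheses hold because fibrations and trivial fibrations are detected objectwise, each $\mathrm{ev}_S$ has the left adjoint above, and the small object argument applies as $\cat{Ch}$ is locally presentable; see e.g.\ \cite{GoerssSchemmerhorn}.

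For monoidality with respect to Day convolution we verify the unit axiom and the pushout--product axiom. The unit axiom is automatic because the monoidal unit is already cofibrant: the computation of $\cat{d(s)Br}(\varnothing,-)$ gives $\bunit \cong F_\varnothing \otimes S(0)$, and $0 \to F_\varnothing \otimes S(0)$ is one of the generating cofibrations above. For the pushout--product axiom, the standard argument (pushout--products are stable under cobase change, transfinite composition and retracts in each variable) reduces the claim to showing that $(F_S \otimes \iota)\,\square\,(F_T \otimes \iota')$ is a cofibration, and is trivial whenever $\iota$ or $\iota'$ is, for $S, T \in \cat{d(s)Br}$ and $\iota,\iota' \in I \cup J$. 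Here we use that Day convolution is the unique cocontinuous symmetric monoidal structure for which the Yoneda embedding $S \mapsto F_S$ is strong symmetric monoidal; together with compatibility of copowers this yields a natural isomorphism $(F_S \otimes X) \otimes (F_T \otimes Y) \cong F_{S \sqcup T} \otimes (X \otimes_\bQ Y)$ (which can also be read off from \eqref{eqn:formula-day}), and unravelling it identifies $(F_S \otimes \iota)\,\square\,(F_T \otimes \iota')$ with $F_{S\sqcup T} \otimes (\iota\,\square\,\iota')$, the latter $\square$ being formed in $\cat{Ch}$. Now $(\cat{Ch}, \otimes_\bQ)$ is a monoidal model category --- over the field $\bQ$ every object is cofibrant, pushout--products of degreewise monomorphisms are degreewise monomorphisms, and Künneth makes the acyclicity clause immediate --- so $\iota\,\square\,\iota'$ is a cofibration, trivial as soon as $\iota$ or $\iota'$ is; and $F_{S\sqcup T} \otimes (-)$ is left Quillen, its right adjoint $\mathrm{ev}_{S\sqcup T}$ preserving fibrations and trivial fibrations, so it carries $\iota\,\square\,\iota'$ to a map of the required type.

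The argument is entirely formal, so the main obstacle is only organisational: one must set up the enriched copower carefully so that $F_S \otimes (-) \dashv \mathrm{ev}_S$ and $(F_S\otimes X)\otimes(F_T\otimes Y) \cong F_{S\sqcup T}\otimes(X\otimes_\bQ Y)$ hold on the nose, and then carry out the reduction of the pushout--product axiom to generators. There are no set-theoretic or homotopical difficulties, since $\cat{Ch}$ over a field is about as well-behaved a monoidal model category as one could ask for; the single genuinely external input is the (standard) fact that $(\cat{Ch},\otimes_\bQ)$ is a monoidal model category.
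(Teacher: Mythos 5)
Your proof is correct, and the first half (existence of the projective model structure, cofibrantly generated by the images of $I$ and $J$ under the left adjoints to evaluation) is the same argument the paper gives, transferring from $\cat{Ch}$ objectwise.

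For the monoidal part you diverge from the paper, which simply cites Isaacson's general result on Day convolution and monoidal model structures. Your verification is a good self-contained substitute: the reduction of the pushout--product axiom to generating (trivial) cofibrations is standard, the key identification $(F_S \otimes X) \otimes (F_T \otimes Y) \cong F_{S \sqcup T} \otimes (X \otimes_\bQ Y)$ follows from co-Yoneda applied to the Day coend, and since $F_{S\sqcup T}\otimes(-)$ preserves pushouts you get $(F_S\otimes\iota)\,\square\,(F_T\otimes\iota') \cong F_{S\sqcup T}\otimes(\iota\,\square\,\iota')$, after which left-Quillenness of $F_{S\sqcup T}\otimes(-)$ and the monoidal model structure on $(\cat{Ch},\otimes_\bQ)$ finish the job. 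The observation that the unit $\bunit \cong F_\varnothing$ is one of the generating cofibrant objects, making the unit axiom vacuous, is also right. The trade-off is exactly what you'd expect: the paper outsources the bookkeeping to a cited proposition, whereas your route keeps everything elementary at the cost of spelling out the pushout--product reduction and the representable-Day-convolution computation; both are legitimate, and yours has the mild advantage of making the role of the hypothesis that $\bQ$ is a field (so every object of $\cat{Ch}$ is cofibrant) visible where it is used.
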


The proof proceeds as in the unenriched case \cite[Theorem 11.6.1]{Hirschhorn} by right transfer as in \cite[Theorem 11.3.2]{Hirschhorn}. The set of generating cofibrations $I'$ is given by the union over all $S \in \mr{ob}(\cat{d(s)Br})$ of the image of $I$ under the left adjoint $S_*$ to the evaluation $ \cat{Fun}(\cat{d(s)Br},\cat{Ch}) \to \cat{Ch}$ given by $F \mapsto F(S)$, and the set of generating trivial cofibrations $J'$ is in the same manner with $J$ in place of $I$. That this model structure is monoidal is \cite[Proposition 2.2.15]{Isaacson}.

By \cref{lem:functor-model-cat}, the Day convolution tensor product preserves weak equivalences in each entry when all objects involved are cofibrant. The latter hypothesis is in fact unnecessary:

\begin{lemma}\label{lem:dsbr-weq} The Day convolution tensor product on $\cat{d(s)Br}$ preserves weak equivalences in each entry.\end{lemma}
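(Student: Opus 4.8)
The plan is to reduce the general statement to the case where one of the two functors is cofibrant, which is covered by \cref{lem:functor-model-cat}, using the explicit formula \eqref{eqn:formula-day} for Day convolution to get control over the failure of cofibrancy. First I would recall that a weak equivalence between arbitrary objects of $\cat{Fun}(\cat{d(s)Br},\cat{Ch})$ can be resolved by a weak equivalence between cofibrant objects via functorial cofibrant replacement, so it suffices to check that for each fixed $F$, tensoring with $F$ takes a weak equivalence $\varphi\colon G \to G'$ to a weak equivalence, and by a two-out-of-three and naturality argument it is enough to treat $\varphi$ the cofibrant replacement map $QG \to G$, i.e. to show $F \otimes QG \to F \otimes G$ is a weak equivalence for every $G$; symmetrically one reduces to $F$ cofibrant as well.

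The key computational input is that the Day convolution $(F \otimes G)(S)$ is, by \eqref{eqn:formula-day}, a finite direct sum over isomorphism classes of pairs $(S',S'')$ of the chain complexes $\cat{Pair}(S' \sqcup S'', S) \otimes_{\fS_{S'} \times \fS_{S''}} F(S') \otimes G(S'')$. The point is that $\cat{Pair}(S' \sqcup S'',S)$ is a finite-dimensional $\bQ[\fS_{S'} \times \fS_{S''}]$-module, and over the field $\bQ$ the group algebra of a finite group is semisimple, so $\cat{Pair}(S' \sqcup S'',S) \otimes_{\fS_{S'} \times \fS_{S''}} (-)$ is an exact functor on $\bQ[\fS_{S'}\times\fS_{S''}]$-modules, hence takes quasi-isomorphisms to quasi-isomorphisms. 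Since $F(S')$ is just a chain complex and $\otimes_\bQ$ is exact over $\bQ$, tensoring a quasi-isomorphism $G(S'') \to G'(S'')$ with $F(S')$ and then applying this exact coinvariants functor yields a quasi-isomorphism in each summand; summing over the finitely many summands gives that $(F \otimes G)(S) \to (F \otimes G')(S)$ is a quasi-isomorphism for every $S$, which is exactly objectwise weak equivalence.

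Putting these together: an arbitrary weak equivalence $\varphi\colon G \xrightarrow{\sim} G'$ is an objectwise quasi-isomorphism, so by the previous paragraph $\mathrm{id}_F \otimes \varphi$ is an objectwise quasi-isomorphism, i.e. a weak equivalence, with no cofibrancy hypothesis needed on $F$ or on $G, G'$; symmetrically $\varphi \otimes \mathrm{id}_F$ is a weak equivalence. I would expect the only subtlety — and the "main obstacle," though it is a mild one — to be making sure the formula \eqref{eqn:formula-day} genuinely exhibits $(F\otimes G)(S)$ as a direct sum of such coinvariant complexes at the level of chain complexes (and not merely up to some completion), so that exactness can be applied termwise; this is handled by the explicit verification indicated after \eqref{eqn:formula-day}, reducing to representable $F, G$. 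An alternative, essentially equivalent, phrasing avoids the semisimplicity remark by noting that each $S_*\colon \cat{Ch} \to \cat{Fun}(\cat{d(s)Br},\cat{Ch})$ applied to a generating (trivial) cofibration produces objects whose underlying chain complexes in each degree are projective $\bQ$-modules, so all objects are "flat enough"; but over a field the cleanest argument is simply that everything in sight is exact, so I would present it that way.
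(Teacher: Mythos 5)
Your second paragraph is exactly the paper's argument: apply the explicit coend formula \eqref{eqn:formula-day}, then use that over $\bQ$ both tensoring and taking coinvariants by a finite group action are exact, so they preserve quasi-isomorphisms objectwise. The opening reduction to cofibrant objects is superfluous (as you yourself observe at the end), and the claim that the direct sum in \eqref{eqn:formula-day} is finite is not quite right for a fixed $S$ — the pairs $(S',S'')$ with $\cat{Pair}(S'\sqcup S'',S)\neq 0$ form an infinite family as the number of matched pairs grows — but this does not affect the argument since homology commutes with arbitrary direct sums.
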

	
\begin{proof}Using \eqref{eqn:formula-day}, the result follows from the facts that for rational chain complexes tensor products and quotients by finite group actions preserve weak equivalences.	
\end{proof}

\subsection{Commutative algebras on the downward Brauer category} Recall that $\cat{Com}$ denotes the nonunital commutative algebra operad. We write $\cat{Alg}_\cat{Com}(\cat{Fun}(\cat{d(s)Br}, \cat{Ch}))$ for the category of $\cat{Com}$-algebra objects in the symmetric monoidal category $\cat{Fun}(\cat{d(s)Br}, \cat{Ch})$; these are \emph{nonunital commutative algebra objects}.

\subsubsection{Unital commutative algebras}Let $\cat{Com}^+$ denote the \emph{unital commutative algebra operad}, having $\cat{Com}^+(r) = \bQ$ with trivial $\fS_r$-action for all $r \geq 0$.  We will write $\cat{Alg}_{\cat{Com}^+}(\cat{Fun}(\cat{d(s)Br}, \cat{Ch}))$ for the category of $\cat{Com}^+$-algebra objects in $\cat{Fun}(\cat{d(s)Br}, \cat{Ch})$; these are \emph{unital commutative algebra objects}.

The object $\bunit$ is canonically a unital commutative algebra object; an \emph{augmentation} for a unital commutative algebra object $R$ is a morphism $\epsilon \colon R \to \bunit$. Since the unit provides a section, there is a splitting
\[R = \bunit \oplus \ol{R}\]
with $\ol{R} = \ker(\epsilon)$ the \emph{augmentation ideal}.  This is canonically a nonunital commutative algebra object, and taking the augmentation ideal yields an equivalence of categories 
\begin{equation}\label{eqn:augm-equiv-cats}\cat{Alg}^\mr{augm}_{\cat{Com}^+}(\cat{Fun}(\cat{d(s)Br}, \cat{Ch})) \overset{\cong}\lra \cat{Alg}_{\cat{Com}}(\cat{Fun}(\cat{d(s)Br}, \cat{Ch}))\end{equation}
which has an inverse given by unitalisation.

\subsubsection{Homotopy theory of commutative algebras and Andr{\'e}--Quillen homology} \label{sec:aq-homology}To do homotopy theory with the nonunital commutative algebras in $\cat{Fun}(\cat{d(s)Br},\cat{Ch})$, we transfer the model structure to these objects.

\begin{lemma}\label{lem:alg-model-cat} The category $\cat{Alg}_{\cat{Com}}(\cat{Fun}(\cat{d(s)Br},\cat{Ch}))$ has a model structure whose weak equivalences and fibrations are objectwise. This is cofibrantly generated.\qed
\end{lemma}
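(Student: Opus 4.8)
The strategy is to obtain this model structure by right transfer along the free-forgetful adjunction
\[
F \colon \cat{Fun}(\cat{d(s)Br},\cat{Ch}) \rightleftarrows \cat{Alg}_{\cat{Com}}(\cat{Fun}(\cat{d(s)Br},\cat{Ch})) \colon U,
\]
where $F$ is the free nonunital commutative algebra monad $V \mapsto \bigoplus_{r \geq 1} (V^{\otimes r})_{\fS_r}$ computed objectwise using the Day convolution tensor product, and $U$ is the forgetful functor. The source category carries the cofibrantly generated monoidal model structure of \cref{lem:functor-model-cat}, with generating cofibrations $I'$ and generating trivial cofibrations $J'$ as described there. Since $U$ creates both the weak equivalences and the fibrations of the transferred structure, the claim is exactly that the transfer exists; I would invoke Kan's transfer criterion, e.g.\ in the form of \cite[Theorem 11.3.2]{Hirschhorn}, whose hypotheses are (a) that the proposed generating (trivial) cofibrations $F(I')$ and $F(J')$ permit the small object argument, and (b) an acyclicity condition: every relative $F(J')$-cell complex is a weak equivalence.

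\textbf{Smallness.} Hypothesis (a) is routine: $\cat{Ch}$ over $\bQ$ is locally presentable, hence so is the functor category $\cat{Fun}(\cat{d(s)Br},\cat{Ch})$, and the free commutative algebra functor $F$ is a left adjoint so it preserves the relevant filtered colimits; thus every object of the algebra category is small relative to the whole category, and in particular relative to $F(I')$-cells and $F(J')$-cells. I would state this in a sentence, citing local presentability.

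\textbf{Acyclicity, the main point.} The real work is hypothesis (b). A generator in $J'$ is of the form $S_*(0 \hookrightarrow D(n))$, the image under a left Kan extension $S_*$ (left adjoint to evaluation at an object $S$ of $\cat{d(s)Br}$) of the inclusion $0 \hookrightarrow D(n)$, and $D(n)$ is contractible. One shows that pushing out $F$ of such a map along an arbitrary algebra map, and then taking a transfinite composite, yields a weak equivalence. The standard way to see this is to filter the pushout $A \to A[D]$ (where $D$ is free on a contractible cofibrant generator) by "number of cells", so that the associated graded pieces are of the form $A \otimes (\text{Day-convolution power of } D(n))_{\fS}$ up to the standard $\fS_r$-equivariant combinatorics of the free commutative algebra filtration. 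Because $D(n)$ is chain-homotopy equivalent to $0$, each such power is chain-homotopy equivalent to $0$, and \cref{lem:dsbr-weq} guarantees that tensoring with $A$ over the Day convolution product — and passing to $\fS_r$-coinvariants, which over $\bQ$ is exact — preserves these equivalences. Hence each filtration stage is a weak equivalence, and a colimit argument (the maps being, on underlying objects, degreewise split monomorphisms of non-negatively graded rational chain complexes, so the transfinite composite computes the homotopy colimit) finishes the acyclicity check. The characteristic-zero hypothesis is used twice and essentially: exactness of $(-)_{\fS_r}$ and the contractibility bookkeeping.

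\textbf{Cofibrant generation and conclusion.} Once the transfer criterion applies, the transferred model structure is automatically cofibrantly generated, with generating cofibrations $F(I')$ and generating trivial cofibrations $F(J')$; its weak equivalences and fibrations are objectwise by construction, as asserted in the lemma. I expect the genuine obstacle to be nothing more than carefully writing out the filtration of a free-algebra pushout and checking that \cref{lem:dsbr-weq} applies at each stage — this is the one place where something beyond formal nonsense is needed, and it is also where one must be slightly careful that the Day convolution tensor product, not the pointwise tensor product, is the operative monoidal structure when forming the free commutative algebra monad.
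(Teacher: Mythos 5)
Your proof takes the same route as the paper: right transfer of the projective model structure along the free-forgetful adjunction, with generating (trivial) cofibrations $F^{\mr{Com}}I'$ and $F^{\mr{Com}}J'$; the paper delegates the verification to \cite[Example 3.7]{GoerssSchemmerhorn}, while you spell out the smallness and acyclicity steps (the latter via the free-algebra filtration, exactness of $\fS_r$-coinvariants over $\bQ$, and \cref{lem:dsbr-weq}), which are exactly the points that reference handles. The argument is correct and matches the paper's approach.
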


The proof proceeds as in the unenriched case by right transfer \cite[Example 3.7]{GoerssSchemmerhorn}. The set of generating cofibrations is $F^\mr{Com}I'$ and the set of generating trivial cofibrations is $F^\mr{Com} J'$ with $F^\mr{Com}$ the left adjoint in the free-forgetful adjunction
\[\begin{tikzcd} \cat{Alg}_{\cat{Com}}(\cat{Fun}(\cat{d(s)Br},\cat{Ch})) \rar[shift left=.5ex]{U^\mr{Com}} &[10pt] \cat{Fun}(\cat{d(s)Br},\cat{Ch}). \lar[shift left=.5ex]{F^\mr{Com}}\end{tikzcd}\]

\begin{remark} The same construction endows $\cat{Alg}_{\cat{Com}^+}(\cat{Fun}(\cat{d(s)Br},\cat{Ch}))$ with a model structure whose weak equivalences and fibrations are objectwise. Moreover,  \cite[Theorem 7.6.5]{Hirschhorn} endows $\cat{Alg}^\mr{augm}_{\cat{Com}^+}(\cat{Fun}(\cat{d(s)Br},\cat{Ch}))$ with a model structure whose weak equivalences, cofibrations, and fibrations are such when forgetting the augmentation. Then \eqref{eqn:augm-equiv-cats} is a Quillen equivalence.\end{remark}

The monad $\mr{Com}$ has an augmentation $\epsilon \colon \mr{Com} \to \mr{Id}$ by projection to the $n=1$ summand, and we define \emph{indecomposables} $Q^{\mr{Com}}(\ol{R})$ of a nonunital commutative algebra object $\ol{R}$ as the following reflexive coequaliser in $\cat{Fun}(\cat{d(s)Br},\cat{Ch})$
\begin{equation*}
	\begin{tikzcd}
		\mr{Com}(\ol{R}) \arrow[r, shift left=1ex, "\text{act}"] \arrow[r, shift left=-1ex, swap, "\epsilon"]& \lar \ol{R} \rar & Q^\mr{Com}(\ol{R}).
	\end{tikzcd}
\end{equation*}
This is the left adjoint in an adjunction
\[\begin{tikzcd}\cat{Fun}(\cat{d(s)Br},\cat{Ch}) \rar[shift left=.5ex]{Z^\mr{Com}} &[10pt] \cat{Alg}_{\cat{Com}}(\cat{Fun}(\cat{d(s)Br},\cat{Ch})) \lar[shift left=.5ex]{Q^{\mr{Com}}}\end{tikzcd}\] 
with right adjoint the \emph{trivial algebra functor} $Z^\mr{Com}$ endowing $F$ with the zero multiplication. With the model structures of \cref{lem:functor-model-cat} and \cref{lem:alg-model-cat} this is a Quillen adjunction, because the right adjoint preserves the underlying objects and hence (trivial) fibrations. We write $\bL Q^\mr{Com}$ for the total left derived functor of $Q^\mr{Com}$.

\begin{definition}For a nonunital commutative algebra object $\ol{R} \colon \cat{d(s)Br} \to \cat{Ch}$ we define its \emph{Andr{\'e}--Quillen homology}
	\[\mr{AQ}_p(\ol{R}) \coloneqq H_p(\bL Q^\mr{Com}(\ol{R})).\]
\end{definition}

\begin{notation}By a slight abuse of notation, for an augmented commutative algebra $R$ with augmentation ideal $\ol{R}$ we write $\bL Q^{\mr{Com}^+}(R)$ for $\bL Q^{\mr{Com}}(\ol{R})$, and $\mr{AQ}_*(R)$ for $\mr{AQ}_*(\ol{R})$.\end{notation}

\subsubsection{Regular sequences}\label{sec:Regular} If $R \in \cat{Alg}^\mr{augm}_{\cat{Com}^+}(\cat{Fun}(\cat{d(s)Br}, \cat{Ch}))$ and $z \in \overline{R}(\varnothing)$ is a cycle of degree $k \geq 0$, then by adjunction there is a map $(\varnothing)_* S(k) \to R$, where $(\varnothing)_*$ denotes the left Kan extension along the inclusion $\{\varnothing\} \to \cat{d(s)Br}$ and $S(k)$ is the chain complex of \eqref{eqn:gen-cof}. Using that $R$ is a unital commutative algebra object, this further extends to a morphism $F^{\mr{Com}^+}((\varnothing)_* S(k)) \to R$, using which we may form the pushout 
\begin{equation}\label{eq:QuotByZ}
	\begin{tikzcd} 
		F^{\mr{Com}^+}((\varnothing)_* S(k)) \dar \rar& R \dar\\[-2pt]
		F^{\mr{Com}^+}(0) \rar& R/(z)
	\end{tikzcd}
\end{equation}
in the category $\cat{Alg}^\mr{augm}_{\cat{Com}^+}(\cat{Fun}(\cat{d(s)Br},\cat{Ch}))$.

\begin{definition}Let $R$ and $z$ be as above, then we say that $z$ is \emph{not a zerodivisor} if the morphisms $z \cdot -\colon \bQ[k] \otimes R(S) \to R(S)$ are injective for all finite sets $S$.\end{definition}

In particular, as $z^2 = (-1)^k z^2$ by graded-commutativity, $k$ must be even.

\begin{lemma}If $z \in \ol{R}(\varnothing)$ is not a zerodivisor then \eqref{eq:QuotByZ} is a homotopy pushout.
\end{lemma}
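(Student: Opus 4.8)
The plan is to factor the left-hand vertical map of \eqref{eq:QuotByZ} as a cofibration followed by a weak equivalence, to identify the resulting pushout with a twisted Koszul-type complex, and then to invoke the non-zerodivisor hypothesis. Write $A \coloneqq F^{\mr{Com}^+}((\varnothing)_* S(k))$. Since $(\varnothing)_* S(k)$ is supported on the object $\varnothing$ with value $\bQ$ in degree $k$, and $k$ is even, $A$ is supported on $\varnothing$ with value the polynomial ring $\bQ[z_k]$; the map $A \to R$ classifies $z$, and the map $A \to F^{\mr{Com}^+}(0) = \bunit$ appearing in \eqref{eq:QuotByZ} is the augmentation $z_k \mapsto 0$. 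First I would factor this augmentation as $A \to A' \to \bunit$ with $A' \coloneqq F^{\mr{Com}^+}((\varnothing)_* D(k+1))$: applying the left Quillen functors $(\varnothing)_*$ and $F^{\mr{Com}^+}$ to the generating cofibration $S(k) \hookrightarrow D(k+1)$ of $\cat{Ch}$ exhibits $A \to A'$ as a cofibration, while $A'$ is supported on $\varnothing$ with value the Koszul complex $(\bQ[z_k] \otimes \Lambda[y],\, dy = z_k)$ with $y$ of degree $k+1$, which is a free resolution of $\bQ$, so $A' \to \bunit$ is a weak equivalence.

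I would then study the pushout $\Phi(R) \coloneqq A' \sqcup_A R$ along the cofibration $A \to A'$, together with the comparison map $\Phi(R) \to \bunit \sqcup_A R = R/(z)$ induced by $A' \to \bunit$. Writing $A' = A \otimes B$ as graded algebra objects, where $B \coloneqq F^{\mr{Com}^+}((\varnothing)_* S(k+1))$ is supported on $\varnothing$ with value $\Lambda[y]$ but carries the twisted differential $dy = z_k$, one computes $\Phi(R) = R \otimes B$; objectwise $\Phi(R)(S) = R(S) \otimes \Lambda[y]$ with differential $d(r + r'y) = dr + (dr')y + (-1)^{|r'|} z \cdot r'$, where $z \cdot -$ denotes the action of $z \in R(\varnothing)$ on $R(S)$. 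In other words $\Phi(R)(S)$ is the mapping cone of the degree-$k$ chain endomorphism $z \cdot -$ of $R(S)$, naturally in $R$.

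Two observations then complete the proof. First, $\Phi$ preserves weak equivalences: a weak equivalence $R \to R'$ induces on each $\Phi(-)(S)$ the map of mapping cones attached to a commuting square whose vertical arrows are the quasi-isomorphisms $R(S) \to R'(S)$, hence a quasi-isomorphism. Second, when $R$ is cofibrant the pushout $\Phi(R) = A' \sqcup_A R$ is a homotopy pushout of $\bunit \leftarrow A \to R$, since $A$, $A'$ and $R$ are all cofibrant and $A \to A'$ is a cofibration. Combining these: for general $R$ one factors $A \to R$ as a cofibration $A \to \hat R$ followed by a trivial fibration $\hat R \to R$, and then $\hocolim(\bunit \leftarrow A \to R) \simeq A' \sqcup_A \hat R = \Phi(\hat R) \xrightarrow{\ \sim\ } \Phi(R)$, compatibly with the comparison maps to $R/(z)$. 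Finally, the comparison map $\Phi(R) \to R/(z)$ is objectwise the projection from $\mr{Cone}(z \cdot -)$ onto $\mr{coker}(z \cdot -)$, and this cokernel is $(R/(z))(S) = R(S)/(z \cdot R(S))$ — the ideal generated by $z \in R(\varnothing)$ is levelwise $z \cdot R(S)$, since there are no nonzero morphisms $\varnothing \to S$ in $\cat{d(s)Br}$ for $S \neq \varnothing$. As $z$ is not a zerodivisor, $z \cdot - \colon R(S) \to R(S)$ is injective, and over $\bQ$ the projection from the mapping cone of an injective chain map to its cokernel is a quasi-isomorphism. Hence $\hocolim(\bunit \leftarrow A \to R) \to R/(z)$ is a weak equivalence, i.e.\ \eqref{eq:QuotByZ} is a homotopy pushout.

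The main obstacle I anticipate is ensuring that the strict pushout $\Phi(R) = A' \sqcup_A R$ genuinely models the homotopy pushout when $R$ is not assumed cofibrant; the clean route is the observation that $\Phi$ is a homotopical functor, though one could instead appeal to left properness of the algebra model category (which holds over $\bQ$). The remaining steps — identifying $\Phi(R)$ with a twisted Koszul complex and the elementary fact that the cone of an injection is quasi-isomorphic to its cokernel — are routine.
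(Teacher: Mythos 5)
Your proof is correct and follows essentially the same route as the paper's: both exhibit the left vertical map of \eqref{eq:QuotByZ} as (a model for) the generating cofibration $F^{\mr{Com}^+}((\varnothing)_*S(k)) \to F^{\mr{Com}^+}((\varnothing)_*D(k+1))$, compute the pushout with a cofibrant replacement of $R$ (you factor $A \to R$ as a cofibration followed by a trivial fibration, the paper replaces $R$ by a cofibrant $cR$ and uses the resulting span $A' \leftarrow A \to cR$), identify the resulting object levelwise as the mapping cone of $z\cdot-$, and invoke the non-zerodivisor hypothesis to identify this cone with $R/(z)$. The only cosmetic difference is how each establishes that forming this pushout respects the weak equivalence between $R$ and its cofibrant replacement: you observe directly that a quasi-isomorphism of complexes induces a quasi-isomorphism of cones, while the paper runs a two-step filtration by powers of the odd-degree generator $z_{k+1}$ — which over $\bQ$ amounts to the same two-term comparison.
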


\begin{proof}
	Consider the generating cofibration $S(k) \hookrightarrow D(k+1)$ of \eqref{eqn:gen-cof}. Then
	\[F^{\mr{Com}^+}((\varnothing)_*S(k)) \lra F^{\mr{Com}^+}((\varnothing)_*D(k+1))\]
	is a cofibration modelling the left vertical map of \eqref{eq:QuotByZ} (it is a generating cofibration). If $cR \overset{\sim}\to R$ is a cofibrant replacement (we may assume it is surjective) and $z'  \in \overline{cR}(\varnothing)$ is a lift of $z$, the homotopy pushout is by definition given by
	\[cR \otimes_{F^{\mr{Com}^+}((\varnothing)_*S(k))} F^{\mr{Com}^+}((\varnothing)_*D(k+1))\]
	which is $cR \otimes F^{\mr{Com}^+}((\varnothing)_*S(k+1))$ with differential determined by $d (x \otimes 1) = d_{cR}(x) \otimes 1$, $d (1 \otimes z_{k+1}) = z' \otimes 1$, and the Leibniz rule.
	
	We thus first prove that the map
	\[cR \otimes_{F^{\mr{Com}^+}((\varnothing)_*S(k))} F^{\mr{Com}^+}((\varnothing)_*D(k+1)) \lra R \otimes_{F^{\mr{Com}^+}((\varnothing)_*S(k))} F^{\mr{Com}^+}((\varnothing)_*D(k+1))\]
	induced by $cR \overset{\sim}\to R$ is a weak equivalence. To do so, we use that there is an isomorphism
	\begin{equation}\label{eqn:f-com-iso}F^{\mr{Com}^+}((\varnothing)_*S(k+1)) \cong (\varnothing)_*(\Lambda[z_{k+1}])\end{equation}
	and filter both sides by powers of $z_{k+1}$ (there will be only two non-zero powers, $1$ and $z_{k+1}$, since $k+1$ is odd). On the associated gradeds we get either a shift of the map $cR \to R$ or the map $0 \to 0$ so the result follows.
	
	Next we observe that the isomorphism \eqref{eqn:f-com-iso} together with the fact that $k+1$ is odd, identifies $R \otimes F^{\mr{Com}^+}((\varnothing)_*S(k+1))$ with the mapping cone of%\ak{Removed a ``and similarly for $R$.'' here}
	\[z \cdot -\colon \bQ[k] \otimes R \lra R.\]
	As we have assumed that $z$ is not a zerodivisor this map is injective when evaluated on any finite set, so its cokernel and mapping cone are quasiisomorphic: it follows that $R \otimes F^{\mr{Com}^+}((\varnothing)_*S(k+1)) \simeq R/(z)$.
\end{proof}

\begin{corollary}\label{cor:RegSeq}
	If $z \in \ol{R}(\varnothing)$ is not a zerodivisor then there is a homotopy cofibre sequence
	\begin{equation*}
		(\varnothing)_*(\bQ\{z\}[k]) \lra \bL Q^{\mr{Com}^+}(R) \lra \bL Q^{\mr{Com}^+}(R/(z)).\eqno\qed
	\end{equation*}
\end{corollary}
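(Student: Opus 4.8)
The plan is to deduce \cref{cor:RegSeq} directly from the preceding lemma by applying the left-derived indecomposables functor $\bL Q^{\mr{Com}^+}$ to the homotopy pushout square \eqref{eq:QuotByZ}. First I would recall that $\bL Q^{\mr{Com}^+} = \bL Q^{\mr{Com}}$ is a left Quillen functor (it is the total left derived functor of the left adjoint $Q^{\mr{Com}}$, as established just before the definition of Andr\'e--Quillen homology), and hence preserves homotopy pushouts. Since the previous lemma identifies \eqref{eq:QuotByZ} as a homotopy pushout square in $\cat{Alg}^\mr{augm}_{\cat{Com}^+}(\cat{Fun}(\cat{d(s)Br},\cat{Ch}))$, applying $\bL Q^{\mr{Com}^+}$ yields a homotopy pushout square in $\cat{Fun}(\cat{d(s)Br},\cat{Ch})$ with corners $\bL Q^{\mr{Com}^+}(F^{\mr{Com}^+}((\varnothing)_*S(k)))$, $\bL Q^{\mr{Com}^+}(R)$, $\bL Q^{\mr{Com}^+}(F^{\mr{Com}^+}(0))$, and $\bL Q^{\mr{Com}^+}(R/(z))$.

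Next I would compute the two corners coming from free algebras. For any cofibrant $X \in \cat{Fun}(\cat{d(s)Br},\cat{Ch})$ one has $Q^{\mr{Com}}(F^{\mr{Com}}(X)) \cong X$ (the free algebra's indecomposables are the generators), so since $F^{\mr{Com}^+}((\varnothing)_*S(k))$ has augmentation ideal $F^{\mr{Com}}((\varnothing)_*S(k))$ with $(\varnothing)_*S(k)$ already cofibrant, we get $\bL Q^{\mr{Com}^+}(F^{\mr{Com}^+}((\varnothing)_*S(k))) \simeq (\varnothing)_*S(k) = (\varnothing)_*(\bQ\{z\}[k])$. Similarly $\bL Q^{\mr{Com}^+}(F^{\mr{Com}^+}(0)) \simeq 0$. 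A homotopy pushout square in a stable setting with one corner weakly equivalent to $0$ is exactly a homotopy cofibre sequence, which gives precisely
\[
(\varnothing)_*(\bQ\{z\}[k]) \lra \bL Q^{\mr{Com}^+}(R) \lra \bL Q^{\mr{Com}^+}(R/(z)),
\]
as claimed. One should also check that the left-hand map here is the one induced by the cycle $z$, which is immediate from unwinding the adjunctions used to build \eqref{eq:QuotByZ}.

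The main point requiring a little care — and the only genuine obstacle — is the interplay between the \emph{unital} augmented category and the \emph{nonunital} category in which $\bL Q^{\mr{Com}}$, and hence Andr\'e--Quillen homology, is defined: one must pass through the Quillen equivalence \eqref{eqn:augm-equiv-cats} (taking augmentation ideals) and verify that it carries the homotopy pushout \eqref{eq:QuotByZ} to a homotopy pushout of nonunital algebras, and that the free unital algebra $F^{\mr{Com}^+}$ corresponds under this equivalence to the free nonunital algebra on the same generators. Both facts follow formally from the remark immediately after \cref{lem:alg-model-cat} that \eqref{eqn:augm-equiv-cats} is a Quillen equivalence and from the compatibility of the two free functors, so in the write-up this can be dispatched in a sentence or two; the rest is the formal manipulation of derived functors sketched above. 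I expect the entire proof to be short.
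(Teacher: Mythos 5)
Your proposal is correct and matches the paper's (implicit) argument exactly: the corollary carries a \qed with no written proof, because the intended argument is precisely what you describe, namely that $Q^{\mr{Com}}$ is a left Quillen functor so its derived functor preserves the homotopy pushout established in the preceding lemma, and the two free corners compute to $(\varnothing)_*S(k)$ and $0$. Your closing remark about passing between the augmented unital and nonunital categories via the Quillen equivalence \eqref{eqn:augm-equiv-cats} and the compatibility $\overline{F^{\mr{Com}^+}(X)} = F^{\mr{Com}}(X)$ is exactly the bookkeeping needed to make the abuse of notation $\bL Q^{\mr{Com}^+}$ rigorous, and it is handled correctly.
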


\subsubsection{The Harrison complex}\label{sec:Harrison} Koszul duality between the nonunital commutative and Lie operads \cite[Section 13.1.5]{LodayVallette} gives for $\ol{R} \in \cat{Alg}_{\cat{Com}}(\cat{Fun}(\cat{d(s)Br}, \cat{Ch}))$, a weak equivalence of the form
\[F^{\mr{Com}}(\mr{coLie}(\ol{R}[1])[-1]) \overset{\sim}\lra \ol{R},\]
where the differential of the domain is a certain functorial deformation of the differential induced by that on $\ol{R}$ \cite[Corollary 11.3.5]{LodayVallette}, and the domain is in addition cofibrant \cite[Proof of Theorem 12.1.6]{LodayVallette}; the proofs in \cite{LodayVallette} are given in $\cat{Ch}$ but go through for $\cat{Fun}(\cat{d(s)Br},\cat{Ch})$. Taking indecomposables shows that the (shifted) \emph{Harrison complex} (c.f.~ \cref{sec:prelim-com}),
\[\mr{Harr}(\ol{R})[-1] \coloneqq \mr{coLie}(\ol{R}[1])[-1]\]
is a model for the derived indecomposables (cf.~\cite[Proposition 13.1.4]{LodayVallette}).

\begin{lemma}\label{lem:qcom-harr} $\bL Q^\mr{Com}(\ol{R}) \simeq \mr{Harr}(\ol{R})[-1]$.\end{lemma}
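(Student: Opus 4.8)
The statement to prove is \cref{lem:qcom-harr}, asserting $\bL Q^\mr{Com}(\ol{R}) \simeq \mr{Harr}(\ol{R})[-1]$.

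\medskip

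The plan is to exhibit an explicit cofibrant replacement of $\ol{R}$ as a nonunital commutative algebra object, and then compute the derived indecomposables by applying $Q^\mr{Com}$ to it. Concretely, the paragraph immediately preceding the lemma already sets up the key input: Koszul duality between the nonunital commutative and Lie operads produces, for any $\ol R \in \cat{Alg}_{\cat{Com}}(\cat{Fun}(\cat{d(s)Br}, \cat{Ch}))$, a weak equivalence
\[
F^{\mr{Com}}\bigl(\mr{coLie}(\ol{R}[1])[-1]\bigr) \overset{\sim}\lra \ol{R},
\]
where the domain carries a functorially deformed differential and, crucially, is cofibrant (as cited from \cite[Proof of Theorem 12.1.6]{LodayVallette}, valid in $\cat{Fun}(\cat{d(s)Br},\cat{Ch})$ by the same arguments). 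So the first step is simply to invoke this: $c\ol R := F^{\mr{Com}}(\mr{coLie}(\ol R[1])[-1])$ is a cofibrant replacement of $\ol R$.

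\medskip

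The second step is to compute $Q^\mr{Com}(c\ol R)$. Since $Q^\mr{Com}$ is left adjoint to the trivial algebra functor $Z^\mr{Com}$, and $F^\mr{Com}$ is left adjoint to the forgetful functor, the composite $Q^\mr{Com} \circ F^\mr{Com}$ is left adjoint to $U^\mr{Com} \circ Z^\mr{Com}$, which is just the identity functor on $\cat{Fun}(\cat{d(s)Br},\cat{Ch})$ (the trivial algebra on $F$ has underlying object $F$). Hence $Q^\mr{Com}(F^\mr{Com}(X)) \cong X$ naturally in $X$. Applying this to $X = \mr{coLie}(\ol R[1])[-1]$, and checking that the deformed differential on $c\ol R$ descends under $Q^\mr{Com}$ to exactly the Harrison differential $d_\mr{Harr}$ (this is precisely the content of Koszul duality: the "curved/deformed" part of the differential on the cobar-type resolution is, on indecomposables, the cooperation coming from the cogenerators), we obtain $Q^\mr{Com}(c\ol R) = \mr{Harr}(\ol R)[-1] = \mr{coLie}(\ol R[1])[-1]$ with its total differential $d_{\ol R} + d_\mr{Harr}$ as defined in \cref{sec:prelim-com}. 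Therefore $\bL Q^\mr{Com}(\ol R) = Q^\mr{Com}(c\ol R) \simeq \mr{Harr}(\ol R)[-1]$. This is, as the excerpt notes, essentially \cite[Proposition 13.1.4]{LodayVallette}, transported to the functor category.

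\medskip

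I expect the only real subtlety to be the bookkeeping of differentials: one must verify that the functorial deformation of the differential on $F^{\mr{Com}}(\mr{coLie}(\ol{R}[1])[-1])$ reduces, after applying $Q^\mr{Com}$, precisely to the map $d_\mr{Harr}$ induced on cogenerators by the commutative multiplication $\mr{Sym}^2(\ol R) \to \ol R$, matching the conventions fixed in \cref{sec:prelim-com}. This is a standard fact about the bar--cobar / Koszul resolution for the $(\cat{Com},\cat{Lie})$ Koszul pair, and everything in \cite{LodayVallette} that is used works verbatim in $\cat{Fun}(\cat{d(s)Br},\cat{Ch})$ since that category is again a cocomplete closed symmetric monoidal $\bQ$-linear category with the same formal properties. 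A one-line proof suffices: "This is \cite[Proposition 13.1.4]{LodayVallette}; although stated in $\cat{Ch}$, the proof applies to $\cat{Fun}(\cat{d(s)Br},\cat{Ch})$, using the cofibrant replacement recalled above together with the identification $Q^\mr{Com} \circ F^\mr{Com} \cong \mr{id}$."
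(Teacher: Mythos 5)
Your proof is correct and uses the same two ingredients the paper uses: the Koszul (bar--cobar) resolution $F^{\mr{Com}}(\mr{coLie}(\ol R[1])[-1]) \to \ol R$ being a cofibrant replacement, and the identification $Q^{\mr{Com}} \circ F^{\mr{Com}} \cong \mr{id}$ on generators, with the twisted part of the differential landing in decomposables so that only the Harrison differential survives on indecomposables. The one organisational difference is that you apply the Koszul resolution directly to $\ol R$ and read off $\bL Q^{\mr{Com}}(\ol R)$ from it, whereas the paper first fixes an auxiliary cofibrant replacement $c\ol R$, applies the Koszul resolution to $c\ol R$, and then compares $\mr{coLie}(c\ol R[1])[-1]$ with $\mr{coLie}(\ol R[1])[-1]$ via a zigzag (using that $\mr{coLie}(-)$ preserves weak equivalences, \cref{lem:dsbr-weq}); your route is slightly more direct and equally valid, since $\bL Q^{\mr{Com}}$ may be computed from any cofibrant replacement.
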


\begin{proof}For $c\ol{R} \overset{\sim}\lra \ol{R}$ a cofibrant replacement, there is a commutative diagram
	\[\begin{tikzcd} Q^\mr{Com}(c\ol{R}) & \lar \dar Q^\mr{Com}(F^\mr{Com}(\mr{coLie}(c\ol{R}[1])[-1])) \rar[equal] &  \mr{coLie}(c\ol{R}[1])[-1] \dar \\[-2pt]
		& Q^\mr{Com}(F^\mr{Com}(\mr{coLie}(\ol{R}[1])[-1])) \rar[equal] & \mr{coLie}(\ol{R}[1])[-1].\end{tikzcd}\]
	By definition $c\ol{R}$ is a cofibrant, and $F^{\mr{Com}}(\mr{coLie}(c\ol{R}[1])[-1])$ is too by the discussion above. The left horizontal map is thus a weak equivalence; it is a left Quillen functor applied to a weak equivalence between cofibrant objects. The right vertical map is a weak equivalence as $\mr{coLie}(-)$ preserves weak equivalences (as the tensor product does by \cref{lem:dsbr-weq} and coinvariants for a finite group action do).
\end{proof}

\subsubsection{Additional gradings and Koszulness}\label{sec:Gradings}
If a nonunital commutative algebra object $\ol{R}$ takes values in the subcategory $\cat{Gr}(\bQ\text{-}\cat{mod}) \subset \cat{Ch}$, then the objects $\mr{AQ}_p(\ol{R})$ inherit a further grading, as in \cref{sec:prelim-com}. We set
\[H^\mr{Com}_p(\ol{R})_q \coloneqq \mr{AQ}_{p+q-1}(\ol{R})_q \cong H_{p+q}(\mr{Harr}(\overline{R}))_q,\]
and call $p$ the \emph{Harrison degree}, $q$ the \emph{internal degree}, and $p+q-1$ the \emph{total degree}. If $\ol{R}$ also has a weight grading, i.e.\  $\ol{R} \in \cat{Alg}_{\cat{Com}}(\cat{Fun}(\cat{d(s)Br},\cat{Gr}(\bQ\text{-}\cat{mod})^\bN))$, then its derived indecomposables can be formed in $\cat{Ch}^\bN$, giving them an additional weight grading (and similarly for the Harrison complex) and we write
\[H^\mr{Com}_p(\ol{R})_{q,w} \coloneqq \mr{AQ}_{p+q-1}(\ol{R})_{q,w} \cong H_{p+q}(\mr{Harr}(\overline{R}))_{q,w},\]
for the piece of weight $w$.

\begin{definition}\label{def:koszul-com} For $\ol{R} \in \cat{Alg}_{\mr{Com}}(\cat{Fun}(\cat{d(s)Br},\cat{Gr}(\bQ\text{-}\cat{mod})^\bN))$ we say that $\ol{R}$ is \emph{Koszul in weight $ \leq W$} if $H^\mr{Com}_p(\ol{R})_{q,w}=0$ when $p \neq w$ and $w \leq W$. If $W=\infty$ then we simply say that $R$ is \emph{Koszul}.
\end{definition}

The following lemma gives an easy criterion for half of this inequation:

\begin{lemma}\label{lem:KoszEasyEstimate}
	If  each $\ol{R}(S)$ is supported in weight $\geq 1$ then $H^\mr{Com}_p(\ol{R})_{q,w} = 0 \text{ if } p > w$.
\end{lemma}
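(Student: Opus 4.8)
The plan is to show that the entire chain group computing $H^\mr{Com}_p(\ol{R})_{q,w}$ already vanishes once $p>w$, so that no homology computation is needed. The mechanism is that the Harrison degree $p$ records the \emph{word length} in the cofree Lie coalgebra $\mr{coLie}(\ol{R}[1])$ underlying the Harrison complex, and a word of length $p$ is assembled from $p$ tensor factors of $\ol{R}$, each of which carries weight $\geq 1$ by hypothesis.

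In detail, I would first unwind the definitions in \cref{sec:Gradings}. Since $\ol{R}$ takes values in $\cat{Gr}(\bQ\text{-}\cat{mod})^\bN\subset\cat{Ch}^\bN$ it has vanishing internal differential, so the only differential on $\mr{Harr}(\ol{R})=\mr{coLie}(\ol{R}[1])$ is $d_\mr{Harr}$, which lowers word length by $1$. Write $\mr{coLie}(\ol{R}[1])=\bigoplus_{n\geq 1}\mr{coLie}(n)\otimes_{\fS_n}(\ol{R}[1])^{\otimes n}$ for the word-length decomposition, the tensor products being Day convolutions over $\cat{d(s)Br}$ and $\mr{coLie}(n)$ being concentrated in homological degree $0$ and weight $0$. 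A word of length $n$ whose $\ol{R}$-letters have internal degrees summing to $q$ then lies in homological degree $n+q$; hence, within internal degree $q$, the homological degree $p+q$ singles out exactly the word-length-$p$ summand $\mr{coLie}(p)\otimes_{\fS_p}(\ol{R}[1])^{\otimes p}$, and $H^\mr{Com}_p(\ol{R})_{q,w}=H_{p+q}(\mr{Harr}(\ol{R}))_{q,w}$ is a subquotient of the weight-$w$ part of this summand. Now weight is additive under Day convolution over $\cat{d(s)Br}$ and is unaffected by the shift $[1]$, so, evaluated at any finite set $S$, this summand is built from terms of the form $W_0\otimes\ol{R}(S_1)[1]\otimes\cdots\otimes\ol{R}(S_p)[1]$ with $W_0$ concentrated in weight $0$; since each $\ol{R}(S_i)$ is supported in weight $\geq 1$, the summand is supported in weight $\geq p$, and therefore vanishes in weights $w<p$. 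Hence $H^\mr{Com}_p(\ol{R})_{q,w}=0$ whenever $p>w$.

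The argument is essentially degree bookkeeping, and the only step needing care is pinning down that fixing homological degree $p+q$ together with internal degree $q$ forces word length exactly $p$ in $\mr{coLie}(\ol{R}[1])$ — equivalently, getting the interplay of the suspension $[1]$, the internal grading, and the homological grading right. Note that the commutative-algebra structure enters only through the fact that $d_\mr{Harr}$ does not \emph{raise} word length, which is irrelevant to the estimate since the word-length-$p$ chains already vanish; this is precisely why the resulting bound is one-sided.
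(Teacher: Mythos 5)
Your proof is correct and takes essentially the same approach as the paper: both identify $H^\mr{Com}_p(\ol{R})_{q,w}$ as a subquotient of the word-length-$p$ summand $\mr{coLie}(p)\otimes_{\fS_p}(\ol{R}[1])^{\otimes p}$ of the Harrison complex, which by additivity of weight under Day convolution is supported in weights $\geq p$. The only difference is that you make explicit the degree bookkeeping (homological degree $p+q$ plus internal degree $q$ forces word length $p$), which the paper's terser proof leaves implicit.
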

\begin{proof} Neglecting the differential, the Harrison complex splits as 
	\[\bigoplus_{k \geq 0} \mr{coLie}(k) \otimes_{\fS_k} (\ol{R}[1])^{\otimes k}.\]
	The weight of $\ol{R}[1]$ is $\geq 1$, so the weight of $(\ol{R}[1])^{\otimes p}$ is $\geq p$. As $H^\mr{Com}_p(\ol{R})$ is a subquotient of $\mr{coLie}(p) \otimes_{\fS_p} (\ol{R}[1])^{\otimes p}$, it is supported in weight $\geq p$.
\end{proof}

\subsection{Realisation}\label{sec:realisation}

If $H$ is a vector space with a (skew) symmetric form $\lambda$, then we will now explain how a representation of the downward (signed) Brauer category yields a representation of $\mr{Aut}(H,\lambda)$. 

\begin{definition}\label{defn:Realisation}
If $\lambda$ is a symmetric form on $H$ there is associated a functor
\[K\colon \cat{dBr} \lra \cat{Gr}(\bQ\text{-}\cat{mod})\]
given as follows: to the object $S$ we assign $K(S) = {H}^{\otimes S}$, considered as a graded vector space in grading zero, and to the morphism $(f, m_S)\colon S \to T$ we assign the map ${H}^{\otimes S} \to {H}^{\otimes T}$ given by applying ${\lambda}$ to each of the pairs in $m_S$, and using the bijection $f(T) \overset{\sim}\to T$ to induce an isomorphism ${H}^{\otimes f(T)} \overset{\sim}\to {H}^{\otimes T}$ on the remaining factors.

Similarly if $\lambda$ is a skew-symmetric form on $H$ there is associated a functor
\[K\colon \cat{dsBr} \lra \cat{Gr}(\bQ\text{-}\cat{mod})\]
given in the same way on objects, and given on a morphism $[(f,m_S)]\colon S \to T$ by applying $\lambda$ to each of the ordered pairs in $m_S$; as $\lambda$ is skew-symmetric this is well-defined.
\end{definition}

\begin{remark}In \cite{KR-WTorelli} $K$ was defined on the full Brauer category $\cat{(s)Br}$, and its restriction to $\cat{d(s)Br}$ was denoted $i^*K$. We have no need for the full Brauer category here and hence opt to simplify the notation.\end{remark}

In either case dualising $K$ gives a functor $K^\vee\colon  \cat{d(s)Br}^\mr{op} \to \cat{Gr}(\bQ\text{-}\cat{mod})$, and taking the coend with this defines a functor
\[K^\vee \otimes^{\cat{d(s)Br}} -\colon \cat{Fun}(\cat{d(s)Br},\cat{Gr}(\bQ\text{-}\cat{mod})) \lra \cat{Gr}(\bQ\text{-}\cat{mod}).\]
As explained in \cite[Section 2.2.1]{KR-WTorelli} (following \cite{SS}) this  has a strong symmetric monoidality.

The data $({H},{\lambda})$ is---tautologically---equipped with an action of the group $\mr{Aut}({H},{\lambda})$, and hence the functor $K$ is too. Thus the above construction may be promoted to a functor 
\[K^\vee \otimes^{\cat{d(s)Br}} -\colon \cat{Fun}(\cat{d(s)Br},\cat{Gr}(\bQ\text{-}\cat{mod}))  \lra \cat{Gr}(\cat{Rep}(\mr{Aut}({H},{\lambda}))),\]
i.e.\ to take values in graded $\mr{Aut}({H},{\lambda})$-representations.

From now on, we take $(H,\lambda)$ to be $(H(g),\lambda)$ as in \cref{sec:orth-symp-rep-theory}, so that $\mr{Aut}(H,\lambda) = \mr{O}_\epsilon(H(g))$. This functor is a left adjoint so is right exact, but is not left exact in general. However, it has the following stable left exactness property. Let us say that a functor $A\colon \cat{d(s)Br} \to \bQ\text{-}\cat{mod}$ is \emph{supported on sets of size $\leq N$} if $A(S)=0$ whenever $|S| > N$. (We take the target to be $\bQ$-modules rather than graded $\bQ$-modules as we will later wish to consider functors to graded $\bQ$-modules whose support depends on the grading.)

\begin{lemma}\label{lem:partial-inverse-to-realisation}  For each $A \in \cat{Fun}(\cat{d(s)Br},\bQ\text{-}\cat{mod})$ and finite set $S$ there is a surjective map
	\[A(S) \lra [H(g)_{[S]} \otimes (K^\vee \otimes^{\cat{d(s)Br}} A)]^{\mr{O}_\epsilon(H(g))}\]
which is injective when $|S| \leq g$.
\end{lemma}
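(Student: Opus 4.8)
The plan is to exhibit the claimed map as a counit-type comparison for the adjunction-like pairing between $\cat{d(s)Br}$-representations and $\mr{O}_\epsilon(H(g))$-representations, and to analyse it using the explicit coend formula. First I would unwind the coend: for $A\colon\cat{d(s)Br}\to\bQ\text{-}\cat{mod}$ one has
\[
K^\vee\otimes^{\cat{d(s)Br}}A \;=\; \mathrm{colim}_{\cat{d(s)Br}}\bigl(K^\vee(-)\otimes A(-)\bigr),
\]
and by Yoneda/coend calculus the value on the representable $\bQ\cat{d(s)Br}(-,T)$ is $K^\vee(T)=H(g)^{\otimes T}$ (using the self-duality of $H(g)$ afforded by $\lambda$). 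The functor $S\mapsto[H(g)_{[S]}\otimes(-)]^{\mr{O}_\epsilon(H(g))}$ should be thought of as a partial inverse to realisation: I would first check it on representables, where $[H(g)_{[S]}\otimes H(g)^{\otimes T}]^{\mr{O}_\epsilon(H(g))}$ is computed by the first fundamental theorem of invariant theory for $\mr{O}_\epsilon$ — the invariants are spanned by complete contractions, i.e.\ exactly by (images of) basis morphisms of $\cat{d(s)Br}(T,S)$ — and the second fundamental theorem tells us that the only relations among these contractions are the ones already imposed in $\cat{d(s)Br}$ provided $|S|\le g$ (equivalently $2g$ is large enough relative to the number of vectors being contracted; the genuinely new invariant relation is the vanishing of an antisymmetrisation over $2g+1$ or $g+1$ indices, which does not appear in the stated range). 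This gives the comparison map on representables and shows it is an isomorphism when $|S|\le g$.

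Next I would promote this to general $A$. Every $A$ is a colimit of representables; the source functor $A\mapsto A(S)$ certainly commutes with colimits, and the target functor $A\mapsto[H(g)_{[S]}\otimes(K^\vee\otimes^{\cat{d(s)Br}}A)]^{\mr{O}_\epsilon(H(g))}$ is the composite of the colimit-preserving realisation with the functor $(-)\mapsto[H(g)_{[S]}\otimes(-)]^{\mr{O}_\epsilon(H(g))}$. The latter is only right exact in general (invariants of a finite-dimensional group over $\bQ$ — here an arithmetic or algebraic group whose finite-dimensional algebraic representations are semisimple by \cref{sec:orth-symp-rep-theory} — are actually exact, but the subtlety is that $H(g)_{[S]}$ is a \emph{quotient}, and more importantly that the isomorphism on representables fails for large $|S|$). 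So one does not get an isomorphism for all $A$ by a formal colimit argument; one only gets a surjection. Concretely: write $A$ as a reflexive coequaliser (or just a cokernel, since we are over a field and only need right-exactness) of free functors $\bigoplus_i \bQ\cat{d(s)Br}(-,T_i)\twoheadrightarrow A$; both functors in the statement send this to a right-exact diagram, and on the free functors the comparison is surjective (it is even an isomorphism when all $|T_i|\le g$, but surjectivity holds unconditionally because the invariants of $H(g)_{[S]}\otimes H(g)^{\otimes T}$ are \emph{always} spanned by contraction morphisms — that is the easy, ``first fundamental theorem'' half of invariant theory), so the comparison for $A$ is surjective. For injectivity when $|S|\le g$: resolve $A$ one step further, $\bigoplus_j\bQ\cat{d(s)Br}(-,U_j)\to\bigoplus_i\bQ\cat{d(s)Br}(-,T_i)\to A\to 0$, and apply the five-lemma-style diagram chase, using that the comparison is an isomorphism on each representable appearing (all the relevant sets $T_i$ can be taken of size $\le|S|$ after discarding summands that do not affect $A(S)$, hence $\le g$) — more carefully, one should observe that $A(S)$ only depends on the part of the resolution supported on sets of size $\le|S|\le g$, so one may truncate the free resolution to those summands before applying the comparison.

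The main obstacle is the injectivity half, and within it the precise bookkeeping of \emph{which} free summands one is allowed to use: the naive diagram chase wants the comparison to be iso on $T_i$ and $U_j$, but a general free resolution of $A$ involves representables on arbitrarily large sets. The resolution of this is the standard observation that, for fixed $S$ with $|S|\le g$, both sides of the comparison only see the restriction of $A$ to the full subcategory of $\cat{d(s)Br}$ on sets of size $\le|S|$ (because a contraction morphism $T\to S$ factors through no set larger than $S$, and dually morphisms into $S$ only increase cardinality), so one may replace $A$ by $A$ restricted-and-left-Kan-extended from that subcategory, which does admit a free resolution by representables on small sets. I would then also need to record the invariant-theory input carefully: the ``first fundamental theorem'' surjectivity $[H(g)_{[S]}\otimes H(g)^{\otimes T}]^{\mr{O}_\epsilon}=\mathrm{span\ of\ contractions}$ holds for all $g$, while the ``second fundamental theorem'' statement that the only relations are the Brauer-category relations holds once $g\ge$ (something linear in $|S|$); getting the constant to be exactly $|S|\le g$ rather than $|S|\le g/c$ requires using that $H(g)_{[S]}$ has already quotiented out the $\omega$-insertions, so the only remaining potential relation is the determinant/Pfaffian one which needs $2g\ge |S|+|T|$ or similar — but since we are pairing with the \emph{cogenerator} $H(g)_{[S]}$ it is really $|S|\le g$ that is needed; I would verify this against the corresponding statement in \cite[Section 2.2]{KR-WTorelli}, which is where the analogous bound for the full Brauer category is established, and cite that rather than reprove the invariant theory from scratch.
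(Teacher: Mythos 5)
Your overall strategy — unwind the coend, use the first and second fundamental theorems of invariant theory on representables, then pass to general $A$ — matches the paper's. The paper obtains the map from the coend formula
\[
[H(g)_{[S]} \otimes (K^\vee \otimes^{\cat{d(s)Br}} A)]^{\mr{O}_\epsilon(H(g))} \;=\; \int^{T} [H(g)_{[S]} \otimes K(T)^\vee]^{\mr{O}_\epsilon(H(g))} \otimes A(T)
\]
together with the two-variable natural transformation $\cat{d(s)Br}(T,S) \to [H(g)_{[S]} \otimes K(T)^\vee]^{\mr{O}_\epsilon(H(g))}$ which, by \cite[Theorem 2.6]{KR-WTorelli}, is surjective always and an isomorphism when $|S|+|T| \leq 2g$; it then applies the coend, the source collapsing to $A(S)$ by co-Yoneda. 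Your appeal to the fundamental theorems, with the plan to cite \cite[Section~2.2]{KR-WTorelli} rather than reprove them, is the same input, and your surjectivity argument is fine.

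The injectivity half has a genuine gap, caused by a variance error. A morphism $T \to S$ in $\cat{d(s)Br}$ is an injection $S \hookrightarrow T$ plus a matching on the complement, so it exists only when $|T| \geq |S|$. Hence the covariant representables are $\bQ\cat{d(s)Br}(T_i, -)$ (not $\bQ\cat{d(s)Br}(-,T_i)$), and those contributing at $S$ have $|T_i| \geq |S|$ — the opposite of what you assert. Equivalently, the co-Yoneda coend $A(S) \cong \int^T \cat{d(s)Br}(T,S) \otimes A(T)$ ranges over $T$ with $|T| \geq |S|$, which is unbounded above; a free cover of $A$ therefore involves representables on arbitrarily large sets, and you cannot truncate to sets of size $\leq |S|$. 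This is not a bookkeeping nuisance but the heart of the matter: for $|S| + |T_i| > 2g$ the pointwise comparison $\cat{d(s)Br}(T_i,S) \to [H(g)_{[S]} \otimes K(T_i)^\vee]^{\mr{O}_\epsilon(H(g))}$ is no longer injective (the Pfaffian/determinantal relation enters), and taking $A = \bQ\cat{d(s)Br}(T_0,-)$ with $|T_0|$ large makes the map of the lemma at $S$ exactly this non-injective comparison. So the diagram chase you propose does not close, and injectivity of the coend map is not a formal consequence of the pointwise statement on representables. What makes the argument go through in the paper is that the lemma is applied (in the proof of Lemma~4.14) only to functors supported on sets of size $\leq g$; then only $T$ with $|T| \leq g$ contribute to the coend, the pointwise comparison is an isomorphism on all contributing $T$, and the map on coends is an isomorphism. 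If you want to salvage a resolution-style argument, the correct move is to restrict to and Kan-extend from the subcategory of sets of size $\geq |S|$ — but you then still need an upper bound on the sizes appearing, and that is exactly what the support hypothesis on $A$ supplies.
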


\begin{proof}
	Recall from \cref{sec:orth-symp-rep-theory} that $H(g)_{[S]}$ is a certain quotient of $H(g)^{\otimes S}$ and consider
	\[[H(g)_{[S]} \otimes (K^\vee \otimes^{\cat{d(s)Br}} A)]^{\mr{O}_\epsilon(H(g))} = \int^{T \in \cat{d(s)Br}} [H(g)_{[S]} \otimes K(T)^\vee]^{\mr{O}_\epsilon(H(g))} \otimes A(T).\]
	There is a natural transformation of two variables
	\[\cat{d(s)Br}(T, S) \lra [H(g)_{[S]} \otimes K(T)^\vee]^{\mr{O}_\epsilon(H(g))}\]
	given by the functoriality of $K$, which is surjective, and is injective if $|S|+|T| \leq 2g$, by \cite[Theorem 2.6]{KR-WTorelli}. By the co-Yoneda lemma it gives a map
	\[A(S) \lra [H(g)_{[S]} \otimes (K^\vee \otimes^{\cat{d(s)Br}} A)]^{\mr{O}_\epsilon(H(g))}\]
	which is surjective, and injective for $|S| \leq g$.
\end{proof}

\begin{lemma}\label{lem:PartialExactness}
If $0 \to A \to B \to C \to 0$ is a short exact sequence in the category $\cat{Fun}(\cat{d(s)Br},\bQ\text{-}\cat{mod})$ such that $B$ is supported on sets of size $\leq g$, then
\[0 \lra K^\vee \otimes^{\cat{d(s)Br}} A \lra K^\vee \otimes^{\cat{d(s)Br}} B \lra K^\vee \otimes^{\cat{d(s)Br}} C \lra 0\]
is again exact.
\end{lemma}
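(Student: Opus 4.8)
The plan is to reduce the claimed exactness to the stable left exactness of the realisation functor on the \emph{monomorphism} $A \hookrightarrow B$, since right exactness of $K^\vee \otimes^{\cat{d(s)Br}} -$ is already known (it is a left adjoint, hence cocontinuous). So the only thing to prove is that
\[K^\vee \otimes^{\cat{d(s)Br}} A \lra K^\vee \otimes^{\cat{d(s)Br}} B\]
is injective; once we have this, the cokernel of this map is $K^\vee \otimes^{\cat{d(s)Br}} C$ by right exactness applied to $A \to B \to C \to 0$, and a short exact sequence results.

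\begin{proof}
Since $K^\vee \otimes^{\cat{d(s)Br}} -$ is a left adjoint it is right exact, so applying it to $A \to B \to C \to 0$ already yields exactness at $K^\vee \otimes^{\cat{d(s)Br}} B$ and $K^\vee \otimes^{\cat{d(s)Br}} C$. It therefore suffices to show that $K^\vee \otimes^{\cat{d(s)Br}} A \to K^\vee \otimes^{\cat{d(s)Br}} B$ is injective.

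As $B$ is supported on sets of size $\leq g$ and $A \subseteq B$, the subfunctor $A$ is also supported on sets of size $\leq g$. By \cref{lem:partial-inverse-to-realisation}, for any $F \in \cat{Fun}(\cat{d(s)Br},\bQ\text{-}\cat{mod})$ supported on sets of size $\leq g$ and any finite set $S$ with $|S| \leq g$, the natural map
\[F(S) \lra [H(g)_{[S]} \otimes (K^\vee \otimes^{\cat{d(s)Br}} F)]^{\mr{O}_\epsilon(H(g))}\]
is an isomorphism. Applying this to $F = A$ and $F = B$, and using naturality in $F$, we obtain a commutative square in which the vertical maps are isomorphisms for all $S$ with $|S| \leq g$:
\[\begin{tikzcd}
A(S) \rar \dar["\cong"'] & B(S) \dar["\cong"] \\
{[H(g)_{[S]} \otimes (K^\vee \otimes^{\cat{d(s)Br}} A)]^{\mr{O}_\epsilon(H(g))}} \rar & {[H(g)_{[S]} \otimes (K^\vee \otimes^{\cat{d(s)Br}} B)]^{\mr{O}_\epsilon(H(g))}}.
\end{tikzcd}\]
For $|S| > g$ both $A(S)$ and $B(S)$ vanish anyway. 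Thus the natural transformation $A \to B$ is recovered, objectwise, from the map $K^\vee \otimes^{\cat{d(s)Br}} A \to K^\vee \otimes^{\cat{d(s)Br}} B$ by the functor $G \mapsto [H(g)_{[-]} \otimes G]^{\mr{O}_\epsilon(H(g))}$ together with the identification above.

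Now suppose $x \in (K^\vee \otimes^{\cat{d(s)Br}} A)$ maps to $0$ in $K^\vee \otimes^{\cat{d(s)Br}} B$. Since $A$ is supported on sets of size $\leq g$, the canonical surjection $\bigoplus_{|S| \leq g} \cat{d(s)Br}(-,S)^{\oplus \dim A(S)} \twoheadrightarrow A$ induces, after applying the right-exact functor $K^\vee \otimes^{\cat{d(s)Br}} -$ (which sends a representable $\cat{d(s)Br}(-,S)$ to $K(S)^\vee$), a surjection $\bigoplus_{|S|\leq g} (K(S)^\vee)^{\oplus \dim A(S)} \twoheadrightarrow K^\vee \otimes^{\cat{d(s)Br}} A$. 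Lift $x$ to an element of this sum supported on sets $S$ with $|S| \leq g$; concretely $x$ is represented by a finite sum $\sum_j \varphi_j \otimes a_j$ with $a_j \in A(S_j)$, $|S_j| \leq g$, $\varphi_j \in K(S_j)^\vee$. Pairing against $H(g)_{[S]}$ and taking $\mr{O}_\epsilon(H(g))$-invariants, the assumption that $x$ dies in $K^\vee \otimes^{\cat{d(s)Br}} B$ forces, via the isomorphisms of \cref{lem:partial-inverse-to-realisation} applied to $B$, the corresponding element of $B(S)$ to vanish for each $|S| \leq g$. But $B(S)$ agrees with $[H(g)_{[S]} \otimes (K^\vee \otimes^{\cat{d(s)Br}} B)]^{\mr{O}_\epsilon(H(g))}$, so tracing back through the isomorphism of \cref{lem:partial-inverse-to-realisation} for $A$, the element $x$ is already $0$ in $[H(g)_{[S]} \otimes (K^\vee \otimes^{\cat{d(s)Br}} A)]^{\mr{O}_\epsilon(H(g))} \cong A(S)$ for all relevant $S$. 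Since these invariant-spaces detect $K^\vee \otimes^{\cat{d(s)Br}} A$ (again by \cref{lem:partial-inverse-to-realisation} together with the co-Yoneda presentation of $A$), we conclude $x = 0$. This proves injectivity, and hence the sequence is short exact.
\end{proof}

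The main obstacle is the bookkeeping in the last step: one must check carefully that the family of invariant functors $G \mapsto [H(g)_{[S]} \otimes G]^{\mr{O}_\epsilon(H(g))}$ for $|S| \leq g$ is \emph{jointly faithful} on the essential image of realisations of functors supported on small sets, so that injectivity on all these invariant-spaces implies injectivity of the original map. This is exactly what the co-Yoneda presentation of $A$ together with the isomorphism range of \cref{lem:partial-inverse-to-realisation} delivers, but one must be attentive to the size bounds ($|S| + |T| \leq 2g$ in the cited \cite[Theorem 2.6]{KR-WTorelli}) to ensure no element is missed — this is precisely why the hypothesis that $B$ (and hence $A$) is supported on sets of size $\leq g$ is needed.
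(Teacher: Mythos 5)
Your proof follows essentially the same strategy as the paper's: reduce to injectivity of the left-hand map (right exactness being automatic from the left-adjoint property), observe that $A$ inherits the support bound from $B$, apply \cref{lem:partial-inverse-to-realisation} to get the isomorphisms $A(S) \cong [H(g)_{[S]} \otimes (K^\vee \otimes^{\cat{d(s)Br}} A)]^{\mr{O}_\epsilon(H(g))}$ and similarly for $B$, and then deduce injectivity of the realised map from injectivity of $A(S) \hookrightarrow B(S)$ via the invariant-functors. The overall structure is sound.

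However, the final step has a genuine gap in its justification. You write that the joint faithfulness of the functors $G \mapsto [H(g)_{[S]} \otimes G]^{\mr{O}_\epsilon(H(g))}$ on the relevant realisations ``is exactly what the co-Yoneda presentation of $A$ together with the isomorphism range of \cref{lem:partial-inverse-to-realisation} delivers.'' That is not quite right: the isomorphism of \cref{lem:partial-inverse-to-realisation} tells you that these invariant-functors recover $A(S)$ and $B(S)$, but it does not by itself tell you that they jointly detect nonvanishing of an arbitrary element of $K^\vee \otimes^{\cat{d(s)Br}} A$ (indeed, your middle paragraph, where you ``pair'' a coend-element $x$ against $H(g)_{[S]}$ and take invariants elementwise, does not describe a well-defined operation — the invariant-functor acts on representations, not on individual vectors in them). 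What is actually needed, and what the paper invokes, is the representation-theoretic input of \cref{sec:orth-symp-rep-theory}: algebraic $\mr{O}_\epsilon(H(g))$-representations are semisimple with simple objects exhausted by the $V_\lambda$, and $[H(g)_{[S]} \otimes V_\lambda]^{\mr{O}_\epsilon(H(g))} \neq 0$ when $|S| = |\lambda|$. One then argues: if the map were not injective, its kernel would be a nonzero algebraic subrepresentation, hence would contain some irreducible $V_\lambda$, which is detected by $[H(g)_{[S]} \otimes -]^{\mr{O}_\epsilon(H(g))}$ for $|S| = |\lambda|$, contradicting the injectivity on invariants. If you wish to keep the restriction to $|S| \leq g$ (the paper instead upgrades the isomorphism to hold for all $S$, since both sides vanish for $|S| > g$), you should also note that every $V_\lambda$ occurring in $K^\vee \otimes^{\cat{d(s)Br}} A$ has $|\lambda| \leq g$, because it is a quotient of a sum of $K(S)^\vee$ with $|S| \leq g$ and $K(S)^\vee$ only contains $V_\lambda$'s with $|\lambda| \leq |S|$.
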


(The following argument provides the missing second half of the proof of \cite[Corollary 2.18]{KR-WTorelli}, which is not as immediate as we had suggested.)

\begin{proof}
It is right exact as $K^\vee \otimes^{\cat{d(s)Br}} -$ is a left adjoint, so we only need to show that the left-hand map is injective. As $B$ is supported on sets of size $\leq g$ and $A$ is a subobject of $B$, $A$ also is supported on sets of size $\leq g$. 

The map 
	\[A(S) \lra [H_{[S]} \otimes (K^\vee \otimes^{\cat{d(s)Br}} A)]^{\mr{O}_\epsilon(H(g))}\] 
of \cref{lem:partial-inverse-to-realisation} is an isomorphism for $|S| \leq g$. As the domain vanishes if $|S| > g$, this map is an isomorphism. The same argument holds with $A$ replaced by $B$, and so we deduce that the map
\[[H_{[S]} \otimes (K^\vee \otimes^{\cat{d(s)Br}} A)]^{\mr{O}_\epsilon(H(g))} \lra [H_{[S]} \otimes (K^\vee \otimes^{\cat{d(s)Br}} B)]^{\mr{O}_\epsilon(H(g))}\]
is injective for all finite sets $S$.

If $K^\vee \otimes^{\cat{d(s)Br}} A \to K^\vee \otimes^{\cat{d(s)Br}} B$ were not injective then its kernel would contain an irreducible algebraic $\mr{O}_\epsilon(H(g))$-representation, which would be detected by applying $[H_{[S]} \otimes -]^{\mr{O}_\epsilon(H(g))}$ for some finite set $S$. Thus the map is injective as claimed.
\end{proof}

\subsubsection{Realisation and Koszulness}\label{sec:realisation-and-koszul} The above results imply that realisation preserves Koszulness in a range increasing with $g$.

\begin{lemma}\label{lem:realisation-h-com} 
Suppose that the weight $w$ piece of a nonunital commutative algebra $\ol{R} \in \cat{Alg}_{\mr{Com}}(\cat{Fun}(\cat{d(s)Br},\cat{Gr}(\bQ\text{-}\cat{mod})^\bN))$ is supported on sets of size $\leq \lambda w$ for some $\lambda \in \bN_{>0}$. Then as long as $w \leq \tfrac{1}{\lambda}g$ we have
	\[K^\vee \otimes^{\cat{d(s)Br}} H^\mr{Com}_p(\ol{R})_{q,w} \cong H_p^\mr{Com}(K^\vee \otimes^{\cat{d(s)Br}} \ol{R})_{q,w}.\]
\end{lemma}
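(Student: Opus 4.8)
The plan is to compute both sides using the explicit Harrison complex model for derived indecomposables (\cref{lem:qcom-harr}), and then to commute the realisation functor $K^\vee \otimes^{\cat{d(s)Br}}-$ past the formation of homology in the stated range, using the partial exactness of \cref{lem:PartialExactness}. What makes this clean is that $\ol R$ takes values in $\cat{Gr}(\bQ\text{-}\cat{mod})^\bN$, hence has vanishing internal differential: for each fixed internal degree $q$ and weight $w$ the complex $\mr{Harr}(\ol R)_{q,w}$ is then an honest non-negatively graded chain complex of functors $\cat{d(s)Br}\to\bQ\text{-}\cat{mod}$, whose part in homological degree $p+q$ is the $p$-fold-tensor piece $\mr{coLie}(p)\otimes_{\fS_p}(\ol R[1])^{\otimes p}$.

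First I would use that $K^\vee \otimes^{\cat{d(s)Br}}-$ is strong symmetric monoidal (as recalled in \cref{sec:realisation}) and preserves colimits, the internal grading, and the weight grading, being a coend against the grading-$0$ functor $K$. Hence it commutes with suspension and with $\mr{coLie}(-)$, sends $\cat{Com}$-algebras to $\cat{Com}$-algebras, and intertwines the commutative multiplications; since the coderivation $d_\mr{Harr}$ is uniquely determined by the multiplication, one obtains a natural isomorphism of chain complexes, compatible with the internal and weight gradings,
\[ K^\vee \otimes^{\cat{d(s)Br}} \bigl(\mr{Harr}(\ol R)[-1]\bigr) \;\cong\; \mr{Harr}\bigl(K^\vee \otimes^{\cat{d(s)Br}} \ol R\bigr)[-1]. \]
So the right-hand side of the asserted identity is the homology of the complex on the left.

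Next I would bound supports. By the Day convolution formula \eqref{eqn:formula-day}, a tensor product of functors supported on sets of size $\le a_1,\dots,a_k$ is supported on sets of size $\le a_1+\cdots+a_k$; applied to the weight-$w$ part of $\mr{coLie}(p)\otimes_{\fS_p}(\ol R[1])^{\otimes p}$, a sum of terms $\ol R_{w_1}[1]\otimes\cdots\otimes\ol R_{w_p}[1]$ with $\sum w_i = w$ and each $\ol R_{w_i}$ supported on sets of size $\le \lambda w_i$ by hypothesis, this shows every term of $\mr{Harr}(\ol R)_{q,w}$ is supported on sets of size $\le \lambda w$, hence on sets of size $\le g$ once $w\le g/\lambda$. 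I would then filter $\mr{Harr}(\ol R)_{q,w}$ by the standard short exact sequences relating the Harrison chains to their cycles, boundaries and homology in each degree; as all objects occurring are subquotients of the Harrison chains, they too are supported on sets of size $\le g$. By \cref{lem:PartialExactness}, $K^\vee \otimes^{\cat{d(s)Br}}-$ preserves exactness of each such sequence, and hence commutes with the homology of $\mr{Harr}(\ol R)_{q,w}$; combined with the isomorphism above this yields
\[ H^\mr{Com}_p\bigl(K^\vee \otimes^{\cat{d(s)Br}} \ol R\bigr)_{q,w} \;\cong\; K^\vee \otimes^{\cat{d(s)Br}} H^\mr{Com}_p(\ol R)_{q,w}. \]

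The only real subtlety I anticipate is this uniform support control: one must be sure that not just the Harrison chains but also all of their cycles, boundaries and homology stay supported on sets of size $\le g$, so that \cref{lem:PartialExactness} applies at every stage of the filtration; this is where the hypothesis on the support of the weight-$w$ piece of $\ol R$ is genuinely used, via additivity of supports under Day convolution. Everything else --- strong monoidality commuting with the Harrison construction, and the bookkeeping of the three gradings --- is formal.
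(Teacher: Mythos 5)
Your proposal is correct and takes essentially the same route as the paper's own proof: model $\bL Q^\mr{Com}$ by the Harrison complex (\cref{lem:qcom-harr}), commute $K^\vee\otimes^{\cat{d(s)Br}}-$ past the Harrison construction using strong symmetric monoidality, observe via Day convolution that the weight-$w$ piece of $\mr{Harr}(\ol R)$ stays supported on sets of size $\le \lambda w \le g$, and then invoke \cref{lem:PartialExactness} to commute realisation with homology. The only difference is that you spell out the filtration-by-cycles/boundaries step and the support bookkeeping a bit more explicitly than the paper does, which elides these as standard.
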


\begin{proof}As an explicit model for the derived indecomposables of $\ol{R}$ we can take the Harrison complex (\cref{lem:qcom-harr}):
	\[\bL Q^\mr{Com}(\ol{R}) \simeq \mr{coLie}(\ol{R}[1])[-1] = \mr{Harr}(\ol{R})[-1].\]	
	The weight $w$ piece of $\ol{R}$ is supported on sets of size $\leq \lambda w$, i.e.\ $\ol{R}(S)_{q,w}=0$ if $w < \tfrac{1}{\lambda}|S|$. It follows that tensor powers of $\ol{R}$ have the same property, and so Schur functors of $\ol{R}$ have the same property, and hence the chain complex with additional grading $\mr{Harr}(\ol{R})$ has this vanishing property too. In particular, for each $w \leq \tfrac{1}{\lambda} g$, $\mr{Harr}(\ol{R})_{q,w}$ is a chain complex supported on sets of size $\leq g$, so by Lemma \ref{lem:PartialExactness} the operation $K^\vee \otimes^{\cat{dsBr}} -$ is exact on it. Thus, as long as $w \leq \tfrac{1}{\lambda} g$, we have
	\begin{align*}
		K^\vee \otimes^{\cat{d(s)Br}} H^\mr{Com}_{p}(\ol{R})_{q,w} &\cong H_{p+q}(K^\vee \otimes^{\cat{d(s)Br}} \mr{Harr}(\ol{R}))_{q,w} \\
		&\cong H_{p+q}( \mr{Harr}(K^\vee \otimes^{\cat{d(s)Br}} \ol{R}))_{q,w} \\
		&\cong H^\mr{Com}_{p}(K^\vee \otimes^{\cat{d(s)Br}} \ol{R})_{q,w}
	\end{align*}
	where for the second step we have used that $K^\vee \otimes^{\cat{d(s)Br}}-$ is an additive strong symmetric monoidal left adjoint
	and so commutes with the formation of the Harrison complex.
\end{proof}

\subsection{The main examples}\label{sec:MainEx1}

There are two families of examples we shall work with, which are both special cases of the functors $\mathcal{P}(-; \mathcal{B})'_{\geq 0} \otimes \det^{\otimes n}$ from \cite[Definition 1.3]{KR-WTorelli}, and we follow that definition.

\begin{definition}\label{def:zn}
A \emph{partition} of a finite set $S$ is a finite collection of possibly empty subsets $\{S_\alpha\}_{\alpha \in I}$ of $S$ which are pairwise disjoint and whose union is $S$. A partition is \emph{admissible} if each part has size $\geq 3$.

For $n \in \bN_{>0}$ let $Z_n\colon \cat{d(s)Br} \to \cat{Gr}(\bQ\text{-}\cat{mod})$ be the functor which to a finite set $S$ assigns the vector space 
\[Z_n(S) = \bQ\{\text{admissible partitions $\{S_\alpha\}_{\alpha \in I}$ of $S$}\} \otimes \det(\bQ^S)^{\otimes n},\]
made into a graded vector space by declaring a part $S_\alpha$ to have degree $n(|S_\alpha|-2)$, and a partition to have degree the sum of the degrees of its parts.

The linear map $Z_n(S) \to Z_n(T)$ induced by a bijection $(f, \varnothing)\colon S \to T$ is simply given by relabelling elements in a partition and by the induced map on determinants. The linear map induced by $(inc, (x,y))\colon S \to S \setminus \{x,y\}$ assigns to the element $[\{S_\alpha\}_{\alpha \in I}] \otimes (x \wedge y \wedge s_3 \wedge \cdots \wedge s_{|S|})^{\otimes n}$ the following:
\begin{enumerate}[(i)]
\item if some $S_\beta$ contains $\{x,y\}$ then it assigns 0,

\item if $x$ and $y$ lie in different parts $S_\beta$ and $S_\gamma$, then these are merged into a single new part $S_\beta \setminus\{x\} \cup S_\gamma \setminus\{y\}$ and it assigns
\[[\{S_\alpha\}_{\alpha \neq \beta, \gamma} \cup \{S_\beta \setminus\{x\} \cup S_\gamma \setminus\{y\}\}] \otimes (s_3 \wedge \cdots \wedge s_{|S|})^{\otimes n}.\]
\end{enumerate}
On a more general morphism in $\cat{d(s)Br}$ the effect of $Z_n$ is determined by the above and functoriality. 
\end{definition}

\begin{figure}[h]
	\begin{tikzpicture}
	\begin{scope}
		\draw[line width = 20pt,round cap-round cap,black!15!white] (0,1.1) -- (0,-3);
		\node at (0,-2.6) {$S$};
		\begin{scope}
			\foreach \i in {1,...,5}
			{
%				\node at (0,{-(\i-3)/3}) {$\bullet$};
				\node at (.15,{-(\i-3)/3}) {\footnotesize \i};
				\draw (-1,0) -- (0,{(\i-3)/3});
			}
			\node at (-1,0) {$\bullet$};
		\end{scope}
		\begin{scope}
			\foreach \i in {6,...,8}
			{
%				\node at (0,{-(\i-2)/3}) {$\bullet$};
				\draw (-1,-1.66) -- (0,{-(\i-2)/3});
				\node at (.15,{-(\i-2)/3}) {\footnotesize \i};
			}
			\node at (-1,-1.66) {$\bullet$};
		\end{scope}
		\draw[|->] (1,-.9) -- (2.25,-.9);
		\node at (1.66,-.6) {\small $(inc,\{1,2\})$};
		\begin{scope}[xshift=3.8cm]
			\draw[line width = 20pt,round cap-round cap,black!15!white] (0,1.1) -- (0,-3);
			\node at (0,-2.6) {$S'$};
			\begin{scope}
				\foreach \i in {1,...,5}
				{
					\draw (-1,0) -- (0,{(\i-3)/3});
				}
				\foreach \i in {3,...,5}
				{
					%					\node at (0,{-(\i-4)/3}) {\footnotesize $\bullet$};
					\node at (.15,{-(\i-3)/3}) {\footnotesize \i};
				}
				\node at (-1,0) {$\bullet$};
			\end{scope}
			\begin{scope}
				\foreach \i in {6,7,8}
				{
					%					\node at (0,{-(\i-2)/3}) {$\bullet$};
					\draw (-1,-1.66) -- (0,{-(\i-2)/3});
					\node at (.15,{-(\i-2)/3}) {\footnotesize \i};
				}
				\node at (-1,-1.66) {$\bullet$};
			\end{scope}
			\draw (0,{2/3}) to[out=0,in=0,looseness=7] (0,{1/3});
		\end{scope}
		\node at (4.9,-.9) {$= 0$};
	\end{scope}

	\begin{scope}[yshift=-5cm]
		\draw[line width = 20pt,round cap-round cap,black!15!white] (0,1.1) -- (0,-3);
		\node at (0,-2.6) {$S$};
		\begin{scope}
			\foreach \i in {2,...,6}
			{
%				\node at (0,{-(\i-4)/3}) {$\bullet$};
				\node at (.15,{-(\i-4)/3}) {\footnotesize \i};
				\draw (-1,0) -- (0,{(\i-4)/3});
			}
			\node at (-1,0) {$\bullet$};
		\end{scope}
		\begin{scope}
			\foreach \i in {1}
			{
%				\node at (0,{-(\i+3)/3}) {$\bullet$};
				\draw (-1,-1.66) -- (0,{-(\i+3)/3});
				\node at (.15,{-(\i+3)/3}) {\footnotesize \i};
			}
			\foreach \i in {7,8}
			{
%				\node at (0,{-(\i-2)/3}) {$\bullet$};
				\draw (-1,-1.66) -- (0,{-(\i-2)/3});
				\node at (.15,{-(\i-2)/3}) {\footnotesize \i};
			}
			\node at (-1,-1.66) {$\bullet$};
		\end{scope}
		\draw[|->] (1,-.9) -- (2.25,-.9);
		\node at (1.66,-.6) {\small $(inc,\{1,2\})$};
		\begin{scope}[xshift=3.8cm]
			\draw[line width = 20pt,round cap-round cap,black!15!white] (0,1.1) -- (0,-3);
			\node at (0,-2.6) {$S'$};
			\begin{scope}
				\foreach \i in {2,...,6}
				{
					\draw (-1,0) -- (0,{(\i-4)/3});
				}
				\foreach \i in {3,...,6}
				{
%					\node at (0,{-(\i-4)/3}) {\footnotesize $\bullet$};
					\node at (.15,{-(\i-4)/3}) {\footnotesize \i};
				}
				\node at (-1,0) {$\bullet$};
			\end{scope}
			\begin{scope}
				\foreach \i in {1}
				{
					\draw (-1,-1.66) -- (0,{-(\i+3)/3});
				}
				\foreach \i in {7,8}
				{
%					\node at (0,{-(\i-2)/3}) {$\bullet$};
					\draw (-1,-1.66) -- (0,{-(\i-2)/3});
					\node at (.15,{-(\i-2)/3}) {\footnotesize \i};
				}
				\node at (-1,-1.66) {$\bullet$};
			\end{scope}
			\draw (0,{2/3}) to[out=0,in=0] (0,{-4/3});
		\end{scope}
		\node at (4.9,-.9) {$=$};
		\begin{scope}[xshift=2.4cm]
		\draw[line width = 20pt,round cap-round cap,black!15!white] (4,.9) -- (4,-2.7);
		\node at (4,-2.3) {$S'$};
		\begin{scope}
			\foreach \i in {3,...,8}
			{
%				\node at (4,{-(\i-4)/3}) {$\bullet$};
				\node at (4.15,{-(\i-4)/3}) {\footnotesize \i};
				\draw (3,-.5) -- (4,{-(\i-4)/3});
			}
			\node at (3,-.5) {$\bullet$};
		\end{scope}
		\end{scope}
	\end{scope}
	\end{tikzpicture}
	\caption{The effect of the morphism $(inc,\{1,2\}) \colon S = \ul{8} \to S' = \ul{8} \setminus \{1,2\}$ on two elements of $Z_n(S)$: the first is the partition of $S$ into parts $S_\alpha = \{1,\ldots,5\}$ and $S_\beta = \{6,7,8\}$ and the second is the partition into parts $S'_{\alpha'} = \{2,\ldots,6\}$ and $S'_{\beta'} = \{1,7,8\}$. Similarly to \cite[Section 5.3]{KR-WTorelli}, we represent each part in a partition by a corolla whose legs are labelled by the elements in that part; we have suppressed the orientations. In this graphical notation, morphisms in $\cat{d(s)Br}$ act by reordering labels and connecting legs, collapsing internal edges and mapping to zero whenever a loop appears.}
\end{figure}
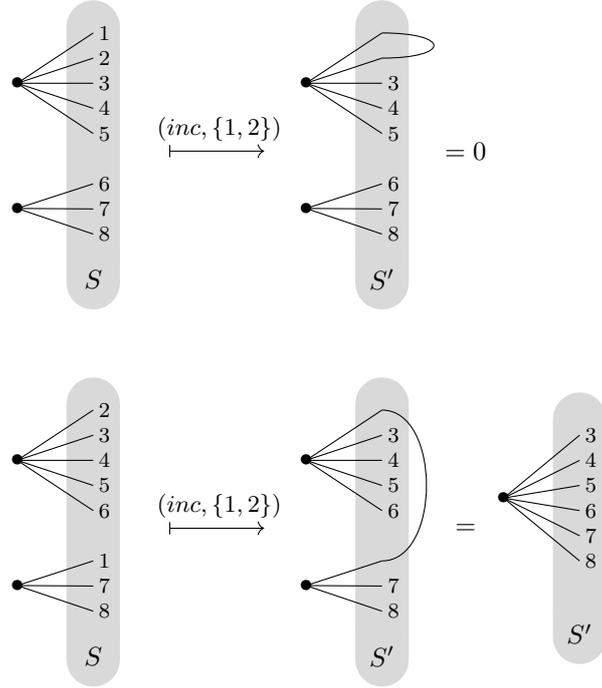

\begin{definition}\label{def:en}
A \emph{weighted partition} of a set $S$ is a partition $\{S_\alpha\}_{\alpha \in I}$ of $S$ along with a weight $g_\alpha \in \{0,1,2,3,\ldots\}$ for each part. A weighted partition is \emph{admissible} if
\begin{enumerate}[(i)]
\item each part of size 0 has weight $\geq 2$,

\item each part of size 1 or 2 has weight $\geq 1$.
\end{enumerate}

For $n \in \bN_{>0}$ let $E_n\colon \cat{d(s)Br} \to \cat{Gr}(\bQ\text{-}\cat{mod})$ be the functor which to a finite set $S$ assigns the vector space 
\[E_n(S) = \bQ\{\text{admissible weighted partitions $\{(S_\alpha, g_\alpha)\}_{\alpha \in I}$ of $S$}\} \otimes \det(\bQ^S)^{\otimes n},\]
made into a graded vector space by declaring a weighted part $(S_\alpha, g_\alpha)$ to have degree $n(2 g_\alpha + |S_\alpha|-2)$, and a weighted partition to have degree the sum of the degrees of its weighted parts.

The linear map $E_n(S) \to E_n(T)$ induced by a bijection $(f, \varnothing)\colon S \to T$ is simply given by relabelling elements in a partition and by the induced map on determinants. The linear map induced by $(inc, (x,y))\colon S \to S \setminus \{x,y\}$ assigns to the element $[\{(S_\alpha, g_\alpha)\}_{\alpha \in I}] \otimes  (x \wedge y \wedge s_3 \wedge \cdots \wedge s_{|S|})^{\otimes n}$ the following:
\begin{enumerate}[(i)]
\item if some $S_\beta$ contains $\{x,y\}$ then it assigns
\[[\{(S_\alpha, g_\alpha)\}_{\alpha \neq \beta} \cup \{(S_\beta \setminus \{x,y\}, g_\beta+1)\}] \otimes ( s_3 \wedge \cdots \wedge s_{|S|})^{\otimes n},\]

\item if $x$ and $y$ lie in different parts $S_\beta$ and $S_\gamma$ then it assigns
\[[\{(S_\alpha, g_\alpha)\}_{\alpha \neq \beta, \gamma} \cup \{(S_\beta \setminus\{x\} \cup S_\gamma \setminus\{y\}, g_\beta+g_\gamma)\}] \otimes ( s_3 \wedge \cdots \wedge s_{|S|})^{\otimes n}.\]
\end{enumerate}
On a more general morphism in $\cat{d(s)Br}$ the effect of $E_n$ is determined by the above and functoriality. 
\end{definition}

\begin{figure}[h]
	\begin{tikzpicture}
	\begin{scope}
		\draw[line width = 20pt,round cap-round cap,black!15!white] (0,1.1) -- (0,-3);
		\node at (0,-2.6) {$S$};
		\begin{scope}
			\foreach \i in {1,...,5}
			{
				%				\node at (0,{-(\i-3)/3}) {$\bullet$};
				\node at (.15,{-(\i-3)/3}) {\footnotesize \i};
				\draw (-1,0) -- (0,{(\i-3)/3});
			}
			\node at (-1,0) {$\bullet$};
			\node at (-1,0) [left] {$g_\alpha$};
		\end{scope}
		\begin{scope}
			\foreach \i in {6,...,8}
			{
				%				\node at (0,{-(\i-2)/3}) {$\bullet$};
				\draw (-1,-1.66) -- (0,{-(\i-2)/3});
				\node at (.15,{-(\i-2)/3}) {\footnotesize \i};
			}
			\node at (-1,-1.66) {$\bullet$};
			\node at (-1,-1.66) [left] {$g_\beta$};
		\end{scope}
		\draw[|->] (1,-.9) -- (2.25,-.9);
		\node at (1.66,-.6) {\small $(inc,\{1,2\})$};
		\begin{scope}[xshift=3.8cm]
			\draw[line width = 20pt,round cap-round cap,black!15!white] (0,1.1) -- (0,-3);
			\node at (0,-2.6) {$S'$};
			\begin{scope}
				\foreach \i in {1,...,5}
				{
					\draw (-1,0) -- (0,{(\i-3)/3});
				}
				\foreach \i in {3,...,5}
				{
					%					\node at (0,{-(\i-4)/3}) {\footnotesize $\bullet$};
					\node at (.15,{-(\i-3)/3}) {\footnotesize \i};
				}
				\node at (-1,0) {$\bullet$};
				\node at (-1,0) [left] {$g_\alpha$};
			\end{scope}
			\begin{scope}
				\foreach \i in {6,7,8}
				{
					%					\node at (0,{-(\i-2)/3}) {$\bullet$};
					\draw (-1,-1.66) -- (0,{-(\i-2)/3});
					\node at (.15,{-(\i-2)/3}) {\footnotesize \i};
				}
				\node at (-1,-1.66) {$\bullet$};
				\node at (-1,-1.66) [left] {$g_\beta$};
			\end{scope}
			\draw (0,{2/3}) to[out=0,in=0,looseness=7] (0,{1/3});
		\end{scope}
		\node at (4.9,-.9) {$=$};
		\begin{scope}[xshift=7.2cm]
			\draw[line width = 20pt,round cap-round cap,black!15!white] (0,.6) -- (0,-3);
			\node at (0,-2.6) {$S'$};
			\begin{scope}
				\foreach \i in {3,...,5}
				{
					\draw (-1,-.33) -- (0,{-(\i-3)/3});
				}
				\foreach \i in {3,...,5}
				{
					%					\node at (0,{-(\i-4)/3}) {\footnotesize $\bullet$};
					\node at (.15,{-(\i-3)/3}) {\footnotesize \i};
				}
				\node at (-1,-.33) {$\bullet$};
				\node at (-1,-.33) [left] {$g_\alpha{+}1$};
			\end{scope}
			\begin{scope}
				\foreach \i in {6,7,8}
				{
					%					\node at (0,{-(\i-2)/3}) {$\bullet$};
					\draw (-1,-1.66) -- (0,{-(\i-2)/3});
					\node at (.15,{-(\i-2)/3}) {\footnotesize \i};
				}
				\node at (-1,-1.66) {$\bullet$};
				\node at (-1,-1.66) [left] {$g_\beta$};
			\end{scope}
		\end{scope}
	\end{scope}
	
	\begin{scope}[yshift=-5cm]
		\draw[line width = 20pt,round cap-round cap,black!15!white] (0,1.1) -- (0,-3);
		\node at (0,-2.6) {$S$};
		\begin{scope}
			\foreach \i in {2,...,6}
			{
				%				\node at (0,{-(\i-4)/3}) {$\bullet$};
				\node at (.15,{-(\i-4)/3}) {\footnotesize \i};
				\draw (-1,0) -- (0,{(\i-4)/3});
			}
			\node at (-1,0) {$\bullet$};
			\node at (-1,0) [left] {$g_{\alpha'}$};
		\end{scope}
		\begin{scope}
			\foreach \i in {1}
			{
				%				\node at (0,{-(\i+3)/3}) {$\bullet$};
				\draw (-1,-1.66) -- (0,{-(\i+3)/3});
				\node at (.15,{-(\i+3)/3}) {\footnotesize \i};
			}
			\foreach \i in {7,8}
			{
				%				\node at (0,{-(\i-2)/3}) {$\bullet$};
				\draw (-1,-1.66) -- (0,{-(\i-2)/3});
				\node at (.15,{-(\i-2)/3}) {\footnotesize \i};
			}
			\node at (-1,-1.66) {$\bullet$};
			\node at (-1,-1.66) [left] {$g_{\beta'}$};
		\end{scope}
		\draw[|->] (1,-.9) -- (2.25,-.9);
		\node at (1.66,-.6) {\small $(inc,\{1,2\})$};
		\begin{scope}[xshift=3.8cm]
			\draw[line width = 20pt,round cap-round cap,black!15!white] (0,1.1) -- (0,-3);
			\node at (0,-2.6) {$S'$};
			\begin{scope}
				\foreach \i in {2,...,6}
				{
					\draw (-1,0) -- (0,{(\i-4)/3});
				}
				\foreach \i in {3,...,6}
				{
					%					\node at (0,{-(\i-4)/3}) {\footnotesize $\bullet$};
					\node at (.15,{-(\i-4)/3}) {\footnotesize \i};
				}
				\node at (-1,0) {$\bullet$};
				\node at (-1,0) [left] {$g_{\alpha'}$};
			\end{scope}
			\begin{scope}
				\foreach \i in {1}
				{
					\draw (-1,-1.66) -- (0,{-(\i+3)/3});
				}
				\foreach \i in {7,8}
				{
					%					\node at (0,{-(\i-2)/3}) {$\bullet$};
					\draw (-1,-1.66) -- (0,{-(\i-2)/3});
					\node at (.15,{-(\i-2)/3}) {\footnotesize \i};
				}
				\node at (-1,-1.66) {$\bullet$};
				\node at (-1,-1.66) [left] {$g_{\beta'}$};
			\end{scope}
			\draw (0,{2/3}) to[out=0,in=0,looseness=1.2] (0,{-4/3});
		\end{scope}
		\node at (4.9,-.9) {$=$};
		\begin{scope}[xshift=3.6cm]
			\draw[line width = 20pt,round cap-round cap,black!15!white] (4,.9) -- (4,-2.7);
			\node at (4,-2.3) {$S'$};
			\begin{scope}
				\foreach \i in {3,...,8}
				{
					%				\node at (4,{-(\i-4)/3}) {$\bullet$};
					\node at (4.15,{-(\i-4)/3}) {\footnotesize \i};
					\draw (3,-.5) -- (4,{-(\i-4)/3});
				}
				\node at (3,-.5) {$\bullet$};
				\node at (3,-.5) [left] {$g_{\alpha'}{+}g_{\beta'}$};
			\end{scope}
		\end{scope}
	\end{scope}
\end{tikzpicture}
	\caption{The effect of the morphism $(inc,\{1,2\}) \colon S = \ul{8} \to S' = \ul{8} \setminus \{1,2\}$ on two elements of $E_n(S)$, once more representing each part in a partition by a corolla and suppressing the orientations. Each part $S_\alpha$ in a partition---that is, each corolla---has a weight $g_\alpha \geq 0$ indicated with a label at the vertex. Morphisms in $\cat{d(s)Br}$ act by reordering labels and connecting labels, collapsing internal edges (adding their weights) and removing loops (adding $1$ to the weight of the vertex it is attached to).}
\end{figure}
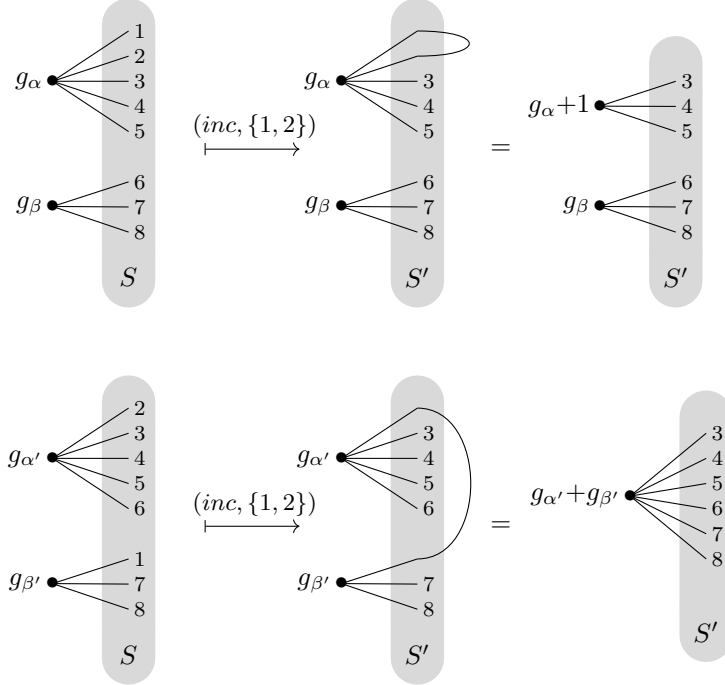

The functors $Z_n$ and $E_n$ have lax symmetric monoidalities given by disjoint union, making them into unital commutative algebra objects in $\cat{Fun}(\cat{d(s)Br}, \cat{Gr}(\bQ\text{-}\cat{mod}))$. As they are concentrated in homological degrees which are multiples of $n$, we can give them an additional weight grading by declaring it to be the homological degree divided by $n$. Furthermore both $Z_n$ and $E_n$ have unique augmentations, by sending all (weighted) partitions of non-empty sets to zero and the empty partition of the empty set to $1$. Up to scaling the homological grading each of these augmented unital commutative algebra objects only depends on the parity of $n$.

For each $n$ there is a (lax symmetric monoidal) epimorphism $E_n \to Z_n$ given by sending a part $(S_\alpha, 0)$ to $S_\alpha$, and parts with strictly positive weight to zero. It is compatible with the augmentations.

After having introduced $E_n$ and $Z_n$ we will now explain that, as commutative algebra objects, they differ only by the attachment of a single commutative algebra cell. As we mentioned above there is a map of commutative algebra objects $E_n \to Z_n$, and hence maps $\bL Q^{\mr{Com}^+}(E_n) \to \bL Q^{\mr{Com}^+}(Z_n)$. Taking the homotopy cofibre of this we may define the relative homology groups $H_p^\mr{Com}(Z_n, E_n)_{q,w}$ which participate in the usual long exact sequence.  We have the following description of this relative homology.

\begin{theorem}\label{thm:relative-com-zn-vs-en}
We have
\[H_p^\mr{Com}(Z_n, E_n)_{q,w}(S) \cong \begin{cases}
\bQ & \text{ if } |S|=1 \text{ and } (p,q,w)=(2,n,1),\\
0 & \text{ otherwise.}
\end{cases}\]
\end{theorem}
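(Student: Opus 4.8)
The plan is to realise $Z_n$ as a homotopy quotient of $E_n$ by a single regular element supported at the one-element set, and then read the relative homology off the cofibre sequence of \cref{cor:RegSeq}.

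First I would single out the element $\xi \in \overline{E_n}(\ul{1})$ corresponding to the weighted partition of $\ul{1}$ with one part $(\ul{1},1)$ --- a lone genus-$1$ corolla carrying a single leg --- tensored with a chosen generator of $\det(\bQ^{\ul{1}})^{\otimes n}$; by \cref{def:en} it has internal degree $n$ and weight $1$. The key structural claim is that the ideal generated by $\xi$ in the commutative algebra object $E_n$ (under both the Day convolution product and the $\cat{d(s)Br}$-action) is precisely $\ker(E_n \to Z_n)$, so that $E_n/(\xi) \cong Z_n$ as augmented commutative algebra objects --- where $E_n/(\xi)$ denotes the pushout of \eqref{eq:QuotByZ} formed with $(\ul{1})_*$ in place of $(\varnothing)_*$. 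One inclusion is formal: $E_n \to Z_n$ is a morphism of commutative algebra objects that kills $\xi$. For the other, $\ker(E_n \to Z_n)$ is spanned by admissible weighted partitions with at least one part of positive genus, and each such partition lies in $(\xi)$ by an explicit construction: a part of genus $g \geq 1$ on a leg-set $L$ is obtained from $\xi^{\sqcup g} \cdot (L \sqcup \{*_1, \dots, *_g\}, 0)$ by applying the morphisms $(\mathrm{inc}, (\star_i, *_i))$ of $\cat{d(s)Br}$, each of which merges one copy of $\xi$ into the genus-$0$ part and raises its genus by one; the degenerate cases in which $(L \sqcup \{*_1,\dots,*_g\},0)$ fails admissibility are $g=1,\,|L|=1$ (which is $\xi$ itself) and $g=2,\,|L|=0$ (which is $(\mathrm{inc},(\star_1,\star_2)) \cdot \xi^{\sqcup 2}$), both manifestly in $(\xi)$. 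A general weighted partition with $\sum_\alpha g_\alpha \geq 1$ is the product of one such positive-genus part with the remaining (admissible) partition, hence also in the ideal.

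Next I would verify that $\xi$ is not a zerodivisor in the sense appropriate to an element supported at $\ul{1}$: multiplication by $\xi$, i.e.\ appending the part $(\ul{1},1)$, is visibly injective on basis elements, and the symmetry constraint on $\xi^{\sqcup 2}$ has sign $(-1)^{n^2}\cdot(-1)^{n} = (-1)^{n(n+1)} = +1$ --- the Koszul sign from the homological degree being cancelled by the sign from transposing the two factors of $\det(\bQ^{-})^{\otimes n}$ --- so it imposes no condition even when $n$ is odd. Granting this, the proof of the lemma preceding \cref{cor:RegSeq} applies with $(\ul{1})_*$ replacing $(\varnothing)_*$ throughout, showing the pushout presenting $E_n/(\xi) = Z_n$ is a homotopy pushout, and hence that there is a cofibre sequence of weight-$1$ objects
\[ (\ul{1})_*(\bQ\{\xi\}[n]) \lra \bL Q^{\mr{Com}^+}(E_n) \lra \bL Q^{\mr{Com}^+}(Z_n). \]
Therefore the homotopy cofibre of the second map --- which by definition computes $H^{\mr{Com}}_*(Z_n, E_n)$ --- is the single suspended cell $(\ul{1})_*(\bQ\{\xi\}[n+1])$, concentrated in weight $1$. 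Computing the left Kan extension, $(\ul{1})_*(M)(S)$ equals $M$ for $|S|=1$ and vanishes otherwise; and tracking the grading conventions of \cref{sec:Gradings} --- $\xi$ has internal degree $q = n$ and weight $1$, and the Harrison degree rises from $1$ to $2$ under the suspension --- yields exactly the stated value.

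The main obstacle is the combinatorial core of the second step: checking that the $\cat{d(s)Br}$-ideal generated by the single element $\xi$ exhausts $\ker(E_n \to Z_n)$, with due care for the admissibility constraints on small parts, and keeping the $\det$-twist and (for $\cat{dsBr}$) ordered-matching signs coherent throughout. When $n$ is even the twist $\det^{\otimes n}$ is canonically trivial and this is purely a matter of combinatorics; when $n$ is odd one must carry the signs along and check, as above, that they never obstruct the constructions. Everything downstream of this is a formal application of the regular-element machinery of \cref{sec:Regular}.
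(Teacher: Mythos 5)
Your proposal has the right skeleton and the right final answer, and in fact it matches the paper's strategy quite closely: both arguments exhibit $Z_n$ as $E_n$ with a single commutative algebra cell $\ul{1}_*S(n) \to \ul{1}_*D(n+1)$ attached along $\xi = (\ul{1},1)$, and read the relative homology off the resulting cofibre sequence for $\bL Q^{\mr{Com}^+}$. Your combinatorial check that the $\cat{d(s)Br}$-ideal $(\xi)$ exhausts $\ker(E_n \to Z_n)$ is correct (the edge cases you treat are exactly the ones that matter, and the sign computation $(-1)^{n^2}(-1)^n = +1$ explaining why no parity constraint arises is a good observation), so $Z_n = E_n/(\xi)$ as the strict pushout.

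However, there is a genuine gap at the step you pass over most quickly: "the proof of the lemma preceding \cref{cor:RegSeq} applies with $(\ul{1})_*$ replacing $(\varnothing)_*$ throughout." That lemma's proof depends in an essential way on the isomorphism $F^{\mr{Com}^+}((\varnothing)_*S(k+1)) \cong (\varnothing)_*(\Lambda[z_{k+1}])$, i.e.\ on the fact that this free algebra is a two-term object concentrated on the empty set, whence $R \otimes F^{\mr{Com}^+}((\varnothing)_*S(k+1))$ is literally the mapping cone of $z \cdot -$ and the non-zerodivisor hypothesis immediately converts the mapping cone to the cokernel. None of this survives the replacement: as the paper's own Claim inside the proof of this very theorem shows, $F^{\mr{Com}^+}(\ul{1}_*S(n+1))(T) \cong \det(\bQ^T)^{\otimes n+1}[(n+1)|T|]$, which is non-zero on every finite set $T$, so $E_n \otimes F^{\mr{Com}^+}(\ul{1}_*S(n+1))$ is nothing like a two-term mapping cone. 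To get from "the strict pushout equals $Z_n$" to "the homotopy pushout is quasi-isomorphic to $Z_n$" the paper does real work at this point: it identifies the homotopy pushout with a complex $E''_n$ of biweighted partitions via the explicit isomorphism $\chi$, decomposes $E''_n(S)$ over partitions of $S$ as a tensor product of per-part complexes, and checks that the per-part complexes indexed by the auxiliary legs $B$ are acyclic while those indexed by the parts $S_\alpha$ have homology concentrated at genus zero. No general "regular element at $\ul{1}$" lemma is stated or proved in the paper, and it would not follow by cosmetic substitution from the $\varnothing$-version. If you want your argument to stand alone you either need to prove such a general lemma (which would require a filtration/Koszul-complex argument accounting for the interplay of $\cat{d(s)Br}$-matchings with the determinant twist), or carry out the explicit combinatorial computation that the paper performs.
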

\begin{proof}
As the weight is given by the homological grading divided by $n$, we may neglect it. Let us write $\ul{n} \coloneqq \{1,2,\ldots, n\}$ and in particular $\ul{1} = \{1\}$.  There is a map $S(n) \to E_n(\ul{1})$ corresponding to the weighted partition $(\ul{1}, 1)$ of the set $\ul{1}$, with $S(n)$ as in \eqref{eqn:gen-cof}. This is adjoint to a map $\ul{1}_*S(n) \to E_n$ and hence, as the target is a unital commutative algebra object, to a map  $\phi \colon F^{\mr{Com}^+}(\ul{1}_*S(n)) \lra E_n$. Using this we may form the pushout
	\[\begin{tikzcd} 
F^{\mr{Com}^+}(\ul{1}_*S(n)) \rar{\phi} \dar & E_n \dar \\[-2pt]
F^{\mr{Com}^+}(\ul{1}_*D(n+1)) \rar & E'_n
		\end{tikzcd}\]
		of unital commutative algebra objects. As $(\ul{1}, 1)$ maps to to zero in $Z_n(\ul{1})$, there is a factorisation
		\begin{equation}\label{eq:comparison}
		E_n \lra E'_n \lra Z_n.
		\end{equation}
Although $E'_n : \cat{d(s)Br} \to \cat{Ch}$ does not take values in the subcategory $\cat{Gr}(\bQ\text{-}\cat{mod}) \subset \cat{Ch}$, we can nonetheless give it an ``internal grading" and ``weight" by declaring both $\ul{1}_* S(n)$ and $\ul{1}_* D(n+1)$ to have internal degree $n$ and weight 1. Then the maps \eqref{eq:comparison} both preserve these two additional gradings. We will show that the map $E'_n \to Z_n$ is a weak equivalence: as the cofibre of the map $\bL Q^{\mr{Com}^+}(E_n) \to \bL Q^{\mr{Com}^+}(E'_n)$ is $\ul{1}_* (D(n+1)/S(n)) = \ul{1}_* S(n+1)$ by construction (supported in total degree $n+1$, internal degree $n$, and weight $1$) the formula in the statement of the theorem follows.
		
	Pushouts of commutative algebra objects are given by the relative tensor product, and---neglecting the differential---we have
	\begin{align*}
	F^{\mr{Com}^+}(\ul{1}_*D(n+1)) &= F^{\mr{Com}^+}(\ul{1}_*S(n) \oplus \ul{1}_*S(n+1))\\
	&= F^{\mr{Com}^+}(\ul{1}_*S(n)) \otimes F^{\mr{Com}^+}(\ul{1}_*S(n+1))
	\end{align*}
	so that---neglecting the differential---we have
	\[E'_n = E_n \otimes F^{\mr{Com}^+}(\ul{1}_*S(n+1)).\]
	
	\vspace{1ex}
	
\noindent\textbf{Claim} We have $F^{\mr{Com}^+}(\ul{1}_*S(n+1))(T) \cong \det(\bQ^T)^{\otimes n+1}[(n+1)|T|]$, and the functoriality is such that a morphism $(f,m)$  in $\cat{d(s)Br}$ with $m \neq \varnothing$ acts as zero.
	
\begin{proof}[Proof of claim]
By definition of Day convolution we have
\[(\ul{1}_*S(n+1))^{\otimes p}(T) = S(n+1)^{\otimes p} \otimes \cat{d(s)Br}(\{1,2,\ldots, p\}, T)\]
where $\fS_p$ acts diagonally, and so
\[F^{\mr{Com}^+}(\ul{1}_*S(n+1))(T) = \bigoplus_{p \geq 0} S(n+1)^{\otimes p} \otimes_{\fS_p} \cat{d(s)Br}(\{1,2,\ldots, p\}, T).\]

Write $e_{n+1} \in S(n+1)$ for the basis element. If $(f, m) \in \cat{d(s)Br}(\{1,2,\ldots, p\}, T)$ has $m \neq \varnothing$, say with $(x,y) \in m$, then
\begin{align*}
(e_{n+1} \otimes \cdots \otimes e_{n+1}) \otimes_{\fS_p} (f, m) &= (x \, y) \cdot(e_{n+1} \otimes \cdots \otimes e_{n+1}) \otimes_{\fS_p} (x \, y) \cdot(f, m)\\
&= (-1)^{n+1} (e_{n+1} \otimes \cdots \otimes e_{n+1}) \otimes_{\fS_p} (-1)^n (f, m)\\
 &= - (e_{n+1} \otimes \cdots \otimes e_{n+1}) \otimes_{\fS_p} (f, m)
\end{align*}
and so this term vanishes. Hence only the term with $p=|T|$ contributes, giving $F^{\mr{Com}^+}(\ul{1}_*S(n+1))(T) = S(n+1)^{\otimes |T|} \otimes_{\fS_{|T|}} \cat{d(s)Br}(\{1,2,\ldots, |T|\}, T)$. The claim is simply an interpretation of this formula.
\end{proof}

Using the expression \eqref{eqn:formula-day} for Day convolution, neglecting the differential we have
\begin{equation}\label{eq:Eprime}
E'_n(S) = \bigoplus_{S', S''} \cat{Pair}(S' \sqcup S'', S) \otimes_{\fS_{S'} \times \fS_{S''}} E_n(S') \otimes \det(\bQ^{S''})^{\otimes n+1}[(n+1)|S''|].
%\colim_{\phi : A \sqcup B \to S} E_n(A) \otimes F^{\mr{Com}^+}(\ul{1}_*S(n+1))(B).
\end{equation}
We will give an interpretation of this in terms of decorated partitions. Define a \emph{biweighted partition} of a set $S$ to be a partition $\{S_\alpha\}_{\alpha \in I}$ of \emph{a subset of} $S$ along with a weight $g_\alpha \in \{0,1,2,3,\ldots\}$ and a further weight $h_\alpha \in \{0,1\}$ for each part. A biweighted partition is \emph{admissible} if 
\begin{enumerate}[(i)]
\item when $|S_\alpha|=0$ then $g_\alpha+h_\alpha\geq 2$,

\item when $|S_\alpha|=1,2$ then $g_\alpha+h_\alpha\geq 1$.
\end{enumerate}
For such a biweighted partition we let $A \coloneqq \cup_{\alpha \in I} S_\alpha$, $B \coloneqq S \setminus A$, and $J \coloneqq \{\alpha \in I \, | \, h_\alpha = 1\}$.
%and $K = S'' \cup J$
Define
\[E''_n(S) = \bigoplus_{\substack{\text{admissible biweighted partitions} \\ \{(S_\alpha, g_\alpha, h_\alpha)\}_{\alpha \in I} \text{ of } S}} \det(\bQ^{J}) \otimes \det(\bQ^{B}) \otimes \det(\bQ^{S})^{\otimes n},\]
made into a graded vector space by declaring a biweighted partition to have degree $(n+1)|B| + \sum_{\alpha \in I} n(2 g_\alpha + |S_\alpha|-2) + (2n+1)h_\alpha$.

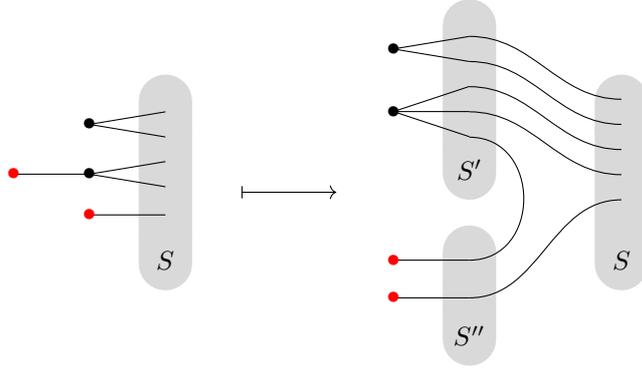
\begin{figure}[h]
\begin{tikzpicture}
	\begin{scope}
	\draw[line width = 20pt,round cap-round cap,black!15!white] (0,.66) -- (0,-2.2);
	\node at (0,-1.8) {$S$};
		\begin{scope}
		\foreach \i in {1,2}
		{
			\draw (-1,0) -- (0,{(\i-1.5)/3});
		}
		\node at (-1,0) {$\bullet$};
		\end{scope}
		\begin{scope}[yshift=-.66cm]
			\foreach \i in {1,2}
			{
				\draw (-1,0) -- (0,{(\i-1.5)/3});
			}
			\node at (-1,0) {$\bullet$};
			\draw (-2,0) -- (-1,0);
			\node at (-2,0) [red] {$\bullet$};
		\end{scope}
		\begin{scope}[yshift=-1.2cm]
			\draw (-1,0) -- (0,0);
			\node at (-1,0) [red] {$\bullet$};
		\end{scope}
	\end{scope}

	\draw[|->] (1,-.9) -- (2.25,-.9);
	
	\begin{scope}[xshift=4cm,yshift=1cm]
		\draw[line width = 20pt,round cap-round cap,black!15!white] (0,.66) -- (0,-2);
		\node at (0,-1.6) {$S'$};
		\begin{scope}
			\foreach \i in {1,2}
			{
				\draw (-1,0) -- (0,{(\i-1.5)/3});
			}
			\node at (-1,0) {$\bullet$};
		\end{scope}
		\begin{scope}[yshift=-.83cm]
			\foreach \i in {1,2,3}
			{
				\draw (-1,0) -- (0,{(\i-2)/3});
			}
			\node at (-1,0) {$\bullet$};
		\end{scope}
	\end{scope}

	\begin{scope}[xshift=4cm,yshift=-2cm]
		\draw[line width = 20pt,round cap-round cap,black!15!white] (0,.66) -- (0,-1.2);
		\node at (0,-.8) {$S''$};
		\draw (-1,0.2) -- (0,.2);
		\draw (-1,-.3) -- (0,-.3);
		\node at (-1,0.2) [red] {$\bullet$};
		\node at (-1,-.3) [red] {$\bullet$};
		\draw (0,.2) to[in=0,out=0,looseness=1.5] (0,{1.83});
	\end{scope}

	\begin{scope}[xshift=6cm]
		\draw[line width = 20pt,round cap-round cap,black!15!white] (0,.66) -- (0,-2.2);
		\node at (0,-1.8) {$S$};
		\begin{scope}
			\foreach \i in {1,...,4}
			{
				\draw (-2,{(\i-3)/3+5/6}) to[out=0,in=180] (0,{(\i-3)/3});
			}
		\end{scope}
		\draw (-2,-2.3) to[out=0,in=180] (0,-1);
	\end{scope}
\end{tikzpicture}
%\begin{center}
%\includegraphics[width=8cm]{./Figures/biweighted}
\caption{Intuitive indication of the map $\chi$. Weights $g_\alpha$ are not indicated, and weights $h_\alpha=1$ are indicated by a half-edge with a red end.}\label{fig:biweighted}
%\end{center}
\end{figure}

Define a morphism of graded vector spaces
\[\chi \colon E''_n(S) \lra E'_n(S)\]
on  $(\{S_\alpha\}_{\alpha \in I}, g_\alpha, h_\alpha) \otimes (j_1 \wedge \cdots \wedge j_{|J|}) \otimes (b_1 \wedge \cdots \wedge b_{|B|}) \otimes (a_1 \wedge \cdots \wedge a_{|A|} \wedge b_1 \wedge \cdots \wedge b_{|B|})^{\otimes n}$ as follows.  Let $S' \coloneqq A \cup J$ and $S'' \coloneqq  B \cup J$, and let $\phi = (f, m) \colon S' \sqcup S'' \to S$ be the morphism in $\cat{d(s)Br}$ with injection $f \colon S \to S' \sqcup S''$ given by sending $s \in S$ to $s \in S'$ if $s \in A$, and to $s \in S''$ if $s \in B$, and matching $m$ given by pairing each element of $J  \subset S'$ with the same element of $J \subset S''$, putting that of $S'$ first. For $\alpha \in I$ let
\[S'_\alpha \coloneqq \begin{cases}
S_\alpha & \text{if $h_\alpha =0$,}\\
S_\alpha \cup \{\alpha\} & \text{if $h_\alpha =1$,}
\end{cases} \quad \subset S'.\]
Then $\phi$, $(\{S'_\alpha\}_{\alpha \in I}, g_\alpha) \otimes (a_1 \wedge \cdots \wedge a_{|A|} \wedge j_1 \wedge \cdots \wedge j_{|J|})^{\otimes n}$, $(b_1 \wedge \cdots \wedge b_{|B|} \wedge j_1 \wedge \cdots \wedge j_{|J|})^{\otimes n+1}$ 
represents an element of \eqref{eq:Eprime}. Note that permuting the $j_i$ acts by the sign, permuting the $a_i$ acts by the $n$th power of the sign, and permuting the $b_i$ acts via the $(n+1)$st power of the sign, making this map well-defined.

\vspace{1ex}
\noindent\textbf{Claim.} The map $\chi$ is an isomorphism.
\begin{proof}[Proof of claim]
Suppose we are given a morphism $\phi = (f, m) \colon S' \sqcup S'' \to S$, $(\{S'_\alpha\}_{\alpha \in I}, g_\alpha) \otimes (s'_1 \wedge \cdots \wedge s'_{|S'|})^{\otimes n} \in E_n(S')$, and  $(s''_1 \wedge \cdots \wedge s''_{|S''|})^{\otimes n+1} \in \det(\bQ^{S''})^{\otimes n+1}[(n+1)|S''|]$, representing an element of $E'_n(S)$ in the description \eqref{eq:Eprime}. 

Suppose that this data is such that there are ordered pairs $(s'_i, s''_i), (s'_j, s''_j)$ with $i \neq j$, but $s'_i, s'_j \in S'_\alpha$. The permutation $(s'_i \, s'_j)(s''_i \, s''_j)$ gives a morphism $\psi \colon S' \sqcup S'' \to S' \sqcup S''$ such that $\phi \circ \psi = \psi$. Thus the data above is equivalent to the data $\phi$, $(s'_i \, s'_j) \cdot ((\{S'_\alpha\}_{\alpha \in I}, g_\alpha) \otimes (s'_1 \wedge \cdots \wedge s'_{|S'|})^{\otimes n})$, $(s''_i \, s''_j) \cdot (s''_1 \wedge \cdots \wedge s''_{|S''|})^{\otimes n+1}$. That is: the data $\phi$, $(-1)^n (\{S'_\alpha\}_{\alpha \in I}, g_\alpha) \otimes (s'_1 \wedge \cdots \wedge s'_{|S'|})$, and $(-1)^{n+1}(s''_1 \wedge \cdots \wedge s''_{|S''|})^{\otimes n+1}$, and so minus the original element. Thus such elements vanish, and we may suppose that for each $\alpha$ there is at most one pair $(s',s'') \in m$ with $s' \in S'_\alpha$. Using this, it is easy to produce an inverse to the map $\chi$: we set $S_\alpha \coloneqq S'_\alpha \setminus \{s'\}$ and $h_\alpha=1$ if $(s',s'') \in m$ with $s' \in S'_\alpha$, and otherwise set $S_\alpha \coloneqq S'_\alpha$ and $h_\alpha=0$.
\end{proof}

The differential of $E'_n(S)$ is given under the isomorphism $\chi$ by sending the element $(\{S_\alpha\}_{\alpha \in I}, g_\alpha, h_\alpha) \otimes (j_1 \wedge \cdots \wedge j_{|J|}) \otimes (b_1 \wedge \cdots \wedge b_{|B|}) \otimes (a_1 \wedge \cdots \wedge a_{|A|} \wedge b_1 \wedge \cdots \wedge b_{|B|})^{\otimes n}$ of $E''_n(S)$ to
\begin{align*}
&\sum_{\beta \in J \subset I} (\{S_\alpha\}_{\alpha \in I}, g_\alpha + \delta_{\alpha \beta}, h_\alpha - \delta_{\alpha \beta}) \otimes (j_1 \wedge \cdots \wedge j_{|J|})/\beta \\
 &\quad\quad\quad\quad\quad \otimes (b_1 \wedge \cdots \wedge b_{|B|}) \otimes (a_1 \wedge \cdots \wedge a_{|A|} \wedge b_1 \wedge \cdots \wedge b_{|B|})^{\otimes n}\\
 &+ \sum_{b \in B} (\{S_\alpha\}_{\alpha \in I \cup \{b\}}, g_\alpha, h_\alpha) \otimes (j_1 \wedge \cdots \wedge j_{|J|})\\
&\quad\quad\quad\quad\quad \otimes (b_1 \wedge \cdots \wedge b_{|B|})/b \otimes (a_1 \wedge \cdots \wedge a_{|A|} \wedge b_1 \wedge \cdots \wedge b_{|B|})^{\otimes n}
\end{align*}
where $S_b = \{b\}$, $g_b = 1$, $h_b=0$. In particular, the partition $S = \bigcup_{\alpha} S_\alpha \cup \bigcup_{b \in B} \{b\}$ of $S$ is preserved by the differential, so we recognise the chain complex $E''_n(S)$ as a direct sum of complexes, one for each partition of $S$. Furthermore, we recognise the chain complex for the partition $S = \bigcup_{\alpha} S_\alpha \cup \bigcup_{b \in B} \{b\}$ as the tensor product of chain complexes, one for each part of this partition. 

\begin{enumerate}[(i)]
\item That corresponding to the part $S_\alpha$ is given by
\[\bigoplus_{g_\alpha + 1 \geq r_\alpha} (S_\alpha, g_\alpha, 1) \otimes \alpha \otimes 1  \overset{d}\lra \bigoplus_{g_\alpha \geq r_\alpha} (S_\alpha, g_\alpha, 0) \otimes 1 \otimes 1 \]
(tensored with $\det(\bQ^{S_\alpha})^{\otimes n}$) with $d((S_\alpha, g_\alpha, 1) \otimes \alpha \otimes 1) = (S_\alpha, g_\alpha+1, 0) \otimes 1 \otimes 1$, where $r_\alpha$ is 2 if $|S_\alpha|$ is 0, is 1 is $|S_\alpha|$ is 1 or 2, and is 0 otherwise. The homology of this complex is given by $\bigoplus_{0 \geq r_\alpha} (S_\alpha, 0, 0) \otimes 1 \otimes 1$, i.e.\ is 1-dimensional as long as $|S_\alpha| > 2$ and zero otherwise, recalling the requirement $g_\alpha \geq 0$.

\item That corresponding to the part $\{b\}$ with $b \in B$ is given by
\[(\varnothing, -, -) \otimes 1 \otimes s  \underset{\sim}{\overset{d}\lra} (\{b\}, 1, 0) \otimes 1 \otimes 1\]
(tensored with $\det(\bQ^{b})^{\otimes n}$) so is acyclic.
\end{enumerate}
Thus the homology of $E_n''(S)$ is supported on those partitions with $B=\varnothing$, $|S_\alpha| > 2$ and $g_\alpha=h_\alpha=0$, and we recognise it as being isomorphic to $Z_n(S)$. By considering the maps involved (or by counting dimensions) it follows that the map $E'_n(S) \to Z_n(S)$ is an isomorphism on homology, so $E'_n \to Z_n$ is a weak equivalence.
\end{proof}

\begin{corollary}\label{cor:EnZnEquiKoszul}
$E_n$ is Koszul if and only if $Z_n$ is.
\end{corollary}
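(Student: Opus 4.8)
The plan is to run the long exact sequence in commutative algebra homology attached to the morphism $E_n \to Z_n$ — the one relating $\bL Q^{\mr{Com}^+}(E_n)$, $\bL Q^{\mr{Com}^+}(Z_n)$ and the relative groups $H^\mr{Com}_p(Z_n, E_n)_{q,w}$ introduced before the statement — and then feed in the computation of \cref{thm:relative-com-zn-vs-en}. Fixing an internal degree $q$, a weight $w$ and a finite set $S$, this sequence reads
\[\cdots \lra H^\mr{Com}_p(E_n)_{q,w}(S) \lra H^\mr{Com}_p(Z_n)_{q,w}(S) \lra H^\mr{Com}_p(Z_n, E_n)_{q,w}(S) \lra H^\mr{Com}_{p-1}(E_n)_{q,w}(S) \lra \cdots\]
and by \cref{thm:relative-com-zn-vs-en} every relative term vanishes except the single group $H^\mr{Com}_2(Z_n,E_n)_{n,1}(\ul 1) = \bQ$.

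First I would dispose of all triples $(q,w,S) \neq (n,1,\ul 1)$: there the relative groups vanish in every Harrison degree, so $E_n \to Z_n$ induces isomorphisms $H^\mr{Com}_p(E_n)_{q,w}(S) \cong H^\mr{Com}_p(Z_n)_{q,w}(S)$ for all $p$, and in particular the vanishing of the off-diagonal groups $p \neq w$ on one side is equivalent to that on the other. For the remaining triple $(q,w,S) = (n,1,\ul 1)$ I would observe that both augmentation ideals $\ol{E_n}$ and $\ol{Z_n}$ take values supported in weight $\geq 1$ — each part of an admissible (weighted) partition of a non-empty set has degree $\geq n$, hence weight $\geq 1$, and the augmentation ideals vanish on the empty set — so \cref{lem:KoszEasyEstimate} forces $H^\mr{Com}_p(E_n)_{n,1}(\ul 1) = H^\mr{Com}_p(Z_n)_{n,1}(\ul 1) = 0$ for every $p \geq 2$. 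Hence the only way the relative $\bQ$ can enter the exact sequence is through a short exact sequence $0 \to \bQ \to H^\mr{Com}_1(E_n)_{n,1}(\ul 1) \to H^\mr{Com}_1(Z_n)_{n,1}(\ul 1) \to 0$ of \emph{diagonal} groups (those with $p = w = 1$), while for every $p \neq 1$ the relative group vanishes and one again gets $H^\mr{Com}_p(E_n)_{n,1}(\ul 1) \cong H^\mr{Com}_p(Z_n)_{n,1}(\ul 1)$.

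Combining the two cases, $H^\mr{Com}_p(E_n)_{q,w}(S)$ vanishes for all $p \neq w$ (and all $q$, $S$) precisely when the same holds for $Z_n$, which is exactly the assertion that $E_n$ is Koszul if and only if $Z_n$ is. I do not expect a genuine obstacle here: once \cref{thm:relative-com-zn-vs-en} is in hand this is a diagram chase, and the one delicate point is the relative class in bidegree $(p,w) = (2,1)$, which looks off-diagonal but is harmless because the elementary weight estimate of \cref{lem:KoszEasyEstimate} already annihilates the groups it could interact with on the off-diagonal.
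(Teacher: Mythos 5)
Your proof is correct and runs the same long exact sequence as the paper, feeding in \cref{thm:relative-com-zn-vs-en}. The difference is in how the injectivity of the connecting map $\bQ \to H^\mr{Com}_1(E_n)_{n,1}(\ul{1})$ is established: the paper traces the connecting map back to the class of the attaching map $\phi\colon \ul{1}_*S(n)\to\overline{E_n}$ in Andr\'e--Quillen homology and argues that a weight-$1$ element of $\overline{E_n}$ cannot be decomposable, hence that class is nonzero; you instead invoke \cref{lem:KoszEasyEstimate} to get $H^\mr{Com}_2(Z_n)_{n,1}(\ul{1})=0$, so the preceding map $H^\mr{Com}_2(Z_n)_{n,1}(\ul 1)\to\bQ$ is zero and the connecting map is injective by exactness. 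Both observations come down to the same weight-$1$ sparsity of the augmentation ideals, but yours is slightly cleaner in that it does not require retracing the chain-level description of the connecting map from the proof of \cref{thm:relative-com-zn-vs-en}; you also explicitly dispose of the triples $(q,w,S)\neq(n,1,\ul 1)$, which the paper leaves implicit. One small inaccuracy in phrasing: you say that for every $p\neq 1$ "the relative group vanishes", but this is false at $p=2$ where $H^\mr{Com}_2(Z_n,E_n)_{n,1}(\ul 1)=\bQ$; what you actually need, and have already established, is that both absolute groups vanish there by \cref{lem:KoszEasyEstimate}, while the relative groups do vanish for all $p\leq 1$, which is what gives the isomorphisms $H^\mr{Com}_p(E_n)_{n,1}(\ul 1)\cong H^\mr{Com}_p(Z_n)_{n,1}(\ul 1)$ for $p\leq 0$.
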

\begin{proof}
In the portion of the long exact sequence
	\[\begin{tikzcd}
& H_2^\mr{Com}(E_n)_{n,1}(\ul{1})\rar & H_2^\mr{Com}(Z_n)_{n,1}(\ul{1}) \rar \ar[draw=none]{d}[name=X, anchor=center]{} & H_2^\mr{Com}(Z_n, E_n)_{n,1}(\ul{1}) = \bQ
\ar[rounded corners,
to path={ -- ([xshift=2ex]\tikztostart.east)
	|- (X.center) \tikztonodes
	-| ([xshift=-2ex]\tikztotarget.west)
	-- (\tikztotarget)}]{dll} \\
& H_1^\mr{Com}(E_n)_{n,1}(\ul{1}) \rar & H_1^\mr{Com}(Z_n)_{n,1}(\ul{1}) \rar & \cdots
\end{tikzcd}\]
we must show that the connecting map is injective. By the proof of the previous theorem, this connecting map sends the generator to the map
\[\ul{1}_*S(n) \overset{\phi}\lra \overline{E_n} \lra \bL Q^\mr{Com}(E_n),\]
a class in $\mr{AQ}_n(E_n)_{n,1}(\ul{1}) = H_1^\mr{Com}(E_n)_{n,1}(\ul{1})$. As $1$ is the lowest weight in which $\overline{E_n}$ has nontrivial elements, elements of this weight cannot be decomposable: thus as $\phi$ is nontrivial the connecting map is too.
\end{proof}

\subsection{Relation to Torelli Lie algebras}\label{sec:realisation-torelli}

The relation between $E_1$ and the Torelli Lie algebra was already indicated in \cite[Remark 8.4]{KR-WTorelli}, but we explain it again here.

\begin{notation}We write $\kappa_{e^j} \coloneqq (\varnothing, j) \in E_n(\varnothing)$.\end{notation}

\begin{theorem}\label{thm:TorRelation}The realisation 
\[K^\vee \otimes^{\cat{dsBr}} E_1/(\kappa_{e^2}) \in \cat{Gr}(\cat{Rep}(\mr{Sp}_{2g}(\bZ)))^\bN\]
is a commutative algebra object, and as long as $g \geq 4$ it agrees  in degrees $* \leq g$ with the quadratic dual of the Lie algebra object
\[\mr{Gr}^\bullet_\mr{LCS} \ft_{g,1}  \in \cat{Gr}(\cat{Rep}(\mr{Sp}_{2g}(\bZ)))^\bN.\]
\end{theorem}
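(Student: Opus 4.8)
The plan is to use the twisted Miller--Morita--Mumford classes to build a map from the quadratic algebra of \cref{thm:LowCohTg1} to $K^\vee \otimes^{\cat{dsBr}} E_1/(\kappa_{e^2})$ and to check it is an isomorphism in the asserted range. By \cref{cor:hain-presentation} and the definition of Koszul duality (the symplectic form identifies $\Lambda^3 V_1$ with its dual), the quadratic dual of $\GrLCS \ft_{g,1}$ is the bigraded commutative algebra object $A := \Lambda^*[\Lambda^3 V_1[1,1]] / ((\ref{eq:IH}),(\text{\ref{eq:Theta}}))$ discussed after \cref{cor:hain-presentation}. Since $K^\vee \otimes^{\cat{dsBr}}(-)$ is a strong symmetric monoidal left adjoint, it carries the augmented unital commutative algebra object $E_1/(\kappa_{e^2})$ to one and commutes with the pushout presenting it, so $K^\vee \otimes^{\cat{dsBr}} E_1/(\kappa_{e^2}) = (K^\vee \otimes^{\cat{dsBr}} E_1)/(\kappa_{e^2})$; hence it suffices to produce an $\mr{Sp}_{2g}(\bZ)$-equivariant isomorphism $A \cong (K^\vee \otimes^{\cat{dsBr}} E_1)/(\kappa_{e^2})$ in degrees $* \leq g$.

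For the comparison map, recall from \cite[Section 5]{KR-WTorelli} that $K^\vee \otimes^{\cat{dsBr}} E_1$ is the algebra of twisted MMM-classes: pairing the part of size $r$ and weight $a$ against $K^\vee$ produces $\kappa_{e^a}$ evaluated on $r$ vectors of $H(g)$, and the morphisms $(\mathrm{inc},(x,y))$ of $\cat{dsBr}$ act exactly by the first two relations of \cref{lem:MMMrels}. In particular $\kappa_1$ gives a map $\Lambda^3 V_1 \to (K^\vee \otimes^{\cat{dsBr}} E_1/(\kappa_{e^2}))^1$ and thus an algebra homomorphism $\Phi$ out of the free graded-commutative algebra on $\Lambda^3 V_1[1,1]$. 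Relation \eqref{eq:IH} already holds in $K^\vee \otimes^{\cat{dsBr}} E_1$: by the first relation of \cref{lem:MMMrels} both of its summands equal $\kappa_1$ of $v_1 \otimes v_2 \otimes v_5 \otimes v_6$ up to an even permutation of the arguments, and $\kappa_1$ is alternating. And \eqref{eq:Theta} equals $-\kappa_{e^2}$ in $K^\vee \otimes^{\cat{dsBr}} E_1$ by the computation in the proof of \cref{lem:MMMrels}, which uses only those first two relations, hence maps to $0$ in the quotient. So $\Phi$ descends to $\Phi \colon A \to (K^\vee \otimes^{\cat{dsBr}} E_1)/(\kappa_{e^2})$.

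Surjectivity is easy: an induction on degree shows that every admissible weighted partition of total degree $\geq 2$ is decomposable in the commutative algebra object $E_1$ --- a part of size $r \geq 4$ splits off a size-$3$ weight-$0$ part, a part of size $\leq 3$ with positive weight splits off a weight-$1$ generator, and $\kappa_{e^j}$ for $j \geq 2$ is a product of lower-weight generators --- so $E_1$, hence $E_1/(\kappa_{e^2})$ and its realisation, is generated in degree $1$, where $\Phi$ is the identity of $\Lambda^3 V_1$.

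The injectivity of $\Phi$ in degrees $* \leq g$ is the main obstacle. In degrees $* \leq 2$ it follows from \cref{thm:LowCohTg1} together with the determination of $H^*(T_{g,1};\bQ)^{\text{alg}}$ recalled in \cref{sec:CohTg}: there $(K^\vee \otimes^{\cat{dsBr}} E_1)/(\kappa_{e^2})$ is the quotient $\Lambda^2 \Lambda^3 V_1 / \langle (\ref{eq:IH}), (\text{\ref{eq:Theta}}) \rangle$, since by \cref{rem:tg1-explicit} the vector \eqref{eq:Theta} spans the degree-$2$ kernel of $K^\vee \otimes^{\cat{dsBr}} E_1 \to K^\vee \otimes^{\cat{dsBr}} E_1/(\kappa_{e^2})$. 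Propagating this to degrees $* \leq g$ requires passing to the downward signed Brauer category before realising: because $E_1/(\kappa_{e^2})$ is, in each weight, supported on sets of bounded size, \cref{lem:partial-inverse-to-realisation} and \cref{lem:PartialExactness} make $K^\vee \otimes^{\cat{dsBr}}(-)$ exact and conservative on the Harrison complexes at issue (as in \cref{lem:realisation-h-com}). Thus $\Phi$ is an isomorphism in degrees $* \leq g$ provided the commutative algebra object $E_1/(\kappa_{e^2})$ has its relations generated in degree $2$ over that range, i.e.\ $H^\mr{Com}_2(E_1/(\kappa_{e^2}))_{q,w}$ is concentrated in internal degree $q = 2$ for the relevant weights $w$; by \cref{cor:RegSeq} this reduces to the corresponding vanishing for $E_1$ itself, which must be established by a direct analysis of the Harrison complex of $E_1$ in terms of weighted partitions. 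The difficulty --- and what makes the range finite --- is precisely this last vanishing: it is the only place requiring genuine combinatorial work beyond formal properties of the realisation functor, and the bound on the sizes of sets on which $E_1$ is supported in a given weight is what limits the range.
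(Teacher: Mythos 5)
Your strategy diverges from the paper's: the paper's proof is a short reduction to results already established in \cite[Sections 5, 8.2, 9.3, 9.5]{KR-WTorelli}, where the isomorphism $\Lambda^*[\Lambda^3 V_1[1]]/(\ref{eq:IH}) \to K^\vee \otimes^{\cat{dsBr}} E_1 = R^{\cV}$ in degrees $* \leq g$ is obtained by comparing both sides with the rational cohomology of Torelli groups of high-dimensional manifolds via the Weiss fibre sequence and Madsen--Weiss-style stability. In contrast, you try to manufacture the isomorphism internally from structural properties of $E_1$ and the realisation functor. That is a legitimate alternative idea in principle, but your proof has a real gap and would in any case deliver a weaker range.

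The gap is the final step. You reduce injectivity of $\Phi$ to the vanishing of $H^\mr{Com}_2(E_1)_{q,w}$ for $q \neq 2$, and dismiss it as ``a direct analysis of the Harrison complex of $E_1$ in terms of weighted partitions.'' But because the weight grading on $E_1$ equals the homological grading (as $n=1$), this vanishing is exactly the $p=2$ layer of the Koszulness of $E_1$, which is the central technical result of the entire paper (established in \cref{thm:KoszulPositive} and \cref{thm:koszulity-zn-en} by combining the cohomology of framed high-dimensional Torelli groups, the unstable rational Adams spectral sequence, embedding calculus, and a graph-complex transfer argument). It is not a direct combinatorial computation, and you do not supply a proof of it. Citing those later theorems would not be circular --- their proofs do not depend on \cref{thm:TorRelation} --- but it would invert the paper's logical ordering and, more importantly, would still leave the range short: your conservativity/exactness arguments via \cref{lem:realisation-h-com} and \cref{lem:partial-inverse-to-realisation} are only valid in weight $\leq g/3$ (since the weight-$w$ piece of $E_1$ is supported on sets of size $\leq 3w$), whereas the theorem asserts the identification in degrees $\leq g$. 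The paper's bound of $g$ comes from the explicit stability estimate in \cite[Section 9.5]{KR-WTorelli}, which is independent of the Brauer-category machinery and cannot be recovered from it. The first several steps of your argument (the monoidality of realisation, the construction of $\Phi$ via twisted MMM-classes, the verification that \eqref{eq:IH} and \eqref{eq:Theta} map to zero, and the surjectivity via generation in weight $1$) are fine; it is the endgame that falls short.
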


\begin{proof}
In the language of \cite[Section 5]{KR-WTorelli} we have $E_1 = \mathcal{P}(-;\mathcal{V})'_{\geq 0} \otimes \det$ with $\mathcal{V}=\bQ[e]$ having $|e|=2$, and so $K^\vee \otimes^{\cat{dsBr}} E_1$ is the object denoted $R^\mathcal{V}$ there. By the discussion of \cite[Section 8.2]{KR-WTorelli} (especially Remark 8.4), and using the notation of Section \ref{sec:CohTg}, there is a map of graded commutative algebras
\[\frac{\Lambda^*[\Lambda^3 V_1[1]]}{((\ref{eq:IH}))} \lra R^\cV\]
which is an isomorphism in a stable range of degrees. As in the proof of Lemma \ref{lem:MMMrels} the element $\Theta = \sum_{i,j,k} \kappa_1(a_i \otimes a_j \otimes a_k) \cdot \kappa_1(a_i^\# \otimes a_j^\# \otimes a_k^\#)$ 
%\[\Theta = \sum_{i,j,k} \kappa_1(a_i \otimes a_j \otimes a_k) \cdot \kappa_1(a_i^\# \otimes a_j^\# \otimes a_k^\#) \in \frac{\Lambda^*[\Lambda^3 V_1[1]]}{((\ref{eq:IH}))}\]
corresponds to $-\kappa_{e^2}$. As the weight grading on $E_1$ coincides with its homological grading, there is a map 
\[\frac{\Lambda^*[\Lambda^3 V_1[1,1]]}{((\ref{eq:IH}), (\text{\ref{eq:Theta}}))} \lra K^\vee \otimes^{\cat{dsBr}} E_1/(\kappa_{e^2})\]
which is an isomorphism in a stable range of degrees, and the domain is the quadratic dual of $\mr{Gr}^\bullet_\mr{LCS} \ft_{g,1}$ as long as $g \geq 4$ by the discussion in Section \ref{sec:CohTg}. Finally, the stability range is determined in \cite[Section 9.5]{KR-WTorelli}: this map is an isomorphism in degrees $* \leq g$.
\end{proof}

This yields a relation between Koszulness of $\GrLCS  \ft_{g,1}$ and of $E_1$.

\begin{proposition}\label{prop:E1KoszImpliesStablyKosz}
If $E_1$ is Koszul, then $K^\vee \otimes^{\cat{dsBr}} E_1/(\kappa_{e^2})$ is Koszul in weight $ \leq \tfrac{1}{3} g$.
\end{proposition}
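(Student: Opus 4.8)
The plan is to combine the algebraic reductions already assembled with the two preservation-of-Koszulness results in the excerpt. First, recall from \cref{cor:hain-presentation} and the paragraph immediately following it that for $g \geq 4$ the Lie algebra $\GrLCS \ft_{g,1}$ is Koszul in weight $\leq W$ if and only if the commutative algebra object $\frac{\Lambda^*[\Lambda^3 V_1[1,1]]}{((\ref{eq:IH}), (\text{\ref{eq:Theta}}))}$ in $\cat{Gr}(\bQ\text{-}\cat{mod})^\bN$ is Koszul in weight $\leq W$; this uses \cref{lem:koszul-lie-vs-com}. So it suffices to show that, assuming $E_1$ is Koszul, the commutative algebra object $K^\vee \otimes^{\cat{dsBr}} E_1/(\kappa_{e^2})$ is Koszul in weight $\leq \tfrac{1}{3}g$ (by \cref{thm:TorRelation} this realisation agrees with the displayed quadratic dual in degrees $*\leq g$, hence in weights $\leq \tfrac13 g$ since weight is bounded by homological/internal degree here).

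The first main step is to pass from Koszulness of $E_1$ to Koszulness of $E_1/(\kappa_{e^2})$ as commutative algebra objects on $\cat{dsBr}$. Here $\kappa_{e^2} = (\varnothing, 2) \in E_1(\varnothing)$ is a cycle of even internal degree $2$ (so $n=1$ forces internal degree $2n=2$) and weight $1$, and it is not a zerodivisor: quotienting by the weighted part $(\varnothing,2)$ simply removes the factor accounting for that connected component of weight $2$ and size $0$, which is a free polynomial generator, so multiplication by it is injective on every $E_1(S)$. Therefore \cref{cor:RegSeq} applies and gives a homotopy cofibre sequence
\[(\varnothing)_*(\bQ\{\kappa_{e^2}\}[2]) \lra \bL Q^{\mr{Com}^+}(E_1) \lra \bL Q^{\mr{Com}^+}(E_1/(\kappa_{e^2})),\]
from which one reads off a long exact sequence in $H^\mr{Com}_p(-)_{q,w}$. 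The class $\kappa_{e^2}$ contributes only in Harrison degree $p=1$, internal degree $q=2$, weight $w=1$ (total degree $2$), which lies on the diagonal $p=w$ is false — but crucially it contributes to $H^\mr{Com}_1(E_1)_{2,1}(\varnothing)$, and since $E_1$ is Koszul this group is already accounted for; chasing the long exact sequence shows $H^\mr{Com}_p(E_1/(\kappa_{e^2}))_{q,w}$ vanishes off the diagonal in all weights, i.e.\ $E_1/(\kappa_{e^2})$ is Koszul. (One must check the $\kappa_{e^2}$-term maps isomorphically onto the relevant off-diagonal piece, exactly as in the proof of \cref{cor:EnZnEquiKoszul}: the generator is indecomposable because weight $1$ is the lowest weight in which $\overline{E_1}$ is nonzero.)

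The second step applies \cref{lem:realisation-h-com} to transport Koszulness of $E_1/(\kappa_{e^2})$ through the realisation functor $K^\vee \otimes^{\cat{dsBr}} -$. For this we need the support hypothesis: the weight $w$ piece of $E_1/(\kappa_{e^2})$ is supported on finite sets $S$ with $|S| \leq 3w$. Indeed, in weight $w$ the total homological degree is $w$ (weight equals homological degree divided by $n=1$), a weighted part $(S_\alpha, g_\alpha)$ with $g_\alpha \geq 1$ or $|S_\alpha|\leq 2$ contributes homological degree $2g_\alpha + |S_\alpha| - 2 \geq 1 \geq \tfrac13|S_\alpha|$ as soon as $|S_\alpha| \leq 3$, while a part with $g_\alpha = 0$ and $|S_\alpha| \geq 3$ contributes $|S_\alpha| - 2 \geq \tfrac13 |S_\alpha|$; summing, $w \geq \tfrac13 |S|$, so we may take $\lambda = 3$. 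Then \cref{lem:realisation-h-com} gives, for all $w \leq \tfrac13 g$,
\[K^\vee \otimes^{\cat{dsBr}} H^\mr{Com}_p(E_1/(\kappa_{e^2}))_{q,w} \cong H_p^\mr{Com}(K^\vee \otimes^{\cat{dsBr}} E_1/(\kappa_{e^2}))_{q,w},\]
and the left side vanishes when $p \neq w$ by Step~1, so the right side does too; that is precisely Koszulness of $K^\vee \otimes^{\cat{dsBr}} E_1/(\kappa_{e^2})$ in weight $\leq \tfrac13 g$.

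The step I expect to be the main obstacle — or at least the one requiring the most care — is Step~1: verifying that $\kappa_{e^2}$ is genuinely not a zerodivisor on every $E_1(S)$ (not merely on $E_1(\varnothing)$), and then tracking the tridegrees through the long exact sequence of \cref{cor:RegSeq} precisely enough to conclude that \emph{all} off-diagonal homology vanishes, not just that in the one tridegree where $\kappa_{e^2}$ lives. The non-zerodivisor claim should follow from the explicit description in \cref{def:en}: disjoint union with the fixed weighted part $(\varnothing,2)$ is injective on basis elements and commutes with the $\cat{dsBr}$-action, so $\kappa_{e^2}\cdot -$ is injective on each $E_1(S)$. The bookkeeping in the long exact sequence is then routine but must be done carefully because the $(\varnothing)_*$-term is concentrated in a single set ($\varnothing$) whereas the Harrison homology of $E_1/(\kappa_{e^2})$ is spread over all finite sets; however, since the connecting map and the maps in the sequence are $\cat{dsBr}$-equivariant and the only new class is the indecomposable $\kappa_{e^2}$, the argument closes exactly as in \cref{cor:EnZnEquiKoszul}.
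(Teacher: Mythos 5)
Your overall strategy --- quotient by $\kappa_{e^2}$ via \cref{cor:RegSeq}, then transport Koszulness through realisation by \cref{lem:realisation-h-com} with $\lambda=3$ --- is the right one, and your Step~2 (the support bound and the application of \cref{lem:realisation-h-com}) is correct. But Step~1 contains a genuine error: you misidentify the weight of $\kappa_{e^2}$, and this propagates into a wrong account of how the cofibre sequence interacts with the diagonal criterion.

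The element $\kappa_{e^2}=(\varnothing,2)\in E_1(\varnothing)$ has internal degree $n(2\cdot 2+0-2)=2$, and since weight is internal degree divided by $n$, for $n=1$ it has weight $w=2$, not $w=1$ (the paper records this as $\kappa_{e^2}\in E_1(\varnothing)_{2,2}$). Consequently the extra cell of \cref{cor:RegSeq} contributes in tridegree $(p,q,w)=(2,2,2)$, which lies \emph{on} the diagonal $p=w$ and so poses no obstruction to Koszulness. Your parenthetical --- ``the $\kappa_{e^2}$-term maps isomorphically onto the relevant off-diagonal piece, exactly as in the proof of \cref{cor:EnZnEquiKoszul}: the generator is indecomposable because weight $1$ is the lowest weight \dots'' --- transplants the wrong mechanism. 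In \cref{cor:EnZnEquiKoszul} the attached class $(\ul{1},1)$ really does sit in weight $1$, hence is indecomposable, and that makes the connecting map injective. Here $\kappa_{e^2}$ sits in weight $2$ and is \emph{decomposable}: \cref{lem:MMMrels} gives $\sum_{i,j,k}\kappa_1(a_i\otimes a_j\otimes a_k)\cdot\kappa_1(a_i^\#\otimes a_j^\#\otimes a_k^\#)=-\kappa_{e^2}$, so the map $(\varnothing)_*\bQ\{\kappa_{e^2}\}[2]\to\bL Q^{\mr{Com}}(\overline{E_1})$ is \emph{zero} on homology. The long exact sequence therefore splits into short exact sequences
\[0\lra H^{\mr{Com}}_p(E_1)_{q,w}(S)\lra H^{\mr{Com}}_p(E_1/(\kappa_{e^2}))_{q,w}(S)\lra \begin{cases}\bQ&\text{if }(p,q,w)=(2,2,2),\ S=\varnothing,\\ 0&\text{otherwise,}\end{cases}\lra 0,\]
from which Koszulness of $E_1$ is equivalent to that of $E_1/(\kappa_{e^2})$, the only new class being diagonal. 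As written, your argument asserts the opposite of what is needed (an injective connecting map into a nonexistent off-diagonal piece), so this is a real gap rather than a slip. One minor stylistic remark: your opening paragraph recalls the equivalence with Koszulness of $\GrLCS\ft_{g,1}$ and then concludes ``it suffices to show'' precisely the statement being proved; this detour is harmless but empty, since the bridge to $\GrLCS\ft_{g,1}$ is \cref{cor:E1Suffices} and plays no role here.
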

\begin{proof}
Note that $E_1(\varnothing)$ is the polynomial algebra on the elements $\kappa_{e^j} = (\varnothing, j)$ for $j > 1$, and each $E_1(S)$ is a free $E_1(\varnothing)$-module, so the element $\kappa_{e^2} = (\varnothing,2) \in E_1(\varnothing)_{2,2}$ is not a zerodivisor in the sense of Section \ref{sec:Regular}, and hence we may apply Corollary \ref{cor:RegSeq}. Recalling the definition $\mr{AQ}_*({R}) = H_*(\bL Q^\mr{Com}(\overline{R}))$, %\ak{Added this}\orw{changed to remove the overline in the domain, as we are about to apply it to the unital algebra $E_1$}\ak{We have only defined $AQ$ for nonunital algebras} 
there is a long exact sequence
\[\cdots \lra (\varnothing)_*(\bQ\{\kappa_{e^2}\}[2]) \lra \mr{AQ}_*(E_1) \lra \mr{AQ}_*(E_1/(\kappa_{e^2})) \lra \cdots.\]
As $\kappa_{e^2}$ is decomposable in $E_1$ by the formula for $\text{\eqref{eq:Theta}}$, taking internal gradings and weight into account this gives short exact sequences
\[0 \to H_p^{\mr{Com}}(E_1)_{q,w}(S) \to H_p^{\mr{Com}}(E_1/(\kappa_{e^2}))_{q,w}(S) \to \begin{cases}
\bQ & (p,q,w)=(2,2,2),\ S=\varnothing\\
0 & \text{ otherwise}
\end{cases} \to 0,\]
so $E_1$ is Koszul if and only if $E_1/(\kappa_{e^2})$ is. 

Observe that the weight $w$ part of $E_1/(\kappa_{e^2})$ is supported on sets of size $\leq 3w$, so by \cref{lem:realisation-h-com} we have
\[K^\vee \otimes^{\cat{dsBr}} H^\mr{Com}_p(E_1/(\kappa_{e^2}))_{q,w} \cong H^\mr{Com}_p(K^\vee \otimes^{\cat{dsBr}} E_1/(\kappa_{e^2}))_{q,w}\] 
as long as $w \leq \tfrac{1}{3}g$. Now if $E_1/(\kappa_{e^2})$ is Koszul then $H^\mr{Com}_{p}(E_1/(\kappa_{e^2}))_{q,w}=0$ for $p \neq w$, and it follows that $H^\mr{Com}_{p}(K^\vee \otimes^{\cat{dsBr}} E_1/(\kappa_{e^2}))_{q,w}=0$ for $p \neq w$ and  $w \leq \tfrac{1}{3} g$.
\end{proof}

\begin{corollary}\label{cor:E1Suffices}
If $E_1$ is Koszul then $\GrLCS \ft_{g,1}$ is Koszul in weight $\leq \tfrac{1}{3}g$.
\end{corollary}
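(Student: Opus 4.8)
The plan is to assemble \cref{prop:E1KoszImpliesStablyKosz}, \cref{thm:TorRelation} and \cref{lem:koszul-lie-vs-com} (the latter in the form recorded just after \cref{cor:hain-presentation}). Suppose $E_1$ is Koszul. By \cref{prop:E1KoszImpliesStablyKosz} the commutative algebra object $B \coloneqq K^\vee \otimes^{\cat{dsBr}} E_1/(\kappa_{e^2})$ is Koszul in weight $\leq \tfrac{1}{3}g$. By \cref{cor:hain-presentation} and the discussion following it, for $g \geq 4$ the quadratic dual commutative algebra of $\GrLCS \ft_{g,1}$ is $A \coloneqq \frac{\Lambda^*[\Lambda^3 V_1[1,1]]}{((\ref{eq:IH}), (\text{\ref{eq:Theta}}))}$, so by \cref{lem:koszul-lie-vs-com} it suffices to prove that $A$ is Koszul in weight $\leq \tfrac{1}{3}g$. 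Finally, \cref{thm:TorRelation}---or rather the morphism $A \to K^\vee \otimes^{\cat{dsBr}} E_1/(\kappa_{e^2})$ of commutative algebra objects constructed in its proof---is an isomorphism in (internal) degrees $\leq g$, again for $g \geq 4$. So the remaining task is to propagate Koszulness in weight $\leq \tfrac{1}{3}g$ from $B$ back to $A$ across this partial isomorphism.

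The only non-formal point is exactly this last step: the identification of $A$ with $B$ holds only in a range of degrees, whereas Koszulness is a statement in all degrees. Here I would use the elementary observation that, for a nonunital commutative algebra $\ol{R}$ supported in weights $\geq 1$, the weight-$w$ summand of its Harrison complex $\mr{Harr}(\ol{R})$ depends only on $\ol{R}$ in weights $\leq w$ together with the multiplication between those pieces: a weight-$w$ term of $\mr{coLie}(\ol{R}[1])$ is a $\mr{coLie}$-word in at most $w$ elements of $\ol{R}$, each of weight between $1$ and $w$, and the Harrison differential only involves products $\ol{R}_{w'} \otimes \ol{R}_{w''} \to \ol{R}_{w'+w''}$ with $w'+w'' \leq w$. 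Since $\ol{A}$ is generated in weight $1$, it is supported in internal degree equal to weight, so for every $w \leq \tfrac{1}{3}g$ (in particular $w \leq g$) the morphism $A \to B$ identifies $\mr{Harr}(\ol{A})$ with $\mr{Harr}(\ol{B})$ in weight $w$, and hence induces isomorphisms $H^\mr{Com}_p(A)_{q,w} \cong H^\mr{Com}_p(B)_{q,w}$ for all $p$ and $q$. The right-hand groups vanish for $p \neq w$ by \cref{prop:E1KoszImpliesStablyKosz}, so the left-hand groups do too; thus $A$ is Koszul in weight $\leq \tfrac{1}{3}g$, and \cref{lem:koszul-lie-vs-com} concludes the proof for $g \geq 4$.

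For $g < 4$, where \cref{cor:hain-presentation} is not available, one argues directly: then $\tfrac{1}{3}g < 2$, so Koszulness in weight $\leq \tfrac{1}{3}g$ just means Koszulness in weight $\leq 1$, which holds for any Lie algebra with additional grading generated in weight $1$, since in weight $1$ its Chevalley--Eilenberg complex is concentrated in the single homological degree contributing to $H^\mr{Lie}_1(-)_{0,1}$. The main obstacle in the argument is thus the degree-range bookkeeping of the second paragraph---verifying that the range $* \leq g$ of \cref{thm:TorRelation} contains everything relevant to weights $\leq \tfrac{1}{3}g$---which is however immediate once one recalls that $A$ is quadratic and generated in weight $1$, so that only its low-weight part is relevant and that part lies well within the stable range.
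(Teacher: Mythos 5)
Your proof is correct and follows essentially the same route as the paper's: apply \cref{prop:E1KoszImpliesStablyKosz}, then \cref{thm:TorRelation}, then transfer Koszulness across quadratic duality via \cref{lem:koszul-lie-vs-com}. The one substantive addition is that you spell out why an isomorphism $A \to K^\vee \otimes^{\cat{dsBr}} E_1/(\kappa_{e^2})$ in degrees $\leq g$ is enough, namely that the weight-$w$ piece of the Harrison complex of a weight-positively-graded algebra depends only on the weight-$\leq w$ part of the algebra together with the multiplications landing there---a point the paper's three-sentence proof leaves implicit; you also handle the $g < 4$ degenerate case (where \cref{cor:hain-presentation} is not available) by observing that Koszulness in weight $\leq 1$ holds for trivial reasons. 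Both refinements are correct and could be spliced into the paper as clarification, but they do not change the argument.
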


\begin{proof}
If $E_1$ is Koszul then, by Proposition \ref{prop:E1KoszImpliesStablyKosz}, $K^\vee \otimes^{\cat{dsBr}} E_1/(\kappa_{e^2})$ is Koszul in weight $ \leq \tfrac{1}{3} g$. By Theorem \ref{thm:TorRelation}, $K^\vee \otimes^{\cat{dsBr}} E_1/(\kappa_{e^2})$ agrees with the quadratic dual of the Lie algebra $\GrLCS \ft_{g,1}$ in weight $ \leq g$, so by \cref{lem:koszul-lie-vs-com} this Lie algebra is Koszul in weight $ \leq \tfrac{1}{3}g$.
\end{proof}

For any odd $n \geq 1$ the commutative algebra object $E_n$ is isomorphic to $E_1$ with its homological grading scaled by $n$. Thus $E_1$ is Koszul if and only if $E_n$ is Koszul, and using \cref{cor:E1Suffices} and \cref{prop:koszul} this is what we shall show to prove \cref{athm:main} and hence \cref{thm:mainPrime}. We will do so by showing that the $Z_n$ are Koszul and appealing to \cref{cor:EnZnEquiKoszul}. Koszulness of the $Z_n$ has its own applications, to the rational homotopy Lie algebra of diffeomorphism groups of high-dimensional manifolds: we will explain these applications in \cref{sec:HighDimApplications}.

\section{High-dimensional manifolds: starting the proof of \cref{athm:main}} \label{sec:positive-arity}  
The goal of this section is to prove the following theorem, which \emph{almost} proves that the objects $Z_n, E_n \in \cat{Alg}_{\cat{Com}^+}^\mr{augm}(\cat{Fun}(\cat{d(s)Br}, \cat{Ch}^\bN))$ are Koszul.

\begin{theorem}\label{thm:KoszulPositive}
	If $q \neq n \cdot p$ then the functors
	\[H^\mr{Com}_{p}(Z_n)_{q,w} \colon \cat{d(s)Br} \lra \bQ\text{-}\cat{mod}\qquad \text{and} \qquad H^\mr{Com}_{p}(E_n)_{q,w} \colon \cat{d(s)Br} \lra \bQ\text{-}\cat{mod}\]
	vanish when evaluated on non-empty sets.
\end{theorem}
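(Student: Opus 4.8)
The plan is to reduce \cref{thm:KoszulPositive} to a vanishing statement about the rational homotopy of the spaces $X_1(g)$, and then to play two independent sparsity results against one another.

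\emph{Step 1: reductions.} By construction $\ol{Z_n}$ and $\ol{E_n}$ are concentrated in internal degrees divisible by $n$, and their weight grading is $\tfrac1n$ times the internal degree; the same then holds on the Harrison complex, so $H^\mr{Com}_p(Z_n)_{q,w}$ and $H^\mr{Com}_p(E_n)_{q,w}$ vanish unless $n\mid q$ and $w=q/n$, in which case $q\neq np$ is equivalent to $w\neq p$. Since $\ol{Z_n}(S)$ and $\ol{E_n}(S)$ are supported in weight $\geq1$, \cref{lem:KoszEasyEstimate} disposes of the case $p>w$, so I am reduced to $p<w$ with $q=nw$. The long exact sequence relating $H^\mr{Com}(E_n)$, $H^\mr{Com}(Z_n)$ and the relative groups of \cref{thm:relative-com-zn-vs-en} shows that $H^\mr{Com}_p(E_n)_{q,w}(S)\to H^\mr{Com}_p(Z_n)_{q,w}(S)$ is an isomorphism except when $(p,q,w,S)\in\{(1,n,1,\ul{1}),(2,n,1,\ul{1})\}$, at both of which $p\geq w$; so it suffices to treat $Z_n$. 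Finally, since $Z_n$ depends, up to rescaling its homological grading, only on the parity of $n$, and rescaling leaves $w$ and the hypothesis $q\neq np$ unchanged, to prove the theorem for $Z_n$ at a given weight $w$ I may replace $n$ by any larger integer of the same parity; I therefore assume $n>w$ and $2n\geq6$.

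\emph{Step 2: passage to manifold topology.} Fix $p<w$ and set $q=nw$. The functor $H^\mr{Com}_p(Z_n)_{q,w}$ is supported on finite sets of size $\leq3w$, as the weight $w$ part of $Z_n$ is. Hence by \cref{lem:partial-inverse-to-realisation}, for $g\geq3w$ and $S\neq\varnothing$ its value on $S$ injects into $[H(g)_{[S]}\otimes(K^\vee\otimes^{\cat{d(s)Br}}H^\mr{Com}_p(Z_n)_{q,w})]^{\mr{O}_\epsilon(H(g))}$; but $H(g)_{[S]}$ is a sum of the $V_\lambda$ with $\lambda\vdash|S|$, hence has no trivial summand for $S\neq\varnothing$, so the theorem for $Z_n$ follows once $K^\vee\otimes^{\cat{d(s)Br}}H^\mr{Com}_p(Z_n)_{q,w}$ is shown to consist of trivial $\mr{O}_\epsilon(H(g))$-representations. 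By \cref{lem:realisation-h-com} (with $\lambda=3$, valid for $w\leq g/3$) this realisation equals $H^\mr{Com}_p(K^\vee\otimes^{\cat{d(s)Br}}Z_n)_{q,w}$, and by \cite{KR-WDisks} the connected graded algebra $K^\vee\otimes^{\cat{d(s)Br}}Z_n$ agrees, in a range of degrees growing with $g$, with $H^*(X_1(g);\bQ)$, where $X_1(g)$ is the nilpotent space in the fibration sequence $X_1(g)\to\overline{B\mr{Tor}}^\mr{fr}_\partial(W_{g,1})\to X_0$ of \cite{KR-WDisks}. Since $H^\mr{Com}_p(-)_q$ of a connected graded algebra depends only on its part in degrees $\leq q=nw$, for $g$ large this gives an isomorphism of $\mr{Sp}_{2g}(\bZ)$-representations
\[K^\vee\otimes^{\cat{d(s)Br}}H^\mr{Com}_p(Z_n)_{q,w}\;\cong\;H^\mr{Com}_p(H^*(X_1(g);\bQ))_{q,w},\]
and I am reduced to showing the right-hand side consists of trivial representations.

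\emph{Step 3: two sparsity results.} I would use the unstable rational Adams spectral sequence of $\mr{Sp}_{2g}(\bZ)$-representations
\[E_2^{s,t}=H^\mr{Com}_s(H^*(X_1(g);\bQ))_t\;\Longrightarrow\;\mr{Hom}(\pi_{t-s+1}(X_1(g)),\bQ),\]
available since $X_1(g)$ is nilpotent. As $H^*(X_1(g);\bQ)$ is concentrated in degrees divisible by $n$ in the stable range, so is $E_2$ in its $t$-grading, and $d_r$ moves between internal degrees differing by $r-1$, forcing $n\mid(r-1)$ and hence $r\geq n+1$. For $n>w$ this makes any differential entering the spot $(s,t)=(p,nw)$ originate in negative Harrison degree, and any differential leaving it land in a spot whose Harrison degree exceeds its weight, which vanishes by \cref{lem:KoszEasyEstimate}; both are therefore zero, so $E^{p,nw}_\infty=E^{p,nw}_2=H^\mr{Com}_p(H^*(X_1(g);\bQ))_{q,w}$ is a subquotient of $\mr{Hom}(\pi_{nw-p+1}(X_1(g)),\bQ)$. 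On the other hand, the embedding calculus computation of $\pi_*(\overline{B\mr{Tor}}^\mr{fr}_\partial(W_{g,1}))\otimes\bQ$ in \cite{KR-WDisks}, together with the fibration above (on whose base $\mr{Sp}_{2g}(\bZ)$ acts trivially), shows that the non-trivial part of $\pi_D(X_1(g))\otimes\bQ$ is supported, in the relevant range, in degrees $D\equiv1\pmod{n-1}$. Since $nw-p+1\equiv w-p+1\pmod{n-1}$ with $0<w-p<w<n$, the degrees $nw-p+1$ for $1\leq p\leq w-1$ are not $\equiv1\pmod{n-1}$; hence $\pi_{nw-p+1}(X_1(g))\otimes\bQ$, and so $H^\mr{Com}_p(H^*(X_1(g);\bQ))_{q,w}$, consists of trivial representations, as needed.

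\emph{Main obstacle.} The crux is Step 3: one must read off from \cite{KR-WDisks} the $\mr{Sp}_{2g}(\bZ)$-equivariant sparsity of $\pi_*(\overline{B\mr{Tor}}^\mr{fr}_\partial(W_{g,1}))\otimes\bQ$ in exactly the form used above, and combine it with the (partial) degeneration of the unstable rational Adams spectral sequence, whose justification for $n>w$ relies on the differential estimates afforded by \cref{lem:KoszEasyEstimate}. By contrast the reductions of Steps 1 and 2 are routine given the results already established.
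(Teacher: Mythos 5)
Your proposal is correct and takes essentially the same route as the paper: via \cref{lem:partial-inverse-to-realisation} and \cref{lem:realisation-h-com} the claim is translated into showing $H^\mr{Com}_p(H^*(X_1(g);\bQ))_{q,w}$ consists of trivial $\mr{O}_\epsilon(H(g))$-representations, which is then extracted from the unstable rational Adams spectral sequence using the two sparsity inputs of \cref{thm:pos-arity-framing-input}. The only cosmetic differences are that you front-load both the reduction $E_n\leadsto Z_n$ (via the relative long exact sequence of \cref{thm:relative-com-zn-vs-en}) and the replacement of $n$ by a larger integer of the same parity, whereas the paper performs both at the end, and that you justify the partial degeneration of the spectral sequence by checking that every differential in or out of the relevant spot has vanishing source or target, while the paper argues by separating the contributing bands in total degree.
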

As $Z_n$ and $E_n$ have a weight grading which is equal to the homological grading divided by $n$, the above is equivalent to the vanishing on non-empty sets of $H^\mr{Com}_{p}(Z_n)_{q,w}$ and $H^\mr{Com}_{p}(E_n)_{q,w}$ for $p \neq w$, i.e.\ to Koszulness, but only on non-empty sets. Because of this, we can and will refrain from mentioning the weight grading from now on.

By \cref{cor:EnZnEquiKoszul} it is enough to prove \cref{thm:KoszulPositive} for $Z_n$, which we will do by relating $Z_n$ to the cohomology of moduli spaces of certain high-dimensional manifolds with framings. We have already developed these results in \cite{KR-WDisks}, relying on results from \cite{KR-WTorelli,KR-WAlg}, and it will suffice to merely recall them here.

\subsection{Torelli groups of high-dimensional manifolds}\label{sec:torelli-groups} For the remainder of this section we suppose that $2n \geq 6$. We will use the $2n$-manifolds 
\[W_{g,1} \coloneqq D^{2n} \# (S^n \times S^n)^{\# g}.\]

A \emph{tangential structure} for $2n$-manifolds can equivalently be described by a map $\theta \colon B \to B\mr{O}(2n)$, or by a $\mr{GL}_{2n}(\bR)$-space $\Theta$ (see \cite[Section 4.5]{grwsurvey}). Here we use the former, as in \cite{KR-WTorelli,KR-WAlg}, though we cite extensively from \cite{KR-WDisks}, which uses the latter. 

Fix a tangential structure $\theta \colon B \to B\mr{O}(2n)$. A \emph{$\theta$-structure} on $W_{g,1}$ is a map of vector bundles $\ell \colon TW_{g,1} \to \theta^* \gamma_{2n}$, where $\gamma_{2n}$ is the universal bundle over $B\mr{O}(2n)$. We will fix a boundary condition $\ell_\partial \colon TW_{g,1}|_{\partial W_{g,1}} \to \theta^*\gamma_{2n}$ and consider only those $\theta$-structures which extend it; we let $\mr{Bun}_\partial(TW_{g,1},\theta^*\gamma_{2n})$ denote the space of such bundle maps. The topological group $\Diff_\partial(W_{g,1})$ of diffeomorphisms of $W_{g,1}$ which fix its boundary pointwise (in the $C^\infty$-topology) acts on it through the derivative. We define
\[B\Diff^\theta_\partial(W_{g,1}) \coloneqq \mr{Bun}_\partial(TW_{g,1},\theta^*\gamma_{2n})\sslash \Diff_\partial(W_{g,1}).\]

We will only use tangential structures which satisfy the assumptions of Section 8 of \cite{KR-WAlg}. By Lemma 8.5 (i) of \cite{KR-WAlg}, there then exists up to homotopy a unique orientation-preserving boundary condition $\ell_\partial \colon TW_{g,1}|_{\partial W_{g,1}} \to \theta^* \gamma_{2n}$ which extends to $W_{g,1}$. For a $\theta$-structure $\ell$ on $W_{g,1}$, we denote by $B\Diff^\theta_\partial(W_{g,1})_\ell$ the path-component of $B\Diff^\theta_\partial(W_{g,1})$ which contains it. 

The intersection product endows $H_n(W_{g,1};\bZ)$ with a non-degenerate $(-1)^n$-symmetric form. Every diffeomorphism of $W_{g,1}$ induces an automorphism of this form, so we get a homomorphism 
\[\pi_1(B\Diff_\partial(W_{g,1})) \lra G_g = \begin{cases} 	\mr{Sp}_{2g}(\bZ) & \text{if $n$ is odd,} \\
	\mr{O}_{g,g}(\bZ) & \text{if $n$ is even.}\end{cases}\]
We will let $\smash{G^{\theta,[\ell]}_g} \subset G_g$ denote the image of the composition
\[\pi_1(B\Diff^\theta_\partial(W_{g,1})_\ell) \lra \pi_1(B\Diff_\partial(W_{g,1})) \lra G_g.\]
It is a finite index subgroup by \cite[Lemma 8.9]{KR-WAlg}. There is an induced map $B\Diff^\theta_\partial(W_{g,1})_\ell \to BG^{\theta,[\ell]}_g$, and we define
\[B\Tor^\theta_\partial(W_{g,1})_{\ell} \coloneqq \mr{hofib}\left[B\Diff^\theta_\partial(W_{g,1})_\ell \to BG^{\theta,[\ell]}_g\right].\]

\subsection{$Z_n$ and high-dimensional manifolds with framings} \label{sec:framed} For $Z_n$, we will use the above constructions with tangential structure $\theta_\mr{fr} \colon E\mr{O}(2n) \to B\mr{O}(2n)$ to obtain Torelli groups $B\mr{Tor}^\mr{fr}_\partial(W_{g,1})_\ell$ with framings. From this, we developed the commutative diagram \cite[(15)]{KR-WDisks} with rows and columns fibration sequences
\begin{equation*}%\label{eqn:x0x1-diag-2} 
\begin{tikzcd}[column sep=.3in,row sep=.2in]X_1{}(g) \rar \dar & \overline{B\Tor}^\mr{fr}_\partial(W_{g,1})_\ell \rar \dar & X_0{} \dar \\ 
		A_1{}(g) \rar \dar & \overline{B\Diff}^\mr{fr}_\partial(W_{g,1})_\ell \rar{\overline{\alpha}^\mr{fr}} \dar  & \overline{\Omega^\infty_0 \mathbf{MT}\theta_{\mr{fr}}} \dar \\
		A_2{}(g) \rar & B\overline{G}{}^{\mr{fr},[\ell]}_g \rar & \overline{\overline{\Omega^\infty_0 \mathbf{KH}}} 
\end{tikzcd}
\end{equation*}
with overlines denoting a passage to a finite cover or finite subgroup which has no effect on rational cohomology (we refer to loc.~cit.~for definitions and details). The proof  of \cref{thm:KoszulPositive} uses the following facts about $X_1(g)$, which combines Lemma 4.9, Theorem 4.11, Corollary 4.20, and Remark 6.5 of \cite{KR-WDisks}.

\begin{theorem}\label{thm:pos-arity-framing-input} Fix $2n \geq 6$.
	\begin{enumerate}[(i)]
		\item \label{enum:pos-arity-framing-input-nilpotent} The space $X_1(g)$ is nilpotent and of finite type.
		\item \label{enum:pos-arity-framing-input-action} The action of $\pi_1(A_2(g))$ on $H^*(X_1(g);\bQ)$ and $\pi_*(X_1(g)) \otimes \bQ$ factors over $\overline{G}{}^{\mr{fr},[\ell]}_g$.
		\item \label{enum:pos-arity-framing-input-cohomology} There is a map $K^\vee \otimes^{\cat{d(s)Br}} Z_n \to H^*(X_1(g);\bQ)$ of $\overline{G}{}^{\mr{fr},[\ell]}_g$-representations which is an isomorphism in a range of homological degrees tending to $\infty$ as $g \to \infty$.
		\item \label{enum:pos-arity-framing-input-homotopy} In a range tending to $\infty$ as $g \to \infty$, in degrees $* \leq \tfrac{n(n-3)}{2}$ the non-trivial $\overline{G}{}^{\mr{fr},[\ell]}_g$-representations in $\pi_{*+1}(X_1(g)) \otimes \bQ$ vanish except with $* = r(n-1)$ for some $r \geq 1$.\qed
	\end{enumerate}
\end{theorem}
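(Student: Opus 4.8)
The statement is a recollection, so the plan is to assemble the four assertions from the study of the framed moduli space $\overline{B\mr{Tor}}^\mr{fr}_\partial(W_{g,1})_\ell$ carried out in \cite{KR-WDisks}; once the individual inputs are in hand, combining them is formal. I indicate below which input is needed for each part and where the real work lies.

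Parts (i) and (ii) are the soft ones. For (i) I would run the defining fibration sequence $X_1(g) \to \overline{B\mr{Tor}}^\mr{fr}_\partial(W_{g,1})_\ell \to X_0$: the total space is nilpotent and of finite type by the surgery-theoretic analysis of high-dimensional Torelli groups in \cite{KR-WTorelli, KR-WAlg}, the base $X_0$ is simply connected and of finite type, and both properties pass to the fibre of such a fibration. For (ii), the $\pi_1(A_2(g))$-action on $H^*(X_1(g);\bQ)$ and $\pi_*(X_1(g))\otimes\bQ$ is the monodromy of the left-hand column of the displayed $3\times 3$ diagram of fibration sequences; since that diagram is produced by taking homotopy fibres of a commuting square of maps all lying over $B\overline{G}{}^{\mr{fr},[\ell]}_g$, this action extends along $A_2(g) \to B\overline{G}{}^{\mr{fr},[\ell]}_g$ and therefore factors through $\overline{G}{}^{\mr{fr},[\ell]}_g = \pi_1(B\overline{G}{}^{\mr{fr},[\ell]}_g)$.

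The content is in (iii) and (iv). For (iii) the plan is: compute $H^*(\overline{B\mr{Tor}}^\mr{fr}_\partial(W_{g,1})_\ell;\bQ)$ in a stable range via the parametrised Pontryagin--Thom map together with a homological stability theorem, identify inside it the subalgebra generated by the twisted Miller--Morita--Mumford classes, check that this subalgebra exhausts the cohomology in a range, and match its combinatorial bookkeeping with $K^\vee \otimes^{\cat{d(s)Br}} Z_n$; feeding the result through the Serre spectral sequence of $X_1(g)\to\overline{B\mr{Tor}}^\mr{fr}_\partial(W_{g,1})_\ell\to X_0$ then yields the stated isomorphism for $X_1(g)$. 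For (iv) the plan is to use embedding calculus: the Goodwillie--Weiss tower computes $\pi_*(\overline{B\mr{Tor}}^\mr{fr}_\partial(W_{g,1})_\ell)\otimes\bQ$ in a range, its layers contribute non-trivial $\overline{G}{}^{\mr{fr},[\ell]}_g$-representations only in the sparse degrees indicated, and correcting for the contribution of $\pi_*(X_0)\otimes\bQ$ via the fibration sequence gives the claimed sparsity for $\pi_*(X_1(g))\otimes\bQ$. The main obstacle is (iii): everything else is either formal or a clean application of embedding calculus, but the cohomological identification with $K^\vee \otimes^{\cat{d(s)Br}} Z_n$ requires both homological stability for framed Torelli groups of $W_{g,1}$ and an explicit computation of the stable cohomology, together with a careful comparison of the two combinatorial descriptions.
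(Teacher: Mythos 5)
The paper's own "proof" of this theorem is a bare citation — it is stated to "combine Lemma 4.9, Theorem 4.11, Corollary 4.20, and Remark 6.5 of \cite{KR-WDisks}" and carries a \qed with no further argument. Your proposal correctly identifies that this is a recollection and reconstructs a plausible outline of what those cited results provide: fibration formalities for (i)--(ii), parametrised Pontryagin--Thom plus homological stability and the twisted MMM-class bookkeeping for (iii), and the embedding calculus Taylor tower with correction by $\pi_*(X_0)\otimes\bQ$ for (iv), matching the strategy the authors themselves describe in their Outline of proofs.
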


\subsection{Proof of Theorem \ref{thm:KoszulPositive}}\label{sec:proof-KoszulPositive}We first prove \cref{thm:KoszulPositive} for $Z_n$, using the above theorem as input. As a consequence of \cref{thm:pos-arity-framing-input} \eqref{enum:pos-arity-framing-input-nilpotent}, there is a strongly convergent unstable rational Adams spectral sequence
\[E^2_{s,t} = H^\mr{Com}_{s}(H^*(X_1(g);\bQ))_t \Longrightarrow \mr{Hom}(\pi_{t-s}(\Omega X_1(g)),\bQ),\]
with $d^r$-differential of bidegree $(-r,-r+1)$ which, by naturality and \cref{thm:pos-arity-framing-input} \eqref{enum:pos-arity-framing-input-action}, is a spectral sequence of $\overline{G}{}^{\mr{fr},[\ell]}_g$-representations.

By \cref{thm:pos-arity-framing-input} \eqref{enum:pos-arity-framing-input-cohomology}, in a stable range the rational cohomology of $X_1(g)$ is concentrated in degrees which are multiples of $n$. Thus in a stable range the groups $E^2_{s,t}$ are supported along the lines $t = r \cdot n$ with $r \in \bN$ and by the argument of Lemma \ref{lem:KoszEasyEstimate} we have $E^2_{s,rn}=0$ for $r < s$. Thus $E^2_{s,t}$ with $t=r\cdot n$ is nontrivial only for $1 \leq s \leq r$, contributing to degrees
\[r\cdot n-r+1 \leq t-s+1 \leq r\cdot n.\]
Furthermore, as long as these ranges of total degrees are separated from each other there can be no differentials between the corresponding lines: in particular, as long as $n \geq r-2$ the spectral sequence must collapse in degrees $t-s+1 \leq r \cdot n$. Thus as long as $r \leq n+2$, there is an isomorphism of $\overline{G}{}^{\mr{fr},[\ell]}_g$-representations in a stable range
\[H^\mr{Com}_{s}(H^*(X_1(g);\bQ))_{rn} \cong \mr{Hom}(\pi_{rn-s+1}(X_1(g)) , \bQ).\]
By \cref{thm:pos-arity-framing-input} \eqref{enum:pos-arity-framing-input-homotopy}, in degrees $* \leq \tfrac{n(n-3)}{2}$ the non-trivial representations on the right side vanish except when $r=s$. Hence the same is true for the left side.

By \cref{thm:pos-arity-framing-input} \eqref{enum:pos-arity-framing-input-cohomology} there is a map
\[K^\vee \otimes^{\cat{d(s)Br}} Z_n\lra H^*(X_1(g);\bQ)\]
of $\overline{G}{}^{\mr{fr},[\ell]}_g$-representations which is an isomorphism in a stable range. By \cref{lem:realisation-h-com} 
\[K^\vee \otimes^{\cat{d(s)Br}} H^\mr{Com}_p(Z_n)_q \cong H^\mr{Com}_p(K^\vee \otimes^{\cat{d(s)Br}} Z_n)_q\]
as long as $\tfrac{q}{n} \leq \tfrac{1}{3} g$, which is satisfied by taking $g$ large enough. By the vanishing result established above, for all large enough $g$ and for $q \leq \smash{\tfrac{n(n-3)}{2}}$, this is a trivial representation whenever $q \neq n\cdot p$. By the discussion in \cref{sec:orth-symp-rep-theory}, the same is true as $\mr{O}_\epsilon(H(g))$-representations.

By Lemma \ref{lem:partial-inverse-to-realisation}, if $H^\mr{Com}_p(Z_n)_q(S)$ were not zero for a non-empty finite set $S$ then for $g \geq |S|$ so would be $[H_{[S]} \otimes (K^\vee \otimes^{\cat{d(s)Br}} H^\mr{Com}_p(Z_n)_q]^{\mr{O}_\epsilon(H(g))}$. As $H_{[S]}$ for a non-empty set $S$ is a direct sum of irreducibles that are \emph{not} equal to the trivial representation, this implies that $K^\vee \otimes^{\cat{d(s)Br}} H^\mr{Com}_p(Z_n)_q$ would contain a non-trivial subrepresentation. Using the above, we conclude that
\[H^\mr{Com}_{p}(Z_n)_{q}(S) = 0 \text{ for } S \neq \varnothing \text{ and } q \neq n \cdot p \text{ as long as } q \leq \tfrac{n(n-3)}{2}.\]
We now use that for all $n$'s of the same parity the commutative algebra objects $Z_n$ are isomorphic up to a linear scaling of degrees. As the bound ``$q \leq\tfrac{n(n-3)}{2}$" scales quadratically with $n$ it can therefore be ignored, yielding Theorem \ref{thm:KoszulPositive} for $Z_n$.

\vspace{1em}

To deduce Theorem \ref{thm:KoszulPositive} for $E_n$, we invoke \cref{cor:EnZnEquiKoszul}.% the long exact sequence

\subsection{$E_n$ and high-dimensional manifolds with Euler structure}\label{sec:euler} Instead of using \cref{cor:EnZnEquiKoszul} to deduce \cref{thm:KoszulPositive} for $E_n$ from $Z_n$, one may prove it directly using a custom tangential structure which mimics in high dimensions the moduli spaces of surfaces. We explain this in this extended remark, as it may be of independent interest. It was in fact our original approach to \cref{athm:main}.

The Pontrjagin classes $p_j \in H^{4j}(B\mr{SO}(2n);\bZ)$ for $1 \leq j < n$ induce a map
\[B\mr{SO}(2n) \lra \prod_{1 \leq j<n} K(\bZ,4j).\]
We denote its homotopy fibre by $\widetilde{B}^e$, which has $n$-connected cover $B^e \coloneqq \widetilde{B}^e\langle n \rangle$.

\begin{definition}The \emph{Euler tangential structure} is the map $\theta_e \colon B^e \to B\mr{O}(2n)$.
\end{definition}

The name is justified by the calculation $H^*(B^e;\bQ) = \bQ[e]$ where $e$ denotes the Euler class, of degree $2n$. By construction $B^e$ is $n$-connected, and since $B\mr{SO}(2n)$ has finitely-generated homotopy groups, so does $B^e$. Thus this tangential structure satisfies the hypotheses for Section 8 of \cite{KR-WAlg}. From \cref{sec:torelli-groups}, we obtain $B\Tor^{\theta_e}_\partial(W_{g,1})$ which we shorten to $B\Tor^e_\partial(W_{g,1})$. A straightforward adaptation of the techniques of \cite{KR-WTorelli,KR-WDisks} yields a homotopy-commutative diagram
\begin{equation*}%\label{eqn:x0x1-diag-2} 
\begin{tikzcd}[column sep=.3in,row sep=.15in]X_1^e{}(g) \rar \dar & \overline{B\Tor}^e_\partial(W_{g,1})_\ell \rar \dar & X_0^e \dar \\ 
		A_1^e(g) \rar \dar & \overline{B\Diff}^e_\partial(W_{g,1})_\ell \rar{\overline{\alpha}^e} \dar  & \overline{\Omega^\infty_0 \mathbf{MT}\theta_e} \dar \\
		A_2^e(g) \rar & B\overline{G}{}^{e,[\ell]}_g \rar & \overline{\overline{\Omega^\infty_0 \mathbf{KH}}} 
\end{tikzcd}
\end{equation*}
with rows and columns fibration sequences, and an analogue of \cref{thm:pos-arity-framing-input} sufficient to directly prove \cref{thm:KoszulPositive} for $E_n$:

\begin{theorem}\label{thm:pos-arity-euler-input} \
	\begin{enumerate}[(i)]
		\item The space $X_1^e(g)$ is nilpotent and of finite type.
		\item \label{enum:pos-arity-euler-input-action} The action of $\pi_1(A^e_2(g))$ on $H^*(X^e_1(g);\bQ)$ and $\pi_*(X^e_1(g)) \otimes \bQ$ factors over $\overline{G}{}^{e,[\ell]}_g$.
		\item \label{enum:pos-arity-euler-input-cohomology} There is a map
		\[\frac{K^\vee \otimes^{\cat{d(s)Br}} E_n}{(\kappa_{e^{j}} \mid j > 1)} \lra H^*(X^e_1(g);\bQ)\]
		of $\overline{G}{}^{e,[\ell]}_g$-representations which is an isomorphism in a range of homological degrees tending to $\infty$ as $g \to \infty$.
		\item \label{enum:pos-arity-euler-input-homotopy} In a stable range, in degrees $* \leq \tfrac{n(n-3)}{2}$ the non-trivial $\overline{G}{}^{e,[\ell]}_g$-representations in $\pi_{*+1}(X^e_1(g)) \otimes \bQ$ vanish except with $* = r(n-1)$ for some $r \geq 1$.\qed
	\end{enumerate}
\end{theorem}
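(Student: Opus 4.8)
The plan is to mirror, step by step, the proof of \cref{thm:pos-arity-framing-input} carried out in \cite{KR-WDisks}, substituting the Euler tangential structure $\theta_e$ for the framing structure $\theta_{\mr{fr}}$; the only structural input used there to build the displayed diagram of fibration sequences and to deduce parts (i)--(ii) is that $\theta_e$ satisfies the hypotheses of \cite[Section 8]{KR-WAlg}, which holds since $B^e$ is $n$-connected with finitely generated homotopy groups, together with finiteness of the relevant homotopy and homology. So I would first check that the constructions of \cite{KR-WDisks} go through verbatim, giving the diagram, the nilpotence and finite type of $X_1^e(g)$ (part (i), as in \cite[Lemma 4.9]{KR-WDisks}), and the factorisation of the $\pi_1(A_2^e(g))$-action on $H^*(X_1^e(g);\bQ)$ and $\pi_*(X_1^e(g))\otimes\bQ$ through $\overline{G}{}^{e,[\ell]}_g$ with trivial action on $X_0^e$ (part (ii), as in \cite[Theorem 4.11]{KR-WDisks}).

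For part (iii): the stable rational cohomology of $\overline{B\mr{Tor}}^{\theta}_\partial(W_{g,1})_\ell$ is computed in \cite{KR-WTorelli}, for any $\theta$ satisfying the above hypotheses, as the realisation $K^\vee\otimes^{\cat{d(s)Br}}$ of the Brauer-category commutative algebra $\mathcal{P}(-;H^*(B;\bQ))'_{\geq 0}\otimes\det^{\otimes n}$. Since $H^*(B^e;\bQ)=\bQ[e]$ with $|e|=2n$, this algebra is by definition $E_n$ --- the general-$n$ version of the identification recorded in \cref{thm:TorRelation} --- so there is a stable isomorphism $K^\vee\otimes^{\cat{d(s)Br}}E_n\to H^*(\overline{B\mr{Tor}}^e_\partial(W_{g,1})_\ell;\bQ)$. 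The classes $\kappa_{e^j}=(\varnothing,j)$ with $j>1$ span $E_n(\varnothing)=\bQ[\kappa_{e^2},\kappa_{e^3},\ldots]$, and via the Thom isomorphism $H^*(\mathbf{MT}\theta_e;\bQ)\cong H^{*+2n}(B^e;\bQ)=\bQ[e]$ they are exactly the polynomial generators of $H^*(\overline{\Omega^\infty_0\mathbf{MT}\theta_e};\bQ)$; the top-left square of the diagram then shows they are pulled back along $\overline{B\mr{Tor}}^e_\partial(W_{g,1})_\ell\to X_0^e$, hence are killed by $X_1^e(g)\to\overline{B\mr{Tor}}^e_\partial(W_{g,1})_\ell$, so the map factors through $(K^\vee\otimes^{\cat{d(s)Br}}E_n)/(\kappa_{e^j}\mid j>1)$. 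That the factored map is a stable isomorphism I would prove by the Eilenberg--Moore spectral sequence of $X_1^e(g)\to\overline{B\mr{Tor}}^e_\partial(W_{g,1})_\ell\to X_0^e$: each $E_n(S)$ is \emph{free} over $E_n(\varnothing)=\bQ[\kappa_{e^j}]$, since a weighted partition of $S$ splits uniquely into one with no empty part and a collection of closed parts, so $K^\vee\otimes^{\cat{d(s)Br}}E_n$ is free over $\bQ[\kappa_{e^j}]$ and the spectral sequence collapses stably onto $(K^\vee\otimes^{\cat{d(s)Br}}E_n)\otimes_{\bQ[\kappa_{e^j}]}\bQ$ --- this uses that $H^*(X_0^e;\bQ)\cong\bQ[\kappa_{e^j}\mid j>1]$ stably, which is proved like the vanishing $H^{>0}(X_0;\bQ)=0$ in the framed case.

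For part (iv): the long exact sequence of $X_1^e(g)\to\overline{B\mr{Tor}}^e_\partial(W_{g,1})_\ell\to X_0^e$ together with the triviality of the $\overline{G}{}^{e,[\ell]}_g$-action on $X_0^e$ shows the non-trivial $\overline{G}{}^{e,[\ell]}_g$-subrepresentations of $\pi_{*+1}(X_1^e(g))\otimes\bQ$ agree with those of $\pi_{*+1}(\overline{B\mr{Tor}}^e_\partial(W_{g,1})_\ell)\otimes\bQ$. These are controlled by the embedding-calculus arguments of \cite[Section 6]{KR-WDisks} (building on \cite{KR-WAlg}): the Weiss tower for $B\mr{Diff}^{\theta_e}_\partial(W_{g,1})$ has $r$-th layer contributing only in degrees $\geq r(n-1)$, and these connectivity estimates use only $2n\geq 6$ and the $n$-connectivity of $B^e$, so they transcribe directly and give the vanishing away from $*=r(n-1)$ in the range $*\leq\tfrac{n(n-3)}{2}$.

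The main obstacle --- and the only genuine departure from the framed case --- is part (iii): matching the bookkeeping of \cite{KR-WTorelli} so that $H^*(B^e;\bQ)=\bQ[e]$ produces $E_n$ rather than $Z_n$, and carefully tracking the now-nontrivial space $X_0^e$ so that passing to the fibre $X_1^e(g)$ cuts down precisely by the ideal $(\kappa_{e^j}\mid j>1)$; the rest is a routine transcription.
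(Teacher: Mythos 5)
Your plan is structurally right---the paper itself states this theorem as a "straightforward adaptation" of \cite{KR-WDisks}, and parts (i), (ii) and (iv) are indeed routine transcriptions once the Euler diagram is set up (the Euler tangential structure satisfies the hypotheses of \cite[Section 8]{KR-WAlg} since $B^e$ is $n$-connected with finitely generated homotopy, and the embedding-calculus connectivity estimates behind (iv) use only $2n\geq 6$ and $n$-connectivity of $B^e$).

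The problem is in part (iii), where your Eilenberg--Moore argument rests on a false input. You assert that $H^{>0}(X_0;\bQ)=0$ in the framed case and, by analogy, that $H^*(X_0^e;\bQ)\cong\bQ[\kappa_{e^j}\mid j>1]$. Neither is true. Section 6.1 of this very paper records $(X_0)_\bQ\simeq\prod K(\bQ,d)$ with $d$ ranging over positive integers $\equiv 2n-1\pmod 4$ (the shifted rational homotopy of $\mathbf{KH}$), so $H^*(X_0;\bQ)$ is a non-trivial free graded-commutative algebra on odd generators. In the Euler case $X_0^e$ is the fibre of $\overline{\Omega^\infty_0\mathbf{MT}\theta_e}\to\overline{\overline{\Omega^\infty_0\mathbf{KH}}}$, and the $\mathbf{KH}$ target is exactly the same as in the framed case, so $H^*(X_0^e;\bQ)$ contains the $\mathbf{KH}$ contributions in addition to $\bQ[\kappa_{e^j}]$. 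With the correct $H^*(X_0^e;\bQ)$ your collapse onto $(K^\vee\otimes^{\cat{d(s)Br}}E_n)\otimes_{\bQ[\kappa_{e^j}]}\bQ$ does not follow: the $\mr{Tor}$ of $\bQ$ over the extra odd generators is a polynomial algebra and would contribute, unless you additionally show these odd classes are detected in $H^*(\overline{B\mr{Tor}}^e_\partial(W_{g,1})_\ell;\bQ)$ so that the whole exterior factor cancels rather than producing derived terms. That is exactly the additional content one has to lift from the actual proof of \cite[Corollary 4.20]{KR-WDisks}, and it is also needed for your intermediate claim that $K^\vee\otimes^{\cat{d(s)Br}}E_n\to H^*(\overline{B\mr{Tor}}^e_\partial(W_{g,1})_\ell;\bQ)$ is a stable isomorphism (as stated, this cannot be right either, for the same reason: the $\mathbf{KH}$ classes in the target would have no preimage). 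The remaining ingredients of your sketch for (iii)---that the KR-WTorelli twisted-MMM formalism applied to $H^*(B^e;\bQ)=\bQ[e]$ produces $E_n$, that each $E_n(S)$ is a free $E_n(\varnothing)=\bQ[\kappa_{e^j}\mid j\geq 2]$-module, and that the $\kappa_{e^j}$'s restrict trivially to $X_1^e(g)$ because they are pulled back through $\Omega^\infty_0\mathbf{MT}\theta_e$ from $X_0^e$---are correct and are the heart of the adaptation.
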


\section{Graph complexes: finishing the proof of \cref{athm:main}} Our strategy to complete the proof of \cref{thm:mainPrime} is as follows. Firstly, we will establish graph complex models $RB^{Z_n}_\mr{conn}$ and $RB^{E_n}_\mr{conn}$ for $\bL Q^\mr{Com}(Z_n)$ and $\bL Q^\mr{Com}(E_n)$, in terms of graphs with red and black edges whose black subgraph is connected, equipped with a certain differential. Once these models have been established, there are several ways in which one can proceed: we have chosen to proceed in the most self-contained way, but see \cref{sec:alternative-graph-proof} for alternatives using vanishing results for graph complexes from the literature. In this section we will disregard the weight grading on $E_n$ and $Z_n$.

\subsection{Red and black graphs}

We will first describe explicit cofibrant replacements for the augmented unital commutative algebra objects $Z_n, E_n \in \cat{Alg}_{\cat{Com}^+}^\mr{augm}(\cat{Fun}(\cat{d(s)Br}, \cat{Ch}))$.

\begin{definition}\
\begin{enumerate}[(i)]
		\item A \emph{graph} $\Gamma$ is a tuple $(V, H, \iota, \partial)$ of a set $V$ of vertices, a set $H$ of half-edges, a function $\partial\colon H \to V$, and an involution $\iota$ on $H$. The \emph{edges} $E$ of $\Gamma$ is the set of free orbits of $\iota$, and the \emph{legs} $L$ is the set of trivial orbits. For a graph $\Gamma$ we will write $V(\Gamma), H(\Gamma), E(\Gamma), L(\Gamma), \iota_\Gamma$, and $\partial_\Gamma$ for its associated data. 
		\item An \emph{isomorphism of graphs} $\Gamma \leadsto \Gamma'$ is a pair of bijections between their sets of vertices and half-edges which intertwine the $\iota$'s and $\partial$'s.
\end{enumerate}
\end{definition}

\begin{definition}If $e = \{h_1, h_2\} \in E(\Gamma)$ is an edge, the graph $\Gamma' = \Gamma/e$ \emph{obtained by contracting the edge $e$} has vertices $V(\Gamma') \coloneqq V(\Gamma)/(\partial_\Gamma(h_1) \sim \partial_\Gamma(h_2))$, half-edges $H(\Gamma') \coloneqq H(\Gamma) \setminus \{h_1, h_2\}$, boundary map
	\[\partial_{\Gamma'}\colon H(\Gamma) \setminus \{h_1, h_2\} \overset{\mr{inc}}\lra H(\Gamma) \overset{\partial_{\Gamma}}\lra V(\Gamma) \xrightarrow{\mr{quot}} V(\Gamma)/(\partial_\Gamma(h_1) \sim \partial_\Gamma(h_2)),\]
	and involution $\iota_{\Gamma'} = \iota_{\Gamma}\vert_{H(\Gamma')}$.
\end{definition}

\begin{definition}\
	\begin{enumerate}[(i)]
		\item A \emph{weighted red and black graph} $\Gamma$ is a graph $(V, H, \iota, \partial)$ along with a function $c = c_\Gamma\colon E(\Gamma) \to \{R,B\}$ colouring each edge either red or black, and a function $w\colon V(\Gamma) \to \bN$ assigning to each vertex a \emph{weight}\footnote{This is unrelated to the ``weight grading" discussed earlier: no confusion should arise, as we do not refer to the weight grading in this section.}.
			\item We write $E_R(\Gamma) \coloneqq c_\Gamma^{-1}(R)$ and $E_B(\Gamma) \coloneqq c_\Gamma^{-1}(B)$ for the sets of \emph{red edges} and \emph{black edges} respectively. The \emph{black subgraph} of $\Gamma$ is the subgraph consisting of all vertices and the black edges. 
		\item An \emph{isomorphism of weighted red and black graph}s $\Gamma \leadsto \Gamma'$ is an isomorphism of graphs which commutes with the colour and weight functions. 
		\item A weighted red and black graph is \emph{admissible} if for each $v \in V(\Gamma)$ we have $2 w(v) + \# \partial^{-1}(v) \geq 3$.
	\end{enumerate}
\end{definition}

\begin{definition}
	If $e \in E_B(\Gamma)$, we let $\Gamma/e$ be the weighted red and black graph obtained by contracting the edge $e = \{h_1, h_2\}$, taking the induced colouring, and:
\begin{enumerate}[(i)]	
\item if $\partial (h_1) \neq \partial (h_2)$ then giving the new vertex weight $w(\partial (h_1)) + w(\partial (h_2))$;

\item if $\partial (h_1) = \partial (h_2)$ then giving the new vertex weight $w(\partial(h_1))+1$.
\end{enumerate}
Similarly let $\Gamma{\setminus}e$ be the red and black graph obtained by turning the black edge $e$ red.
\end{definition}

Note that if $\Gamma$ is admissible, so are $\Gamma/e$ and $\Gamma \backslash e$. %\ak{Added this.} 
Recall that $\det(V) \coloneqq \Lambda^{\dim(V)}(V)$. Define 1-dimensional vector spaces
\begin{equation}\label{eqn:orientations}\mathfrak{K}_B(\Gamma) \coloneqq \det(\bQ^{E_B(\Gamma)}) \quad\quad \text{ and } \quad\quad \mathfrak{L}(\Gamma) \coloneqq \det(\bQ^{L(\Gamma)}).\end{equation}
Isomorphisms of weighted red and black graphs induce isomorphisms of these vector spaces.%An isomorphism of red and black graphs $\Gamma \leadsto \Gamma'$ induces isomorphisms $\mathfrak{K}_B(\Gamma) \overset{\sim}\to \mathfrak{K}_B(\Gamma')$ and $\mathfrak{L}(\Gamma) \overset{\sim}\to \mathfrak{L}(\Gamma')$.

If $e \in E_B(\Gamma)$ there are maps $-/e\colon \mathfrak{K}_B(\Gamma) \to \mathfrak{K}_B(\Gamma/e)$ and $- {\setminus} e\colon \mathfrak{K}_B(\Gamma) \to \mathfrak{K}_B(\Gamma {\setminus} e)$ given by contracting with $e$. As contracting or colouring an edge does not change the set of legs, there are also canonical identifications $\mathfrak{L}(\Gamma) \overset{\sim}\to \mathfrak{L}(\Gamma/e)$ and $\mathfrak{L}(\Gamma) \overset{\sim}\to \mathfrak{L}(\Gamma {\setminus} e)$.

\begin{definition}\label{def:RBEn} For an admissible weighted red and black graph $\Gamma$ we set
	\[\deg(\Gamma) \coloneqq n\left(\underset{v \in V(\Gamma)}{\textstyle \sum}(2 w(v){+}\mr{val}(v){-}2)\right) + \#E_B(\Gamma)\]
and define a graded vector space
\[RB^{E_n}(S) \coloneqq \left(\bigoplus_{\substack{\Gamma=(V, F, \iota, \partial, c, w)\\\ell\colon L(\Gamma) \overset{\sim}\to S}} \mathfrak{K}_B(\Gamma) \otimes \mathfrak{L}(\Gamma)^{\otimes n} [\deg(\Gamma)] \right)/\sim\]
where the sum is over all admissible weighted red and black graphs with legs identified with $S$, and $\sim$ is the equivalence relation induced by isomorphism of such graphs. 

We define a differential by summing over all ways of contracting a black edge or turning a black edge red, with certain signs. More formally, we define $d=d_\text{con}+d_\text{col}$ on this graded vector space by 
\begin{align*}
d_\text{con}(\Gamma, \ell, \nu, \omega^{\otimes n}) &= \sum_{e \in E_B(\Gamma)}  (\Gamma/e, \ell, \nu/e, \omega^{\otimes n})\\
d_\text{col}(\Gamma, \ell, \nu, \omega^{\otimes n}) &= -\sum_{e \in E_B(\Gamma)}  (\Gamma {\setminus} e, \ell, \nu {\setminus} e, \omega^{\otimes n}).
\end{align*}

We consider these complexes as defining a functor $RB^{E_n}\colon \cat{d(s)Br} \to \cat{Ch}$, by letting the morphism $[(f, m_S)] \in \cat{d(s)Br}(S, T)$ induce the map $RB^{E_n}(S) \to RB^{E_n}(T)$ given by creating (oriented) red edges according to the (ordered) matching $m_S$, and then relabelling the remaining legs according to $f$. (This is well-defined as a functor on $\mathsf{d(s)Br}$ because of the twisting by $\mathfrak{L}(\Gamma)^{\otimes n}$.) Disjoint union makes $RB^{E_n}$ into a commutative algebra object.
\end{definition}

\begin{figure}
	\begin{tikzpicture}
	\begin{scope}
		\node at (-1,0) {$\bullet$};
		\node at (1,0) {$\bullet$};
		\draw (1,0) -- (1.2,.3);
		\node at (1.3,.5) {$1$};
		\draw (1,0) -- (1.2,-.3);
		\node at (1.3,-.5) {$2$};
		\node at (-1.3,0) {$g_\alpha$};
		\node at (1.4,0) {$g_\beta$};
		\draw (-1.5,0) circle (.5cm);
		\draw [red] (-1,0) to[out=50,in=130,looseness=1.5] (1,0);
		\draw (-1,0) to[out=-50,in=-130,looseness=1.5] (1,0);
		\draw (-1,0) -- (1,0);
	\end{scope}
	
	\draw [|->] (-1,-1) -- (-2,-2);
	\node at (-1.5,-1.4) [left] {$d_\mr{col}$}; 
	\draw [|->] (1,-1) -- (2,-2);
	\node at (1.5,-1.4) [right] {$d_\mr{con}$}; 
	
	\begin{scope}[xshift=-6cm,yshift=-3cm]
		\node at (-1,0) {$\bullet$};
		\node at (1,0) {$\bullet$};
		\draw (1,0) -- (1.2,.3);
		\node at (1.3,.5) {$1$};
		\draw (1,0) -- (1.2,-.3);
		\node at (1.3,-.5) {$2$};
		\node at (-1.3,0) {$g_\alpha$};
		\node at (1.4,0) {$g_\beta$};
		\draw (-1.5,0) circle (.5cm);
		\draw [red] (-1,0) to[out=50,in=130,looseness=1.5] (1,0);
		\draw (-1,0) to[out=-50,in=-130,looseness=1.5] (1,0);
		\draw (-1,0) -- (1,0);
	\end{scope}

	\node at (-4,-3) {$+$};

	\begin{scope}[xshift=-1.55cm,yshift=-3cm]
		\node at (-1,0) {$\bullet$};
		\node at (1,0) {$\bullet$};
		\draw (1,0) -- (1.2,.3);
		\node at (1.3,.5) {$1$};
		\draw (1,0) -- (1.2,-.3);
		\node at (1.3,-.5) {$2$};
		\node at (-1.3,0) {$g_\alpha$};
		\node at (1.4,0) {$g_\beta$};
		\draw [red] (-1.5,0) circle (.5cm);
		\draw [red] (-1,0) to[out=50,in=130,looseness=1.5] (1,0);
		\draw (-1,0) to[out=-50,in=-130,looseness=1.5] (1,0);
		\draw (-1,0) -- (1,0);
	\end{scope}
	
	\begin{scope}[xshift=4cm,yshift=-3cm]
		\node at (-1,0) {$\bullet$};
		\node at (1,0) {$\bullet$};
		\draw (1,0) -- (1.2,.3);
		\node at (1.3,.5) {$1$};
		\draw (1,0) -- (1.2,-.3);
		\node at (1.3,-.5) {$2$};
		\node at (-1.1,0) [left] {$g_\alpha{+}1$};
		\node at (1.4,0) {$g_\beta$};
		\draw (-1,0) [red] to[out=50,in=130,looseness=1.5] (1,0);
		\draw (-1,0) to[out=-50,in=-130,looseness=1.5] (1,0);
		\draw (-1,0) -- (1,0);
	\end{scope}
	\end{tikzpicture}
	\caption{A weighted red and black graph with two vertices, of weight $g_\alpha$ and $g_\beta$ respectively, and two legs, labelled by $\ul{2}$. We have also indicated the terms $d_\mr{col}$ and $d_\mr{con}$ of the differential, assuming $n$ is even (if $n$ were odd, then the graph would be zero by symmetry of the legs). In $d_\mr{col}$, the reader maybe expected two terms obtained by turning red one of the two black edges connecting the two vertices, by these yield the same graph with opposite sign and hence cancel. The same happens in $d_\mr{con}$, where two terms obtained by collapsing one of the two black edges connecting the two vertices cancel.}
	\label{fig:rbgraph-example}
\end{figure}
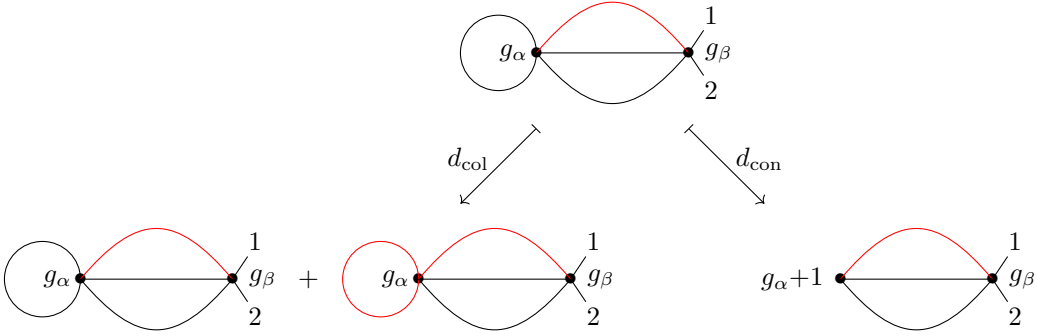

We let $RB^{Z_n}(S)$ be the quotient of $RB^{E_n}(S)$ by those weighted red and black graphs having some vertex of weight $>0$. These assemble into a functor $RB^{Z_n}\colon \cat{d(s)Br} \to \cat{Ch}$ which is a quotient of $RB^{E_n}$, having an induced augmented unital commutative algebra structure. Concretely $RB^{Z_n}(S)$ is given by weighted red and black graphs having all weights zero (equivalently, no weights), and the differential is given by the same formula with the understanding that $\Gamma/e$ is zero when $e$ is a loop (as then $\Gamma/e$ has a vertex of weight $>0$).

\begin{proposition}\label{prop:RBIsModel}
There are weak equivalences of unital commutative algebras
\[RB^{Z_n} \overset{\sim}\lra Z_n\quad\quad\text{ and }\quad\quad RB^{E_n} \overset{\sim}\lra E_n,\]
and $RB^{Z_n}$ and $RB^{E_n}$ are cofibrant in $\cat{Alg}_{\cat{Com}^+}^\mr{augm}(\cat{Fun}(\cat{d(s)Br}, \cat{Ch}))$. 
\end{proposition}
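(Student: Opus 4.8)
The plan is to prove the statement in two halves, since it asserts both that the $RB$-complexes compute the correct homotopy type (weak equivalence) and that they are cofibrant as augmented unital commutative algebra objects. I will focus on $RB^{E_n}$, since $RB^{Z_n}$ is a quotient and the same strategy applies (with loops mapping to zero); a clean way is to set up both simultaneously and note that the map $RB^{E_n} \to RB^{Z_n}$ covers $E_n \to Z_n$.

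\textbf{Cofibrancy.} The standard way to recognise a cofibrant commutative algebra object is as a (retract of a) cell complex, i.e.\ built by iterated pushouts of free algebras on generating cofibrations $F^{\mr{Com}^+}(S_*I')$. Here I would argue that $RB^{E_n}$ is \emph{quasi-free}: forgetting the differential, it is the free unital commutative algebra on the sub-functor $RB^{E_n}_\mr{conn} \subset RB^{E_n}$ spanned by connected graphs, since disjoint union of graphs corresponds exactly to the commutative product and every graph is uniquely a disjoint union of connected ones. One then equips $RB^{E_n}_\mr{conn}$ with an exhaustive filtration by (say) $\#E_B(\Gamma) + \sum_v(2w(v)+\mr{val}(v)-2)$, or more carefully by a two-step filtration first on the number of black edges and then on total degree, such that the differential $d = d_\mr{con} + d_\mr{col}$ is filtered: $d_\mr{con}$ drops $\#E_B$ by one, $d_\mr{col}$ keeps the graph but recolours. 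The associated graded generators are then a direct sum of (shifts of) representable functors $S_*(-)$ tensored with one-dimensional sign spaces, exhibiting $RB^{E_n}$ as built by attaching free commutative algebra cells along a filtration — hence cofibrant by \cite[Proof of Theorem 12.1.6]{LodayVallette}-style reasoning, or directly by the cofibrantly-generated model structure of \cref{lem:alg-model-cat}. The key point to check is that the filtration is compatible with the commutative algebra structure (it is, since degree and black-edge count are additive under disjoint union) and that on each filtration quotient the differential is ``free'', so that successive stages are genuine pushouts of free-algebra maps.

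\textbf{Weak equivalence.} For the comparison map $RB^{E_n}(S) \to E_n(S)$: send a connected admissible weighted red and black graph $\Gamma$ with $\#E_B(\Gamma) = 0$ (all edges red) to the corresponding weighted partition of $S$ — the parts are the vertices, with their weights, and the legs at a vertex are its block — and send every graph with at least one black edge to zero. This is visibly a chain map (the target has zero differential, and $d_\mr{con}, d_\mr{col}$ both land in strictly higher black-edge count or... — more precisely $d$ raises nothing into black-edge-count zero from black-edge-count zero, so graphs with no black edges are cycles mapping correctly) and a map of commutative algebras. To see it is a quasi-isomorphism, filter $RB^{E_n}(S)$ by number of black edges and run the spectral sequence: the $d_\mr{col}$-part alone, at fixed underlying graph-with-all-edges-present, computes the reduced cohomology of the ``cube'' of ways of recolouring black edges red, which is acyclic unless there are no black edges — equivalently, for each fixed graph the complex $\big(\bigoplus_{\text{colourings}} \mathfrak{K}_B(\Gamma)\otimes\mathfrak{L}(\Gamma)^{\otimes n}, d_\mr{col}\big)$ is the Koszul complex of an exterior algebra on the edge set and hence has homology concentrated where all edges are red. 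Then the $d_\mr{con}$-differential on what remains collapses each connected all-red graph's black structure — but there is none left — so one is reduced exactly to $E_n(S)$. One must do this cleanly: the cleanest framing is to observe that for a fixed graph $\Gamma_0$ \emph{with a distinguished set of edges that are allowed to be black}, the sub-quotient complex generated by all colourings and all contractions of those edges is the (shifted, twisted) chain complex computing derived indecomposables of a point, i.e.\ contractible, unless that set is empty; this is the same ``a black edge can be contracted or turned red, and the cone of this is acyclic'' mechanism used in \cref{thm:relative-com-zn-vs-en}. Combining: $H_*(RB^{E_n}(S), d) \cong E_n(S)$, naturally in $S \in \cat{d(s)Br}$, so the map is an objectwise weak equivalence. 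The $RB^{Z_n}$ case is identical except that loops contribute $0$ under $d_\mr{con}$, matching the fact that in $Z_n$ a self-pairing of a part is zero.

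\textbf{Main obstacle.} I expect the genuine work to be in organising the double complex so that the two halves of the differential can be handled in the right order without sign ambiguities: getting the signs in $d = d_\mr{con} + d_\mr{col}$ to make $d^2 = 0$ and to be compatible with the orientation twists $\mathfrak{K}_B(\Gamma) \otimes \mathfrak{L}(\Gamma)^{\otimes n}$ under both edge-contraction and edge-recolouring is the fiddly part, and it is also what makes the spectral-sequence collapse argument go through (the $d_\mr{col}$ Koszul complex is only acyclic with the correct signs and the $\mathfrak{K}_B$-twist). A secondary subtlety is verifying cofibrancy rigorously: one should make sure the filtration on the generators $RB^{E_n}_\mr{conn}$ is one for which each quotient is a \emph{projective} (i.e.\ direct sum of representables) object of $\cat{Fun}(\cat{d(s)Br},\cat{Ch})$ with a \emph{free} differential, which is where the twisting by $\mathfrak{L}(\Gamma)^{\otimes n}$ — needed to make $RB^{E_n}$ a functor on $\cat{d(s)Br}$ in the first place — must be tracked carefully so that the $\cat{d(s)Br}$-action on generators is by (signed) relabelling and edge-creation, matching the representables $S_*(-)$.
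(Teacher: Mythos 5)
Your overall architecture --- exhibit $RB^{E_n}$ as quasi-free to get cofibrancy, and filter to isolate $d_\mr{col}$ for the weak equivalence --- is the right one and matches the paper's proof, but several load-bearing details are wrong in ways that would break the argument. Most seriously, the comparison map as you describe it is not a chain map. You send an all-red graph to the partition whose parts are the vertices with their vertex weights; but take $\Gamma$ with two vertices of weights $g_1, g_2$, legs $L_1, L_2$ respectively, and a single black edge $e$ joining them. Then $d\Gamma = \Gamma/e - \Gamma{\setminus}e$, and your $\phi$ sends this to $\{(L_1 \cup L_2, g_1{+}g_2)\} - \{(L_1,g_1),(L_2,g_2)\} \neq 0$. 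The correct map takes the parts to be the \emph{connected components} $\Gamma_\alpha$ of the all-red graph, with weight $\left(\sum_{v \in V(\Gamma_\alpha)} w(v)\right) + 1 - \chi(\Gamma_\alpha)$, i.e.\ vertex weights plus the genus of the component; with that formula both $\Gamma/e$ and $\Gamma{\setminus}e$ map to the same partition, which is precisely what the chain-map condition requires. Your ``visibly a chain map'' check only observes that all-red graphs are cycles and never tests the one-black-edge case, which is where the content lies.

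For cofibrancy you claim $RB^{E_n}$ is free on the subfunctor of \emph{connected} graphs, but this is false: under Day convolution over $\cat{d(s)Br}$ the product can pair legs of separate factors, which creates red edges, so a connected graph containing a red edge is also the Day product of the pieces obtained by severing that edge --- the canonical map $F^{\mr{Com}^+}(\text{connected graphs}) \to RB^{E_n}$ is therefore not injective. The correct generators are graphs whose \emph{black subgraph} is connected (and non-empty); red-edge creation under Day convolution cannot merge black components, so there the map is an isomorphism. Your filtration is also off: both $d_\mr{con}$ and $d_\mr{col}$ strictly lower the black-edge count, so filtering by black edges makes the associated-graded differential vanish identically and the spectral sequence detects nothing. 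Filter instead by the \emph{total} number of edges, which $d_\mr{con}$ decreases and $d_\mr{col}$ preserves; on the associated graded only $d_\mr{col}$ survives and, splitting over isomorphism classes of uncoloured graphs, is the reduced simplicial chain complex of the simplex on the edge set. Note that this is acyclic whenever $\Gamma$ has \emph{any} edges, red or black, so the surviving homology sits on graphs with no edges at all, not on all-red graphs as you assert; it is exactly this that identifies the $E^2$-page with $E_n$ (via $\phi$ restricted to no-edge graphs) and makes the collapse of the spectral sequence immediate, since $d_\mr{con}$ has nothing left to act on.
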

\begin{proof}
Let $\phi\colon RB^{E_n} \to E_n$ be the map which
\begin{enumerate}[(i)]
\item kills weighted red and black graphs having any black edges, and

\item assigns to a graph having only red edges the induced partition of its set of legs given by the connected components of the graph, with the part corresponding to the connected subgraph $\Gamma_\alpha$ assigned weight $\left(\sum_{v \in V(\Gamma_\alpha)} w(v)\right) + 1 - \chi(\Gamma_\alpha)$, i.e.\ the sum of the weights of its vertices and its genus.
\end{enumerate}
This is a map of commutative algebra objects. It is also---when evaluated on any $S$---a map of chain complexes, as follows. If $(\Gamma, \ell, \omega^{\otimes n}, \nu)$ is such that $\Gamma$ has at least two black edges then every term of $d(\Gamma, \ell, \omega^{\otimes n}, \nu)$ has a black edge and so vanishes under $\phi$; $(\Gamma, \ell, \omega^{\otimes n}, \nu)$ does too. If $(\Gamma, \ell, \omega^{\otimes n}, \nu)$ is such that $\Gamma$ has a single black edge $e$ then 
\[d(\Gamma, \ell, \nu, \omega^{\otimes n}) = (\Gamma/e, \ell, 1, \omega^{\otimes n}) - (\Gamma {\setminus} e, \ell, 1,\omega^{\otimes n}),\]
and $\Gamma/e$ and $\Gamma{\setminus} e$ induce the same partition on $S$ with the same weights so this vanishes on applying $\phi$; $(\Gamma, \ell, \omega^{\otimes n}, \nu)$ does too.

To check that $\phi$ is a weak equivalence, we consider the filtration given by letting $F^p RB^{E_n}(S)$ be spanned by those graphs with $\leq p$ edges. As $d_\text{con}$ reduces the number of edges and $d_\text{col}$ preserves the number of edges, we have
\[\mr{Gr} (RB^{E_n}(S), d=d_\text{con}+d_\text{col}) \cong (RB^{E_n}(S), d_\text{col}).\]
This complex splits over isomorphism classes of weighted graphs $\Gamma$ as a sum of chain complexes $\mr{Colour}_*(\Gamma)_{\mr{Aut}(\Gamma)}$, where $\mr{Colour}_*(\Gamma)$ has basis the set of subgraphs of $\Gamma$, considered as those edges to be coloured red: such a subgraph has degree given by $n\left(\sum_{v \in V(\Gamma)}(2w(v)+\mr{val}(v)-2)\right) + \# E_B(\Gamma)$, and the differential sums over all ways of adding a single edge to the red subgraph. In other words, up to a shift of degrees $\mr{Colour}_*(\Gamma)$ is the \emph{reduced} simplicial chain complex of the simplex $\Delta^{E(\Gamma)}$ on the set of edges of $\Gamma$. It is therefore acyclic if $\Gamma$ contains any edges, and has homology $\bQ[n\sum_{v \in V(\Gamma)}(2w(v)+\mr{val}(v)-2)]$ otherwise. As it is supported on graphs with no edges, the differential $d_\text{con}$ has no effect and so the spectral sequence for this filtration collapses, to give
\[H_*(RB^{E_n}(S), d) \cong \bigoplus_{\substack{\text{weighted graphs } \Gamma \\ \text{ with no edges}}} \bQ[n\sum_{v \in V(\Gamma)}(2w(v)+\mr{val}(v)-2)],\]
and with this description we observe that $\phi_*\colon H_*(RB^{E_n}(S), d) \to E_n$ is an isomorphism as required. The argument for $Z_n$ is completely parallel.

%Neglecting the differential, the objects $RB^{Z_n}$ and $RB^{E_n}$ are free commutative algebra objects on their subobjects consisting of those weighted red and black graphs whose black subgraphs are connected.\orw{TODO: add detail} As these subobjects lie in strictly positive degrees, after reincorporating the differential it is easy to construct $RB^{Z_n}$ and $RB^{E_n}$ by iterated pushouts against $F^{\mr{Com}^+}$ applied to a cofibration.

%XXXXXX

Neglecting for a moment the differential, let $X \subset RB^{E_n}$ be the subobject spanned by those weighted red and black graphs whose black subgraphs are connected (we take this to also mean non-empty). The commutative unital algebra structure on $RB^{E_n}$ induces a map
\begin{equation}\label{eq:BRFreeOnConn}
F^{\mr{Com}^+}(X) = \bigoplus_{k \geq 0} (X^{\otimes k})_{\fS_k} \lra RB^{E_n}
\end{equation}
which we claim is an isomorphism. By definition of Day convolution, we have
\[(X^{\otimes k})(S) = \colim_{\substack{f: S_1 \sqcup \cdots \sqcup S_k \to S \\ \in \mathsf{d(s)Br}}} X(S_1) \otimes \cdots \otimes X(S_k).\]
%Every morphism $f: S_1 \sqcup \cdots \sqcup S_k \to S$ in $\mathsf{d(s)Br}$ canonically factors as $S_1 \sqcup \cdots \sqcup S_k \overset{f^{in}}\to S'_1 \sqcup \cdots \sqcup S'_k \overset{f^{out}}\to S$ where $S'_i \subseteq S_i$ and $f^{in} = \sqcup f^{in}_i$ consists of pairings between elements of the same set $S_i$, and $f_{out}$ has no pairings between elements of the same $S'_i$. This lets us express
%$$(X^{\otimes k})(S) = \bigoplus_{\substack{f^{out}: S'_1 \sqcup \cdots \sqcup S'_k \to S \\ \text{ with no pairings in each $S'_i$}}} X(S'_1) \otimes \cdots \otimes X(S'_k).$$
%As $X(S'_i)$ consists of weighted red and black graphs with connected black subgraph and legs $S'_i$, and combining them via some $f^{out}: S'_1 \sqcup \cdots \sqcup S'_k \to S$ means adding further red edges between different connected black components, 
Spelling this out, it is the space of weighted red and black graphs with legs $S$ whose black subgraph has precisely $k$ components, and the components are ordered; $(X^{\otimes k})_{\fS_k}(S)$ is then the same without the ordering of components. We therefore see that \eqref{eq:BRFreeOnConn} is indeed an isomorphism.

Reincorporating the differential, we have just shown that $RB^{E_n}$ is quasi-free, and it is also non-negatively graded, so a standard induction over skeleta of $X$ shows that it is cofibrant, cf.\ \cite[Proposition B.6.6]{LodayVallette}. The same goes for $RB^{Z_n}$.
\end{proof}

Let $RB^{E_n}_\mr{conn}$ be the quotient of $RB^{E_n}$ by the unit and those weighted red and black graphs whose black subgraph is not connected, and $RB^{Z_n}_\mr{conn}$ be the analogous quotient of $RB^{Z_n}$. As the red and black graphs whose black subgraph is not connected are precisely the decomposables, we find:

\begin{corollary}
We have $\bL Q^\mr{Com}(Z_n) \simeq RB^{Z_n}_\mr{conn}$ and $\bL Q^\mr{Com}(E_n) \simeq RB^{E_n}_\mr{conn}$.\qed
\end{corollary}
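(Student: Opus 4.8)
The plan is to compute $\bL Q^\mr{Com}$ by applying the strict indecomposables functor $Q^\mr{Com}$ to the cofibrant models provided by \cref{prop:RBIsModel}. First I would note that \cref{prop:RBIsModel} exhibits $RB^{E_n}$ and $RB^{Z_n}$ as cofibrant objects of $\cat{Alg}_{\cat{Com}^+}^\mr{augm}(\cat{Fun}(\cat{d(s)Br},\cat{Ch}))$ together with weak equivalences to $E_n$ and $Z_n$. Applying the augmentation-ideal equivalence \eqref{eqn:augm-equiv-cats}, which by the Remark after \cref{lem:alg-model-cat} is a Quillen equivalence, turns these into weak equivalences $\overline{RB^{E_n}} \overset{\sim}\lra \overline{E_n}$ and $\overline{RB^{Z_n}} \overset{\sim}\lra \overline{Z_n}$ in $\cat{Alg}_\cat{Com}(\cat{Fun}(\cat{d(s)Br},\cat{Ch}))$ with cofibrant domains. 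Hence $\bL Q^\mr{Com}(E_n) \simeq Q^\mr{Com}(\overline{RB^{E_n}})$ and $\bL Q^\mr{Com}(Z_n) \simeq Q^\mr{Com}(\overline{RB^{Z_n}})$, and it remains to identify the right-hand sides.

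Next I would use the quasi-free description established in the proof of \cref{prop:RBIsModel}: $RB^{E_n} \cong F^{\mr{Com}^+}(X)$, where $X \subset RB^{E_n}$ is the subobject spanned by weighted red and black graphs whose black subgraph is connected and nonempty; concretely $\overline{RB^{E_n}} \cong F^\mr{Com}(X)$ as graded objects of $\cat{Fun}(\cat{d(s)Br},\cat{Ch})$, equipped with a quasi-free differential not in general induced from one on $X$. Since $Q^\mr{Com}$ is left adjoint to the trivial-algebra functor $Z^\mr{Com}$ while $U^\mr{Com}Z^\mr{Com}$ is the identity, $Q^\mr{Com}\circ F^\mr{Com}$ is naturally the identity; so $Q^\mr{Com}(\overline{RB^{E_n}})$ is the graded object $X$ carrying the differential induced from $d = d_\text{con}+d_\text{col}$ by projecting away the summand of decomposables. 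By the computation of Day convolution used in that proof, the decomposables of $F^\mr{Com}(X)$ are precisely the graphs whose black subgraph has at least two components — together with the arity-zero unit $\bunit$, discarded on passing to augmentation ideals, these are exactly the graphs spanning the subobject by which $RB^{E_n}_\mr{conn}$ was defined to be the quotient. Unwinding the projected differential then gives $d_\text{con}$, which preserves connectedness of the black subgraph, together with the part of $d_\text{col}$ recolouring only those black edges whose removal leaves the black subgraph connected — i.e.\ exactly the differential carried by $RB^{E_n}_\mr{conn}$. The same argument applies verbatim to $Z_n$.

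I do not expect a genuine obstacle here: the real work was done in proving \cref{prop:RBIsModel}, in particular the cofibrancy and quasi-freeness of $RB^{E_n}$ and $RB^{Z_n}$ and the weak equivalences to $E_n$ and $Z_n$. The only points needing a moment's care are that the equivalence \eqref{eqn:augm-equiv-cats} transports cofibrancy (immediate, being a Quillen equivalence whose two functors are mutually inverse) and the bookkeeping identifying the projected differential on $X$ with the differential already defined on $RB^{E_n}_\mr{conn}$ and $RB^{Z_n}_\mr{conn}$.
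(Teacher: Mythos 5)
Your argument is correct and takes essentially the same approach as the paper, which dispatches the corollary in a single sentence after \cref{prop:RBIsModel}: the decomposables of the cofibrant (quasi-free) model $\overline{RB^{E_n}}=F^\mr{Com}(X)$ are exactly the graphs with disconnected black subgraph, so $Q^\mr{Com}$ of the cofibrant replacement is the quotient $RB^{E_n}_\mr{conn}$. You have simply filled in the bookkeeping that the paper leaves implicit (transport of cofibrancy through the augmentation-ideal Quillen equivalence, $Q^\mr{Com}\circ F^\mr{Com}\cong\mr{id}$ by uniqueness of adjoints, and the identification of the induced differential on $X$ as the quotient differential defining $RB^{E_n}_\mr{conn}$), all of which checks out.
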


Following Section \ref{sec:Gradings}, as $Z_n$ and $E_n$ have trivial differential the homology groups $\mr{AQ}_*(Z_n)$ and $\mr{AQ}_*(E_n)$ are equipped with an additional internal grading. This may be implemented in terms of the resolutions $RB^{Z_n}$ and $RB^{E_n}$, and so in the models $RB^{Z_n}_\mr{conn}$ and $RB^{E_n}_\mr{conn}$, by giving a (weighted) red and black graph $\Gamma$ \emph{internal degree}
\[\mr{deg_{int}}(\Gamma) \coloneqq n\left(\sum_{v \in V(\Gamma)}(2 w(v)+\mr{val}(v)-2)\right).\]
It is indeed preserved by the differential, and by the quasi-isomorphisms to $Z_n$ and $E_n$. As the total degree of $\Gamma$ is
$ n\left(\sum_{v \in V(\Gamma)}(2 w(v)+\mr{val}(v)-2)\right) + \#E_B(\Gamma)$, its \emph{Harrison degree} is then $\# E_B(\Gamma) + 1$.

\subsection{Black graphs}

\begin{definition}\label{def:Gn}
Let $G^{E_n}(S)$ be the quotient of $RB^{E_n}_\mr{conn}(S)$ by  the subcomplex spanned by those red and black graphs having a nonzero number of red edges, and $G^{Z_n}(S)$ be the analogous quotient of $RB^{Z_n}_\mr{conn}(S)$.
\end{definition}

By the discussion above these complexes inherit an internal grading, giving homology groups $H_{p+q}(G^{E_n}(S))_q$ and $H_{p+q}(G^{Z_n}(S))_q$ in total degree $p+q$ and internal degree $q$. Moreover, $G^{E_n}(S)$ and $G^{Z_n}(S)$ are functorial with respect to bijections of $S$, and may be organised into objects $G^{E_n}$ and $G^{Z_n}$ of $\cat{Fun}(\cat{FB},\cat{Ch})$. %\ak{Added this} 
In Section \ref{sec:ClassicalGraphCxes} we explain how these are related to other graph complexes in the literature.

Neglecting the differential, every red and black graph whose black subgraph is connected can, tautologically, be obtained from a connected black graph by attaching some red edges. This gives the decompositions
\begin{equation}\label{eq:RBdecomp}
\begin{aligned}
RB^{Z_n}_\mr{conn}(-) &\cong \bigoplus_{n \geq 0} G^{Z_n}(\ul{n}) \otimes_{\fS_n} \cat{d(s)Br}(\ul{n}, -)\\
RB^{E_n}_\mr{conn}(-) &\cong \bigoplus_{n \geq 0} G^{E_n}(\ul{n}) \otimes_{\fS_n} \cat{d(s)Br}(\ul{n}, -)
\end{aligned}
\end{equation}
as functors to graded vector spaces, where we recall $\ul{n} \coloneqq \{1,\ldots,n\}$. 

%XXXXXXXXXXXXXXXX
%\orw{ATTEMPTED REVISION}

The inclusion $i \colon \mathsf{FB} \to \mathsf{d(s)Br}$ of the ($\bQ$-linearised) category of finite sets and bijections has a retraction $r \colon \mathsf{d(s)Br} \to \mathsf{FB}$, which is the identity on objects, and on morphisms $r \colon  \mathsf{d(s)Br}(A,B) \to \mathsf{FB}(A,B)$ is the identity if $|A|=|B|$, and is the zero map otherwise. Left Kan extension gives a functor
\[r_* \colon \mathsf{Fun}(\mathsf{d(s)Br}, \mathsf{Ch}) \lra \mathsf{Fun}(\mathsf{FB}, \mathsf{Ch}).\]
In \cref{lem:functor-model-cat} we discussed the model structure on the domain, and the codomain has a similar model structure, such that $r_* \dashv r^*$ is a Quillen adjunction, and in particular $r_*$ admits a total left derived functor $\bL r_*$. Unwinding definitions shows that $G^{E_n} = r_*(RB_{conn}^{E_n})$ and $G^{Z_n} = r_*(RB_{conn}^{Z_n})$, and we first argue that these are in fact also derived left Kan extensions.

\begin{lemma}
The natural maps
\begin{align*}
\bL r_* (RB_{conn}^{E_n}) &\lra r_* (RB_{conn}^{E_n}) = G^{E_n},\\
\bL r_* (RB_{conn}^{Z_n}) &\lra r_* (RB_{conn}^{Z_n}) = G^{Z_n}
\end{align*}
are equivalences.
\end{lemma}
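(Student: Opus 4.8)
The plan is to show that the functors $RB^{E_n}_\mathrm{conn}$ and $RB^{Z_n}_\mathrm{conn}$ are cofibrant in $\cat{Fun}(\cat{d(s)Br},\cat{Ch})$, after which $\bL r_*$ may be computed by applying $r_*$ directly and the maps in the statement are equalities, hence equivalences. The decompositions \eqref{eq:RBdecomp} express $RB^{E_n}_\mathrm{conn} \cong \bigoplus_{m\ge 0} G^{E_n}(\ul m)\otimes_{\fS_m}\cat{d(s)Br}(\ul m,-)$ as functors to graded vector spaces, i.e.\ as a direct sum of representable-induced pieces; the point is to upgrade this to a statement in $\cat{Ch}$ compatible with the differential and to check cofibrancy.

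First I would record that for a finite set $\ul m$, the functor $\cat{d(s)Br}(\ul m,-)$ is the image of the one-dimensional chain complex $\bQ[0]$ (with its $\fS_m$-action on $\cat{d(s)Br}(\ul m,\ul m)$) under the left adjoint $(\ul m)_*$ to evaluation at $\ul m$, and that more generally $(\ul m)_*(C)$ for any $\fS_m$-chain complex $C$ is given by $C\otimes_{\fS_m}\cat{d(s)Br}(\ul m,-)$. The generating cofibrations $I'$ of \cref{lem:functor-model-cat} are the $(\ul m)_*$ applied to the generating cofibrations of $\cat{Ch}$; since these generating cofibrations are built (via the left adjoint $S_*$ from a single object $S$) it follows that any functor of the form $(\ul m)_*(C)$ with $C$ a cofibrant $\fS_m$-chain complex over $\bQ$ is cofibrant in $\cat{Fun}(\cat{d(s)Br},\cat{Ch})$. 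Over $\bQ$ every bounded-below chain complex of $\fS_m$-representations is cofibrant in the projective model structure on $\fS_m$-chain complexes, because $\bQ[\fS_m]$ is semisimple. Hence it suffices to exhibit $RB^{E_n}_\mathrm{conn}$ and $RB^{Z_n}_\mathrm{conn}$ as direct sums $\bigoplus_{m\ge 0} (\ul m)_*(C_m)$ for suitable $\fS_m$-chain complexes $C_m$ compatibly with the differential.

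The key observation is that the differential on $RB^{E_n}_\mathrm{conn}$ — which is $d_{\mathrm{con}}+d_{\mathrm{col}}$ acting on black edges only — never creates or changes red edges, and never alters the underlying set of legs $S$; a morphism of $\cat{d(s)Br}$ acts by creating red edges and relabelling legs, which again commutes with $d$. Consequently, for a graph whose black subgraph is connected, the differential preserves the ``core'' consisting of the black subgraph together with its set of half-edges that are either legs or endpoints of red edges, and acts only inside that core. Spelling this out: let $C_m := G^{E_n}(\ul m)$ with its differential and its $\fS_m$-action permuting the $m$ legs; then the decomposition \eqref{eq:RBdecomp} is exactly the identification $RB^{E_n}_\mathrm{conn}\cong\bigoplus_{m\ge 0}(\ul m)_*(C_m)$, and one checks it is compatible with $d$ because $d$ acts entirely within the $G^{E_n}(\ul m)$ factor (it does not touch the red edges encoded by the $\cat{d(s)Br}(\ul m,-)$ factor). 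The same argument works verbatim for $RB^{Z_n}_\mathrm{conn}$ with $C_m := G^{Z_n}(\ul m)$. Since each $C_m$ is a non-negatively graded $\fS_m$-chain complex over $\bQ$, it is cofibrant, and therefore $RB^{E_n}_\mathrm{conn}$ and $RB^{Z_n}_\mathrm{conn}$ are cofibrant; thus $\bL r_*$ agrees with $r_*$ on them, giving the claimed equivalences.

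The main obstacle I anticipate is purely bookkeeping: verifying carefully that the isomorphism \eqref{eq:RBdecomp}, which was only asserted at the level of graded vector spaces, is a chain map once one puts the differential back — in particular tracking the orientation twist $\mathfrak{L}(\Gamma)^{\otimes n}$ through the $\otimes_{\fS_m}$ and confirming the $\fS_m$-equivariance of the identification. Everything else (semisimplicity of $\bQ[\fS_m]$, cofibrancy of induced functors, the Quillen adjunction $r_*\dashv r^*$) is formal.
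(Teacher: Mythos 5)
Your proposal has a genuine gap: the claim that the differential acts entirely within the $G^{E_n}(\ul{m})$ factor, so that \eqref{eq:RBdecomp} upgrades to a decomposition of chain complexes, is false. The trouble is $d_{\mathrm{col}}$: it \emph{does} create red edges (that is precisely what ``turning a black edge red'' means), and in doing so it changes the black core. Concretely, a red-and-black graph $\Gamma$ with connected black subgraph $\gamma$ sits in the summand $G^{E_n}(\ul{m})\otimes_{\fS_m}\cat{d(s)Br}(\ul{m},-)$, where $m$ is the number of half-edges of $\gamma$ that are either legs of $\Gamma$ or endpoints of red edges of $\Gamma$. When $d_{\mathrm{col}}$ recolours a black edge $e$ of $\gamma$, the new black subgraph has lost a black edge but gained \emph{two} new external half-edges (the two ends of $e$, now endpoints of a red edge), so the result lands in the summand indexed by $m+2$, not $m$. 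Your sentence ``never creates or changes red edges, and never alters the underlying set of legs $S$'' conflates the legs of $\Gamma$ (indeed unchanged) with the legs of the black core $\gamma$ (which do change); the index of the decomposition is the latter. The paper flags exactly this by stating \eqref{eq:RBdecomp} only ``neglecting the differential'' and ``as functors to graded vector spaces.'' As a consequence, $RB^{E_n}_{\mathrm{conn}}$ is not visibly a direct sum of induced cofibrant pieces in $\cat{Ch}$, and the cofibrancy argument does not go through.

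What the paper does instead is circumvent the problem by filtering by (homological) degree, so that the associated graded really does carry the trivial differential; on the associated graded the identification $\mr{Gr}(RB^{E_n}_{\mathrm{conn}})=i_*(G^{E_n})$ is an honest isomorphism in $\cat{Fun}(\cat{d(s)Br},\cat{Ch})$, and one can then run exactly the cofibrancy-plus-adjunction argument you had in mind (every object of $\cat{Fun}(\cat{FB},\cat{Ch})$ is cofibrant over $\bQ$; $r_*i_*=\mr{id}$), and conclude via a comparison of the two spectral sequences for $\bL r_*$ and $r_*$. If you wanted to stay closer to your original plan, you could instead filter $RB^{E_n}_{\mathrm{conn}}$ by the number of red edges: this kills $d_{\mathrm{col}}$ on the associated graded, leaving the genuinely $\ul{m}$-preserving differential $d_{\mathrm{con}}$, and your decomposition argument then applies to the associated graded. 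But some filtration argument is unavoidable; \eqref{eq:RBdecomp} simply is not a chain-level splitting.
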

\begin{proof}We consider the case of $E_n$; that of $Z_n$ is identical. Filtering $RB^{E_n}_{conn}(-)$ by its grading, we obtain compatible filtrations of the two sides. This gives a map of spectral sequences, which are strongly convergent as the values of ${RB}_{conn}^{E_n}$ are non-negatively graded.  As $r_*$ preserves exact sequences %\ak{Added this} 
the map of $E^1$-pages is
\[\bL r_* (\mr{Gr}(RB_{conn}^{E_n}))(-) \lra r_* (\mr{Gr}(RB_{conn}^{E_n}))(-).\]
Thus it suffices to show that this is an equivalence. By \eqref{eq:RBdecomp} we have $\mr{Gr}(RB_{conn}^{E_n})(-) = i_*(G^{E_n})(-)$, where the latter has trivial differential. As we are working over a field of characteristic zero all objects of $\mathsf{Fun}(\mathsf{FB}, \mathsf{Ch})$ are cofibrant, so $\bL i_*(G^{E_n})(-) \overset{\sim}\to i_*(G^{E_n})(-)$. Using that $r_* \circ i_* = (r \circ i)_* = \mr{id}$ and similarly $\bL r_* \circ \bL i_* \simeq \mr{id}$, the claim follows.
\end{proof}

Let us analyse $\bL r_*(A)$ for $A \colon \mathsf{d(s)Br} \to \mathsf{Ch}$ using the colimit description of Kan extension and the bar construction %Bousfield--Kan simplicial 
model for homotopy colimits. % \cite[XII \S5]{BousfieldKan}. %\ak{From that reference, is it clear that this formula is valid? They only work in spaces.}\orw{ugh. Thm 9.1 in Shulman's ``Homotopy limits and colimits and enriched homotopy theory'' will do, I think?}\ak{Yes.}
 This is the simplicial object with $s$-simplices
\[[s] \longmapsto \bigoplus_{S_0, \ldots, S_s \in \mathsf{d(s)Br}} A(S_0) \otimes \mathsf{d(s)Br}(S_0, S_1) \otimes \cdots \otimes \mathsf{d(s)Br}(S_{s-1}, S_s) \otimes \mathsf{FB}(r(S_s), -)\]
(implicitly replacing $\mathsf{d(s)Br}$ by a skeletal subcategory), and face maps given by composing morphisms in $\mathsf{d(s)Br}$, acting with them on $A(-)$, or applying $r \colon \mathsf{d(s)Br} \to \mathsf{FB}$ to them and composing in $\mathsf{FB}$; its geometric realisation models $\bL r_*(A)$. Filtering this geometric realisation by skeleta gives a spectral sequence with
\[E^1_{s,t} = \bigoplus_{S_0, \ldots, S_s \in \mathsf{d(s)Br}} H_{t}(A)(S_0) \otimes  \mathsf{d(s)Br}(S_0, S_1) \otimes \cdots \otimes \mathsf{d(s)Br}(S_{s-1}, S_s) \otimes \mathsf{FB}(r(S_s), -),\]
converging strongly to $H_{s+t}(\bL r_*(A))$ as the values of $A$ are by definition non-negatively graded. The $d^1$-differential is given by the alternating sum of the face maps. 

If $A$ also has an additional grading, then this gets carried along too. Applied to $A \coloneqq RB_{conn}^{E_n}$ with its additional internal grading this gives a spectral sequence
\begin{equation}\label{eq:KanExtSS}
\begin{tikzcd}
%E^1_{s,t,q} = 
\bigoplus\limits_{\mathclap{\substack{S_0, \ldots, S_s \\ \in \mathsf{d(s)Br}}}} H_{t-q+1}^\mr{Com}(E_n)_q(S_0) \otimes  \mathsf{d(s)Br}(S_0, S_1) \otimes \cdots \otimes \mathsf{d(s)Br}(S_{s-1}, S_s) \otimes \mathsf{FB}(r(S_s), T) \arrow[Rightarrow,start anchor={[yshift=15pt]}]{d} \\[-5pt]
	 H_{s+t}(G^{E_n}(T))_q,
\end{tikzcd}
\end{equation}
where the top expression is $E^1_{s,t,q}$, and similarly for $RB_{conn}^{Z_n}$.

\subsection{Finishing the proof of \cref{athm:main}}

\begin{lemma}\label{lem:BlackGphVanishing}
$H_{p+q}(G^{Z_n}(T))_q$ and $H_{p+q}(G^{E_n}(T))_q$ vanish if $n \cdot (p+1) < q$ and $T \neq \varnothing$.
\end{lemma}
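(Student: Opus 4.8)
The plan is to derive this directly from \cref{thm:KoszulPositive} by a degree count in the spectral sequence \eqref{eq:KanExtSS}. I would carry out the argument for $G^{E_n}$ in detail and then observe that the case of $G^{Z_n}$ is word-for-word identical, using the analogous spectral sequence assembled from $RB^{Z_n}_\mr{conn}$.

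First I would pin down which summands of the $E^1$-page of \eqref{eq:KanExtSS},
\[E^1_{s,t,q} = \bigoplus_{S_0, \ldots, S_s} H_{t-q+1}^\mr{Com}(E_n)_q(S_0) \otimes \mathsf{d(s)Br}(S_0, S_1) \otimes \cdots \otimes \mathsf{d(s)Br}(S_{s-1}, S_s) \otimes \mathsf{FB}(r(S_s), T),\]
can be non-zero. A morphism space $\mathsf{d(s)Br}(S_{i-1}, S_i)$ is non-zero only if there is an injection $S_i \hookrightarrow S_{i-1}$, hence only if $|S_{i-1}| \geq |S_i|$, while $\mathsf{FB}(r(S_s), T)$ is non-zero only if $|S_s| = |T|$. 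Since $T \neq \varnothing$, a non-zero summand forces $|S_0| \geq |S_1| \geq \cdots \geq |S_s| = |T| \geq 1$, so in particular $S_0 \neq \varnothing$. Then \cref{thm:KoszulPositive} applies to the factor $H_{t-q+1}^\mr{Com}(E_n)_q(S_0)$ and shows it vanishes unless $q = n(t-q+1)$, equivalently $t = q + \tfrac{q}{n} - 1$ (which in particular forces $n \mid q$; when $n \nmid q$ the $E^1$-page vanishes entirely in internal degree $q$ and there is nothing to prove).

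Consequently, in internal degree $q$ the whole $E^1$-page is concentrated on the single line $t = q + \tfrac q n - 1$. Since \eqref{eq:KanExtSS} converges strongly (the values of $RB^{E_n}_\mr{conn}$ being non-negatively graded), if all $E^1_{s,t,q}$ with $s+t = N$ vanish then so does $H_N(G^{E_n}(T))_q$; hence $H_N(G^{E_n}(T))_q$ can be non-zero only for total degree $N = s+t \geq q + \tfrac q n - 1$. Writing $N = p+q$, this reads $p \geq \tfrac q n - 1$, i.e.\ $n(p+1) \geq q$, whose contrapositive is exactly the asserted vanishing $H_{p+q}(G^{E_n}(T))_q = 0$ when $n(p+1) < q$. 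The same computation applies verbatim to $G^{Z_n}$.

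I do not expect a genuine obstacle: all the substance is already packaged in \cref{thm:KoszulPositive}, and what remains is the bookkeeping above together with the strong convergence of the bar-construction spectral sequence \eqref{eq:KanExtSS}, which is already recorded in the surrounding text. The only points requiring a little care are keeping straight the relation between the Harrison-type index $p$ in the statement and the filtration and complementary indices $(s,t)$, and noting that everything in sight is graded in multiples of $n$, so that $\tfrac q n$ may be treated as an integer.
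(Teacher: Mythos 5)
Your argument is correct and is essentially the same as the paper's: both run the spectral sequence \eqref{eq:KanExtSS} at $T \neq \varnothing$, use the inequality $|S_0| \geq \cdots \geq |S_s| = |T|$ to force $S_0 \neq \varnothing$, apply \cref{thm:KoszulPositive} to concentrate the $E^1$-page on the line $t = q + \tfrac{q}{n} - 1$, and then read off the vanishing in total degree $< q + \tfrac{q}{n} - 1$. The only cosmetic difference is that you phrase the conclusion as a concentration-on-a-line statement whereas the paper states the inequality $t < \tfrac{n+1}{n}q - 1$ directly; the degree bookkeeping is identical.
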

\begin{proof}We consider the case of $E_n$; that of $Z_n$ is identical. We apply the spectral sequence \eqref{eq:KanExtSS} for a fixed $q$ and evaluated at $T \neq \varnothing$, which starts from
\[E^1_{s,t,q} = \bigoplus\limits_{\mathclap{\substack{S_0, ..., S_s \\ \in \mathsf{d(s)Br}}}} H_{t-q+1}^\mr{Com}(E_n)_q(S_0) \otimes  \mathsf{d(s)Br}(S_0, S_1) \otimes \cdots \otimes \mathsf{d(s)Br}(S_{s-1}, S_s) \otimes \mathsf{FB}(r(S_s), T)\]
and converges to $H_{s+t}(G^{E_n}(T))$. For the latter tensor factors to be non-zero we must have $|S_0| \geq |S_1| \geq \cdots \geq |S_s|=|T|$, and so if $T \neq \varnothing$ then $S_0 \neq \varnothing$. In this case, for the first tensor factor to be non-zero we must have $q = n \cdot (t-q+1)$ by Theorem \ref{thm:KoszulPositive}. Thus if $q \neq n \cdot (t-q+1)$ then $E^1_{s,t,q}=0$.

In particular if $t < \tfrac{n+1}{n}q-1$ then $E^1_{s,t,q}=0$, so $H_{s+t}(G^{E_n}(T))_q=0$ if $s+t < \tfrac{n+1}{n}q-1$. Writing $s+t = p+q$ and rearranging, we see that $H_{p+q}(G^{E_n}(T))_q=0$ for $n \cdot(p+1)< q$ as claimed.
\end{proof}

The following lemma is due to Turchin--Willwacher \cite[Section 6.1]{TW} for $Z_n$ and Chan--Galatius--Payne \cite[Theorem 1.7]{CGP2} for $E_n$. As its proof is quite elementary we give it, following \cite[Section 5.3]{CGP2}.

\begin{lemma}[Turchin--Willwacher, Chan--Galatius--Payne] \label{lem:inj-empty-from-1}
There are injections
\begin{align*}
H_{p+q}(G^{Z_n}(\varnothing))_q &\lra H_{p+q+n}(G^{Z_n}(\ul{1}))_{q+n},\\
H_{p+q}(G^{E_n}(\varnothing))_q &\lra H_{p+q+n}(G^{E_n}(\ul{1}))_{q+n}.
\end{align*}
\end{lemma}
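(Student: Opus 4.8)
The plan is to construct an explicit chain map realising the injection, mimicking the operation of ``adding a leg'' to a graph. Write $G = G^{E_n}$ (the argument for $G^{Z_n}$ is identical, simply discarding all graphs with a vertex of positive weight). I would first observe that there is a natural map of complexes $\delta \colon G(\varnothing) \to G(\ul 1)$ which to a connected black graph $\Gamma$ with no legs associates $\sum_{v \in V(\Gamma)} \Gamma_v$, where $\Gamma_v$ is $\Gamma$ with a single new leg attached at the vertex $v$; the orientation data $\mathfrak K_B(\Gamma) \otimes \mathfrak L(\Gamma)^{\otimes n}$ is carried along using the fact that a leg contributes trivially to $\mathfrak K_B$ and that there is a canonical new tensor factor in $\mathfrak L$. (The degree shift by $n$ comes from $\mr{val}(v)$ increasing by one, so $\deg_\mathrm{int}$ and total degree each go up by $n$, which matches the statement.) One must check $\delta$ commutes with $d = d_\mathrm{con} + d_\mathrm{col}$: contracting or colouring an edge away from $v$ commutes with attaching a leg at $v$, and contracting an edge $e$ incident to $v$ moves the leg to the merged vertex, which is exactly the term appearing in $\delta(d_\mathrm{con}\Gamma)$ — the only subtlety is the sign, which I would fix by the standard convention that the new leg is ordered last.

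Next I would produce a retraction up to homotopy, or more precisely exhibit a left inverse on homology. The natural candidate is the ``forget the leg'' type map, but that does not land in the right complex; instead, following Chan--Galatius--Payne \cite[Section 5.3]{CGP2}, I would use the \emph{deletion} map together with a counting/transfer argument. Concretely: there is a splitting of $G(\ul 1)$ (as a graded vector space, ignoring the differential) according to the valence of the unique special vertex carrying the leg, and projecting to the summand where that vertex has valence exactly $1$ — i.e.\ the leg sits at a univalent vertex — followed by contracting the unique edge at that vertex, gives a map $\rho \colon G(\ul 1) \to G(\varnothing)$ of the correct degree. One checks $\rho \circ \delta$ is, on the chain level, multiplication by a nonzero scalar (the point is that among the $|V(\Gamma)|$ terms of $\delta(\Gamma)$, only those where the new leg happens to already be univalent survive $\rho$, but after symmetrising the count works out, exactly as in loc.\ cit.); since we are over $\bQ$ this immediately gives injectivity on homology.

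The step I expect to be the main obstacle is the bookkeeping of signs and orientation data, and in particular verifying that $\rho$ is genuinely a chain map (it is not obviously compatible with $d_\mathrm{col}$, since colouring the edge at the leg-vertex produces a graph outside the image of the valence-$1$ projection). The clean way around this, and the one I would actually adopt, is not to insist $\rho$ be a chain map but rather to argue as in \cite[Section 5.3]{CGP2}: run the long exact sequence relating $G(\ul 1)$, $G(\varnothing)$, and the relative term, and identify the connecting/boundary map with $\delta$ up to sign using the explicit description of the differential; then injectivity of $\delta_*$ follows from exactness together with the observation that the relevant adjacent relative homology group vanishes in the appropriate degree. I would verify that vanishing directly from the definitions of $RB^{E_n}$ and $G^{E_n}$ as a quotient: the relative complex is spanned by graphs whose leg lies at a vertex that becomes disconnected-looking after deletion, and an explicit acyclicity argument (the same ``add the missing half-edge'' contraction used in the proof of \cref{prop:RBIsModel}) finishes it. Throughout, the fact that everything is over a field of characteristic zero means coinvariants are exact and one never worries about the $\mr{Aut}(\Gamma)$-actions.
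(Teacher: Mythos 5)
Your proposal has a genuine gap: the unweighted map $\delta(\Gamma) = \sum_{v} \Gamma_v$ is \emph{not} a chain map. To see this, consider a non-loop black edge $e$ joining distinct vertices $v_1, v_2$, which contract to $\bar v$ in $\Gamma/e$. In $d(\delta(\Gamma))$ both $\Gamma_{v_1}/e$ and $\Gamma_{v_2}/e$ appear, and each equals $(\Gamma/e)_{\bar v}$ with the same sign, so $(\Gamma/e)_{\bar v}$ is counted twice; but $\delta(d\Gamma)$ produces $(\Gamma/e)_{\bar v}$ only once. The fix — and the key idea of the argument — is to weight the $v$-th term of the leg-insertion map by $\chi(v) \coloneqq n(2w(v)+\mr{val}(v)-2)$, i.e.\ to take $t(\Gamma) = \sum_v \chi(v)\,\Gamma_v$. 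This works precisely because $\chi$ is additive under edge contraction: $\chi(\bar v) = \chi(v_1) + \chi(v_2)$ for a non-loop edge, and $\chi$ is preserved when contracting a loop. Without this weighting there is no chain map.

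Your proposed retraction $\rho$ is also not the one that works. Projecting to graphs whose leg-vertex has valence $1$ and then contracting is problematic (for $G^{Z_n}$ admissibility rules out valence-$1$ weight-$0$ vertices entirely, so $\rho$ would be zero), and in any case $\rho \circ \delta$ is not multiplication by a scalar. The correct partner to $t$ is simply the ``delete the leg'' map $\pi$ (which does land in $G(\varnothing)$ — your worry that it does not is unfounded), declared to be zero whenever deletion would violate admissibility at the leg-vertex. One must check $\pi$ is a chain map, which requires a small case analysis when deleting the leg creates an inadmissible vertex; this is manageable. The composition $\pi \circ t$ is then multiplication by $\sum_v \chi(v) = \deg_{\mathrm{int}}(\Gamma) = q$, which is nonzero in the range $q>0$ (and for $q\le 0$ the source vanishes), giving the desired injection. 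Your fallback suggestion to replace all this with a long exact sequence plus a vanishing result is not worked out enough to evaluate, and in particular you would still need to identify the boundary map with a specific chain map — exactly the map $t$ — so the weighting issue cannot be sidestepped.
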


\begin{proof}We explain the proof for $G^{E_n}$; the proof for $G^{Z_n}$ is identical without weights.
	
For $q \leq 0$ there is nothing to prove: the admissibility condition for graphs in $G^{E_n}(\varnothing)$ implies that $2 w(v)+\mr{val}(v) -2 \geq 1$ for all vertices $v$, so the internal grading satisfies $q = n\sum_{v \in V(\Gamma)} (2w(v)+\mr{val}(v)-2) > 0$. 

We will define maps of chain complexes
\[G^{E_n}_*(\varnothing) \overset{t}\lra G^{E_n}_{*+n}(\ul{1}) \overset{\pi}\lra G^{E_n}_*(\varnothing)\]
whose composition preserves the internal grading and on the summand with internal grading $q$ is given by multiplication by $q$. In particular it is an isomorphism for $q>0$. Informally, $t$ sums over all ways to add a single leg and $\pi$ deletes the leg.

Suppose we are given a weighted black graph $(\Gamma,\ell,\nu,\omega^{\otimes n}) \in G^{E_n}(\varnothing)$; $\Gamma$ is the graph with $w$ the weights of its vertices, $\ell$ the labelling of its legs, and $\nu$ and $\omega^{\otimes n}$ are orientations in $\mathfrak{K}_B(\Gamma)$ and $\mathfrak{L}(\Gamma)^{\otimes n}$ respectively, as in \eqref{eqn:orientations}. For each vertex $v \in V(\Gamma)$ adding a leg at $v$ yields a graph $\Gamma_v$ with $V(\Gamma_v) = V(\Gamma)$, $E_B(\Gamma_v) = E_B(\Gamma)$, and $L(\Gamma_v) = \{l\}$. Using the same $w$ and $\nu$, $\ell_v \colon \{l\} \to \{1\}$ the unique bijection, and $\omega_v$ induced from $\omega$ by $- \wedge l \colon \det(\bQ^{L(\Gamma)}) \overset{\sim}\to \det(\bQ^{L(\Gamma_v)})$, 
%given by identifying $1 \in \det(\bQ^{L(\Gamma)})$ with $l \in \det(\bQ^{L(\Gamma_v)})$, 
we obtain $(\Gamma_v,\ell_v,\nu,\omega_v^{\otimes n}) \in G^{E_n}(\ul{1})$. Since the valence of $v$ has increased by $1$, $\mr{deg}_\mr{int}(\Gamma_v) = \mr{deg}_\mr{int}(\Gamma)+n$ and the same is true for the total degree since the number of black edges did not change. Introducing the notation $\chi(v) \coloneqq n(2w(v)+\mr{val}(v)-2)$, then $t$ is defined by
\begin{align*} t \colon G^{E_n}_{p+q}(\varnothing) & \lra G^{E_n}_{p+q+n}(\ul{1}) \\
	(\Gamma,\ell,\nu,\omega^{\otimes n}) &\longmapsto \sum_{v \in V(\Gamma)} \chi(v) \cdot (\Gamma_v,,\ell_v,\nu,\omega_v^{\otimes n}).\end{align*}
To verify $t$ is a chain map, we need to show it commutes with $d_\mr{con}$ (since $d_\mr{col}$ vanishes upon taking the quotient by graphs with at least one red edge). This follows by observing that if an edge $e$ connecting vertices $v'$ and $v''$ (these can be equal, when $e$ is a loop) is collapsed to a vertex $v$, then $(\Gamma/e)_v = \Gamma_{v'}/e = \Gamma_{v''}/e$ and $\chi(v) = \chi(v')+\chi(v'')$.

Suppose we are given a weighted black graph $(\Gamma,\ell,\nu,\omega^{\otimes n}) \in G^{E_n}(\ul{1})$. Removing the unique leg $l$ from $\Gamma$ yields a graph $\Gamma^l$ with $V(\Gamma^l) = V(\Gamma)$, $E_B(\Gamma^l) = E_B(\Gamma)$, and $L(\Gamma^l) = \varnothing$. Using the same $w$ and $\nu$, $\ell^l$ the unique map between empty sets, and $\omega^l$ induced from $\omega$ by the map $\det(\bQ^{L(\Gamma)}) \smash{\overset{\sim}\to} \det(\bQ^{L(\Gamma^l)})$ that contracts with $l$,
%sending $l \in \det(\bQ^{L(\Gamma)})$ to $1 \in \det(\bQ^{L(\Gamma^l)})$, 
we obtain $(\Gamma^l,\ell^l,\nu,(\omega^l)^{\otimes n}) \in G^{E_n}(\varnothing)$. Since the valence of $v_l \coloneqq \partial(l)$ has decreased by $1$, $\mr{deg}_\mr{int}(\Gamma^l) = \mr{deg}_\mr{int}(\Gamma)-n$ and the same is true for the total degree since the number of black edges did not change. This weighted graph is admissible if and only if $2 w(v_l) + \mr{val}(v_l) \geq 3$. Then define
\begin{align*} \pi \colon G^{E_n}_{p+q+n}(\ul{1}) & \lra G^{E_n}_{p+q}(\varnothing) \\
	(\Gamma,\ell,\nu,\omega^{\otimes n}) &\longmapsto \begin{cases} (\Gamma^l,\ell^l,\nu,(\omega^l)^{\otimes n}) & \text{if $2 w(v_l) + \mr{val}(v_l) \geq 3$,} \\
	0 & \text{otherwise.}\end{cases}\end{align*}
To verify that $\pi$ is a chain map, we observe that $(\Gamma/e)^l = \Gamma^l/e$. A subtlety is that deleting a leg may make a graph inadmissible; this occurs either when (a) the leg $l$ is at a trivalent vertex of weight $0$, or (b) the leg $l$ is at a univalent vertex with weight 1. The case (a) is verified in \cref{fig:inadm}, and case (b) is left to the reader.%\ak{Added this.}

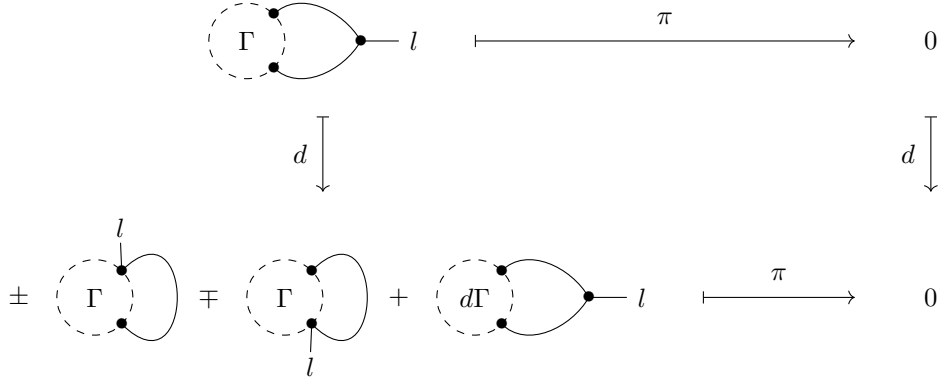
\begin{figure}
	\begin{tikzpicture}
	\begin{scope}
		\draw [dashed] (0,0) circle (.5cm);
		\node at (0,0) {$\Gamma$};
		\node at (45:.5) {$\bullet$};
		\draw (45:.5) to[out=45,in=120] (1.5,0);
		\node at (-45:.5) {$\bullet$};
		\draw (-45:.5) to[out=-45,in=-120] (1.5,0);
		\node at (1.5,0) {$\bullet$};
		\draw (1.5,0) -- (2,0);
		\node at (2.2,0) {$l$};
	\end{scope}

	\draw [|->] (3,0) -- (8,0);
	\node at (5.5,.3) {$\pi$};
	\node at (9,0) {$0$};
	\draw [|->] (1,-1) -- (1,-2);
	\node at (.7,-1.5) {$d$};
	\draw [|->] (9,-1) -- (9,-2);
	\node at (8.7,-1.5) {$d$};	
	\draw [|->] (6,-3.4) -- (8,-3.4);
	\node at (7,-3.1) {$\pi$};
	\node at (9,-3.4) {$0$};
	
	\begin{scope}[yshift=-3.4cm,xshift=-2cm]
		\node at (-1,0) {$\pm$};
		\draw [dashed] (0,0) circle (.5cm);
		\node at (0,0) {$\Gamma$};
		\node at (45:.5) {$\bullet$};
		\node at (-45:.5) {$\bullet$};
		\draw (45:.5) to[out=45,in=-45,looseness=5] (-45:.5);
		\draw (45:.5) -- (65:.8);
		\node at (65:.8) [yshift=.2cm] {$l$};
	\end{scope}
	
	\begin{scope}[yshift=-3.4cm,xshift=.5cm]
		\node at (-1,0) {$\mp$};
		\draw [dashed] (0,0) circle (.5cm);
		\node at (0,0) {$\Gamma$};
		\node at (45:.5) {$\bullet$};
		\node at (-45:.5) {$\bullet$};
		\draw (45:.5) to[out=45,in=-45,looseness=5] (-45:.5);
		\draw (-45:.5) -- (-65:.8);
		\node at (-65:.8) [yshift=-.2cm] {$l$};
	\end{scope}

	\begin{scope}[xshift=3cm,yshift=-3.4cm]
		\draw [dashed] (0,0) circle (.5cm);
		\node at (-1,0) {$+$};
		\node at (0,0) {$d\Gamma$};
		\node at (45:.5) {$\bullet$};
		\draw (45:.5) to[out=45,in=120] (1.5,0);
		\node at (-45:.5) {$\bullet$};
		\draw (-45:.5) to[out=-45,in=-120] (1.5,0);
		\node at (1.5,0) {$\bullet$};
		\draw (1.5,0) -- (2,0);
		\node at (2.2,0) {$l$};
	\end{scope}
	\end{tikzpicture}
	\caption{This figure illustrates that $\pi$ is a chain map when deleting the unique leg creates inadmissible graphs, in case (a). Weights are not displayed.}
	\label{fig:inadm}
\end{figure}

The composition $\pi \circ t$ is given on the element $(\Gamma,\ell,\nu,\omega^{\otimes n})$ by multiplication by $\sum_{v \in V(\Gamma)} \chi(v) = \mr{deg}_\mr{int}(\Gamma)$; the result follows.
\end{proof}

Combining this with Lemma \ref{lem:BlackGphVanishing} shows that the conclusion of that lemma holds for $T=\varnothing$ too, with a slightly better range:

\begin{corollary}\label{cor:BlackGphVanishingZero}
$H_{p+q}(G^{Z_n}(\varnothing))_q$ and $H_{p+q}(G^{E_n}(\varnothing))_q$ vanish if $n \cdot p < q$.
\end{corollary}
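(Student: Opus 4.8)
The plan is to combine the injections of \cref{lem:inj-empty-from-1} with the vanishing of \cref{lem:BlackGphVanishing} applied to the non-empty set $\ul{1}$. Concretely, fix $p,q$ with $n\cdot p < q$; I want to show $H_{p+q}(G^{Z_n}(\varnothing))_q = 0$ (and likewise for $E_n$). By \cref{lem:inj-empty-from-1} there is an injection
\[
H_{p+q}(G^{Z_n}(\varnothing))_q \lra H_{(p+q)+n}(G^{Z_n}(\ul{1}))_{q+n},
\]
so it suffices to show the target vanishes. The target has total degree $(p+q)+n$ and internal degree $q+n$; writing this total degree as $p'+q'$ with $q' = q+n$, we get $p' = p$. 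Thus the target is $H_{p+q'}(G^{Z_n}(\ul{1}))_{q'}$ with $q' = q+n$, and \cref{lem:BlackGphVanishing} (applied to $T = \ul{1} \neq \varnothing$) asserts this vanishes provided $n\cdot(p+1) < q'$, i.e.\ $n\cdot p + n < q + n$, i.e.\ $n\cdot p < q$ — which is exactly our hypothesis. Hence the target vanishes, and so does the source. The argument for $E_n$ is verbatim the same, using the $E_n$-halves of \cref{lem:inj-empty-from-1} and \cref{lem:BlackGphVanishing}.

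I expect no genuine obstacle here: the whole content is the bookkeeping of degrees, and the slight improvement in range (from $n\cdot(p+1) < q$ for non-empty sets to $n\cdot p < q$ for the empty set) is precisely the "$+n$ shift" built into \cref{lem:inj-empty-from-1}, which raises the internal degree by $n$ while leaving the Harrison-type index $p$ unchanged. The only thing to be careful about is matching conventions: total degree is $p+q$ with $p$ the parameter that \cref{lem:BlackGphVanishing} constrains, and one must check that the shift $(p+q,q) \mapsto (p+q+n, q+n)$ indeed fixes $p$ — which it does. So the proof is a two-line deduction.

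\begin{proof}
We treat $G^{Z_n}$; the argument for $G^{E_n}$ is identical. Suppose $n\cdot p < q$. By \cref{lem:inj-empty-from-1} there is an injection
\[
H_{p+q}(G^{Z_n}(\varnothing))_q \lra H_{p+q+n}(G^{Z_n}(\ul{1}))_{q+n}.
\]
Set $q' \coloneqq q+n$, so the target is $H_{p+q'}(G^{Z_n}(\ul{1}))_{q'}$. Since $n\cdot p < q$ we have $n\cdot(p+1) = n\cdot p + n < q + n = q'$, so \cref{lem:BlackGphVanishing} (applied with $T = \ul{1} \neq \varnothing$) gives $H_{p+q'}(G^{Z_n}(\ul{1}))_{q'} = 0$. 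As the displayed map is injective, $H_{p+q}(G^{Z_n}(\varnothing))_q = 0$ as well.
\end{proof}
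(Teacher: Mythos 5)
Your proof is correct and is essentially the paper's own argument: inject into $H_{p+q+n}(G^{Z_n}(\ul{1}))_{q+n}$ via \cref{lem:inj-empty-from-1}, then apply \cref{lem:BlackGphVanishing} with $T=\ul{1}$ and observe that the ``$+n$ shift'' fixes the Harrison index $p$ while converting the bound $n(p+1)<q+n$ into $np<q$. The degree bookkeeping is right, so nothing further to add.
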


\begin{proof}By \cref{lem:inj-empty-from-1}, $H_{p+q}(G^{Z_n}(\varnothing))_q$ injects into $H_{p+q+n}(G^{Z_n}(\ul{1}))_{q+n}$ for $q \neq 0$. By \cref{lem:BlackGphVanishing} for $T = \ul{1}$, this vanishes as long as $n(p+1)<q+n$. Rearranging gives the result.
\end{proof}

Using this we can reverse the logic of the proof of Lemma \ref{lem:BlackGphVanishing} to access $H_{p}^\mr{Com}(Z_n)_q(\varnothing)$ and $H_{p}^\mr{Com}(E_n)_q(\varnothing)$.

\begin{theorem}\label{thm:koszulity-zn-en}
$H^\mr{Com}_{p}(Z_n)_q(\varnothing)$ and $H^\mr{Com}_{p}(E_n)_q(\varnothing)$ vanish if $q \neq n \cdot p$.
\end{theorem}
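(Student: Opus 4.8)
The plan is to deduce the vanishing of $H^\mr{Com}_{p}(Z_n)_q(\varnothing)$ and $H^\mr{Com}_{p}(E_n)_q(\varnothing)$ for $q \neq n \cdot p$ from \cref{cor:BlackGphVanishingZero} by running the spectral sequence \eqref{eq:KanExtSS} ``in reverse'', exactly as in the proof of \cref{lem:BlackGphVanishing} but with the roles of hypothesis and conclusion exchanged. We focus on $E_n$; the argument for $Z_n$ is identical (and simpler, with no weights). First, recall from \cref{sec:Harrison} and \cref{lem:qcom-harr} that $H^\mr{Com}_{p}(E_n)_q(\varnothing) \cong H_{p+q}(\mr{Harr}(\overline{E_n}))_q(\varnothing)$, and that by \cref{prop:RBIsModel} and its corollary this agrees with $H_{p+q}(RB^{E_n}_\mr{conn}(\varnothing))_q$, which in turn equals $H_{p+q}(G^{E_n}(\varnothing))_q$ since on the empty set there are no legs and hence no red edges to add: the quotient map $RB^{E_n}_\mr{conn}(\varnothing) \to G^{E_n}(\varnothing)$ is an isomorphism. (Concretely, every red-and-black graph with no legs whose black subgraph is connected has its red edges supported on the vertices, and the colouring differential $d_\mr{col}$ still acts; but one checks directly—or uses the decomposition \eqref{eq:RBdecomp} evaluated at $\varnothing$—that the red edges contribute an acyclic factor, so the homology is carried by the black graphs. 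Alternatively this identification is already implicit in \cref{def:Gn} and the surrounding discussion.)

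With this reduction in hand, the statement to prove becomes: $H_{p+q}(G^{E_n}(\varnothing))_q = 0$ when $q \neq n \cdot p$. By \cref{lem:KoszEasyEstimate}, or rather by the degree bookkeeping in \cref{sec:Gradings} which gives that a connected black graph $\Gamma$ has Harrison degree $\#E_B(\Gamma)+1$ and internal degree $n\sum_v(2w(v)+\mr{val}(v)-2) \geq n\cdot\#E_B(\Gamma)$ once one uses connectedness to relate the number of edges to the sum of valences, one has the automatic vanishing $H_{p+q}(G^{E_n}(\varnothing))_q = 0$ for $q < n\cdot p$. (This is the ``easy half'' of the diagonal criterion and mirrors \cref{lem:KoszEasyEstimate}; it holds on $\varnothing$ precisely because there are no legs.) The remaining case is $q > n \cdot p$, and this is exactly what \cref{cor:BlackGphVanishingZero} asserts: $H_{p+q}(G^{E_n}(\varnothing))_q$ vanishes when $n\cdot p < q$. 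Combining the two half-ranges gives vanishing whenever $q \neq n\cdot p$, which is the claim.

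Thus the proof is short: it simply records that the ``easy estimate'' $q \geq np$ of \cref{lem:KoszEasyEstimate} together with the ``hard estimate'' $q \leq np$ of \cref{cor:BlackGphVanishingZero} pin the homology to the diagonal $q = np$. The only point requiring a line of justification is the first reduction, namely that $H^\mr{Com}_{p}(E_n)_q(\varnothing)$ is computed by $G^{E_n}(\varnothing)$: this follows from \cref{prop:RBIsModel} (so $RB^{E_n}_\mr{conn}$ computes $\bL Q^\mr{Com}(E_n)$), together with the observation that the red-colouring part of the differential on $RB^{E_n}_\mr{conn}(\varnothing)$ contributes only an acyclic summand—equivalently, that on the empty set the projection to black graphs is a quasi-isomorphism—which is a special case of the filtration-by-number-of-edges argument already used in the proof of \cref{prop:RBIsModel}. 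I expect no genuine obstacle here; the substance of the result is entirely contained in the already-proved \cref{thm:KoszulPositive}, \cref{lem:BlackGphVanishing}, \cref{lem:inj-empty-from-1}, and \cref{cor:BlackGphVanishingZero}, and the present theorem is the formal consequence that completes the verification of the diagonal criterion on \emph{all} objects of $\cat{d(s)Br}$, including $\varnothing$.

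\begin{proof}
We treat $E_n$; the case of $Z_n$ is identical (and has no weights). By \cref{lem:qcom-harr} and the corollary to \cref{prop:RBIsModel}, $\bL Q^\mr{Com}(E_n) \simeq RB^{E_n}_\mr{conn}$, so $H^\mr{Com}_{p}(E_n)_q(\varnothing) \cong H_{p+q}(RB^{E_n}_\mr{conn}(\varnothing))_q$. Evaluated on the empty set there are no legs, hence no red edges: the decomposition \eqref{eq:RBdecomp} at $\varnothing$ shows $RB^{E_n}_\mr{conn}(\varnothing)$ is the summand of black graphs with no legs together with the acyclic contribution of colourings, exactly as in the filtration argument of \cref{prop:RBIsModel}; thus the projection $RB^{E_n}_\mr{conn}(\varnothing) \to G^{E_n}(\varnothing)$ is a quasi-isomorphism and
\[
H^\mr{Com}_{p}(E_n)_q(\varnothing) \cong H_{p+q}(G^{E_n}(\varnothing))_q.
\]
A connected black graph $\Gamma$ contributing to $G^{E_n}(\varnothing)$ in total degree $p+q$ and internal degree $q$ has $\#E_B(\Gamma) = p$ black edges (its Harrison degree being $\#E_B(\Gamma)+1$) and internal degree $q = n\sum_{v \in V(\Gamma)}(2w(v)+\mr{val}(v)-2)$. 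Admissibility gives $2w(v)+\mr{val}(v)-2 \geq 1$ when $\mr{val}(v) \leq 2$, and in any case $\sum_v \mr{val}(v) = 2\#E_B(\Gamma)$, so $q \geq n\cdot p$; hence $H_{p+q}(G^{E_n}(\varnothing))_q = 0$ for $q < n\cdot p$. On the other hand \cref{cor:BlackGphVanishingZero} gives $H_{p+q}(G^{E_n}(\varnothing))_q = 0$ for $n\cdot p < q$. Combining, $H^\mr{Com}_{p}(E_n)_q(\varnothing) = 0$ whenever $q \neq n\cdot p$, and likewise for $Z_n$.
\end{proof}
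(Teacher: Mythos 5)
Your proposal rests on a reduction that does not hold: the claim that ``on the empty set there are no legs, hence no red edges'' conflates two distinct notions. Red edges are edges between vertices (free orbits of $\iota$ coloured $R$); legs are the free orbits of the involution, and a red-and-black graph with $L(\Gamma)=\varnothing$ can perfectly well have red edges joining its vertices. Concretely, the decomposition \eqref{eq:RBdecomp} evaluated at $\varnothing$ reads
\[
RB^{E_n}_\mr{conn}(\varnothing) \cong \bigoplus_{m \geq 0} G^{E_n}(\ul{m}) \otimes_{\fS_m} \cat{d(s)Br}(\ul{m}, \varnothing),
\]
and $\cat{d(s)Br}(\ul{m},\varnothing)$ is the span of matchings of $\ul{m}$, so all the even-$m$ summands are present: they are exactly the graphs with $m/2$ red edges and no legs. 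The quotient map $RB^{E_n}_\mr{conn}(\varnothing) \to G^{E_n}(\varnothing)$ is therefore far from an isomorphism, and the claim that it is a quasi-isomorphism is neither true nor the content of anything in the paper. The decomposition \eqref{eq:RBdecomp} is only an isomorphism of graded vector spaces, not of chain complexes — $d_\mr{col}$ strictly raises $m$ — so nothing about acyclicity of the ``red-edge factor'' follows from it.

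The correct relation between $H^\mr{Com}_p(E_n)_q(\varnothing)$ and $H_{p+q}(G^{E_n}(\varnothing))_q$ is the spectral sequence \eqref{eq:KanExtSS} with $T=\varnothing$, whose $E^1$-page involves $H^\mr{Com}_*(E_n)_q(S_0)$ for \emph{all} objects $S_0$ of $\cat{d(s)Br}$, not just $\varnothing$, and which converges to $H_{*}(G^{E_n}(\varnothing))_q$. The paper runs this spectral sequence ``in reverse'': it isolates the $S_0=\varnothing$ subcomplex (computing its homology directly), uses \cref{thm:KoszulPositive} to kill the contributions with $S_0 \neq \varnothing$ off the diagonal, checks there are no incoming or outgoing differentials in the relevant range, and only then invokes \cref{cor:BlackGphVanishingZero}. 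Your argument skips all of this — in effect it silently assumes the higher filtration pieces of the spectral sequence vanish, which is exactly what needs to be proved. Also note that the remark immediately after \cref{cor:BlackGphVanishingZero} explains why a superficially similar shortcut (applying the transfer directly to $RB^{E_n}_\mr{conn}$) fails; yours fails for a related but distinct reason. (There is also a minor bookkeeping slip: a connected black graph with no legs contributing to $H^\mr{Com}_p$ has $\#E_B(\Gamma) = p-1$, not $p$, since the Harrison degree is $\#E_B(\Gamma)+1$; but this is secondary to the main gap.)
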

\begin{proof}
We consider the case of $E_n$; that of $Z_n$ is identical. Consider the spectral sequence of \eqref{eq:KanExtSS} for $T=\varnothing$. This has
\[E^1_{s,t,q} = \bigoplus\limits_{\mathclap{\substack{S_0, ..., S_s \\ \in \mathsf{d(s)Br}}}} H_{t-q+1}^\mr{Com}(E_n)_q(S_0) \otimes  \mathsf{d(s)Br}(S_0, S_1) \otimes \cdots \otimes \mathsf{d(s)Br}(S_{s-1}, S_s) \otimes \mathsf{FB}(r(S_s), \varnothing)\]
and converges strongly to $H_{s+t}(G^{E_n}(\varnothing))_q$. The terms with $S_0 = \varnothing$ form a subcomplex $D_{*,t,q}$ of $(E^1_{*,t,q}, d^1)$, as in this case for the latter tensor factors to be non-zero we must have $\varnothing = S_0 = S_1 = \cdots = S_s$, and $d^1$ preserves this property. Furthermore, this subcomplex is
\[H_{t-q+1}^\mr{Com}(E_n)_q(\varnothing) \overset{0}\leftarrow H_{t-q+1}^\mr{Com}(E_n)_q(\varnothing) \overset{\mr{id}}\leftarrow H_{t-q+1}^\mr{Com}(E_n)_q(\varnothing) \overset{0}\leftarrow H_{t-q+1}^\mr{Com}(E_n)_q(\varnothing) \overset{\mr{id}}\leftarrow\cdots\]
so is acyclic in degrees $s>0$. The quotient complex $E^1_{*,t,q}/D_{*,t,q}$ is given by the analogous formula to the above, but only summing over $S_0 \neq \varnothing$. In this case, for $H_{t-q+1}^\mr{Com}(E_n)_q(S_0)$  to be non-zero we must have $q = n \cdot (t-q+1)$. In total we find that $E^2_{0,t,q} = H_{t-q+1}^\mr{Com}(E_n)_q(\varnothing)$ and if $q \neq n \cdot (t-q+1)$ and $s > 0$ then $E^2_{s,t,q}=0$. 

We combine these properties as follows. By Lemma \ref{lem:KoszEasyEstimate} we already know that the claimed vanishing occurs for $q < n \cdot p$, so suppose that $q > n \cdot p$ and consider $H_{p}^\mr{Com}(E_n)_q(\varnothing) = E^2_{0,p+q-1,q}$. There are no differentials leaving this position in the spectral sequence. Differentials arriving at it come from $E^r_{r,p+q-r,q}$ with $r \geq 2$, but for this to be nontrivial we must have $q=n \cdot (p-r+1) \leq n \cdot p$, which is not possible by our assumption that $q > n \cdot p$. Thus no differentials can enter this position either, and so 
\[H_{p}^\mr{Com}(E_n)_q(\varnothing) \cong E^\infty_{0,p+q-1,q}.\]
But $E^\infty_{0,p+q-1,q}$ is a filtration quotient of $H_{p+q-1}(G^{E_n}(\varnothing))_q$, which by Corollary \ref{cor:BlackGphVanishingZero} vanishes for $n\cdot (p-1) < q$, so in particular for $q> n \cdot p$ which is what we assumed.
\end{proof}

\begin{remark}%\ak{Added} 
The reader may wonder why we did not apply a transfer argument as in \cref{lem:inj-empty-from-1} directly to $RB^{Z_n}_{conn}$ and $RB^{E_n}_{conn}$ to deduce \cref{thm:koszulity-zn-en} from the corresponding result with $\ul{1}$ in place of $\varnothing$. The reason is that the analogue of the map $\pi$, which removes the unique leg from a red-and-black graph whose legs are labelled by $\ul{1}$, is \emph{not} a chain map in this case. We invite the reader to verify this for the following red-and-black graph:
\[\begin{tikzpicture}
	\begin{scope}
		\draw [dashed] (0,0) circle (.5cm);
		\node at (0,0) {$\Gamma$};
		\node at (45:.5) {$\bullet$};
		\draw [red] (45:.5) to[out=45,in=120] (1.5,0);
		\node at (-45:.5) {$\bullet$};
		\draw (-45:.5) to[out=-45,in=-120] (1.5,0);
		\node at (1.5,0) {$\bullet$};
		\draw (1.5,0) -- (2,0);
		\node at (2.2,0) {$l$};
	\end{scope}
\end{tikzpicture}\]
\end{remark}

Recalling that the internal degree is $n$ times the weight, \cref{thm:KoszulPositive} and \cref{thm:koszulity-zn-en} together show that the $Z_n$ and $E_n$ are all Koszul, and hence by \cref{cor:E1Suffices} imply that $\GrLCS \ft_{g,1}$ is Koszul in gradings $\leq \tfrac{g}{3}$, and hence by \cref{prop:koszul} imply \cref{thm:mainPrime}.

\section{Applications of Koszulness} 

In this section we give two applications of \cref{athm:main}, to high-dimensional manifolds and graph complexes respectively.

\subsection{Applications to high-dimensional manifolds}\label{sec:HighDimApplications} In this subsection we follow \cref{sec:positive-arity} in refraining from mentioning the weight grading (as it coincides with the homological grading divided by $n$). Given the Koszulness of $Z_n$, we can now revisit \cref{sec:framed} and determine the rational homotopy groups of $X_1(g)$ in a stable range of degrees. Let $2n \geq 6$ and recall that by \cref{thm:pos-arity-framing-input} \eqref{enum:pos-arity-framing-input-cohomology} the construction of twisted MMM-classes yields an algebra homomorphism
\[\psi \colon K^\vee \otimes^{\cat{d(s)Br}} Z_n \lra H^*(X_1(g);\bQ)\]
which is an isomorphism of $\overline{G}{}^{\mr{fr},[\ell]}_g$-representations in a stable range; in fact, $* \leq \frac{g-3}{2}$ suffices by \cite[Section 9.2]{KR-WTorelli}. The discussion in \cite[Section 5]{KR-WTorelli} gives in degrees $* \leq n \cdot g$ (cf.~\cref{thm:TorRelation}) an expression for the left side as a quadratic commutative algebra:
\[\Lambda^*[W[n]]/(R') \lra K^\vee \otimes^{\cat{d(s)Br}} Z_n \qquad \text{with} \quad W \coloneqq \begin{cases}V_{1^3} & \text{if $n$ is odd,} \\
	V_3 & \text{if $n$ is even,}\end{cases}\]
and $R' \coloneqq \langle \eqref{eq:IH} \rangle \leq \Lambda^2[W[n]]$. We write $R'^\perp \subset \Lambda^2[W[n]]^\vee \cong \Lambda^2[W[n]]$ for its annihilator.

\begin{remark}\label{rem:homotopy-x1g-explicit}
	When $n$ is odd, as long as $g \geq 6$ we have the decomposition
	\[\Lambda^2[V_{1^3}] = V_0 + V_{1^2} + V_{1^4} + V_{1^6} + V_{2^2} + V_{2^2,1^2}.\]
	into irreducible $\mr{Sp}_{2g}(\bZ)$-representations. The vectors \eqref{eq:IH} generate the representation $V_0 + V_{1^2} + V_{2^2}$ (c.f.~\cref{rem:tg1-explicit}), so $R'^\perp \cong  V_{1^4}+V_{1^6} + V_{2^2,1^2}$. For $n$ even, one takes the transpose \cite[Remark 4.23]{KR-WDisks}.
\end{remark}

\begin{theorem}\label{thm:homotopy-x1g-koszul} 
In degrees $* \leq \frac{g-3}{2}$ there is a Lie algebra isomorphism
	\[\mr{Lie}(W[n{-}1])/(R'^\perp[-2]) \cong \pi_{*+1}(X_1(g)) \otimes \bQ.\]
\end{theorem}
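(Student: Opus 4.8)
The plan is to combine the Koszulness results for $Z_n$ with the homotopy-theoretic input recalled in \cref{thm:pos-arity-framing-input}, via the unstable rational Adams spectral sequence. Recall from the proof of \cref{thm:KoszulPositive} that for $2n \geq 6$ there is a strongly convergent spectral sequence of $\overline{G}{}^{\mr{fr},[\ell]}_g$-representations
\[E^2_{s,t} = H^\mr{Com}_{s}(H^*(X_1(g);\bQ))_t \Longrightarrow \mr{Hom}(\pi_{t-s}(\Omega X_1(g)),\bQ),\]
and that by \cref{thm:pos-arity-framing-input}~\eqref{enum:pos-arity-framing-input-cohomology} the algebra $H^*(X_1(g);\bQ)$ agrees with $K^\vee \otimes^{\cat{d(s)Br}} Z_n$ in a stable range, while by \cref{lem:realisation-h-com} the realisation functor commutes with $H^\mr{Com}_*$ in a stable range. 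The main new ingredient over the argument already in \cref{sec:proof-KoszulPositive} is that we now know, by \cref{thm:KoszulPositive} together with \cref{thm:koszulity-zn-en}, that $Z_n$ is \emph{genuinely} Koszul (including on the empty set), so $H^\mr{Com}_s(K^\vee \otimes^{\cat{d(s)Br}} Z_n)_t$ is concentrated on the diagonal $t = n\cdot s$ in the stable range. Thus the $E^2$-page is concentrated on the line $t = ns$, which lies in total degree $t - s = (n-1)s$; as in \cref{sec:proof-KoszulPositive}, distinct lines are separated in total degree (for $s$ in the relevant range, using $2n\geq 6$), so there are no differentials and the spectral sequence collapses. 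This gives, in a stable range,
\[\mr{Hom}(\pi_{(n-1)s+1}(X_1(g)),\bQ) \cong H^\mr{Com}_s(K^\vee \otimes^{\cat{d(s)Br}} Z_n)_{ns}.\]

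Next I would identify the right-hand side with a Koszul-dual Lie algebra. Since $K^\vee \otimes^{\cat{d(s)Br}} Z_n$ is, in degrees $* \leq n\cdot g$, the quadratic commutative algebra $\Lambda^*[W[n]]/(R')$ described just above the theorem statement (this is the content of \cite[Section 5]{KR-WTorelli}, parallel to \cref{thm:TorRelation}), its commutative algebra homology computes $H^\mr{Lie}$ of the quadratic dual Lie algebra $\mathfrak{a} \coloneqq \mr{Lie}(W[n-1])/(R'^\perp[-2])$, by the Koszul duality of \cref{lem:koszul-lie-vs-com} and the diagonal criterion. More precisely, for a Koszul quadratic commutative algebra $A$ with quadratic dual Lie algebra $A^!$ one has $H^\mr{Com}_s(A)_{ns}$ dual to the weight-$s$ piece of $A^!$ itself (the diagonal entries of $H^\mr{Lie}$ of a Koszul Lie algebra recover the generators-and-relations data, and in fact $H^\mr{Lie}_1$ in each weight recovers the Lie algebra); so $H^\mr{Com}_s(\Lambda^*[W[n]]/(R'))_{ns} \cong (\mathfrak{a}_s)^\vee$ where $\mathfrak{a}_s$ is the weight-$s$, i.e. degree $(n-1)s$, part. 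Dualising once more gives $\pi_{(n-1)s+1}(X_1(g)) \otimes \bQ \cong \mathfrak{a}_s$ as $\overline{G}{}^{\mr{fr},[\ell]}_g$-representations in the stable range, and running over all $s$ assembles this into the asserted isomorphism $\mr{Lie}(W[n-1])/(R'^\perp[-2]) \cong \pi_{*+1}(X_1(g)) \otimes \bQ$; the grading bookkeeping (degree $*+1 = (n-1)s+1$, so homotopy classes of the $s$-fold bracket sit in degree $(n-1)s+1$, matching $W$ in degree $n-1$) is routine. One must also check the Lie algebra structure, not merely the graded vector space isomorphism: the Whitehead product on $\pi_{*+1}(X_1(g)) \otimes \bQ$ matches the bracket on $\mathfrak{a}$. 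This follows because the Adams spectral sequence is multiplicative and the identification of $E^2$ with commutative algebra homology is as a (co)algebra, so the Whitehead bracket is dual to the coproduct on $H^\mr{Com}$, which by Koszul duality is the Lie bracket of $\mathfrak{a}$; alternatively one can cite the standard fact that for a coformal (here: Koszul) space the rational homotopy Lie algebra is the Koszul dual of the cohomology algebra.

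For the stability range, the constraint $* \leq \frac{g-3}{2}$ comes from the sharpest known range in which $\psi$ is an isomorphism, namely \cite[Section 9.2]{KR-WTorelli} (this is the same bound invoked for \cref{thm:homotopy-x1g-koszul} via $* \leq \frac{g-3}{2}$ in \cref{sec:HighDimApplications}); the Koszulness of $Z_n$ is needed in weight $\leq$ roughly $\frac{*}{n-1}$, which is well within the range $\leq \frac{g}{3}$ once $*$ is at most $\frac{g-3}{2}$ and $n \geq 3$, so it imposes no further restriction. I would also record, as in \cref{rem:homotopy-x1g-explicit}, the explicit $\mr{Sp}_{2g}(\bZ)$-decomposition of $R'^\perp$ for $g \geq 6$.

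The main obstacle I anticipate is not the collapse of the spectral sequence — which is formal given genuine Koszulness and the degree-separation argument already used in \cref{sec:proof-KoszulPositive} — but rather nailing down the \emph{multiplicative/Lie-algebra} part of the identification cleanly: one needs that the edge isomorphism $\mr{Hom}(\pi_*(X_1(g)),\bQ) \cong H^\mr{Com}_*(H^*(X_1(g);\bQ))$ coming from collapse is compatible with the coalgebra structure on homotopy (dual Whitehead product) and the coLie structure on $H^\mr{Com}$, so that after Koszul duality the bracket is the correct one. This is standard in rational homotopy theory for coformal spaces but requires care to state precisely in the present unstable, $\mr{Sp}_{2g}(\bZ)$-equivariant, bigraded setting; I would handle it either by invoking the coformality of $X_1(g)$ in the stable range (which is exactly what Koszulness of its cohomology algebra gives) and the consequent Quillen-model description $\pi_{*+1}(X_1(g))\otimes\bQ \cong H_*(\mr{CE}\text{-dual of }H^*(X_1(g);\bQ))$, or by a direct naturality argument tracking the product on the Adams $E^2$-page.
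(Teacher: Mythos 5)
Your proposal is correct and takes essentially the same approach as the paper: identify $H^*(X_1(g);\bQ)$ with $K^\vee \otimes^{\cat{d(s)Br}} Z_n$ in a stable range, invoke Koszulness of $Z_n$ via \cref{thm:koszulity-zn-en} and \cref{lem:realisation-h-com} to concentrate the $E^2$-page of the unstable rational Adams spectral sequence on the diagonal, observe collapse for degree reasons, and identify the result with the quadratic dual Lie algebra. The extra care you take over the compatibility of the resulting isomorphism with Lie brackets (via multiplicativity of the spectral sequence, or equivalently coformality of $X_1(g)$) is a point the paper's proof leaves implicit, and your treatment is a reasonable way to make it precise.
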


\begin{proof}
	The map $\psi \colon K^\vee \otimes^{\cat{d(s)Br}} Z_n \to H^*(X_1(g);\bQ)$ of $\overline{G}{}^{\mr{fr},[\ell]}_g$-representations is an isomorphism in degrees $* \leq \frac{g-3}{2}$. The left side is Koszul in degrees $* \leq n\cdot \frac{g}{3}$ by \cref{thm:koszulity-zn-en} and \cref{lem:realisation-h-com}. Thus the unstable rational Adams spectral sequence of the proof of \cref{thm:KoszulPositive} (see \cref{sec:proof-KoszulPositive})
	\[E^2_{s,t} = H^\mr{Com}_{s}(H^*(X_1(g);\bQ))_t \Longrightarrow \mr{Hom}(\pi_{t-s}(\Omega X_1(g)),\bQ),\]
	vanishes except when $t = s \cdot n$ as long as $t \leq \min(\frac{g-3}{2},n\cdot \frac{g}{3})$ but since $2n \geq 6$ the first term is always smaller. Moreover, in this range the $E^2$-page is dual to the quadratic dual of $\Lambda^*[W[n]]/(R')$ and the spectral sequence collapses for degree reasons because the differentials have bidegree $(-r,-r+1)$.
\end{proof}

This in turn has consequences for diffeomorphisms of the disc $D^{2n}$ and homeomorphisms of the Euclidean space $\bR^{2n}$. The fundamental diagram of \cite{KR-WDisks} is (30) in Section 6.1 of loc.~cit.:
\begin{equation}\label{eqn:CrossDiagram}
	\begin{tikzcd}
		& \overline{B\mr{Diff}}^\mr{fr}_\partial(D^{2n})_{\ell_0} \dar \arrow[dashed]{rd} \\
		X_1(g)  \rar \arrow[dashed]{rd} & \overline{B\mr{Tor}}^\mr{fr}_\partial(W_{g,1})_\ell \rar \dar& X_0\\
		& \overline{B\mr{TorEmb}}^\mr{fr, \cong}_{\half\partial}(W_{g,1})_\ell.
	\end{tikzcd}
\end{equation}
The row and column are fibration sequences, and all spaces are nilpotent, connected, and of finite type. The common homotopy fibre $F_n$ of the rationalisations of the dashed maps \cite[Definition 6.1]{KR-WDisks} fits into a map of fibration sequences (cf.~\cite[(3)]{KR-WDisks})
\[\begin{tikzcd} F_n \rar \dar{\simeq} & (\overline{B\mr{Diff}}^\mr{fr}_\partial(D^{2n})_{\ell_0})_\bQ \dar{\simeq} \rar & (X_0)_\bQ \dar{\simeq} \\[-2pt]
	\left(\Omega^{2n+1}_0 \tfrac{\mr{Top}}{\mr{Top(2n)}}\right)_\bQ \rar & \left(\Omega^{2n}_0 \mr{Top}(2n)\right)_\bQ \rar & \left(\Omega^{2n}_0 \mr{Top}\right)_\bQ. \end{tikzcd}\]
In particular, $(X_0)_\bQ \simeq \prod K(\bQ,d)$ with product indexed by integers $d>0$ such that $d \equiv 2n-1 \pmod 4$. This directly yields the following improvement of Theorem C of \cite{KR-WDisks}, also improving Theorem B of loc.~cit.

\begin{corollary}The rational homotopy groups of $\Omega^{2n+1}_0 \tfrac{\mr{Top}}{\mr{Top}(2n)}$ are supported in degrees $* \in \bigcup_{r \geq 2} [2r(n-2)-1,2r(n-1)+1]$.\qed
\end{corollary}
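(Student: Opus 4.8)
The plan is to read off the conclusion from the fibration sequence
\[
F_n \lra (\overline{B\mr{Diff}}^\mr{fr}_\partial(D^{2n})_{\ell_0})_\bQ \lra (X_0)_\bQ
\]
combined with the identification $F_n \simeq (\Omega^{2n+1}_0 \tfrac{\mr{Top}}{\mr{Top}(2n)})_\bQ$ and the cross diagram \eqref{eqn:CrossDiagram}. The key point is that, in the cross diagram, $F_n$ is also the homotopy fibre of the rationalisation of the dashed map $X_1(g) \dashrightarrow \overline{B\mr{TorEmb}}^{\mr{fr},\cong}_{\half\partial}(W_{g,1})_\ell$, and this map is highly connected as $g \to \infty$; consequently the rational homotopy groups of $F_n$ in a stable range are computed from those of $X_1(g)$ and of $\overline{B\mr{TorEmb}}^{\mr{fr},\cong}_{\half\partial}(W_{g,1})_\ell$. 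Since the latter space was already analysed in \cite{KR-WDisks} and contributes only to the ``expected'' degrees (those coming from the embedding calculus Taylor tower, which are even multiples of $n-2$ up to a shift), the new content is the contribution of $\pi_*(X_1(g)) \otimes \bQ$, which is now pinned down completely in a stable range by \cref{thm:homotopy-x1g-koszul}.

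First I would recall from \cref{thm:homotopy-x1g-koszul} that in degrees $* \leq \tfrac{g-3}{2}$ we have $\pi_{*+1}(X_1(g)) \otimes \bQ \cong \mr{Lie}(W[n-1])/(R'^\perp[-2])$, a quadratic Lie algebra with generators in homological degree $n-1$ and quadratic relations, hence concentrated in homological degrees of the form $r(n-1)$ for $r \geq 1$; equivalently $\pi_*(X_1(g))\otimes\bQ$ is supported in degrees $* = r(n-1)+1$ in a stable range. Next I would invoke the analysis of $\overline{B\mr{TorEmb}}^{\mr{fr},\cong}_{\half\partial}(W_{g,1})_\ell$ from \cite{KR-WDisks}: its rational homotopy groups are, in a stable range, supported in the degrees dictated by the layers of the embedding calculus tower, namely intervals around $2r(n-2)$ for $r \geq 2$. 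Feeding both into the long exact sequence of the fibration $F_n \to X_1(g)_\bQ \to \overline{B\mr{TorEmb}}^{\mr{fr},\cong}_{\half\partial}(W_{g,1})_{\ell,\bQ}$ then shows $\pi_*(F_n)\otimes\bQ$ is supported in $\bigcup_{r\geq 2}[2r(n-2)-1, 2r(n-1)+1]$: the lower endpoint $2r(n-2)-1$ comes from the connecting map landing in $\pi_{*-1}$ of the TorEmb space in degree roughly $2r(n-2)$, while the upper endpoint $2r(n-1)+1$ comes from the contribution of $\pi_{2r(n-1)+1}(X_1(g))\otimes\bQ$; intermediate degrees are filled by combinations of the two, and one checks these intervals already absorb everything (the intervals for consecutive $r$ do not overlap when $2n\geq 6$ but the union is as stated). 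Finally I would note that the range of validity — degrees $* \leq \tfrac{g-3}{2}$ for \cref{thm:homotopy-x1g-koszul} and the stable range for the TorEmb computation — both tend to $\infty$, and since $\tfrac{\mr{Top}}{\mr{Top}(2n)}$ and $X_1(g)$ do not depend on $g$ in a way that changes the answer (the identification with $\Omega^{2n+1}_0\tfrac{\mr{Top}}{\mr{Top}(2n)}$ is $g$-independent), the supported degrees we found are the supported degrees for all of $\pi_*(\Omega^{2n+1}_0\tfrac{\mr{Top}}{\mr{Top}(2n)})\otimes\bQ$.

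The main obstacle I anticipate is bookkeeping the exact endpoints of the intervals: one must carefully track how the homological degrees of the Lie algebra generators ($n-1$) and of the embedding calculus layers (roughly $2(n-2)$ per step, before the loop-space shift by $2n+1$) interact through the long exact sequence, and verify that the naive union of ``$X_1$-degrees'' $\{r(n-1)+1\}$ and ``TorEmb-degrees'' $\{\sim 2r(n-2)\}$, together with the connecting-map shifts, collapses to precisely $\bigcup_{r\geq 2}[2r(n-2)-1, 2r(n-1)+1]$ and not something slightly larger or smaller. Concretely one should check: (a) that the quadratic relations really do force $\pi_*(X_1(g))\otimes\bQ$ to vanish outside degrees $r(n-1)+1$, which is immediate from \cref{thm:homotopy-x1g-koszul}; (b) that Theorem B/C of \cite{KR-WDisks} gives the TorEmb support in the claimed range, which is quoted; and (c) that no exotic cancellation or extension problem in the long exact sequence enlarges the support — this is where one uses that everything in sight is a spectral-sequence-free long exact sequence of graded vector spaces, so support can only shrink, not grow, under the maps. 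Since (a), (b) are cited and (c) is formal, the argument should go through essentially by assembling the pieces; the only real care is the arithmetic of the interval endpoints, which is the kind of routine-but-delicate computation one should double-check rather than a conceptual difficulty.
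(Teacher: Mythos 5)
Your overall framework---computing $\pi_*(F_n) \otimes \bQ$ from the long exact sequence of the rationalised fibration $F_n \to (X_1(g))_\bQ \to (\overline{B\mr{TorEmb}}^{\mr{fr},\cong}_{\half\partial}(W_{g,1})_\ell)_\bQ$ and feeding in the complete determination of $\pi_*(X_1(g)) \otimes \bQ$ from \cref{thm:homotopy-x1g-koszul}---is exactly what the paper does; it presents the corollary as a direct improvement of Theorem C of \cite{KR-WDisks}, the improvement being the replacement of the earlier sparsity result for $\pi_*(X_1(g)) \otimes \bQ$ (\cref{thm:pos-arity-framing-input}~(iv)) by the Koszulness calculation.

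However your bookkeeping of the other fibre term is off, and that is where the substance lies. The claim that the TorEmb-space has rational homotopy ``supported in \dots intervals around $2r(n-2)$ for $r \geq 2$'' is neither what Theorems~B/C of \cite{KR-WDisks} assert (those concern $\Omega^{2n}_0\mr{Top}(2n)$ and $\Omega^{2n+1}_0\tfrac{\mr{Top}}{\mr{Top}(2n)}$ themselves, not TorEmb) nor correct: the Bousfield--Kan spectral sequence for the embedding-calculus tower, as recalled in the proof of \cref{athm:morita}, places non-zero entries in filtration $(p,q)$ with $p \in [0,r+1]$ and $q = r(n-1)+1$ for $r \geq 1$, i.e.\ in total degrees $q-p \in [r(n-2), r(n-1)+1]$---linear in $r$, not $2r$. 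The factor of $2$ and the restriction to $r \geq 2$ in the corollary's answer arise from the additional fact that the contributions of $\pi_*(X_1(g))\otimes\bQ$ in odd weights (degrees $s(n-1)+1$, $s$ odd) map \emph{injectively} into $\pi_*$ of the TorEmb-space and hence do not survive to $\pi_*(F_n)$; this is an injectivity statement for the higher-dimensional Johnson-type map from \cite{KR-WDisks} that you never invoke. Your point~(c) (``support cannot grow under maps in a long exact sequence'') is true but beside the point, since what is actually needed is that certain \emph{potential} supports vanish, and that requires positive input, not formal bookkeeping. Finally, the parenthetical ``(the intervals for consecutive $r$ do not overlap when $2n \geq 6$)'' is false: $[2r(n-2)-1, 2r(n-1)+1]$ and $[2(r+1)(n-2)-1, 2(r+1)(n-1)+1]$ overlap whenever $r \geq n-3$, and in particular always when $n=3$.
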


\subsection{Applications to graph complexes}

\subsubsection{Relation to classical graph complexes}\label{sec:ClassicalGraphCxes}
Up to a change of degrees the chain complex $G^{Z_n}(S)$ may be identified with the sum $\bigoplus_{g} G^{(g,|S|)}$ of the graph complexes studied by Chan--Galatius--Payne \cite{CGP, CGP2}. When $S=\varnothing$ this is predual to Kontsevich's graph complex $\cat{GC}_2$ as studied by Willwacher \cite{WillwacherGRT}, and for general $S$ it is predual to the ``hairy graph complex" $\cat{HGC}_2^{S}$ of Andersson--Willwacher--Zivkovic \cite{AWZ}. Similarly, the chain complex $G^{E_n}(S)$ may be identified with the sum $\bigoplus_{g} \widetilde{C}_*(\Delta_{g,|S|};\bQ)$ of the reduced chains on the moduli spaces of tropical curves of genus $g$ with $|S|$ marked points from \cite{CGP2}, or the predual of $\cat{HGC}_2^{S, \text{mod}}$ from \cite{AWZ}.

In the degree convention of \cite{CGP2} a connected (weighted) graph $\Gamma$ is given degree $\mr{deg}_{CGP}(\Gamma) := \# E(\Gamma)-2g(\Gamma)$ where $g(\Gamma)$ is the sum of the first Betti number of $\Gamma$ and the weights at all its vertices. Our internal degree can be compared to $g(\Gamma)$ via
\[\deg_\text{int}(\Gamma) = n\sum_{v \in V(\Gamma)} (2w(v) + \mr{val}(v)-2) = n(2g(\Gamma) + |S|-2)\]
and our total degree is $\mr{deg}(\Gamma) = \#E(\Gamma) + \deg_\text{int}(\Gamma)$, so
\[\mr{deg}_{CGP}(\Gamma) = \mr{deg}(\Gamma) - (1+ \tfrac{1}{n})\deg_\text{int}(\Gamma) + |S|-2.\]

\subsubsection{An alternative approach}\label{sec:alternative-graph-proof}

Complementary to the filtration used in the proof of Proposition \ref{prop:RBIsModel} by number of edges, we may filter $RB^{E_n}_\text{conn}$ by the number of red edges: this kills the $d_\text{col}$ part of the differential, leaving
\[\mr{Gr}(RB^{E_n}_\mr{conn})(T) \cong \bigoplus_{n \geq 0} (G^{E_n}(\ul{n}), d_\text{con}) \otimes_{\fS_n} \cat{d(s)Br}(\ul{n}, T).\]
In particular it gives a spectral sequence
\[\begin{tikzcd} E^1_{p,q} = \bigoplus_{n \geq 0} H_{p+q}(G^{E_n}(\ul{n}))_q \otimes_{\fS_n} \cat{d(s)Br}(\ul{n}, T) \dar[Rightarrow] \\
H_{p+q}(RB^{E_n}_\mr{conn}(T))_q \rar{\cong} & H_{p+1}^\mr{Com}(E_n)_q(T).\end{tikzcd}\]

Using this and the spectral sequence \eqref{eq:KanExtSS}, %\ak{Changed} 
it follows that Koszulness of $E_n$ is \emph{equivalent to} the vanishing range
\[H_{p+q}(G^{E_n}(T))_q=0 \text{ for } n \cdot (p+1) < q \text{ and all } T.\]
The same goes with $E_n$ replaced by $Z_n$. 

\begin{remark}\label{rem:FNW}
Willwacher has pointed out the following line of reasoning to us, which is essentially the argument of \cite{FNW}: combining the above with the discussion of Chan--Galatius--Payne's complexes in the last section, it follows that Koszulness of $Z_n$ is equivalent to $H_*(G^{(g, n)})=0$ for $* < n-3$, and Koszulness of $E_n$ is equivalent to $\widetilde{H}_*(\Delta_{g,n};\bQ)=0$ for $* < n-3+2g-1$, but both follow from \cite[Theorem 1.6]{CGP2}.
\end{remark}

\subsubsection{Comparing $E_n$ and $Z_n$} We can use our results to give a new proof of \cite[Theorem 1.3]{CGP} and \cite[Theorem 1.4]{CGP2} (another has been sketched in \cite[Lemma 5]{AWZ}). As explained in Section \ref{sec:MainEx1} there is a map of commutative algebra objects $E_n \to Z_n$, and by \cref{thm:relative-com-zn-vs-en} we have
\begin{equation}\label{eq:relative-com-zn-vs-en-recollected}
H_p^\mr{Com}(Z_n, E_n)_q(S) = \begin{cases}
	\bQ & \text{ if } |S|=1 \text{ and } (p,q)=(2,n)\\
	0 & \text{ otherwise}
\end{cases}.
\end{equation}

\begin{corollary}
We have
\[H_{p+q}(G^{Z_n}(S), G^{E_n}(S))_q = \begin{cases}
\bQ & \text{ if } |S|=1 \text{ and } (p,q)=(1,n)\\
0 & \text{ otherwise}
\end{cases}.\]
\end{corollary}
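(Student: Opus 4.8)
The plan is to transfer the identity \eqref{eq:relative-com-zn-vs-en-recollected} from commutative algebra homology of $Z_n$ and $E_n$ to the homology of the black graph complexes $G^{Z_n}$ and $G^{E_n}$, using the machinery already in place. First I would recall from \cref{sec:alternative-graph-proof} that the filtration of $RB^{E_n}_\mr{conn}$ by number of red edges gives a strongly convergent spectral sequence
\[E^1_{p,q} = \bigoplus_{m \geq 0} H_{p+q}(G^{E_n}(\ul m))_q \otimes_{\fS_m} \cat{d(s)Br}(\ul m, S) \Longrightarrow H^\mr{Com}_{p+1}(E_n)_q(S),\]
and likewise for $Z_n$, and that conversely the spectral sequence \eqref{eq:KanExtSS} recovers $H_{*}(G^{(-)}(S))$ from $H^\mr{Com}_*(E_n)$, resp.\ $H^\mr{Com}_*(Z_n)$. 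The point is that the map $E_n \to Z_n$ is compatible with all of these constructions, so one has a map of (pairs of) spectral sequences, and the relative homology $H_*(G^{Z_n}(S), G^{E_n}(S))_q$ — defined via the homotopy cofibre of $G^{E_n} \to G^{Z_n}$, equivalently $\bL r_*$ applied to the cofibre $RB^{Z_n}_\mr{conn}/RB^{E_n}_\mr{conn}$ — sits in a spectral sequence whose input is the relative groups $H^\mr{Com}_p(Z_n, E_n)_q(S_0)$, which by \eqref{eq:relative-com-zn-vs-en-recollected} are concentrated in the single spot $|S_0| = 1$, $(p,q) = (2,n)$, and hence one-dimensional.

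Concretely, I would run the bar-construction/Kan-extension spectral sequence \eqref{eq:KanExtSS} with $RB^{E_n}_\mr{conn}$ replaced by the relative object $RB^{Z_n}_\mr{conn}/RB^{E_n}_\mr{conn}$, whose value on $S_0$ has homology $H^\mr{Com}_{p+1}(Z_n, E_n)_q(S_0)$ (using $\bL Q^\mr{Com}(Z_n, E_n) \simeq RB^{Z_n}_\mr{conn}/RB^{E_n}_\mr{conn}$ as functors to $\cat{Ch}$). Since the only nonzero contribution occurs at $S_0 = \ul 1$ with Harrison degree $2$, internal degree $n$, every tensor-factor string $\cat{d(s)Br}(S_0,S_1)\otimes\cdots\otimes\cat{FB}(r(S_s),S)$ forces $|S_0| \geq \cdots \geq |S_s| = |S|$, hence $|S| \leq 1$; when $|S| = 1$ the only surviving string is $S_0 = \cdots = S_s = \ul 1$ with all maps bijections, giving the acyclic-except-in-degree-zero complex $\bQ \xleftarrow{0} \bQ \xleftarrow{\mr{id}} \bQ \xleftarrow{0}\cdots$, so $E^2_{0,t,q} = H^\mr{Com}_{t-q+1}(Z_n,E_n)_q(\ul 1)$ and $E^2_{s,*,*} = 0$ for $s > 0$. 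Reading off degrees: $H^\mr{Com}_{t-q+1}(Z_n,E_n)_q(\ul 1)$ is $\bQ$ exactly when $t-q+1 = 2$, $q = n$, i.e.\ $t = n+1$; so the spectral sequence collapses and $H_{s+t}(G^{Z_n}(\ul 1), G^{E_n}(\ul 1))_q$ is $\bQ$ when $s = 0$, $s+t = n+1$, $q = n$ — that is, in total degree $p+q = n+1$ with $q = n$, hence $p = 1$ — and zero otherwise. When $S = \varnothing$ one repeats the argument of \cref{thm:koszulity-zn-en}: the string $S_0 = \cdots = S_s = \varnothing$ contributes an acyclic-in-positive-degrees subcomplex with $E^2_{0,t,q} = H^\mr{Com}_{t-q+1}(Z_n,E_n)_q(\varnothing) = 0$, and the quotient only sees $S_0 = \ul 1$ (since larger $S_0$ give vanishing relative $H^\mr{Com}$), but there is no string from a nonempty $S_0$ down to $\varnothing$ in $\cat{FB}$, so in fact that quotient is already zero; hence $H_*(G^{Z_n}(\varnothing), G^{E_n}(\varnothing)) = 0$. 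For $|S| \geq 2$ every string vanishes, so the relative homology is zero there too.

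I would present this as: apply the spectral sequence \eqref{eq:KanExtSS} to the cofibre $RB^{Z_n}_\mr{conn}/RB^{E_n}_\mr{conn}$, feed in \eqref{eq:relative-com-zn-vs-en-recollected}, and do the degree bookkeeping. The only place requiring a little care — and the step I expect to be the main obstacle — is justifying that the same bar-construction / left-Kan-extension formalism of \cref{sec:alternative-graph-proof} applies verbatim to the \emph{relative} objects: one needs that $\bL r_*$ commutes with the homotopy cofibre of $G^{E_n} \to G^{Z_n}$ (equivalently, that $r_*$ is exact and $RB^{Z_n}_\mr{conn} \to RB^{Z_n}_\mr{conn}/RB^{E_n}_\mr{conn}$ is still a quotient by a subobject that is levelwise a direct summand as graded vector spaces), and that the identification $RB^{Z_n}_\mr{conn}/RB^{E_n}_\mr{conn} \simeq \bL Q^\mr{Com}(Z_n,E_n)$ holds — both of which follow from the fact, established in \cref{sec:MainEx1} and \cref{sec:ClassicalGraphCxes}, that $E_n \to Z_n$ is a levelwise split surjection of commutative algebra objects and that $RB^{E_n}_\mr{conn} \hookrightarrow RB^{Z_n}_\mr{conn}$ is the inclusion of the graphs with no vertex weights. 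Once that is in place the computation is purely a matter of tracking which of $p$, $q$, and the simplicial degree $s$ can be nonzero, exactly as in the proof of \cref{thm:koszulity-zn-en}.
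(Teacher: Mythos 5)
Your proposal is correct and follows essentially the same route as the paper: you set up the relative form of the spectral sequence \eqref{eq:KanExtSS}, feed in \eqref{eq:relative-com-zn-vs-en-recollected}, observe that $|S_0|\geq 2$ forces vanishing, and for $|T|<2$ split off the acyclic diagonal string exactly as in the proof of \cref{thm:koszulity-zn-en}. One small correction to your worry at the end: the map $RB^{E_n}_\mr{conn}\to RB^{Z_n}_\mr{conn}$ is a \emph{surjection} (quotient by graphs with a positive vertex weight), not an inclusion, and the span of graphs with all weights zero in $RB^{E_n}$ is not even a subcomplex since $d_\mr{con}$ of a loop produces a positive weight; but none of this matters because the mapping cone is defined for any map and $r_*$ is exact, which is all that is needed to form the relative spectral sequence.
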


\begin{proof}%\ak{Adjusted this argument to fit new spectral sequences} 
There is a relative form of the spectral sequence \eqref{eq:KanExtSS}, obtained by taking mapping cones of filtered chain complexes, starting with $E^1_{s,t,q}$ given by
\[\bigoplus\limits_{\mathclap{\substack{S_0, \ldots, S_s \\ \in \mathsf{d(s)Br}}}} H_{t-q+1}^\mr{Com}(Z_n,E_n)_q(S_0) \otimes  \mathsf{d(s)Br}(S_0, S_1) \otimes \cdots \otimes \mathsf{d(s)Br}(S_{s-1}, S_s) \otimes \mathsf{FB}(r(S_s), T)\]
	and converging to $H_{s+t}(G^{Z_n}(T), G^{E_n}(T))_q$. 	If $|T| \geq 2$ then only terms with $|S_0| \geq 2$ can contribute, but then $H_{*}^\mr{Com}(Z_n, E_n)(A_0)=0$ by \eqref{eq:relative-com-zn-vs-en-recollected}.
	
	If $0 \leq |T|<2$ then we may argue as in the proof of \cref{thm:koszulity-zn-en} that the subcomplex with $|T| = |S_0| = \cdots = |S_s|$ is acyclic in degrees $s>0$. Once we quotient by this complex, the remaining terms must have $|S_0| \geq |T|+2$ and the above argument still applies. Thus the $E^2$-page of the spectral sequence is supported along $s=0$ so collapses to give an isomorphism
	\[H_{p+1}^\mr{Com}(Z_n, E_n)_q(T) \cong H_{p+q}(G^{Z_n}(T), G^{E_n}(T))_q.\]
	The result then follows from \eqref{eq:relative-com-zn-vs-en-recollected}.
\end{proof}

\section{Injectivity of the geometric Johnson homomorphism}

\subsection{The geometric Johnson homomorphism}

\subsubsection{Construction} Recall from \cref{lem:grlcs-tg-ug-ses} that there is an extension of Lie algebras with additional grading
\[\GrLCS \fp^r_{g,n} \lra \GrLCS \ft^r_{g,n} \lra \GrLCS \ft_g.\]
This exhibits $\GrLCS \fp^r_{g,n}$ as a Lie ideal in $\GrLCS \ft^r_{g,n}$ and hence the adjoint representation induces a map
\[\GrLCS \ft^r_{g,n} \lra \mr{Der}(\GrLCS \fp^r_{g,n}).\]
Specializing to $\Sigma_{g,1}$, we have $\GrLCS \fp_{g,1} = \mr{Lie}(H)$, the free Lie algebra on $H \coloneqq H_1(\Sigma_{g,1};\bQ)$ in weight 1. The map $\GrLCS \ft_{g,1} \to \mr{Der}(\mr{Lie}(H))$ has image in the derivations which increase the weight and annihilate the element $\omega = \sum_{i=1}^g [e_i,f_i]$ with $e_1,\ldots,e_g,f_1,\ldots,f_g$ a symplectic basis of $H_1(\Sigma_{g,1};\bQ) = H_1(\Sigma_g;\bQ)$ \cite[Corollary 3.2]{MoritaAbelian}. This is denoted by $\fh_{g,1} \subset \mr{Der}(\mr{Lie}(H))$ and called the Lie algebra of \emph{positive degree symplectic derivations}.

Following Hain we refer to the resulting map
\[\tau_{g,1} \colon \GrLCS \ft_{g,1} \lra \fh_{g,1}\]
as the \emph{geometric Johnson homomorphism}. Since this is a map of Lie algebras and its domain is generated in weight 1 as long as $g \geq 4$ (see \cref{cor:hain-presentation}), it is determined by its restriction to $\mr{Gr}^1_\mr{LCS}\,\ft_{g,1}$. Furthermore, it is a map of algebraic $\mr{Sp}_{2g}(\bZ)$-representations, and $\mr{Gr}^1_\mr{LCS}\,\ft_{g,1}$ and $(\fh_{g,1})_1$ are both given by $V_1 \oplus V_{1^3}$ by \cref{cor:hain-presentation} and \cite[Computation 5.8]{KR-WDisks}. This map $\mr{Gr}^1_\mr{LCS}\,\ft_{g,1} \to (\fh_{g,1})_1$ is an isomorphism, essentially by construction of $\ft_{g,1}$, and thus up to isomorphism  $\tau_{g,1}$ is the unique map of Lie algebras that is injective in weight 1. \cref{athm:morita} asserts that in weight $\leq \frac{1}{3}g$ the kernel of $\tau_{g,1}$ lies in the centre of $\GrLCS \ft_{g,1}$ and consists of trivial representations of $\mr{Sp}_{2g}(\bZ)$.

\begin{remark}\cref{athm:morita} may be rephrased as saying that in weight $\leq \frac{1}{3}g$ the kernel of $\tau_{g,1}$ lies in the centre of $\GrLCS \fg_{g,1}$, the lower central series Lie algebra of the pro-algebraic group $\cG_{g,1}$ of \cref{sec:torelli-defs}. See \cite[Question 5.11]{HainJohnson} for a question about this centre.\end{remark}

We know that $K^\vee \otimes^{\cat{dsBr}} E_1$ and $K^\vee \otimes^{\cat{dsBr}} Z_1$ are Koszul in weight $\leq \frac{1}{3}g$ by \cref{thm:koszulity-zn-en} and the argument of \cref{cor:E1Suffices}. In particular, they are quadratic in this range and as long as $g \geq 6$ we let $(K^\vee \otimes^{\cat{dsBr}} E_1)^\mr{quad}$ and $(K^\vee \otimes^{\cat{dsBr}} Z_1)^\mr{quad}$ denote the quadratic algebras given by the quadratic duals of the presentations of $K^\vee \otimes^{\cat{dsBr}} E_1$ and $K^\vee \otimes^{\cat{dsBr}} Z_1$ in weights $\leq 2$. These are explicitly described by (see the proofs of \cref{thm:TorRelation} and \cref{thm:homotopy-x1g-koszul})
\[(K^\vee \otimes^{\cat{dsBr}} E_1)^\mr{quad} \cong \frac{\mr{Lie}(\Lambda^3(V_1)[0,1])}{((\ref{eq:IH})^\perp)} \qquad \text{and} \qquad (K^\vee \otimes^{\cat{dsBr}} Z_1)^\mr{quad} \cong \frac{\mr{Lie}(V_{1^3}[0,1])}{((\ref{eq:IH})^\perp)};\]
in both cases the generators are in homological degree 0 and weight 1, so in particular these Lie algebras are supported in homological degree 0.  For $g \geq 6$ there is therefore a commutative diagram
\begin{equation*}%\label{eqn:duals-of-z1-vs-e1}
\begin{tikzcd}
((K^\vee \otimes^{\cat{dsBr}} Z_1)^\mr{quad})_{0,w} \rar \dar &  \big[H^\mr{Com}_w(K^\vee \otimes^{\cat{dsBr}} Z_1)_{w,w} \big]^\vee \dar \\[-2pt]
((K^\vee \otimes^{\cat{dsBr}} E_1)^\mr{quad})_{0,w}  \rar & \big[H^\mr{Com}_w(K^\vee \otimes^{\cat{dsBr}} E_1)_{w,w} \big]^\vee
\end{tikzcd}
\end{equation*}
with horizontal maps isomorphisms for $w \leq \frac{1}{3}g$.

\begin{lemma}\label{lem:en-vs-zn-duals} Suppose $g \geq 6$, then the map
	\[((K^\vee \otimes^{\cat{dsBr}} Z_1)^\mr{quad})_{0,w}  \lra ((K^\vee \otimes^{\cat{dsBr}} E_1)^\mr{quad})_{0,w} \]
	is injective for $w=1$ and an isomorphism for $2 \leq w \leq \frac{1}{3}g$ 
\end{lemma}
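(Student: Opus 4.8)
The plan is to deduce the statement from the comparison between $E_n$ and $Z_n$ established in \cref{thm:relative-com-zn-vs-en} (and recollected as \eqref{eq:relative-com-zn-vs-en-recollected}), combined with the commutative diagram immediately preceding the lemma whose horizontal maps are isomorphisms for $w \leq \tfrac13 g$. Via those horizontal isomorphisms it suffices to analyse the map
\[\big[H^\mr{Com}_w(K^\vee \otimes^{\cat{dsBr}} Z_1)_{w,w}\big]^\vee \lra \big[H^\mr{Com}_w(K^\vee \otimes^{\cat{dsBr}} E_1)_{w,w}\big]^\vee,\]
equivalently, dually, the map $H^\mr{Com}_w(K^\vee \otimes^{\cat{dsBr}} E_1)_{w,w} \to H^\mr{Com}_w(K^\vee \otimes^{\cat{dsBr}} Z_1)_{w,w}$ induced by $E_1 \to Z_1$, and to show this is an isomorphism for $2 \leq w \leq \tfrac13 g$ and surjective for $w = 1$ (so that its dual is injective).

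First I would pass from the realisations back to the functors $E_1, Z_1$ themselves using \cref{lem:realisation-h-com}: since the weight $w$ part of $Z_1$ and of $E_1$ is supported on sets of size $\leq 3w$ (for $Z_1$ each part has size $\geq 3$; for $E_1$ one needs to check the admissibility conditions give the same bound), we get $K^\vee \otimes^{\cat{dsBr}} H^\mr{Com}_p(Z_1)_{q,w} \cong H^\mr{Com}_p(K^\vee \otimes^{\cat{dsBr}} Z_1)_{q,w}$ and likewise for $E_1$, as long as $w \leq \tfrac13 g$. Since $K^\vee \otimes^{\cat{dsBr}} -$ is exact on the relevant complexes in this range (\cref{lem:PartialExactness}), the long exact sequence relating $H^\mr{Com}_*(E_1)$, $H^\mr{Com}_*(Z_1)$ and $H^\mr{Com}_*(Z_1, E_1)$ is carried to the corresponding long exact sequence after realisation. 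Now invoke \eqref{eq:relative-com-zn-vs-en-recollected}: $H^\mr{Com}_p(Z_1, E_1)_{q,w}(S)$ is $\bQ$ when $|S|=1$, $(p,q,w)=(2,1,1)$ and zero otherwise. In particular, in weight $w \geq 2$ the relative term vanishes identically on all sets $S$, so $K^\vee \otimes^{\cat{dsBr}} H^\mr{Com}_p(Z_1,E_1)_{q,w} = 0$, and the long exact sequence gives $H^\mr{Com}_p(K^\vee \otimes^{\cat{dsBr}} E_1)_{q,w} \overset{\sim}\to H^\mr{Com}_p(K^\vee \otimes^{\cat{dsBr}} Z_1)_{q,w}$ for all $p,q$ and all $2 \leq w \leq \tfrac13 g$; specialising to $p = q = w$ and dualising yields the isomorphism claimed for $2 \leq w \leq \tfrac13 g$.

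For $w = 1$, the relative term $H^\mr{Com}_p(Z_1, E_1)_{q,1}$ is nonzero only at $(p,q)=(2,1)$ and only on one-element sets, where it is $\bQ$; realising, $K^\vee \otimes^{\cat{dsBr}} H^\mr{Com}_2(Z_1,E_1)_{1,1}$ is the $\mr{Sp}_{2g}$-representation $H(g)_{[1]} = V_1$ (up to the relevant identifications). The tail of the long exact sequence in weight $1$ reads
\[H^\mr{Com}_2(K^\vee \otimes^{\cat{dsBr}} E_1)_{1,1} \to H^\mr{Com}_2(K^\vee \otimes^{\cat{dsBr}} Z_1)_{1,1} \to V_1 \to H^\mr{Com}_1(K^\vee \otimes^{\cat{dsBr}} E_1)_{1,1} \to H^\mr{Com}_1(K^\vee \otimes^{\cat{dsBr}} Z_1)_{1,1} \to 0,\]
and the Harrison degree $1$, weight $1$ pieces are just the (dual of the) degree $1$ generators $\Lambda^3 V_1$ and $V_{1^3}$ respectively, so the last map $\Lambda^3 V_1 \to V_{1^3}$ is the projection, which is surjective with kernel $V_1$. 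Hence $V_1 \to H^\mr{Com}_1(K^\vee \otimes^{\cat{dsBr}} E_1)_{1,1}$ is injective, so the connecting map $H^\mr{Com}_2(K^\vee \otimes^{\cat{dsBr}} Z_1)_{1,1} \to V_1$ is zero, and therefore $H^\mr{Com}_2(K^\vee \otimes^{\cat{dsBr}} E_1)_{1,1} \to H^\mr{Com}_2(K^\vee \otimes^{\cat{dsBr}} Z_1)_{1,1}$ is surjective. Dualising gives the injectivity at $w = 1$ asserted in the lemma.

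\textbf{Main obstacle.} The genuinely delicate point is bookkeeping the degree conventions: translating between the Harrison degree $p$, internal degree $q$, weight $w$ triple for $E_1, Z_1$ (where weight equals homological degree equals internal degree divided by $n = 1$), so that the diagonal $p = q = w$ is exactly the place where Koszulness is measured, and making sure the relative term \eqref{eq:relative-com-zn-vs-en-recollected} really only obstructs the $w = 1$ entry and nothing on the diagonal for $w \geq 2$. Once the indices are lined up, everything is formal from the long exact sequence, exactness of realisation in the stable range (\cref{lem:PartialExactness}, \cref{lem:realisation-h-com}), and the explicit identification of the degree-$1$ generators as $\Lambda^3 V_1$ versus $V_{1^3}$.
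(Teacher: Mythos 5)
Your proof takes the same key input as the paper (\cref{thm:relative-com-zn-vs-en} and the long exact sequence in Andr\'e--Quillen homology it induces), but a genuinely different route to the conclusion. The paper keeps everything at the level of functors on $\cat{dsBr}$, establishes that $H^\mr{Com}_p(E_1)_{q,w} \to H^\mr{Com}_p(Z_1)_{q,w}$ is an isomorphism except at $(p,q,w)=(1,1,1)$ on singletons (where it is surjective, using the injectivity of the connecting map from the proof of \cref{cor:EnZnEquiKoszul} to control the $p=2$ slot), and only realises and dualises at the very end. You instead realise the entire long exact sequence immediately (after checking that realisation is exact on the relevant complexes via \cref{lem:PartialExactness} and \cref{lem:realisation-h-com}) and work with explicit $\mr{Sp}_{2g}$-representations, identifying the degree-1 pieces as $\Lambda^3 V_1$ and $V_{1^3}$ and the map between them as the projection. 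Both are valid; the paper's route is more uniform while yours is more concrete and gives the explicit representation-theoretic picture for free.

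There is, however, a structural slip in your $w=1$ argument. The injectivity asserted in the lemma at $w=1$ corresponds, via the horizontal isomorphisms in the diagram preceding the lemma, to surjectivity of $H^\mr{Com}_1(K^\vee \otimes^{\cat{dsBr}} E_1)_{1,1} \to H^\mr{Com}_1(K^\vee \otimes^{\cat{dsBr}} Z_1)_{1,1}$, i.e.\ Harrison degree $p=1$ (this is the entry $H^\mr{Com}_w(\cdot)_{w,w}$ at $w=1$). You correctly establish this: the last map in your long exact sequence is $\Lambda^3 V_1 \twoheadrightarrow V_{1^3}$. But you then digress into the Harrison degree $p=2$ entry, showing $H^\mr{Com}_2(K^\vee \otimes^{\cat{dsBr}} E_1)_{1,1} \to H^\mr{Com}_2(K^\vee \otimes^{\cat{dsBr}} Z_1)_{1,1}$ is surjective, and attach your final ``Dualising gives the injectivity at $w=1$'' to that statement. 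The $p=2$ discussion is correct but irrelevant to the lemma, and the concluding sentence should instead point back to the $p=1$ surjectivity (which you in fact proved two sentences earlier). Also note that the surjectivity at $p=1$ already follows directly from $H^\mr{Com}_1(Z_1,E_1)_{1,1}=0$ in the long exact sequence, without the identification of the kernel as $V_1$; that finer identification only becomes necessary if you want the $p=2$ fact, which you don't need here.
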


\begin{proof} Since the weight $w$ parts of $Z_1$ and $E_1$ are supported on sets of size $\leq 3w$, by \cref{lem:realisation-h-com} the right vertical map is dual to
	\[K^\vee \otimes^{\cat{dsBr}} H^\mr{Com}_w(E_1)_{w,w} \lra K^\vee \otimes^{\cat{dsBr}} H^\mr{Com}_w(Z_1)_{w,w}\]
as long as $w \leq \tfrac{1}{3}g$.	By \cref{thm:relative-com-zn-vs-en}, the map $H^\mr{Com}_p(E_1)_{q,w} \to H^\mr{Com}_p(Z_1)_{q,w}$ is an isomorphism except when $(p,q,w) = (1,1,1)$ and evaluated on sets of size $1$, in which case it is surjective by the proof of \cref{cor:EnZnEquiKoszul}. The lemma is then obtained by dualising.
\end{proof}

By the proof of \cref{prop:E1KoszImpliesStablyKosz}, the map
\[(K^\vee \otimes^{\cat{dsBr}} E_1)^\mr{quad}  \lra (K^\vee \otimes^{\cat{dsBr}} E_1/(\kappa_{e^2}))^\mr{quad}  \cong \GrLCS \ft_{g,1}\]
is injective with cokernel a single trivial representation in weight 2. Since the geometric Johnson homomorphism is injective in weight $1$ and we work up to trivial representations, we thus may as well study the composition
\begin{equation}\label{eqn:framed-johnson} (K^\vee \otimes^{\cat{dsBr}} Z_1)^\mr{quad} \lra \GrLCS \ft_{g,1} \lra \fh_{g,1}.\end{equation}
Up to isomorphism, this is the unique map of Lie algebras with a additional weight grading which is injective in weight 1.

\subsubsection{A high-dimensional geometric Johnson homomorphism} We next describe a higher-dimensional incarnation of \eqref{eqn:framed-johnson}, and first give its domain and codomain. By \cref{thm:homotopy-x1g-koszul}, for $n$ odd and degrees $w(n-1) \leq \frac{g-3}{2}$ \cite[Section 9.2]{KR-WTorelli}, there is an isomorphism
\[((K^\vee \otimes^{\cat{dsBr}} Z_n)^\mr{quad})_{w(n-1), w} \overset{\cong}\lra \pi_{w(n-1)+1}(X_1(g)) \otimes \bQ \]
of %graded Lie algebras in 
algebraic $\overline{G}{}^{\mr{fr},[\ell]}_g$-representations, with $\overline{G}{}^{\mr{fr},[\ell]}_g \subset \mr{Sp}_{2g}(\bZ)$ a finite index subgroup. The left-hand side is identified with $((K^\vee \otimes^{\cat{dsBr}} Z_1)^\mr{quad})_{0,w}$, 
so the domain of our higher-dimensional geometric Johnson homomorphism is the rational homotopy Lie algebra of $X_1(g)$ with a certain regrading. 

For the codomain, we let $\mr{Der}^+(\mr{Lie}(H[n-1]))$ denote the graded Lie algebra of positive degree derivations of the free graded Lie algebra on $H = H_n(W_{g,1};\bQ)$ placed in degree $n-1$ and weight 1. Writing $\mr{Der}^+_\omega(\mr{Lie}(H[n-1]) \subset \mr{Der}^+(\mr{Lie}(H[n-1])$ for those derivations which annihilate $\omega$, we get a graded Lie algebra which agrees with $\fh_{g,1}$ up to regrading---$(\fh_{g,1})_{0,w}$ is identified with $\mr{Der}^+_\omega(\mr{Lie}(H[n-1])_{w(n-1),w}$. It is well-known that there is an isomorphism of graded Lie algebras
\[\pi_{*+1}(B\hAut^\mr{id}_\partial(W_{g,1})) \otimes \bQ \cong \mr{Der}^+_\omega(\mr{Lie}(H[n-1])),\]
 in algebraic  $\overline{G}{}^{\mr{fr},[\ell]}_g$-representations (see \cite[Corollary 3.3]{berglundmadsen2}), with the superscript $\mr{id}$ indicating the identity component. So the codomain of our higher-dimensional geometric Johnson homomorphism is the rational homotopy Lie algebra of $B\hAut^\mr{id}_\partial(W_{g,1})$.

There are maps
\begin{equation}\label{eqn:higher-dim-johnson} X_1(g) \lra B\Diff^\mr{fr}_\partial(W_{g,1})_\ell \lra B\hAut_\partial(W_{g,1}),\end{equation}
which induce a map $\pi_{*+1}(X_1(g)) \otimes \bQ \to \pi_{*+1}(B\hAut^\mr{id}_\partial(W_{g,1})) \otimes \bQ$ of graded Lie algebras in algebraic $\overline{G}{}^{\mr{fr},[\ell]}_g$-representations. This is the \emph{higher-dimensional geometric Johnson homomorphism}. To see that it coincides with $\tau_{g,1}$, in a stable range and up to isomorphism and regrading, it suffices to verify that it is injective in degree $*=n-1$ in a stable range. This is the content of \cite[Proposition 5.10]{KR-WDisks}. To prove \cref{athm:morita}, we may thus use this higher-dimensional incarnation instead.

\subsection{Proof of \cref{athm:morita}} Translated to high dimensions, \cref{athm:morita} says that in a stable range the kernel of the map induced by \eqref{eqn:higher-dim-johnson} on rational homotopy groups consists solely of trivial $\smash{\overline{G}{}^{\mr{fr},[\ell]}_g}$-representations in the centre of $\pi_{*+1}(X_1(g)) \otimes \bQ$.
	
	We need to recall some of the setup from \cite[Section 3]{KR-WDisks}. Let us write $\half \partial W_{g,1}$ for a fixed subset $D^{2n-1} \subset S^{2n-1} = \partial W_{g,1}$. Then we let $B\mr{Emb}^\mr{fr}_{\half \partial}(W_{g,1})$ denote the path component of the homotopy quotient 
	\[\mr{Bun}_{\half \partial}(TW_{g,1},\theta_\mr{fr}^* \gamma) \sslash \mr{Emb}_{\half \partial}(W_{g,1})^\times\]
	containing $\ell$. Let us consider the commutative diagram
	\[\begin{tikzcd} X_1(g) \rar \arrow[dashed]{rd}[swap]{\eqref{eqn:higher-dim-johnson}}&[3pt] B\Diff^\mr{fr}_\partial(W_{g,1})_\ell \dar \rar &[3pt] B\Emb^\mr{fr}_{\half \partial}(W_{g,1})_\ell \dar{\CircNum{2}} \\[-2pt]
		& B\hAut_\partial(W_{g,1}) \rar{\CircNum{1}} & B\hAut_{\half \partial}(W_{g,1}).\end{tikzcd}\]
	
	The map $\CircNum{1}$ is injective on rational homotopy groups in all degrees, as the following commutes
	\[\begin{tikzcd} \pi_{*+1}(B\hAut^\mr{id}_\partial(W_{g,1})) \otimes \bQ \rar \dar{\cong} & \pi_{*+1}(B\hAut^\mr{id}_\ast(W_{g,1})) \otimes \bQ \dar{\cong} \\[-2pt]
		\mr{Der}^+_\omega(\mr{Lie}(H[n-1])) \rar[hook] & \mr{Der}^+(\mr{Lie}(H[n-1])).\end{tikzcd}\]
	
	We claim that $\CircNum{2}$ is also injective. To see this is the case, we use the Bousfield--Kan spectral sequence for the embedding calculus Taylor tower \cite[Section 5.2.3]{KR-WDisks}. This is an extended spectral sequence in the sense of \cite{BousfieldKan}, given by
	\[{}^{BK}E^1_{p,q} = \begin{cases} \pi_{q-p}(B\hAut_{\half \partial}(W_{g,1})) & \text{if $p=0$,} \\
		\pi_{q-p}(BL_{p+1} \mr{Emb}_{\half \partial}(W_{g,1})^\times_\mr{id}) & \text{if $p \geq 1$,}\end{cases}\]
	and converging completely to $\pi_{q-p}(B\mr{Emb}^\mr{fr}_{\half \partial}(W_{g,1})^\times)$. In \cite[Proposition 5.32]{KR-WDisks} we described which entries on the $E^1$-page can be non-zero. In particular, there is a pattern of ``bands'': for $r \geq 1$ there are non-zero entries only in bidegrees $(p,q)$ lying in the intervals $[0,r+1] \times \{r(n-1)+1\}$ with $r \geq 1$. Since the differentials have bidegree $(r,r-1)$, the spectral sequence collapses rationally at the $E^2$-page in a range. As we increase $n$, the number of intervals in which this collapse occurs increases. Thus by making $n$ large enough, we see that the map $B\Emb^\mr{fr}_\partial(W_{g,1})_\ell \to B\hAut_{\half \partial}(W_{g,1})^\times$ induces an identification
	\[\begin{tikzcd}\pi_{r(n-1)+1}(B\mr{Emb}^\mr{fr}_{\half \partial}(W_{g,1})^\times) \otimes \bQ \dar{\cong} \\[-5pt]
		\ker\Big[\pi_{r(n-1)+1}(B\mr{hAut}_{\half \partial}(W_{g,1})^\times) \otimes \bQ \overset{d^1}\lra \pi_{r(n-1)}(BL_2 \mr{Emb}^\mr{fr}_{\half \partial}(W_{g,1})^\times_\mr{id}) \otimes \bQ\Big],\end{tikzcd}\]
	so in particular is injective.
	
	Thus the kernel of \eqref{eqn:higher-dim-johnson} coincides with that of the map induced on rational homotopy by the composition
	\[X_1(g) \lra B\Diff_\partial^\mr{fr}(W_{g,1})_\ell \lra B\Emb^\mr{fr}_{\half \partial}(W_{g,1})_\ell.\]
	The kernel of the left map coincides with the image of the rational homotopy of $X_0$ under the connecting homomorphism and hence consists of trivial representations in the centre of $\pi_{*+1}(X_1(g)) \otimes \bQ$. Using the framed Weiss fibre sequence \cite[Theorem 3.12]{KR-WDisks}
	\[B\mr{Diff}^\mr{fr}_\partial(W_{g,1}) \lra B\mr{Emb}^\mr{fr}_{\half \partial}(W_{g,1}) \lra B^2\mr{Diff}^\mr{fr}_\partial(D^{2n}),\]
	the kernel of the right map coincides with the image of the rational homotopy of  $B^2\mr{Diff}^\mr{fr}_\partial(D^{2n})_{\ell_0}$ under the connecting homomorphism and hence consists of trivial representations in the centre of $\pi_{*+1}(B\Diff_\partial^\mr{fr}(W_{g,1})_\ell) \otimes \bQ$. 
	
	 To complete the proof of \cref{athm:morita} it remains to establish the explicit ranges. To do so, we use a version of representation stability (cf.~\cite[Proposition 15.1]{HainJohnson}) for the domain and codomain of $\tau_{g,1}$. By \cref{cor:hain-presentation} the Lie algebra $\GrLCS \ft_{g,1}$ is quadratically generated by $\Lambda^3 V_1$ in weight 1, so by stability for symplectic Schur functors and the branching rule (e.g.~\cite{KoikeTerada}), its decomposition into non-zero irreducibles stabilises in weight $\leq \frac{1}{3}g$. The Lie algebra $\fh_{g,1}$ fits into the short exact sequence
	\[0 \lra \fh_{g,1} \lra H[-1] \otimes \mr{Lie}(H[1]) \lra \mr{Lie}^{\geq 2}(H[1])[-2] \lra 0,\]
	where $\mr{Lie}^{\geq 2}$ means we discard bracketings of $<2$ elements. Once more invoking stability for symplectic Schur functors and the branching rule, its decomposition into non-zero irreducibles stabilises in weight $\leq g-2$. These observations imply that for $g'>g$, in the commutative diagram
	\[\begin{tikzcd} \GrLCS \ft_{g,1} \rar \dar & \fh_{g,1} \dar \\[-2pt]
	\GrLCS\ft_{g',1} \rar & \fh_{g',1}\end{tikzcd}\]
	induced by the inclusion $\Sigma_{g,1} \subset \Sigma_{g',1}$, the left vertical map is an isomorphism in weight $\leq \frac{1}{3}g$ and the right vertical map is in weight $ \leq g-2$. Since we can make the bottom horizontal map have kernel consisting of trivial representations that are contained in the centre for arbitrary large weight by increasing $g'$, we conclude that the same is true for the top horizontal map in weight $\leq \min(\frac{1}{3}g,g-2)$. Since $g-2 \geq \frac{1}{3}g$ unless $g \leq 2$---in which case the injectivity of the geometric Johnson homomorphism in weight $\leq \tfrac{g}{3}$ is clear---we conclude that the top map has a kernel consisting of trivial representations that are contained in the centre for weight $\leq \frac{1}{3}g$.
	
\begin{remark}The proof of \cref{athm:morita} also gives a version of this theorem for $n$ even.\end{remark}

\subsection{Further remarks}

\subsubsection{The Euler tangential structure} To prove \cref{athm:morita} we could have used the Euler tangential structure of \cref{sec:euler} instead of framings. Doing so, the space $X_1(g)$ gets replaced by $X_1^e(g)$ and $K^\vee \otimes^{\cat{d(s)Br}} Z_n$ by $(K^\vee \otimes^{\cat{d(s)Br}} E_n)/(\kappa_{e^j} \mid j>1)$, but otherwise the proof proceeds in a similar manner. From this point of view, Lemma \ref{lem:en-vs-zn-duals} amounts to the statement that map induced on rational homotopy groups by
\[\mr{Bun}_\partial(TW_{g,1},\theta_\mr{fr}^* \gamma_{2n}) \lra \mr{Bun}_\partial(TW_{g,1},\theta_e^* \gamma_{2n})\]
is an isomorphism in degrees $*>n$.

\subsubsection{Computing the Johnson cokernel}

	Both $\GrLCS \ft_{g,1}$ and $\fh_{g,1}$ are accessible to computer calculations: more precisely, one can compute the multiplicities of the irreducibles in each degree. As pointed out in the introduction, for $\GrLCS \ft_{g,1}$ one uses the computation of the stable character of $\GrLCS \ft_g$ of Garoufalidis--Getzler \cite{GG}, which was conditional on \cref{athm:main} (or rather \cref{thm:mainPrime}), while $\fh_{g,1}$ admits a description in terms of Schur functors (see \cite[Section 5.4.1]{KR-WDisks}). Then \cref{athm:morita} implies that, up to trivial representations, the Johnson cokernel $\mr{coker}(\tau_{g,1} \colon \GrLCS \ft_{g,1} \to \fh_{g,1})$ is also accessible to computer calculations in a stable range.
	
	In fact, up to trivial representations the Johnson image is the kernel of the restriction to $\fh_{g,1}$ of the connecting homomorphism
	\[\partial \colon \pi_{*+1}(B\hAut_*(W_{g,1})) \otimes \bQ \lra \pi_{*}(BL_2 \Emb^\mr{fr}_{\half \partial}(W_{g,1})) \otimes \bQ\]
	in the fibration sequence $BL_2 \Emb^\mr{fr}_{\half \partial}(W_{g,1}) \to BT_2 \Emb^\mr{fr}_{\half \partial}(W_{g,1}) \to B\hAut_\ast(W_{g,1})$ arising from the framed embedding calculus Taylor tower. It would be interesting to relate $\partial$ to Morita's trace maps and its variants, and to approach Question 12.8 of \cite{HainJohnson} from this perspective (cf.~part (ii) of ``the most optimistic landscape'' in Section 1.9 of loc.~cit.).

\subsubsection{Variants}
Let us next consider the analogous geometric Johnson homomorphisms
\[\tau_g^1 \colon \GrLCS \ft_g^1 \lra \fh_{g}^1 \qquad \text{and} \qquad \tau_g \colon \GrLCS \ft_g \lra \fh_g,\]
where $\smash{\fh_{g}^1}$ (also denoted $\fh_{g,\ast}$ in the literature) is the Lie algebra of the positive degree symplectic derivations of $\GrLCS \fp_g^1$ and $\fh_g$ is the quotient of $\fg_{g}^1$ by the inner derivations \cite[p.~370]{MoritaStructure}. 

\begin{corollary}\label{cor:morita-decorations} In weight $\leq \frac{1}{3}g$ the kernels of $\tau_g^1 \colon \GrLCS \ft_g^1 \to \fh_{g}^1$ and $\tau_g \colon \GrLCS \ft_g \to \fh_g$ lie in the centre and consists of trivial $\mr{Sp}_{2g}(\bZ)$-representations.
\end{corollary}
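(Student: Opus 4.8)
The plan is to deduce \cref{cor:morita-decorations} from \cref{athm:morita} (proved above for $\ft_{g,1}$) by transporting the statement along the extensions of \cref{lem:grlcs-tg-ug-ses}, in exactly the same spirit as \cref{prop:koszul} deduced \cref{thm:mainPrime} from \cref{athm:main}. The key point is that the geometric Johnson homomorphisms are compatible with these extensions: the projections $\Sigma_{g,1} \to \Sigma_g^1 \to \Sigma_g$ induce a commutative ladder relating $\tau_{g,1}$, $\tau_g^1$, and $\tau_g$, and on the target side the surjections $\fh_{g,1} \twoheadrightarrow \fh_g^1 \twoheadrightarrow \fh_g$ have kernels which are built from $H$ and inner derivations — that is, from small, explicitly understood representations concentrated in low weight. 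So a kernel element of $\tau_g^1$ or $\tau_g$ can be lifted (at least up to the controlled discrepancy between the Lie algebras) to a kernel element of $\tau_{g,1}$, where \cref{athm:morita} applies.

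First I would treat $\tau_g^1$. By \cref{lem:grlcs-tg-ug-ses} there is an extension $\bQ[2] \to \GrLCS \ft_{g,1} \to \GrLCS \ft_g^1$, and correspondingly $\fh_{g,1}$ surjects onto $\fh_g^1 = \fh_{g,\ast}$ with kernel the one-dimensional space spanned by the derivation dual to $\omega$ (in weight $2$), matching the central $\bQ[2]$. Thus $\tau_{g,1}$ and $\tau_g^1$ have the same kernel up to this central weight-$2$ line, on which $\mr{Sp}_{2g}(\bZ)$ acts trivially; since $\GrLCS \ft_g^1$ is a quotient of $\GrLCS \ft_{g,1}$ by a central ideal, a central element of the former with trivial $\mr{Sp}_{2g}(\bZ)$-action pulls back to one in the latter, so the conclusion for $\tau_{g,1}$ from \cref{athm:morita} gives it for $\tau_g^1$ in weight $\leq \tfrac{1}{3}g$ (the weight-$2$ discrepancy only helps, since it is already central and trivial).

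Next I would pass from $\tau_g^1$ to $\tau_g$ using the extension $\GrLCS \fp_g^1 \to \GrLCS \ft_g^1 \to \GrLCS \ft_g$ together with the corresponding exact sequence $0 \to (\text{inner derivations}) \to \fh_g^1 \to \fh_g \to 0$, where the inner derivations form a copy of $\mr{Lie}(H[1])^{\geq 2}$, or equivalently are governed by $\GrLCS \fp_g^1 = \mr{Lie}(H)$ itself. An element of $\ker(\tau_g)$ lifts to $\GrLCS \ft_g^1$ modulo $\GrLCS \fp_g^1$; I would chase the diagram to see that its image under $\tau_g^1$ lands in the inner derivations, and then use that $\GrLCS \fp_g^1 \to \fh_g^1$ is the adjoint action $\mr{Lie}(H) \to \mr{Der}(\mr{Lie}(H))$, which is injective with image exactly the inner derivations (its kernel is the centre of the free Lie algebra, which vanishes for $g \geq 1$). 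Hence the snake lemma identifies $\ker(\tau_g)$ with the image of $\ker(\tau_g^1)$, and the already-established structure of $\ker(\tau_g^1)$ — central, trivial $\mr{Sp}_{2g}(\bZ)$-representations in weight $\leq \tfrac{1}{3}g$ — is inherited, since the quotient map $\GrLCS \ft_g^1 \to \GrLCS \ft_g$ sends central trivial representations to central trivial representations.

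The main obstacle I expect is bookkeeping around centrality: knowing that an element maps to zero in a quotient Lie algebra, or lies in a central subquotient, does not automatically make it central in the bigger or smaller algebra, and one must use that the relevant ideals ($\bQ[2]$, $\GrLCS \fp_g^1$) and their interactions with the Johnson maps are themselves well-understood — in particular that $\GrLCS \fp_g^1$ is a \emph{free} Lie algebra (so centreless) and that the extension by it is compatible with the grading. A secondary subtlety is that \cref{athm:morita} as proved goes through the high-dimensional model $X_1(g)$ and the map \eqref{eqn:higher-dim-johnson}; one could alternatively re-run the high-dimensional argument for the decorated surfaces directly, using the fibration-sequence comparisons of \cref{lem:grlcs-tg-ug-ses} on the manifold side (the analogous extensions for $B\Tor^\mr{fr}_\partial$ of $W_g^1$ and $W_g$), but the purely Lie-theoretic deduction above is cleaner and suffices.
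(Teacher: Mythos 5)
Your overall strategy — transport the $\ft_{g,1}$ case along the extensions of \cref{lem:grlcs-tg-ug-ses} via commutative ladders, first to $\ft_g^1$ and then to $\ft_g$ — is the same one the paper uses, and your treatment of the second step ($\tau_g^1 \Rightarrow \tau_g$, via the diagram with the identity on $\GrLCS \fp_g^1$ on the left and a snake-lemma chase) is essentially correct (modulo the slip that $\GrLCS \fp_g^1 = \mr{Lie}(H)/(\omega)$, not $\mr{Lie}(H)$; that distinction does not affect the chase).

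However, your first step ($\tau_{g,1} \Rightarrow \tau_g^1$) contains a genuine gap. You claim that $\fh_{g,1} \to \fh_g^1$ has kernel a one-dimensional trivial representation concentrated in weight $2$, ``matching the central $\bQ[2]$.'' This is false. The kernel, denoted $\mathfrak{j}_{g,1}$ in Morita--Sakasai--Suzuki, consists of those symplectic derivations $D$ of $\mr{Lie}(H)$ with $D(H) \subseteq (\omega)$; it is nonzero in infinitely many weights and contains nontrivial $\mr{Sp}_{2g}(\bZ)$-representations. As a result, your chain of reasoning breaks at the crucial point: given $x \in \ker(\tau_g^1)_w$ with lift $\tilde x \in (\GrLCS \ft_{g,1})_w$, you only know $\tau_{g,1}(\tilde x) \in (\mathfrak{j}_{g,1})_w$, which is not automatically zero, so you cannot conclude $\tilde x \in \ker(\tau_{g,1})$. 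The paper closes exactly this gap with two nontrivial external inputs from \cite{MSS}: Proposition 3.5(ii) there shows that the projection $\fh_{g,1} \to \fh_g^1$ restricted to the Johnson image is injective in weight $\neq 2$ (i.e.\ $\mr{im}(\tau_{g,1}) \cap \mathfrak{j}_{g,1} = 0$ there), which is what actually lets one pull $\ker(\tau_g^1)$ back to $\ker(\tau_{g,1})$; and Table 1 of \cite{MSS} is used to handle weight $2$ separately, where $(\mathfrak{j}_{g,1})_2$ is indeed only a trivial representation. Without these facts, or some substitute for them, the first step of your argument does not go through.
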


\begin{proof}For $\tau_g^1$ we use the extension  $\bQ[2\to \GrLCS \ft_{g,1} \to \GrLCS \ft_g^1$ of \cref{lem:grlcs-tg-ug-ses}.
	The statement in weight $\neq 2$ follows because the images of $\tau_{g,1}$ and $\tau^1_g$ coincide by \cite[Proposition 3.5 (ii)]{MSS}. In weight 2, $\fh_{g,1}$ and $\fh_{g}^1$ differ only in a trivial representation, by \cite[Table 1]{MSS} ($\mathfrak{j}_{g,1}$ is the kernel of the surjection $\fh_{g,1} \to \fh_g^1$ \cite[p.~307]{MSS}).
	
	For $\tau_g$ we use the diagram of extensions
	\[\begin{tikzcd}  \GrLCS \fp^1_g \rar \dar[equal] & \GrLCS \ft^1_g \dar{\tau_g^1} \rar & \GrLCS \ft_g  \dar{\tau_g}  \\[-2pt]
		 \GrLCS \fp^1_g  \rar & \fh_g^1 \rar & \fh_g. \end{tikzcd}\]
	The top row appears in \cref{lem:grlcs-tg-ug-ses}, the bottom row appears on p.~370 of \cite{MoritaStructure} (Morita uses $\cL_g$ and $\fh_{g,*}$ in place of $\GrLCS \fp^1_g$ and $\fh_g^1$). See also p.~643 of \cite{HainTorelli}.
\end{proof}

\begin{remark}\cref{athm:morita} and \cref{cor:morita-decorations} remain true when we replace the unipotent completions $\ft_{g,1}$, $\ft_g^1$, and $\ft_g$ with the relative unipotent completions $\fu_{g,1}$, $\fu_g^1$, and $\fu_g$ respectively, since these only differ by trivial representations in the centre by \cref{lem:tg-ug-ses}.\end{remark}

\begin{remark}\label{rem:habiro-massuyeau} Let $\cC_{g,1}$ be the monoid of 3-dimensional homology cylinders under concatenation, which has a filtration by $Y$-equivalence. There is a map of filtered monoids $T_{g,1} \to C_{g,1}$, if the left side is given the lower central series filtration. Taking associated gradeds and rationalising, we get a map of Lie algebras with additional weight grading
	\[\mr{Gr}^\bullet_\mr{LCS}\, T_{g,1} \lra \mr{Gr}^\bullet_Y\, \cC_{g,1}.\]
	In \cite[Question 1.3]{HabiroMassuyeau}, Habiro and Massuyeau ask whether this map is injective. Rationally, in a stable range, and up to trivial representations in the centre, it follows from \cref{athm:morita} that it is. To see this, consider Figure 8.1 of \cite{HabiroMassuyeau} and observe that the map denoted by $\mr{Gr}^\Gamma \cI_{g,1} \otimes \bQ \to \mr{Gr}^{[-]} \cI_{g,1} \otimes \bQ$ is $\tau_{g,1}$.
\end{remark}

\bibliographystyle{amsalpha}
\bibliography{../../cell}

\def\cprime{$'$} \def\cprime{$'$}
\providecommand{\bysame}{\leavevmode\hbox to3em{\hrulefill}\thinspace}
\providecommand{\MR}{\relax\ifhmode\unskip\space\fi MR }
% \MRhref is called by the amsart/book/proc definition of \MR.
\providecommand{\MRhref}[2]{%
  \href{http://www.ams.org/mathscinet-getitem?mr=#1}{#2}
}
\providecommand{\href}[2]{#2}
\begin{thebibliography}{KRW20b}

\bibitem[Aka05]{Akazawa}
H.~Akazawa, \emph{Symplectic invariants arising from a {G}rassmann quotient and
  trivalent graphs}, Math. J. Okayama Univ. \textbf{47} (2005), 99--117.

\bibitem[Ati69]{AtiyahFib}
M.~F. Atiyah, \emph{The signature of fibre-bundles}, Global {A}nalysis
  ({P}apers in {H}onor of {K}. {K}odaira), Univ. Tokyo Press, Tokyo, 1969,
  pp.~73--84.

\bibitem[AWZ20]{AWZ}
A.~Andersson, T.~Willwacher, and M.~Zivkovic, \emph{Oriented hairy graphs and
  moduli spaces of curves}, \url{https://arXiv.org/abs/2005.00439}, 2020.

\bibitem[BK72]{BousfieldKan}
A.~K. Bousfield and D.~M. Kan, \emph{Homotopy limits, completions and
  localizations}, Lecture Notes in Mathematics, Vol. 304, Springer-Verlag,
  Berlin-New York, 1972.

\bibitem[BM20]{berglundmadsen2}
A.~Berglund and I.~Madsen, \emph{Rational homotopy theory of automorphisms of
  manifolds}, Acta Math. \textbf{224} (2020), no.~1, 67--185.

\bibitem[CGP21]{CGP}
M.~Chan, S.~Galatius, and S.~Payne, \emph{Tropical curves, graph complexes, and
  top weight cohomology of $\mathcal{M}_g$}, J. Amer. Math. Soc. \textbf{34}
  (2021), no.~2, 565--594.

\bibitem[CGP22]{CGP2}
\bysame, \emph{Topology of moduli spaces of tropical curves with marked
  points}, Facets of algebraic geometry. {V}ol. {I}, London Math. Soc. Lecture
  Note Ser., vol. 472, Cambridge Univ. Press, Cambridge, 2022, pp.~77--131.
  \MR{4381898}

\bibitem[FNW23]{FNW}
M.~Felder, F.~Naef, and T.~Willwacher, \emph{Stable cohomology of graph
  complexes}, Sel. Math., New Ser. \textbf{29} (2023), no.~2, No 23.

\bibitem[GG17]{GG}
S.~Garoufalidis and E.~Getzler, \emph{Graph complexes and the symplectic
  character of the {T}orelli group}, \url{https://arxiv.org/abs/1712.03606},
  2017.

\bibitem[GK94]{GinzburgKapranov}
V.~Ginzburg and M.~Kapranov, \emph{Koszul duality for operads}, Duke Math. J.
  \textbf{76} (1994), no.~1, 203--272.

\bibitem[GN98]{GN}
S.~Garoufalidis and H.~Nakamura, \emph{Some {$IHX$}-type relations on trivalent
  graphs and symplectic representation theory}, Math. Res. Lett. \textbf{5}
  (1998), no.~3, 391--402.

\bibitem[GRW19]{grwsurvey}
S.~Galatius and O.~Randal-Williams, \emph{Moduli spaces of manifolds: a user's
  guide}, Handbook of homotopy theory, Chapman \& Hall/CRC, CRC Press, Boca
  Raton, FL, 2019, pp.~445--487.

\bibitem[GS07]{GoerssSchemmerhorn}
P.~Goerss and K.~Schemmerhorn, \emph{Model categories and simplicial methods},
  Interactions between homotopy theory and algebra, Contemp. Math., vol. 436,
  Amer. Math. Soc., Providence, RI, 2007, pp.~3--49. \MR{2355769}

\bibitem[Hai87]{HaindeRhamI}
R.~Hain, \emph{The de {R}ham homotopy theory of complex algebraic varieties.
  {I}}, $K$-Theory \textbf{1} (1987), no.~3, 271--324. \MR{908993}

\bibitem[Hai93]{HainCompletions}
\bysame, \emph{Completions of mapping class groups and the cycle {$C-C^-$}},
  Mapping class groups and moduli spaces of {R}iemann surfaces
  ({G}\"{o}ttingen, 1991/{S}eattle, {WA}, 1991), Contemp. Math., vol. 150,
  Amer. Math. Soc., Providence, RI, 1993, pp.~75--105. \MR{1234261}

\bibitem[Hai97]{HainTorelli}
\bysame, \emph{Infinitesimal presentations of the {T}orelli groups}, J. Amer.
  Math. Soc. \textbf{10} (1997), no.~3, 597--651.

\bibitem[Hai15]{HainImproved}
\bysame, \emph{Genus 3 mapping class groups are not {K}\"{a}hler}, J. Topol.
  \textbf{8} (2015), no.~1, 213--246.

\bibitem[Hai20]{HainJohnson}
\bysame, \emph{Johnson homomorphisms}, EMS Surv. Math. Sci. \textbf{7} (2020),
  no.~1, 33--116. \MR{4195745}

\bibitem[Hir03]{Hirschhorn}
P.~S. Hirschhorn, \emph{Model categories and their localizations}, Mathematical
  Surveys and Monographs, vol.~99, American Mathematical Society, Providence,
  RI, 2003. \MR{1944041}

\bibitem[HL97]{HainLooijenga}
R.~Hain and E.~Looijenga, \emph{Mapping class groups and moduli spaces of
  curves}, Algebraic geometry---{S}anta {C}ruz 1995, Proc. Sympos. Pure Math.,
  vol.~62, Amer. Math. Soc., Providence, RI, 1997, pp.~97--142. \MR{1492535}

\bibitem[HM09]{HabiroMassuyeau}
K.~Habiro and G.~Massuyeau, \emph{Symplectic {J}acobi diagrams and the {L}ie
  algebra of homology cylinders}, J. Topol. \textbf{2} (2009), no.~3, 527--569.
  \MR{2546585}

\bibitem[HS00]{HabeggerSorger}
N.~Habegger and C.~Sorger, \emph{An infinitesimal presentation of the {T}orelli
  group of a surface with boundary},
  \url{http://www.math.sciences.univ-nantes.fr/~habegger/PS/inf180300.ps},
  2000.

\bibitem[Isa09]{Isaacson}
S.~B. Isaacson, \emph{Cubical homotopy theory and monoidal model categories},
  ProQuest LLC, Ann Arbor, MI, 2009, Thesis (Ph.D.)--Harvard University.
  \MR{2713397}

\bibitem[Joh85]{JohnsonAb}
D.~Johnson, \emph{The structure of the {T}orelli group. {III}. {T}he
  abelianization of {$\mathcal{T}$}}, Topology \textbf{24} (1985), no.~2,
  127--144.

\bibitem[KM96]{KM}
N.~Kawazumi and S.~Morita, \emph{The primary approximation to the cohomology of
  the moduli space of curves and cocycles for the stable characteristic
  classes}, Math. Res. Lett. \textbf{3} (1996), no.~5, 629--641.

\bibitem[KO87]{KohnoOda}
T.~Kohno and T.~Oda, \emph{The lower central series of the pure braid group of
  an algebraic curve}, Galois representations and arithmetic algebraic geometry
  ({K}yoto, 1985/{T}okyo, 1986), Adv. Stud. Pure Math., vol.~12, North-Holland,
  Amsterdam, 1987, pp.~201--219.

\bibitem[KRW20a]{KR-WAlg}
A.~Kupers and O.~Randal-Williams, \emph{The cohomology of {T}orelli groups is
  algebraic}, Forum of Mathematics, Sigma \textbf{8} (2020), e64.

\bibitem[KRW20b]{KR-WDisks}
\bysame, \emph{On diffeomorphisms of even-dimensional discs},
  \url{https://arxiv.org/abs/2007.13884}, 2020.

\bibitem[KRW20c]{KR-WTorelli}
\bysame, \emph{On the cohomology of {T}orelli groups}, Forum of Mathematics, Pi
  \textbf{8} (2020), e7.

\bibitem[KT87]{KoikeTerada}
K.~Koike and I.~Terada, \emph{Young-diagrammatic methods for the representation
  theory of the classical groups of type {$B_n,\;C_n,\;D_n$}}, J. Algebra
  \textbf{107} (1987), no.~2, 466--511.

\bibitem[LV12]{LodayVallette}
J.-L. Loday and B.~Vallette, \emph{Algebraic operads}, Grundlehren der
  Mathematischen Wissenschaften [Fundamental Principles of Mathematical
  Sciences], vol. 346, Springer, Heidelberg, 2012.

\bibitem[Mil12]{Milles}
J.~Mill\`es, \emph{The {K}oszul complex is the cotangent complex}, Int. Math.
  Res. Not. IMRN (2012), no.~3, 607--650.

\bibitem[Mor93]{MoritaAbelian}
S.~Morita, \emph{Abelian quotients of subgroups of the mapping class group of
  surfaces}, Duke Math. J. \textbf{70} (1993), no.~3, 699--726. \MR{1224104}

\bibitem[Mor99]{MoritaStructure}
\bysame, \emph{Structure of the mapping class groups of surfaces: a survey and
  a prospect}, Proceedings of the {K}irbyfest ({B}erkeley, {CA}, 1998), Geom.
  Topol. Monogr., vol.~2, Geom. Topol. Publ., Coventry, 1999, pp.~349--406.
  \MR{1734418}

\bibitem[Mor06]{MoritaCohomological}
\bysame, \emph{Cohomological structure of the mapping class group and beyond},
  Problems on mapping class groups and related topics, Proc. Sympos. Pure
  Math., vol.~74, Amer. Math. Soc., Providence, RI, 2006, pp.~329--354.
  \MR{2264550}

\bibitem[MSS15]{MSS}
S.~Morita, T.~Sakasai, and M.~Suzuki, \emph{Structure of symplectic invariant
  {L}ie subalgebras of symplectic derivation {L}ie algebras}, Adv. Math.
  \textbf{282} (2015), 291--334.

\bibitem[MSS20]{MSSTorelli}
\bysame, \emph{Torelli group, {J}ohnson kernel, and invariants of homology
  spheres}, Quantum Topol. \textbf{11} (2020), no.~2, 379--410. \MR{4118638}

\bibitem[SS15]{SS}
S.~V. Sam and A.~Snowden, \emph{Stability patterns in representation theory},
  Forum Math. Sigma \textbf{3} (2015), e11, 108.

\bibitem[TW17]{TW}
V.~Turchin and T.~Willwacher, \emph{Commutative hairy graphs and
  representations of {${\rm Out}(F_r)$}}, J. Topol. \textbf{10} (2017), no.~2,
  386--411. \MR{3653316}

\bibitem[Wil15]{WillwacherGRT}
T.~Willwacher, \emph{M. {K}ontsevich's graph complex and the
  {G}rothendieck-{T}eichm\"{u}ller {L}ie algebra}, Invent. Math. \textbf{200}
  (2015), no.~3, 671--760. \MR{3348138}

\end{thebibliography}

\vspace{.5cm}

\end{document}